\definecolor{darkblue}{rgb}{0,0,0.5}
\definecolor{urlblue}{rgb}{0,0,0.7}
\newcommand{\RR}{\mathbb{R}}
\newcommand{\ZZ}{\mathbb{Z}}
\newcommand{\HH}{\mathbb{H}}
\newcommand{\NN}{\mathbb{N}}
\DeclareMathOperator{\tr}{tr}
\let\div\undefined
\DeclareMathOperator{\diam}{diam}
\DeclareMathOperator{\div}{div}
\DeclareMathOperator{\Ric}{Ric}
\DeclareMathOperator{\supp}{supp}
\DeclareMathOperator{\spt}{spt}
\DeclareMathOperator{\Lip}{Lip}
\DeclareMathOperator{\loc}{loc}
\newcommand{\D}{\nabla}
\newcommand{\p}{\partial}
\renewcommand{\H}{\mathcal{H}}
\renewcommand{\L}{\mathcal{L}}
\renewcommand{\th}{\theta}
\newcommand{\metric}[2]{\langle#1,#2\rangle}
\renewcommand{\bar}{\overline}
\renewcommand{\tilde}{\widetilde}
\renewcommand{\epsilon}{\varepsilon}
\renewcommand{\leq}{\leqslant}
\renewcommand{\geq}{\geqslant}
\newcommand{\TODO}[1]{\message{TODO Warning: {#1}}}
\newcommand{\updatetag}[2]{}
\newtheorem{theorem}{Theorem}[section]
\newtheorem{lemma}[theorem]{Lemma}
\newtheorem{cor}[theorem]{Corollary}
\newtheorem{prop}[theorem]{Proposition}
\newtheorem{defn}[theorem]{Definition}
\newtheorem{summaryofdef}[theorem]{Summary of Definitions\,/\,Theorems}
\newtheorem{question}[theorem]{Question}
\numberwithin{equation}{section}
\theoremstyle{definition}
\theoremstyle{definition}
\newtheorem{remark}[theorem]{Remark}
\theoremstyle{definition}
\newtheorem{example}[theorem]{Example}
\newcommand{\IMCF}[1]{\mathsf{IMCF}\pmb{(}{#1}\pmb{)}}
\newcommand{\IVP}[1]{\mathsf{IVP}\pmb{(}{#1}\pmb{)}}
\newcommand{\IMCFOO}[2]{\mathsf{IMCF}\pmb{(}{#1}\pmb{)}\texttt{+}\mathsf{OBS}\pmb{(}{#2}\pmb{)}}
\newcommand{\IMCFOOinthm}[2]{\mathsf{IMCF}\pmb{(}{#1}\pmb{)}\texttt{\emph{+}}\mathsf{OBS}\pmb{(}{#2}\pmb{)}}
\newcommand{\IVPOO}[2]{\mathsf{IVP}\pmb{(}{#1}\pmb{)}\texttt{+}\mathsf{OBS}\pmb{(}{#2}\pmb{)}}
\newcommand{\IVPOOinthm}[2]{\mathsf{IVP}\pmb{(}{#1}\pmb{)}\texttt{\emph{+}}\mathsf{OBS}\pmb{(}{#2}\pmb{)}}
\DeclareMathOperator{\BV}{BV}
\newcommand{\res}{\llcorner}
\newcommand{\uf}{\underline{f}}
\newcommand{\of}{\overline{f}}
\newcommand{\uu}{\underline{u}}
\newcommand{\ou}{\overline{u}}
\let\graph\undefined
\DeclareMathOperator{\graph}{graph}
\DeclareMathOperator{\ip}{ip}
\DeclareMathOperator{\sip}{sip}
\newcommand{\beuc}{\textbf{euc}}
\newcommand{\DSigma}{\D_\Sigma}
\newcommand{\bnu}{\boldsymbol{\nu}}
\newcommand{\usub}{{\underline{u}_\delta}}
\newcommand{\usup}{{\overline{u}_\delta}}
\newcommand{\ureg}{u_{\epsilon,\delta,\lambda}}
\newcommand{\bg}{\boldsymbol{g}}
\newcommand{\bh}{\boldsymbol{h}}
\newcommand{\br}{\boldsymbol{r}}
\newcommand{\bpr}{\boldsymbol{\p_r}}
\newcommand{\bOmega}{\boldsymbol{\Omega}}
\newcommand{\bPhi}{\boldsymbol{\Phi}}
\newcommand{\be}{\boldsymbol{e}}
\renewcommand{\P}[1]{{P\big(#1\big)}}
\newcommand{\Ps}[1]{{P(#1)}}
\newcommand{\tJ}{\tilde{J}}
\DeclareMathOperator*{\esssup}{ess\,sup}
\DeclareMathOperator*{\essinf}{ess\,inf}
\DeclareMathOperator{\arccosh}{arccosh}
\begin{document}
	
\title{\texorpdfstring{\vspace{-24pt}Inverse mean curvature flow with outer obstacle}{Inverse mean curvature flow with outer obstacle}}
\author{Kai Xu}
\date{\vspace{-12pt}}
\maketitle

\begin{abstract}
	We develop a new boundary condition for the weak inverse mean curvature flow, which gives canonical and non-trivial solutions in bounded domains. Roughly speaking, the boundary of the domain serves as an outer obstacle, and the evolving hypersurfaces are assumed to stick tangentially to the boundary upon contact. In smooth bounded domains, we prove an existence and uniqueness theorem for weak solutions, and establish $C^{1,\alpha}$ regularity of the level sets up to the obstacle. The proof combines various techniques, including elliptic regularization, blow-up analysis, and certain parabolic estimates. As an analytic application, we address the well-posedness problem for the usual weak inverse mean curvature flow, showing that the initial value problem always admits a unique maximal (or innermost) weak solution.
\end{abstract}
	
%

\tableofcontents
	
\section{Introduction}


The theme of this paper is to further investigate the analytic aspects of weak inverse mean curvature flow. We are focused on the study of an outer obstacle boundary condition, and are guided by primary questions on the existence of solutions.

\vspace{9pt}
\phantomsection
\addtocounter{subsection}{1}
\addcontentsline{toc}{subsection}{\arabic{section}.\arabic{subsection}\texorpdfstring{\quad}{} Background and motivations}
\textbf{Background and motivations.}

The classical form of the \textit{inverse mean curvature flow} (IMCF) is an evolution of hypersurfaces given by the equation
\begin{equation}\label{eq-intro:smooth_flow}
	\frac{\p\Sigma_t}{\p t}=\frac{\nu}{H},
\end{equation}
where $\{\Sigma_t\}$ is a smooth foliation of hypersurfaces in a Riemannian manifold $M$, and $\nu$, $H$ denote the outward unit normal and mean curvature of $\Sigma_t$. For analytic considerations, we usually transform \eqref{eq-intro:smooth_flow} into the level set equation (see \cite[Section 4.2.2]{Lee_rel})
\begin{equation}\label{eq-intro:level_set}
	\div\Big(\frac{\D u}{|\D u|}\Big)=|\D u|,
\end{equation}
by setting $u$ as the arrival time function, so that $\Sigma_t=\big\{x: u(x)=t\big\}$ for each $t$.

The long-time existence of \eqref{eq-intro:smooth_flow} is well-studied in the ambient space $\RR^n$: see for the case of compact initial hypersurfaces \cite{Gerhardt_1991, Huisken-Ilmanen_2008,Urbas_1990} and noncompact initial hypersurfaces \cite{Choi-Daskalopoulos_2021, Daskalopoulos-Huisken_2022}. In general, however, we cannot exclude singularity formation in IMCF due to the vanishing of mean curvature. Therefore, we shall consider \textit{weak solutions}. In \cite{Huisken-Ilmanen_2001}, Huisken and Ilmanen developed the notion of weak IMCF, which resolves the singularity issue. This was done by establishing suitable variational principles for the equation \eqref{eq-intro:level_set}, inspired by previous works \cite{Chen-Giga-Goto_1991, Evans-Spruck_1991} on the level set form of mean curvature flow.

A weak solution $u$ of the IMCF \eqref{eq-intro:level_set} is defined as a locally Lipschitz function that locally minimizes the functional
\begin{equation}\label{eq-intro:energy_v_interior}
	J_u^K(v):=\int_K\big(|\D v|+v|\D u|\big).
\end{equation}
Namely, $u$ is a weak solution if $J_u^K(u)\leq J_u^K(v)$ for all $v\in\Lip_{\loc}(\Omega)$ with $\{u\ne v\}\Subset K\Subset\Omega$. An equivalent definition is that each sub-level set $E_t:=\{u<t\}$ minimizes the perimeter-like energy
\begin{equation}\label{eq-intro:energy_E_interior}
	J_u^K(E):=\Ps{E;K}-\int_{E\cap K}|\D u|,
\end{equation}
in the sense that $J_u^K(E_t)\leq J_u^K(E)$ whenever $E\Delta E_t\Subset K\Subset\Omega$. Here we use $\Ps{E;K}$ to denote the perimeter of a set $E$ in $K$. A main feature of weak solutions is the jumping (or fattening) phenomenon, where the level set $\p E_t$ can ``jump'' to its outermost area-minimizing hull, leaving $u\equiv t$ in the skipped region. Weak solutions have nice regularity on manifolds: the current existence theories, namely \textit{elliptic regularization} and \textit{$p$-harmonic approximation}, provide locally Lipschitz solutions. Also, each level set $\p E_t$ of a weak solution is a priori $C^{1,\alpha}$ up to a codimension 8 singular set. (On the other hand, the main issue for existence is the behavior at infinity, see below.) See Section \ref{sec:prelim} for more background material on the smooth and weak IMCF.

The weak IMCF has been a powerful tool in studying scalar curvature problems in dimension three: we refer to the relevant works \cite{Bray-Miao_2008, Bray-Neves_2004, Huisken-Ilmanen_2001, Huisken-Koerber_2023, Lee-Neves_2013, Shi_2016, Xu_2023_sys2}. The applications are based on the well-known monotonicity of Hawking mass, initially discovered by Geroch \cite{Geroch_1973} for smooth flows and extended by Huisken-Ilmanen \cite{Huisken-Ilmanen_2001} to weak flows.



\vspace{3pt}

Let us turn to the main analytic focus of this paper. A motivating question is the well-posedness of the \textit{initial value problem}:
\begin{equation}\label{eq-intro:IVP}
	\left\{\begin{aligned}
		& \div\Big(\frac{\D u}{|\D u|}\Big)=|\D u|\qquad\text{weakly in}\ \ \Omega\setminus\bar{E_0}, \\
		& u|_{\p E_0}=0,\qquad u|_{\Omega\setminus E_0}\geq0.
	\end{aligned}\right. \tag{IVP}
\end{equation}
where $\Omega$ is a domain and $E_0\subset\Omega$ is an initial set. Let us consider two particular cases: (1) $\Omega=M$, and (2) $\Omega$ is a bounded domain with smooth boundary. To obtain a unique solution of \eqref{eq-intro:IVP}, a boundary condition must be assigned at infinity (when $\Omega=M$) or at $\p\Omega$ (when $\Omega$ is bounded).

\vspace{3pt}

(1) The case $\Omega=M$ was the major consideration in the literature. Commonly, one imposes the \textit{properness} condition
\begin{equation}\label{eq-intro:PROPER}
	\lim_{x\to\infty}u(x)=+\infty\ \ \text{uniformly}. \tag{Proper}
\end{equation}
This is equivalent to a more familiar form: $E_t\Subset M$ for all $t>0$. Properness is natural in geometric point of view, and is necessary for obtaining the monotonicity of Hawking mass. Regarding the solvability of \eqref{eq-intro:IVP}\texttt{+}\eqref{eq-intro:PROPER}, it was previously established that:

\begin{enumerate}[nosep]
	\item if there is a proper weak sub-solution of IMCF with bounded initial value \cite{Huisken-Ilmanen_2001}, or
	
	\item if $M$ admits a certain global isoperimetric inequality \cite{Xu_2023_proper}, a particular case of which is an Euclidean-type one $|\p E|\geq C|E|^{(n-1)/n}$, $\forall E\Subset M$,
\end{enumerate}

\noindent then \eqref{eq-intro:IVP}\texttt{+}\eqref{eq-intro:PROPER} has a unique solution for all bounded $E_0$. On the other hand, little was known about non-proper solutions in general. Note that there are abundant cases in which proper solutions do not exist: for instance, when $M$ has a cylindrical end (see \cite{Xu_2023_proper}; the flow must jump to infinity when it fully enters the end). Also, properness is not an expected condition when considering a noncompact initial value $E_0$, such as in \cite{Choi-Daskalopoulos_2021}. As no condition was known to work universally, it is natural to pose in general:
\vspace{-3pt}
\begin{equation}\label{eq-intro:Q_infty}
	\text{to what extent does \eqref{eq-intro:IVP} admit a unique solution?}\tag{$\text{Q}_1$}
\end{equation}

\begin{example}\label{ex-intro:radial_sym}
	To test for a suitable answer, we consider a capped cylinder $g=dr^2+f(r)^2d\th^2$, where
	\[f(r)=\left\{\begin{aligned}
		& \sin r\qquad(0\leq r\leq \pi/2), \\
		& 1\qquad(r>\pi/2).
	\end{aligned}\right.\]
	Let $E_0=\{r<\epsilon\}$ be an initial value with $\epsilon<\pi/2$. The unique solution that we seek in \eqref{eq-intro:Q_infty} should be invariant under rotations. Thus, we shall restrict our search within radial functions $u=u(r)$. It turns out that all radial solutions of \eqref{eq-intro:IVP} take the form
	\begin{equation}\label{eq-intro:classify}
		u(r)=\min\Big\{\log\big[f(r)/f(\epsilon)\big], t_0\Big\}\qquad\text{for some $t_0>0$.}
	\end{equation}
	See Lemma \ref{lemma-prelim:sphere_sym} for the proof. Among the solutions in \eqref{eq-intro:classify}, there is a maximal one $u_0(r)=\log\big[f(r)/f(\epsilon)\big]$: it represents a smooth IMCF motion until reaching the equator $r=\pi/2$, then followed by a jump over the whole cylinder $\{r>\pi/2\}$. The other solutions are truncations of $u_0$, representing sudden jumps to infinity at an earliler time $t_0$. In this example, the best candidate for \eqref{eq-intro:Q_infty} seems to be the \textit{maximal solution} $u_0$. 
\end{example}

Another case appears in Choi-Daskalopoulos \cite{Choi-Daskalopoulos_2021}, where maximal solutions are shown to exist for noncompact convex initial values in $\RR^n$. There one employs the phrase \textit{innermost solutions}, which is equivalent to \textit{maximal solutions} (a function being maximal means that its sub-level sets are innermost).

In Theorem \ref{thm-intro:max_sol}, we confirm in general the existence of maximal solutions of \eqref{eq-intro:IVP}, thus answering \eqref{eq-intro:Q_infty} in the way observed here.

\vspace{3pt}

(2) We turn to the case where $\Omega$ is smooth and bounded. A natural question is thus
\begin{equation}\label{eq-intro:Q_pOmega}
	\begin{aligned}
		&\hspace{14pt}\text{to find a boundary condition that gives rise to} \\
		&\hspace{142pt} \text{non-trivial unique solutions of \eqref{eq-intro:IVP}.}
	\end{aligned}\tag{$\text{Q}_2$}
\end{equation}
We observe that the classical Dirichlet and Neumann conditions both have limitations in this situation. The Neumann (or free boundary) condition
\begin{equation}\label{eq-intro:NEUMANN}
	\metric{\D u}{\nu_\Omega}=0\ \ \text{on}\ \ \p\Omega, \tag{Neumann}
\end{equation}
where $\nu_\Omega$ is the outer unit normal of $\Omega$, was studied by Marquardt \cite{Marquardt_2017} and Koerber \cite{Koerber_2020}, see also \cite{Lambert-Scheuer_2016, Marquardt_2013} for the smooth flow. Geometrically it means that each hypersurface in the flow stays perpendicular to $\p\Omega$. For weak solutions, the Neumann version of the energy \eqref{eq-intro:energy_E_interior} is
\begin{equation}\label{eq-intro:energy_Neumann}
	\hat J_u^K(E)=\Ps{E;\Omega\cap K}-\int_{E\cap\Omega\cap K}|\D u|,
\end{equation}
see \cite{Marquardt_2017}. However, note that \eqref{eq-intro:IVP}+\eqref{eq-intro:NEUMANN} only admits the trivial solution $u\equiv0$ in a bounded $\Omega$. This follows by \cite[Lemma 5.5]{Marquardt_2017}. Intuitively, since $\Ps{E;\Omega}$ only counts the interior perimeter, jumping over the whole $\Omega$ at $t=0$ is the energy-minimizing choice.

The classical Dirichlet condition $u|_{\p\Omega}=\varphi$ often does not admit solutions. Considering for example $\Omega=\{|x|<2\}\subset\RR^2$ and $E_0=\{|x|<1\}$, then all the weak solutions of \eqref{eq-intro:IVP} are bounded above by the smooth solution $\log|x|$, by Theorem \ref{thm-intro:existence}. Thus, the boundary value $\varphi$ cannot exceed $\log 2$. However, we shall notice later that the obstacle condition that we develop in this work is a certain relaxation of the Dirichlet condition.

\vspace{12pt}
\phantomsection
\addtocounter{subsection}{1}
\addcontentsline{toc}{subsection}{\arabic{section}.\arabic{subsection}\texorpdfstring{\quad}{} The outer obstacle condition; variational definitions; calibrations}
\textbf{The outer obstacle condition.}

In the present work, we consider the new boundary condition
\begin{equation}\label{eq-intro:OBS}
	\lim_{x\to\p\Omega}\frac{\D u(x)}{|\D u(x)|}=\nu_\Omega, \tag{OBS}
\end{equation}
in a sense to be made precise below. This states that each flowing hypersurface sticks tangentially to $\p\Omega$ after arrival (since $\D u/|\D u|$ is the outer unit normal of the flowing hypersurfaces). Both in the sense of energy comparison (see Definition \ref{def-intro:summary_defs} below), and in geometric sense, the boundary $\p\Omega$ plays the role of an \textit{outer obstacle}. Our main results include developing suitable weak formulations of IMCF\texttt{+}\eqref{eq-intro:OBS}, and establishing a first existence and regularity theorem in smooth bounded $\Omega$.

Expanding curvature flows with outer obstacle condition, to the author's knowledge, has not been extensively studied in the literature. The obstacle considered here is different from the one considered by Moser \cite{Moser_2008} (see ``\hyperlink{intro:relations}{further relations}'' below for more details).

We first present some examples:

\begin{example}[translating cycloids]\label{ex-intro:cycloid} {\ }
	
	It is observed by Drugan-Lee-Wheeler \cite{Drugan-Lee-Wheeler_2016} that cycloids are (the only) translating solitons of IMCF in $\RR^2$. With suitable positioning, let us view the cycloid $\gamma$ as sliding in a strip region $\Omega\subset\RR^2$, see Figure \ref{fig-intro:cycloid}. The curve $\gamma$ can be parametrized by angle using the formula
	\[\gamma(\th)=\frac14\big(1-\cos2\th,2\th-\sin2\th\big),\qquad\th\in(0,\pi).\]
	Note that $\gamma$ has two singularities at its endpoints, where it is asymptotic to a graph $y=\pm Cx^{3/2}$. This implies that the arrival time function $u$ generated by translating $\gamma$ satisfies \eqref{eq-intro:OBS}. By the asymptotic of $\gamma$, each sub-level set of $u$ is a globally $C^{1,1/2}$ curve (note that the sub-level set contains a translation of $\gamma$ and two horizontal radial lines).
\end{example}

\vspace{-12pt}

\begin{figure}[ht]
	\setlength{\abovecaptionskip}{-36pt}
	\setlength{\belowcaptionskip}{-6pt}
	\captionsetup{width=0.9\textwidth}
	\begin{center}
		\includegraphics[scale=0.55]{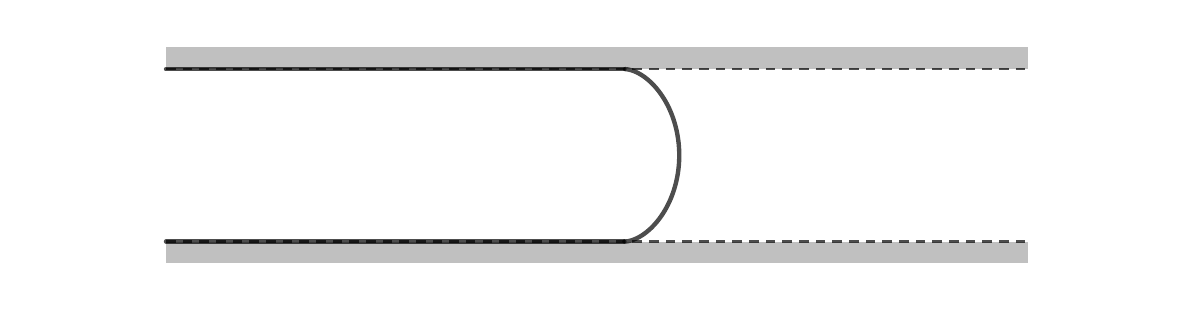}
	\end{center}
	\begin{picture}(0,0)
		\put(320,65){$\Omega$}
		\put(230,79){$\gamma_t$}
		\put(256,65){$\boldsymbol{\to}$}
		\put(253,52){$\boldsymbol{\to}$}
		\put(253,78){$\boldsymbol{\to}$}
	\end{picture}
	\caption{translating cycloids.}\label{fig-intro:cycloid}
\end{figure}

\begin{example}[expanding nephroid]\label{ex-intro:nephroid} {\ }
	
	Let $\gamma$ be a half of the nephroid curve, positioned in a half-plane as in Figure \ref{fig-intro:nephroids}. An explicit angle parametrization of $\gamma$ is given by
	\[\gamma(\th)=-\sin(\th/2)ie^{i\th}+\frac12\cos(\th/2)e^{i\th},\qquad\th\in(0,2\pi).\]
	Then the family of curves
	\[e^{3t/4}\gamma\qquad(t\in\RR)\]
	generate a smooth IMCF $u$ that satisfies \eqref{eq-intro:OBS} at the boundary $\{y=0\}$. Each sub-level $E_t$ is bounded in $\RR^2$ and has $C^{1,1/2}$ boundary. Moreover, note that $\lim_{x\to0}u(x)=-\infty$ and $\lim_{x\to\infty}u(x)=+\infty$. We refer to Appendix \ref{sec:ex} for general computations of homothetic solitons, along with discussions relating to blow-up\,/\,blow-down behaviors.
	\begin{figure}[h]
		\centering
		\includegraphics{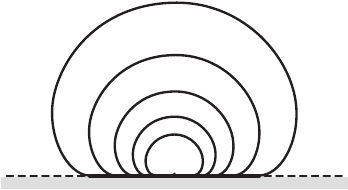}
		\begin{picture}(0,0)
			\put(-89,75){\rotatebox[origin=c]{90}{$\to$}}
			\put(-60,63){\rotatebox[origin=c]{50}{$\to$}}
			\put(-126,63){\rotatebox[origin=c]{130}{$\to$}}
		\end{picture}
		\caption{Expanding nephroids.}\label{fig-intro:nephroids}
	\end{figure}
	
	The nephroid soliton is an important example that shapes the sharpness of several main results. For instance, see the discussions around Theorem \ref{thm-intro:liouville} below.
\end{example}

\begin{example}
	Figure \ref{fig-intro:ellipse_obs} below qualitatively depicts a 2-dimensional flow, where:
	\begin{itemize}[nosep]
		\item the ambient domain $\Omega$ is the light gray region (it is a solid ellipse with a puncture),
		\item the obstacle $\p\Omega$ is the bold solid curve,
		\item the initial data $E_0$ is the dark gray region,
		\item the slim solid curves are level-sets in the flow,
		\item the shaded region is a jump that happens in the weak setting.
	\end{itemize}
	The puncture blocks the movement of the flowing curves, so the latter move forward in the left and right channels around the puncture. At a certain time $T$, the two branches of $\p E_T$ merge through a jump. See the left part of Figure \ref{fig-intro:ellipse_obs}: at the jump time we have $A+C=B+D$.
	\begin{figure}[h]
		\centering
		\includegraphics{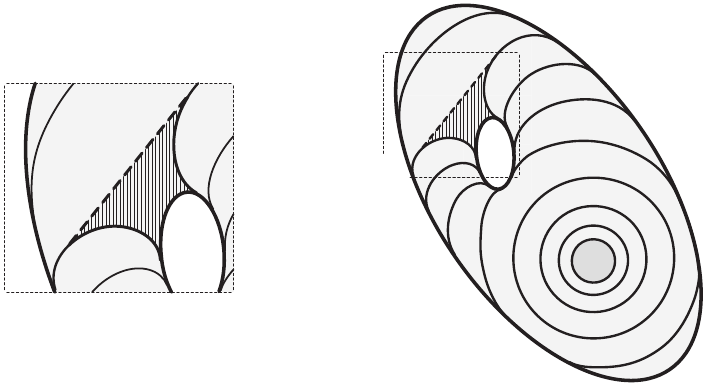}
		\begin{picture}(0,0)
			\put(-65,56){$E_0$}
			\put(-77,170){$\p E_T$}
			\put(-169,96){$\p E_T$}
			\put(-147,139){$\p E_T^+$}
			\put(-223,123){\rotatebox[origin=c]{180}{$\xrightarrow{\hspace{50pt}}$}}
			\put(-294,105){$A$}
			\put(-258,110){$B$}
			\put(-262,76){$C$}
			\put(-293,64){$D$}
		\end{picture}
		\caption{IMCF with obstacle in a punctured ellipse}\label{fig-intro:ellipse_obs}
	\end{figure}
\end{example}


\textbf{The variational definitions.}

The formal definition of weak IMCF with obstacle is given below, which is distilled from Definitions \ref{def-obs:energy_E}, \ref{def-obs:obstacle1}, \ref{def-obs:energy_u}, and Theorem \ref{thm-obs:obstacle2}. It is stated concisely here for ease of understanding; we refer the reader to Subsections \ref{subsec:obs_energy_E} and \ref{subsec:obs_diri} for the full version, and Remark \ref{rmk-obs:integrability} for the well-definedness of the energies. When $K\Subset\Omega$, note that the energies \eqref{eq-intro:energy_E} \eqref{eq-intro:energy_v} are the same as the interior ones \eqref{eq-intro:energy_v_interior} \eqref{eq-intro:energy_E_interior}.

\begin{summaryofdef}\label{def-intro:summary_defs} {\ }
	
	Let $\Omega$ be a locally Lipschitz domain in a smooth Riemannian $n$-manifold ($n\geq2$), and let $u\in\Lip_{\loc}(\Omega)$. We say that $u$ is a weak solution of IMCF in $\Omega$ that respects the outer obstacle $\p\Omega$, if either of the following equivalent conditions hold.
	
	\vspace{3pt}
	
	(i) Each sub-level set $E_t:=\{u<t\}$ locally minimizes the energy $E\mapsto\Ps{E}-\int_E|\D u|$. This means that: for any $E\subset\Omega$ and any domain $K$ satisfying $E\Delta E_t\Subset K\Subset M$, we have $\tJ_u^K(E_t)\leq\tJ_u^K(E)$, where
	\begin{equation}\label{eq-intro:energy_E}
		\tJ_u^K(E)=\Ps{E;K}-\int_{E\cap K}|\D u|.
	\end{equation}
	Here $\Ps{E;K}$ denotes the perimeter of $E$ in $K$.
	
	\vspace{3pt}
	
	(ii) $u$ locally minimizes the energy
	\begin{equation}\label{eq-intro:energy_v}
		v\mapsto\int_{\Omega}\big(|\D v|+v|\D u|\big)-\int_{\p\Omega}v
	\end{equation}
	among all functions $v\in\Lip_{\loc}(\Omega)$ such that $\{u\ne v\}\Subset M$, in a similar manner (see Definition \ref{def-obs:energy_u}, Theorem \ref{thm-obs:obstacle2}).
\end{summaryofdef}

In item (i), we view $E_t$ as a subset of $\Omega$, hence a subset of $M$ (we assume this convention for this paper). The difference set $E\Delta E_t$ is allowed to touch $\p\Omega$ (this can be compared with the interior case \eqref{eq-intro:energy_E}).

Additionally, note that the perimeter $\Ps{E;K}$ includes the boundary portion $\p E\cap\p\Omega\cap K$. This is also important for us: without including this portion, one obtains the free boundary formulation \eqref{eq-intro:energy_Neumann} discussed above.

\vspace{12pt}
\textbf{The role of calibration.}

The following calculation shows the compatibility between \eqref{eq-intro:energy_E} and the boundary condition \eqref{eq-intro:OBS}. Suppose $u\in C^\infty(\Omega)$ solves the IMCF in $\Omega$ and satisfies \eqref{eq-intro:OBS} at $\p\Omega$. Assume (an ideal scenario of) sufficiently strong regularity and convergence. Let $E_t$ be a sub-level set, and let $E\subset\Omega$ be a competitor with $E\Delta E_t\Subset M$. By divergence theorem, we have (where $\nu$ denotes the outer unit normal of the objects):
\begin{equation}\label{eq-intro:calib1}
	\begin{aligned}
		\Ps{E_t} &= \underbrace{\H^{n-1}(\p E_t\cap\Omega)}_{\text{where $\nu_{E_t}=\D u/|\D u|$}}+\underbrace{\H^{n-1}(\p E_t\cap\p\Omega)}_{\text{where $\nu_{E_t}=\nu_\Omega=\D u/|\D u|$}} \\
		&= \int_{\p E_t}\frac{\D u}{|\D u|}\cdot\nu_{E_t}
		= \int_{\p E}\frac{\D u}{|\D u|}\cdot\nu_E+\int_\Omega(\chi_{E_t}-\chi_E)\div\Big(\frac{\D u}{|\D u|}\Big) \\
		&\leq \Ps{E}+\int(\chi_{E_t}-\chi_E)|\D u|,
	\end{aligned}
\end{equation}
which is exactly $\tJ_u(E_t)\leq\tJ_u(E)$.

For weak solutions, the vector field $\D u/|\D u|$ is not everywhere defined, and the suitable replacement is an abstract calibration. We say that a weak solution $u$ of the IMCF is \textit{calibrated} by a vector field $\nu$, if
\begin{equation}\label{eq-intro:calibration}
	|\nu|\leq1,\ \ \metric{\D u}{\nu}=|\D u|\ \ \text{a.e.,\quad and}\quad \div(\nu)=|\D u|\ \ \text{weakly}.
\end{equation}
When $u$ is a smooth solution, $\D u/|\D u|$ is clearly a calibration of $u$. When $u$ is constant in a certain region, \eqref{eq-intro:calibration} reduces to $|\nu|\leq1$ and $\div(\nu)=0$, which corresponds to a usual calibration. Thus, we may view $\nu$ as providing extra information in the jumping regions. The formulation \eqref{eq-intro:calibration} first appeared in \cite[Section 3]{Huisken-Ilmanen_2001} briefly, and played major roles in several analytic studies of equations involving the 1-Laplacian, see \cite{Kawohl-Schuricht_2007, Latorre-Leon_2018, Mazon-Leon_2015} and references therein. We refer to Subsection \ref{subsec:calibrated} for a more details on \eqref{eq-intro:calibration}. The following criterion states that, in the weak setting, the condition \eqref{eq-intro:OBS} should be understood in terms of the calibration:

\begin{lemma}[identical with Lemma \ref{lemma-obs:bd_orthogonal}]\label{lemma-intro:asymp_tangent} {\ }
	
	Let $\Omega\subset M$ be a locally Lipschitz domain, and $u$ be a weak solution of IMCF in $\Omega$ which is calibrated by $\nu$. If $\nu\cdot\nu_\Omega=1$ on $\p\Omega$ in the trace sense, then the solution $u$ respects the outer obstacle $\p\Omega$.
\end{lemma}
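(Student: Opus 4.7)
The plan is to verify item (i) of the variational characterization in Summary \ref{def-intro:summary_defs}, i.e., that every sub-level set $E_t=\{u<t\}$ locally minimizes the obstacle energy $F\mapsto\Ps{F;K}-\int_{F\cap K}|\D u|$. The strategy is to rigorously execute the smooth calibration calculation \eqref{eq-intro:calib1} with $\nu$ in place of $\D u/|\D u|$. The workhorse will be the Gauss-Green formula for bounded divergence-measure fields (Anzellotti; Chen--Frid).

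First I would extend the calibration by zero across $\p\Omega$ to form $\hat\nu:=\nu\chi_\Omega$ on a neighborhood of $\bar\Omega$. Using $\div(\nu)=|\D u|$ in $\Omega$ together with the trace hypothesis $\nu\cdot\nu_\Omega=1$ on $\p\Omega$, a distributional computation yields
\[
\div(\hat\nu)=|\D u|\chi_\Omega-\H^{n-1}\res\p\Omega,
\]
so $\hat\nu$ is a bounded divergence-measure field in a neighborhood of $\bar\Omega$. For any subset $F\subset\Omega$ of locally finite perimeter, apply Gauss-Green to $\hat\nu$ and $F$: the singular part $-\H^{n-1}\res\p\Omega$ contributes nothing to $\int_F$ (since $F\cap\p\Omega$ is $\L^n$-null), so
\[
\int_F|\D u|=\int_{\p^*F\cap\Omega}(\nu\cdot\nu_F)^{\mathrm{int}}\,d\H^{n-1}+\int_{\p^*F\cap\p\Omega}(\nu\cdot\nu_\Omega)^{\mathrm{trace}}\,d\H^{n-1}.
\]
On the boundary piece $\p^*F\cap\p\Omega$, the outward normal $\nu_F$ of $F$ coincides with $\nu_\Omega$ (since $F\subset\Omega$ touches $\p\Omega$ from inside), and the trace hypothesis forces the integrand to equal $1$. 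On the interior piece $\p^*F\cap\Omega$: for $F=E_t$, the calibration identities $|\nu|\leq 1$ and $\metric{\D u}{\nu}=|\D u|$ together with the coarea formula for $u$ imply the tight pairing $(\nu\cdot\nu_{E_t})^{\mathrm{int}}=1$ for $\H^{n-1}$-a.e.\ point and for a.e.\ $t$; for a general competitor $F=E$, only $|(\nu\cdot\nu_E)^{\mathrm{int}}|\leq 1$ holds. These give the equality $\int_{E_t}|\D u|=\Ps{E_t}$ and the inequality $\int_E|\D u|\leq\Ps{E}$ for a.e.\ level $t$.

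Now fix a competitor $E\subset\Omega$ with $E\Delta E_t\subset\subset K\subset\subset M$. Since $E=E_t$ outside $K$, both $\p^*E\setminus K=\p^*E_t\setminus K$ and $\int_{E\setminus K}|\D u|=\int_{E_t\setminus K}|\D u|$, so subtracting the above relations reduces to
\[
\Ps{E_t;K}-\int_{E_t\cap K}|\D u|\leq\Ps{E;K}-\int_{E\cap K}|\D u|,
\]
which is precisely $\tJ_u^K(E_t)\leq\tJ_u^K(E)$, as required.

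The hardest step will be establishing the pairing identity $(\nu\cdot\nu_{E_t})^{\mathrm{int}}=1$ on $\p^*E_t$ for a.e.\ $t$: the calibration supplies only the a.e.\ pointwise identity $\metric{\D u}{\nu}=|\D u|$ in $\Omega$, and upgrading it to an $\H^{n-1}$-a.e.\ statement on the reduced boundary of level sets requires coarea-based slicing together with the BV-theoretic interpretation of one-sided normal traces for divergence-measure fields. This is the calibration-level-set compatibility developed in Subsection \ref{subsec:calibrated}, which I would invoke rather than redevelop. A secondary technical point is to check the integrability assumptions flagged in Remark \ref{rmk-obs:integrability} so that all the integrals above are well-defined.
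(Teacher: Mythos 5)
Your plan proves the level-set form (item (i) of Summary \ref{def-intro:summary_defs}) directly, while the paper proves the function form (item (ii)): it applies the divergence formula \eqref{eq-obs:gauss_green} with $\varphi=v-u$ for a Lipschitz competitor $v$, uses $\nu\cdot\D u=|\D u|$ and $|\nu|\leq 1$ to get $\tJ_u^K(u)\leq\tJ_u^K(v)$ in one line, and then invokes the equivalence Theorem \ref{thm-obs:obstacle2} to conclude. This sidesteps entirely the normal-trace pairing between $\nu$ and $\chi_{E_t}$ over $\p^*E_t$; you pay for the more ``geometric'' sub-level-set calculation with exactly the technical headache you flag at the end.

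And that headache is a genuine gap, not just a secondary technicality. What you need is $(\nu\cdot\nu_{E_t})^{\mathrm{int}}=1$ for $\H^{n-1}$-a.e.\ point of $\p^*E_t\cap\Omega$, for a.e.\ $t$, where the left side is the Anzellotti\,/\,Chen--Frid inner normal trace. You propose to ``invoke'' this from Subsection \ref{subsec:calibrated}, but the only relevant statement there is Remark \ref{rmk-prelim:calibration_normal_vec}, which gives the \emph{pointwise a.e.} identity $\nu=\nu_{E_t}$ on $\p^*E_t\cap\Omega$. For a vector field that is merely $L^\infty$ with $L^1_{\loc}$ divergence, pointwise a.e.\ equality of $\nu$ with the measure-theoretic normal on a rectifiable set does not by itself give the weak normal trace over that set; those are different objects, and passing between them requires an approximate-continuity or tightness argument along $\p^*E_t$ (essentially a ``saturation'' of the Cauchy--Schwarz inequality in the trace pairing). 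None of that is supplied, and it is precisely the part one would have to write out to make the set-level Gauss--Green step rigorous. There is also a secondary issue in the final ``subtraction'' step: $\Ps{E_t}$ and $\int_{E_t}|\D u|$ need not be finite globally (e.g.\ for unbounded $\Omega$), so you must localize to $K$ before subtracting rather than subtracting two identities over all of $\Omega$. The paper's route avoids both problems by working with $\varphi=v-u\in\BV$ of compact support from the start, for which the Anzellotti trace theory applies cleanly and all integrals are finite.
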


\phantomsection
\addtocounter{subsection}{1}
\addcontentsline{toc}{subsection}{\arabic{section}.\arabic{subsection}\texorpdfstring{\quad}{} Main theorems: existence and regularity}
\textbf{Main theorems: existence and regularity.}

The following main theorem states the existence and regularity of solutions with smooth bounded obstacle, thus answering \eqref{eq-intro:Q_pOmega} raised above.

\begin{theorem}[concise version of Theorem \ref{thm-exist:main}]\label{thm-intro:existence} {\ }
	
	Let $\Omega\Subset M$ be a smooth domain, and let $E_0\Subset\Omega$ be a $C^{1,1}$ domain. Then there is a unique weak solution $u$ of the IMCF with initial value $E_0$ and outer obstacle $\p\Omega$, in the sense of Definition \ref{def-obs:ivp}. Regarding $u$ we have the following conclusions:
	
	(i) $u\in\Lip_{\loc}(\Omega)\cap\BV(\Omega)\cap C^{0,\alpha}(\bar\Omega)$,
	
	(ii) $u$ is calibrated by a vector field $\nu$ satisfying $\metric{\nu}{\nu_\Omega}\geq 1-Cd(x,\p\Omega)^\alpha$.
	
	(iii) For each $t>0$, the boundary of the sub-level set $E_t=\{u<t\}$ is a $C^{1,\alpha}$ hypersurface in some small neighborhood of $\p\Omega$.
	
	(iv) $u$ is maximal among all weak solutions of IMCF in $\Omega$ with initial value $E_0$ (the comparison solution need not respect the obstacle $\p\Omega$).
\end{theorem}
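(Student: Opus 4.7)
The plan is to follow the elliptic regularization scheme of Huisken--Ilmanen, with substantial new input at the obstacle. The first step is to solve the regularized equation
\[
E_\epsilon u_\epsilon \;:=\; \div\big(\D u_\epsilon/v_\epsilon\big) - v_\epsilon = 0 \quad\text{in }\ \Omega\setminus\bar{E_0}, \qquad v_\epsilon = \sqrt{|\D u_\epsilon|^2+\epsilon^2},
\]
with the initial condition $u_\epsilon|_{\p E_0}=0$ and, on $\p\Omega$, a Dirichlet condition $u_\epsilon|_{\p\Omega}=L$ for a large constant $L$ chosen in coordination with $\epsilon$. Equivalently, one may view the problem as finding a downward-translating smooth IMCF graph in the cylinder $(\Omega\setminus\bar{E_0})\times\RR$. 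The existence of $u_\epsilon$ is standard quasilinear elliptic theory once the relevant barriers at $\p E_0$ and $\p\Omega$ are in place.

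\textbf{A priori estimates.} The core of the analysis is to establish, uniformly in $\epsilon$:
(a) an interior Lipschitz bound of Huisken--Ilmanen type;
(b) a global H\"older bound $\|u_\epsilon\|_{C^{0,\alpha}(\bar\Omega)}\leq C$, via super- and sub-barriers adapted to tangential contact with $\p\Omega$;
(c) the alignment estimate $\metric{\D u_\epsilon/v_\epsilon}{\nu_\Omega}\geq 1-Cd(x,\p\Omega)^\alpha$ in a neighborhood of $\p\Omega$.
Estimate (c) is the main novelty and the principal obstacle. The plan is to obtain it by a blow-up/compactness argument: rescale $u_\epsilon$ at sequences of points tending to $\p\Omega$ using the anisotropic scaling suggested by the cycloid soliton of Example~\ref{ex-intro:cycloid}, and classify the blow-up limits as flat half-space IMCF solutions meeting the boundary tangentially. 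A Liouville-type rigidity then forces the desired alignment. The parabolic-type interior estimates advertised in the abstract enter here, exploiting the fact that the regularized solution corresponds to a smooth IMCF in the cylinder, and providing the compactness and uniform regularity needed to extract the blow-up limits.

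\textbf{Passing to the limit.} With (a)--(c) in hand, I would extract a subsequential limit $u_\epsilon\to u$ locally uniformly in $\Omega$ and in $C^{0,\alpha}(\bar\Omega)$. The vector fields $\D u_\epsilon/v_\epsilon$ converge weakly to a limit $\nu$, which one checks to be a calibration of $u$ in the sense of \eqref{eq-intro:calibration}. The alignment estimate (c) passes to the limit to yield (ii), and Lemma~\ref{lemma-intro:asymp_tangent} then promotes this to the full obstacle condition. Using the calibration computation in \eqref{eq-intro:calib1} together with lower semicontinuity of the perimeter, one verifies that $u$ is a weak solution in the sense of Definition~\ref{def-intro:summary_defs}, giving (i). For (iii), each $\{u<t\}$ is an almost-minimizer of the obstacle perimeter, and the alignment (ii) supplies the boundary data needed to invoke $C^{1,\alpha}$ boundary regularity of constrained minimal surfaces.

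\textbf{Maximality and main difficulty.} For (iv), given any other weak solution $\bar u$ with the same initial data, I would compare the sub-level sets $\{\bar u<t\}$ and $\{u<t\}$ directly via the obstacle-energy minimization: if the two sets were to cross at some time, taking unions/intersections would yield strictly better competitors, forcing the inclusion $\{u<t\}\subseteq\{\bar u<t\}$ for all $t$ and hence $u\geq\bar u$ pointwise. The single hardest step in the whole argument is estimate (c): standard IMCF estimates degenerate precisely where $|\D u|$ becomes unbounded (at $\p\Omega$), so one cannot argue purely by barriers, and the blow-up/Liouville analysis, guided by the geometry of the obstacle-tangent soliton solutions, is essential.
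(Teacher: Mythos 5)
The main gap is in the approximation scheme. You propose to solve the $\epsilon$-regularized equation with a hard Dirichlet condition $u_\epsilon|_{\p\Omega}=L$ for a large $L$ and then pass $\epsilon\to 0$. But this is exactly the boundary condition the paper argues against in the introduction: for concentric balls $E_0\subset\Omega$ in $\RR^2$, any admissible Dirichlet datum is bounded above by an intrinsic constant ($\log 2$ in the model example), by comparison with the smooth solution. So you cannot send $L\to\infty$; and for bounded $L$ there is no mechanism forcing the level sets to approach $\p\Omega$ tangentially, since the Dirichlet condition simply disappears in the limit --- as the paper notes for the $p$-harmonic analogue in Question~\ref{qs-intro:p-harmonic}. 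The paper's essential new device, which your proposal omits, is a soft obstacle: the domain is enlarged to $\Omega_{\lambda\delta}\supset\supset\Omega$, a weight $\psi_\delta$ blowing up on the padding is inserted into a weighted IMCF equation, the Dirichlet datum is imposed only on the outer boundary $\p\Omega_{\lambda\delta}$ via a subsolution barrier adapted to $\psi_\delta$ (Lemma~\ref{lemma-exist:subsol}), and $\delta\to 0$, $\lambda\to 1$, $\epsilon\to 0$ are sent jointly (Lemmas~\ref{thm-exist:reg_solvability}, \ref{lemma-exist:convergence}). It is the weight that makes the level sets of $u_\delta$ pile up at $\p\Omega$ and forces tangency in the limit; no choice of constant $L$ on $\p\Omega$ can replace it, and without it the existence and barrier steps near $\p\Omega$ in your plan do not go through.

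Two secondary points. The rescaling in the paper's blow-up argument is the standard isotropic one (by $\delta_i^{-1}$ resp.\ $r_i^{-1}$), not an anisotropic one modelled on the cycloid; the cycloid only calibrates the expected H\"older exponent. And the parabolic estimates of Section~\ref{sec:para} are not what supplies compactness of the blow-up sequence --- that comes from the interior Lipschitz bound and the $(\Lambda,r_0)$-minimizing property --- but rather upgrade the qualitative outputs of the two blow-up lemmas ($\metric{\bnu_i}{\bpr}\to 1$ on $\p\Omega$ and $\metric{\bnu_i}{\bpr}\geq\tfrac12$ in a uniform collar, Lemmas~\ref{lemma-exist:tangent_at_pOmega}, \ref{lemma-exist:graphical}) to the quantitative alignment \eqref{eq-exist:bound_direction} and gradient decay \eqref{eq-exist:bound_du}. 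Your maximality sketch is the correct idea in spirit (it is the weak maximum principle, Theorem~\ref{thm-obs:max_principle} and Corollary~\ref{cor-obs:maximality}), but note it depends crucially on $u$ being a supersolution respecting the obstacle, not merely an interior supersolution.
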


We remark on the statements of this theorem:
\begin{enumerate}[nosep]
	\item In item (ii), $\nu_\Omega$ is any smooth extension of the outer unit normal of $\Omega$. By Example \eqref{ex-intro:cycloid}, the regularity result of $u$ cannot exceed $C^{0,2/3}$, and the regularity of $E_t$ cannot exceed $C^{1,1/2}$. The H\"older exponent $\alpha$ in the theorem depends only on the dimension (but we do not know its optimal value).
	\item Item (iii) in particular implies that $E_t$ do not have singularity near $\p\Omega$. On the other hand, when $n\geq8$, minimal hypersurface singularities may occur in the interior.
	\item An intuitive understanding of item (iv) is that: the solution of \eqref{eq-intro:IVP}\texttt{+}\eqref{eq-intro:OBS} makes the minimal amount of ``jumps at $\p\Omega$'' relative to other solutions of \eqref{eq-intro:IVP}. Essentially, item (iv) is a consequence of Theorem \ref{thm-obs:auto_subsol}, which states that every interior weak solution of IMCF is a weak subsolution for the outer obstacle boundary condition.
	\item A direct corollary of (i) is that $u\in L^\infty(\Omega)$, i.e. the flow sweeps out $\Omega$ within finite time. This is closely related to the smoothness of $\p\Omega$. If $\Omega$ forms a cone at some point $x_0$, then a shrinking soliton could form near $x_0$, see Example \ref{ex-ex:hypocycloid}. In this case, the maximal solution would grow at the order of $O\big(|\log d(\cdot,x_0)|\big)$ near $x_0$.
\end{enumerate}

\vspace{3pt}

Applying Theorem \ref{thm-intro:existence}(iv) in an exhausting sequence of precompact domains, and then taking the limit, we can prove the following general existence theorem. This answers \eqref{eq-intro:Q_infty} raised above. 

\begin{theorem}[identical with Theorem \ref{thm-max:existence}]\label{thm-intro:max_sol} {\ }
	
	Let $M$ be a (possibly incomplete) manifold, and $E_0\subset M$ be a (possibly unbounded) $C^{1,1}$ domain. Then there exists, up to equivalence, a unique maximal weak solution of IMCF on $M$ with initial value $E_0$, in the sense of Definition \ref{def-prelim:ivp}.
\end{theorem}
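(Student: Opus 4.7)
The plan is to exhaust $M$ by smooth bounded domains and construct the maximal solution as a monotone limit of obstacle solutions supplied by Theorem~\ref{thm-intro:existence}.

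First I would fix an exhaustion $\Omega_1\subset\subset\Omega_2\subset\subset\cdots$ of $M$ by smooth bounded domains with $\bigcup_k\Omega_k=M$, together with an increasing sequence of $C^{1,1}$ domains $F_k\subset\subset\Omega_k$ satisfying $\bigcup_kF_k=E_0$. For bounded $E_0$ one simply takes $F_k=E_0$ for all $k\geq 1$; for unbounded $E_0$ one obtains the $F_k$ by intersecting $E_0$ with sub-level sets of a proper exhaustion function of $M$ and smoothly rounding the resulting corners to restore $C^{1,1}$ regularity. Applying Theorem~\ref{thm-intro:existence} to the pair $(\Omega_k,F_k)$ then yields, for each $k$, the unique obstacle solution $u_k:\Omega_k\to[0,\infty)$.

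The key step is to verify monotonicity: for $m\geq k$, the restriction $u_m|_{\Omega_k}$ is a weak solution of IMCF on $\Omega_k$ with initial value $F_k$. Indeed, the minimizing property in Definition~\ref{def-intro:summary_defs} is purely local; the conditions $u_m|_{\p F_k}=0$ and $u_m\geq 0$ on $\Omega_k\setminus F_k$ follow from $F_k\subset F_m$ and $u_m\equiv 0$ on $F_m$; and on the possibly-extra region $(F_m\cap\Omega_k)\setminus\bar F_k$ the vanishing function $u_m\equiv 0$ is trivially a local minimizer of the interior energy $\tJ$. Theorem~\ref{thm-intro:existence}(iv), which grants maximality among \emph{all} weak solutions on $\Omega_k$ with initial value $F_k$, then forces $u_m\leq u_k$ on $\Omega_k$. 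Hence $\{u_k\}$ decreases pointwise on every fixed $\Omega_K$ and admits a pointwise limit $u:=\lim_k u_k\geq 0$.

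To close the argument, I would use the Huisken--Ilmanen interior gradient estimate on each $u_k$ to obtain Lipschitz bounds on $\Omega_K\setminus\bar F_K$ that are uniform in $k\geq K$; the monotone convergence together with $\BV$-compactness and lower-semicontinuity of $\tJ$ (applied to the sub-level sets) then shows $u\in\Lip_\loc(M\setminus\bar E_0)$ is a weak solution of IMCF on $M$ with initial value $E_0$. Maximality follows by the same restriction argument: any competitor weak solution $v$ on $M$ satisfies $v|_{\Omega_k}\leq u_k$, so $v\leq u$ after sending $k\to\infty$; uniqueness is then immediate from two maximal solutions bounding each other. I expect the main technical obstacle to be this final limit passage in the \emph{unbounded} $E_0$ case: one must both construct the $F_k$ compatibly with the hypotheses of Theorem~\ref{thm-intro:existence} and verify that the uniform Lipschitz estimate on $u_k$ does not degenerate as the outer obstacle $\p\Omega_k$ recedes to infinity, a point where the interior-estimate literature for weak IMCF should apply without modification.
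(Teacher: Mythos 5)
The overall strategy (exhaust $M$ by smooth bounded domains, apply Theorem~\ref{thm-intro:existence}, pass to a decreasing limit) matches the paper's, but your monotonicity step contains a genuine gap. You assert that for $m\geq k$ the restriction $u_m|_{\Omega_k}$ is a weak solution of $\IVP{\Omega_k;F_k}$ and invoke Theorem~\ref{thm-intro:existence}(iv) to conclude $u_m\leq u_k$. This is not correct as stated: by Definition~\ref{def-prelim:ivp} the initial set of a solution $w$ is $\{w<0\}$, and the paper's existence theorem produces $u_m<0$ on $F_m$ (not $u_m\equiv 0$ as you write). Consequently $\{u_m|_{\Omega_k}<0\}=F_m\cap\Omega_k$, which in general strictly contains $F_k$, so $u_m|_{\Omega_k}$ has the wrong initial value and Theorem~\ref{thm-intro:existence}(iv) does not apply. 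The defect is substantive, not notational: even if one adopts the convention $u_m\equiv 0$ on $F_m$, the sub-level set $E_t(u_m|_{\Omega_k})$ for small $t>0$ is approximately $F_m\cap\Omega_k$, and the minimization required by Theorem~\ref{thm-obs:ivp2}(ii) against a competitor such as $F_k$ itself would amount to the false inequality $\P{F_m\cap\Omega_k}\leq\P{F_k}$ (since $\D u_m=0$ on the gap region, the $\int|\D u|$ term contributes nothing). The same initial-value mismatch reappears in your maximality step, where $v|_{\Omega_k}$ has initial value $E_0\cap\Omega_k\neq F_k$.

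The paper resolves this in two coordinated ways that your sketch omits. First, the exhaustion is built so that $\Omega_i\supset\supset E_0^{i+1}$; then $u_{i+1}|_{\Omega_i}$ is an interior subsolution of $\IVP{\Omega_i;E_0^{i+1}}$ with the correct (uncut) initial set. Second, the comparison is not Theorem~\ref{thm-intro:existence}(iv) (which requires matching initial data) but Corollary~\ref{cor-obs:maximality}, which permits \emph{nested} initial sets $E_0\subset E'_0$: the obstacle solution seeded at the smaller $E_0$ dominates any interior subsolution seeded at the larger $E'_0$, outside $E'_0$. For the maximality step, the paper explicitly modifies the competitor: it sets $v_i=v$ on $\Omega_i\setminus E_0$, $v_i=0$ on $E_0\setminus E_0^i$, and $v_i<0$ on $E_0^i$, then verifies that this $v_i$ is a subsolution of $\IVP{\Omega_i;E_0^i}$ before applying Corollary~\ref{cor-obs:maximality}. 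To repair your argument you would need to either replicate this nesting discipline and invoke the nested-initial-data comparison, or explicitly construct modified competitors as the paper does; simply restricting $u_m$ or $v$ and citing maximality for equal initial data does not close.
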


Further properties of maximal solutions are summarized in Theorem \ref{thm-max:properties}. Building upon Theorem \ref{thm-intro:max_sol}, we can write down a neat proof of the main result of \cite{Xu_2023_proper}; see Theorem \ref{thm-exist:properness} for more details.


\vspace{12pt}
\phantomsection
\addtocounter{subsection}{1}
\addcontentsline{toc}{subsection}{\arabic{section}.\arabic{subsection}\texorpdfstring{\quad}{} Ideas of proof}
\textbf{Ideas in proving existence: blow-up and parabolic estimates}

We briefly describe the main ideas for proving Theorem \ref{thm-intro:existence}. Assume that $\Omega$ is a bounded smooth domain. Let us derive the core item (ii) of Theorem \ref{thm-intro:existence}. By Lemma \ref{lemma-intro:asymp_tangent}, this implies that the solution respects the obstacle $\p\Omega$.

We start with constructing a family of approximate solutions, where the obstacle $\p\Omega$ is replaced by a soft curvature well. Consider a family of smooth functions $\psi_\delta$, defined such that $\psi_\delta|_\Omega=0$ and $\psi_\delta(x)\to+\infty$ when $d(x,\Omega)\to\delta$. So $\psi_\delta$ is defined in the $\delta$-neighborhood of $\Omega$. It follows that
\[\lim_{\delta\to0}\psi_\delta(x)=\left\{\begin{aligned}
	& 0\qquad(x\in\Omega), \\
	& \infty\qquad(x\notin\Omega),
\end{aligned}\right.\]
and we recover a hard obstacle when $\delta\to0$. Then consider the weighted version of weak IMCF
\begin{equation}\label{eq-intro:soft_obs}
	\div\Big(e^{\psi_\delta}\frac{\D u_\delta}{|\D u_\delta|}\Big)=e^{\psi_\delta}|\D u_\delta|.
\end{equation}
The rapid growth of $\psi_\delta$ ensures the existence of a solution $u_\delta$ such that
\[\lim_{d(x,\Omega)\to\delta}u_\delta(x)=+\infty.\]
Then we take a subsequential limit $u=\lim_{\delta\to0}u_\delta$. Note that \eqref{eq-intro:soft_obs} is the usual IMCF inside $\Omega$, thus $u$ is an interior solution of \eqref{eq-intro:IVP} by a standard compactness theorem. The level sets of $u_\delta$ are bent drastically by $\psi_\delta$ when going outside $\Omega$, and when $\delta\to0$, a large amount of level sets pile up at $\p\Omega$.

We need to obtain a uniform control on the calibrations of $u_\delta$ (denoted by $\nu_\delta$), and pass it to the limit $\delta\to0$. For simplicity, assume for the moment that $u_\delta$ are smooth solutions of \eqref{eq-intro:soft_obs}, so  $\nu_\delta$ are the outer unit normals of the hypersurfaces. Several new ingredients are involved in the next step: a blow-up argument, a new Liouville theorem on the half-space, and finally, a new near-obstacle parabolic estimate.

The blow-up argument is aimed at showing the following:
\begin{enumerate}[label={(\roman*)}, nosep]
	\item the normal vector $\nu_\delta$ is uniformly converging to $\nu_\Omega$ \textit{on} $\p\Omega$,
	\item $\metric{\nu_\delta}{\p_r}\geq\frac12$ in a neighborhood of $\p\Omega$ that is independent of $\delta$.
\end{enumerate}
\noindent Here $\p_r=-\D d(\cdot,\p\Omega)$ is the outpointing radial vector orthogonal to $\p\Omega$. The two claims are proved through similar ideas, so let us explain (ii). Suppose, on the contrary, that as $\delta\to0$, there is a sequence of ``bad points'' $x_i\to\p\Omega$. Then we take suitably rescaled solutions near $x_i$, and obtain a ``bad'' blow-up limit. The limit turns out to be a weak solution of IMCF in the half space $\{x_n<0\}$, which respects the boundary obstacle. This eventually contradicts the following Liouville theorem, thus proving claim (ii):

\begin{theorem}[identical with Theorem \ref{thm-halfplane:liouville}]\label{thm-intro:liouville} {\ }
	
	Let $\Omega=\{x_n<0\}\subset\RR^n$, and $u$ is a weak solution of IMCF in $\Omega$ that respects the outer obstacle $\p\Omega$. Assume further that
	\begin{equation}
		|\D u(x)|\leq\frac{C}{|x_n|},\qquad\text{and}\qquad
		\inf_\Omega(u)>-\infty.
	\end{equation}
	Then $u$ is constant.
\end{theorem}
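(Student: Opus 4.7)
We normalize so that $\inf_\Omega u = 0$, and argue by contradiction: suppose $u$ is not constant. The key analytic input comes from the calibration structure. With $\nu$ the calibrating field supplied by the hypothesis, one has $\div(\nu - e_n) = |\D u|$ in $\Omega$, and by Lemma~\ref{lemma-intro:asymp_tangent} the trace of $\nu - e_n$ on $\p\Omega = \{x_n = 0\}$ has vanishing normal component. Applying the divergence theorem to $\nu - e_n$ on the half-cylinder $Q_{R,L} := B'_R \times (-L,0)$ (with $B'_R \subset \RR^{n-1}$), the flux on the top face drops out, and the remaining fluxes are controlled pointwise by $|\nu - e_n| \leq 2$. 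This yields
\[
\int_{Q_{R,L}} |\D u| \, dx \;\leq\; C_n\bigl(R^{n-1} + L R^{n-2}\bigr),
\]
and hence, by the coarea formula,
\[
\int_0^\infty \H^{n-1}\bigl(\{u = t\} \cap Q_{R,L}\bigr) \, dt \;\leq\; C_n \bigl(R^{n-1} + LR^{n-2}\bigr).
\]

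On the other hand, the weak IMCF area-growth identity $|\p E_t| = e^{t-s}|\p E_s|$ (which follows from the calibrated-minimizer structure of the $E_t$) implies a dichotomy: either (a) $|\p E_t| = 0$ for all $t > 0$, or (b) $|\p E_t|$ grows exponentially. In case (a), $E_t$ is clopen in $\Omega$, so by connectedness and $\inf u = 0$ we deduce $E_t = \Omega$ for every $t > 0$, i.e., $u \equiv 0$. In case (b), the coarea bound above forces the level sets $\p E_t$ to escape the cylinder $Q_{R,L}$ in time $T^*(R,L) = O(\log R)$ as $R \to \infty$ (choosing $L \sim R$). But the gradient bound $|\D u| \leq C/|x_n|$ gives the IMCF speed estimate $1/H \geq |x_n|/C$, which constrains the level sets to escape the spatial ball of radius $R$ in time at least $\sim R/(Ch_0)$, where $h_0$ is a working depth at which a fixed fraction of the level-set mass resides. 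For large $R$ these two rates are incompatible, ruling out case (b) and forcing $u$ to be constant.

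\textbf{Main obstacle.} The crux of the argument is the escape-rate comparison in case (b), which must be carried out in the weak-solution setting and thus needs to accommodate possible jumps, boundary tangency, and fattening of the $\p E_t$. Both the IMCF speed lower bound and the ``working depth'' available to us degenerate near $\p\Omega$ at the same rate as the gradient bound, so the delicate point is selecting a depth $h_0 > 0$ at which a definite amount of level-set mass persists throughout the relevant time window. This balancing is where the parabolic-type estimates mentioned in the introduction enter the picture: one tracks the depth profile of $\p E_t$ via parabolic comparison with barriers adapted to the half-space geometry, and uses the hypothesis $\inf u > -\infty$ to rule out the degenerate scenario in which all the area growth concentrates arbitrarily close to $\p\Omega$.
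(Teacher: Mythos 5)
Your proposal has several genuine gaps, and the approach diverges substantially from the paper's argument (which proceeds by a case split on the normal vectors of $\p^*E_{t_i}$, followed by either a spherical barrier sweep or a blow-up to a half-space perimeter minimizer).

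First, you assume at the outset that $u$ is \emph{calibrated} by a vector field $\nu$ with boundary trace $[\nu\cdot\nu_\Omega]=1$. The hypothesis of the theorem is only that $u$ respects the outer obstacle in the variational sense of Definition \ref{def-intro:summary_defs}; Lemma \ref{lemma-intro:asymp_tangent} says calibration \emph{implies} this, but the converse is not known in general, and the paper explicitly flags that ``it is unknown whether all weak solutions are calibrated.'' So the divergence-theorem step $\int_{Q_{R,L}}|\D u|\leq C_n(R^{n-1}+LR^{n-2})$, which drives your whole argument, is built on an extra hypothesis not granted by the statement. The paper's proof works exclusively with the variational/minimizing properties (outward minimizing of $E_t$, and the excess inequality of Lemma \ref{lemma-halfplane:excess_ineq_2}), precisely to avoid this issue.

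Second, the ``exponential area growth'' dichotomy does not apply in this setting. The sub-level sets $E_t$ in the half-space are generically unbounded, so $\Ps{E_t}$ is infinite and the identity $\Ps{E_t}=e^{t-s}\Ps{E_s}$ (which in Remark \ref{rmk-obs:definition}(iii) already requires $E_t\subset\subset M$) simply has no content; one would need to work with localized perimeters, and there is no reason localized perimeter grows exponentially for a weak solution with jumps. Your case (a), where you conclude $E_t$ is clopen, also fails: $\Ps{E_t}=\infty$ is the typical scenario, not $\Ps{E_t}=0$.

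Third, the escape-rate comparison, which you correctly identify as the crux, is left open. You gesture at ``parabolic-type estimates'' and a ``working depth $h_0$,'' but the parabolic estimates in Section \ref{sec:para} are for \emph{smooth} flows evolving in a collar of a smooth obstacle; they are not available for an arbitrary weak solution in the half-space, and adapting them here is exactly the hard step the paper avoids by instead rescaling around a bad point $y_i\in\p^*E_{t_i}$ and passing to a local perimeter minimizer via the uniform $(\Lambda,r_0)$-minimality coming from the gradient bound $|\D u|\leq C/|x_n|$ and the excess inequality with $\Lambda\sim e^{t_i}-1\to0$ (where $\inf u>-\infty$ enters). The resulting blow-up limit must be the half-space $\{x_n<-1\}$ by Lemma \ref{lemma-gmt:min_in_halfplane}, and the $C^1$ convergence of reduced boundaries for almost minimizers produces the contradiction. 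Your outline does not supply a substitute for this argument.
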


Here the condition $\inf(u)>-\infty$ is not removable, due to Example \ref{ex-intro:nephroid}.

\vspace{3pt}

Next, the parabolic estimate is used to derive the following: based on the results (i)(ii) above, the control on $\nu_\delta$ can be strengthened to
\begin{equation}\label{eq-intro:para1}
	\metric{\nu_\delta}{\p_r}\geq 1-Cd(x,\p\Omega)^\gamma-o_\delta(1),
\end{equation}
for some $\gamma=\gamma(n)\in(0,1)$. Roughly speaking, to derive \eqref{eq-intro:para1} we compute the evolution of the quantity $F=d(\cdot,\p\Omega)^{-\gamma}\big[1-\metric{\nu_\delta}{\p_r}\big]$, which yields
\begin{equation}\label{eq-intro:para2}
	\Big(\p_t-\frac{\Delta}{H^2}\Big)F\leq-\frac{c(n,\gamma)}{H^2}d(\cdot,\p\Omega)^{-\gamma-2}\big[1-\metric{\nu_\delta}{\p_r}\big]^2+\text{($\cdots$)}.
\end{equation}
The terms omitted here are caused by the curvature of $M$ and $\p\Omega$, and are of lower order than the negative main term. Then the estimate \eqref{eq-intro:para1} comes from the parabolic maximum principle. In view of the nephroid Example \ref{ex-intro:nephroid} (which does not satisfy \eqref{eq-intro:para1} for $t\ll0$), the uniform condition (ii) cannot be removed in this parabolic estimate.

For a more detailed demonstration of ideas (which involves more setup), see Subsection \ref{subsec:exist_strategy}. We remark here that the parabolic estimate is a necessary part in proving the existence theorem (not only an auxiliary technique to upgrade the regularity). See the proof of Lemma \ref{lemma-exist:graphical} for what it involves.

Many of the ideas above only work when $u_\delta$ are smooth solutions. Particularly, no analogue of parabolic estimate has been found for weak solutions. So we instead need to consider the elliptic regularized equations, originally introduced in \cite{Huisken-Ilmanen_2001}. For $\epsilon\ll1$, the $\epsilon$-regularized IMCF equation is
\begin{equation}\label{eq-intro:ellreg}
	\div\Big(\frac{\D u_\epsilon}{\sqrt{\epsilon^2+|\D u_\epsilon|^2}}\Big)=\sqrt{\epsilon^2+|\D u_\epsilon|^2},
\end{equation}
(with slight modification for the weighted case \eqref{eq-intro:soft_obs}), which is strictly elliptic and thus can be solved by standard PDE theories. Then as $\epsilon\to0$, the approximating solutions $u_\epsilon$ converge to a genuine weak IMCF. The equation \eqref{eq-intro:ellreg} has the geometric meaning that, if $u_\epsilon$ solves \eqref{eq-intro:ellreg} in $\Omega$, then the graph of $\epsilon^{-1}u_\epsilon$ forms a downward translating soliton of IMCF in the product $\Omega\times\RR$. So the parabolic estimates may be performed on this (smooth) soliton, then sent to the limit $\epsilon\to0$. We remark that this method was used by Heidusch to prove the $C^{1,1}$ regularity of level sets in proper solutions \cite{Heidusch_thesis}.

Finally, Theorem \ref{thm-intro:existence} is used to prove Theorem \ref{thm-intro:max_sol} in the following way. Consider an exhaustion of $E_0$ and $M$ by precompact domains $E_0^1\Subset E_0^2\Subset\cdots$ and $\Omega_1\Subset\Omega_2\Subset\cdots$. Applying Theorem \ref{thm-intro:existence} for each $E_0^i$ in $\Omega_i$, we obtain a localized solution $u_i$. Then the (descending) limit $u=\lim_{i\to\infty}u_i$ is the desired maximal solution. Indeed, for any other solution $v$ on $M$, we may compare in each $\Omega_i$ to conclude that $u_i\geq v$, then let $i\to\infty$ to derive $u\geq v$.

\vspace{12pt}
\phantomsection
\addtocounter{subsection}{1}
\addcontentsline{toc}{subsection}{\arabic{section}.\arabic{subsection}\texorpdfstring{\quad}{} Nonlinear potential theory; Dirichlet condition; further relations}
\textbf{Relation with nonlinear potential theory.}

A main analytic branch of the weak IMCF is the $p$-harmonic approximation scheme, initiated by Moser \cite{Moser_2007} and pursued by later works \cite{Agostiniani-Mantegazza-Mazzieri-Oronzio_2022, Fogagnolo-Mazzieri_2022, Kotschwar-Ni_2009, Mari-Rigoli-Setti_2022}. The core relation is the following. Suppose $v_p>0$ is a $p$-harmonic function, meaning that
\[\Delta_pv_p:=\div\big(|\D v_p|^{p-2}\D v_p\big)=0.\]
Moreover, suppose that for a sequence $p\to1$, the functions $u_p:=(1-p)\log v_p$ converges to $u$ in $C^0_{\loc}$. Then it can be shown that $u$ is a weak solution of the IMCF. Usually, one considers the \textit{$p$-capacitary potential} associated with an initial set $E_0$. Namely, one sets $v_p\in W^{1,p}_0(M\setminus E_0)$ as the $p$-harmonic function with $v_p|_{\p E_0}=1$. Under various conditions (for example, when $E_0$ is a $C^{1,1}$ domain in $\RR^n$ \cite{Moser_2007}), the resulting function $u$ is a weak solution of \eqref{eq-intro:IVP}.

When $\Omega$ is a smooth bounded domain, the $p$-capacitary potential carries a Dirichlet condition $v_p|_{\p\Omega}=0$. Note that each $v_p$ is the minimal positive $p$-harmonic function in $\Omega$ with $v_p|_{E_0}=1$. Therefore, Moser's transformation $u_p=(1-p)\log v_p$ yields a maximal solution of the equation
\[\left\{\begin{aligned}
	& \Delta_p u_p=|\D u_p|^p\qquad\text{in }\Omega\setminus E_0 \\
	& u_p=0\qquad\text{on }\p E_0
\end{aligned}\right.\]
in $\Omega$. As $p\to1$, one expects that the limit is also a maximal weak solution of the initial value problem for IMCF. In view of Theorem \ref{thm-intro:max_sol}, we thus pose the following question:

\begin{question}\label{qs-intro:p-harmonic}
	Let $\Omega$ be a bounded smooth domain, and $E_0\Subset\Omega$ be a $C^{1,1}$ domain. Let $v_p$ solve the boundary value problem
	\begin{equation}
		\left\{\begin{aligned}
			& \Delta_p v_p=0\qquad\text{in }\Omega\setminus\bar{E_0}, \\
			& v_p=1\qquad\text{on }\p E_0, \\
			& v_p=0\qquad\text{on }\p\Omega.
		\end{aligned}\right.
	\end{equation}
	Set $u_p=(1-p)\log v_p$. Does (a subsequence of) $u_p$ converge in $C^0_{\loc}(\Omega\setminus E_0)$ to the solution stated in Theorem \ref{thm-intro:existence}, as $p\to1$?
\end{question}

We hope to address this question in forthcoming works.

\vspace{12pt}
\hypertarget{intro:dirichlet}{\textbf{Comparison with Dirichlet condition.}}

The IMCF equation involves the 1-Laplacian $\Delta_1(u)=\div\big(\D u/|\D u|\big)$, which is the Euler-Lagrange operator of the energy $u\mapsto\int|\D u|$. The Dirichlet problem for 1-Laplacian type equations is known to often have outer obstacle interpretations. An instance is the following: for domains $E_0\Subset\Omega$, the \textit{1-capacitor} with Dirichlet condition is a solution of the following minimization problem:
\begin{align}
	\operatorname{Cap}_1(\Omega;E_0) &= \inf\Big\{\int_\Omega|\D u|: u\in\Lip(\bar\Omega),\ u|_{E_0}\geq1,\ u|_{\p\Omega}=0\Big\} \label{eq-intro:cap1_1}\\
	&= \inf\Big\{\|Du\|(\Omega)+\int_{\p\Omega}|u|: u\in\BV(\Omega),\ u|_{E_0}\geq1\Big\}. \label{eq-intro:cap1_2}
\end{align}
The former form \eqref{eq-intro:cap1_1} may not admit a minimizer, while the latter form \eqref{eq-intro:cap1_2} always does. A minimizer of \eqref{eq-intro:cap1_2} need not be zero on $\p\Omega$, but a penalty term $\int_{\p\Omega}u$ is introduced to control the boundary behavior. If $u$ is a minimizer of \eqref{eq-intro:cap1_2}, then every super-level set of $u$ solves the double obstacle problem
\[\inf\Big\{\Ps{E}: E_0\subset E\subset\Omega\Big\}.\]
Similar facts hold for the first Dirichlet eigenvalue of $\Delta_1$, see \cite{Kawohl-Friedman_2003, Kawohl-Schuricht_2007} and references therein.

\vspace{3pt}

Shifting our attention back to IMCF: \eqref{eq-intro:energy_v} may be viewed as a relaxation of the interior energy $\int|\D v|+v|\D u|$. The boundary term $-\int_{\p\Omega}v$ in \eqref{eq-intro:energy_v} is thus a penalty for $v$ ``not attaining $+\infty$ at $\p\Omega$''. In view of these thoughts, we may view the IMCF with outer obstacle as a relaxation of the Dirichlet problem
\[\left\{\begin{aligned}
	& \div\Big(\frac{\D u}{|\D u|}\Big)=|\D u|, \\
	& u|_{\p E_0}=0, \\
	& u|_{\p\Omega}=+\infty,
\end{aligned}\right.\]
This is consistent with the $p$-harmonic and soft obstacle approximations introduced above. In both schemes, note that the approximating solutions attain $+\infty$ at the boundary, but this property is lost when one takes the limit.

\vspace{12pt}
\hypertarget{intro:relations}{\textbf{Further relations with existing works.}}

In \cite{Moser_2008}, Moser studied a different obstacle problem for the weak IMCF. Given a barrier function $\phi$, one considers the problem
\begin{equation}\label{eq-intro:Moser_obstacle}
	\left\{\begin{aligned}
		& u\leq\Phi \quad\text{and}\quad \div\Big(\frac{\D u}{|\D u|}\Big)\geq|\D u|\quad \text{in}\ \ \Omega,\\
		& \div\Big(\frac{\D u}{|\D u|}\Big)=|\D u|\quad \text{in }\{u<\Phi\}.
	\end{aligned}\right.
\end{equation}
This problem is close to an \textit{inner obstacle} problem, corresponding to forced jumps indicated by sub-level sets of the function $\Phi$. Note that $u$ is a subsolution of IMCF, and it can be a strict subsolution in $\{u<\Phi\}$. A particular case of \eqref{eq-intro:Moser_obstacle} is
\[\phi(x)=\left\{\begin{aligned}
	& 0\qquad(x\in\bar{E_0}), \\
	& \!\!+\!\infty\qquad(x\notin\bar{E_0}),
\end{aligned}\right.\]
see \cite[p.2238]{Moser_2008}. In this case, \eqref{eq-intro:Moser_obstacle} becomes equivalent to \eqref{eq-intro:IVP} \cite{Moser_2008}.

\vspace{3pt}

The obstacle problem for the mean curvature flow has been studied in some depth. See, for example, \cite{Almeida-Chambolle-Novaga_2012, Logaritsch_thesis, Mercier-Novaga_2015} for the smooth flow, and \cite{Giga-Tran-Zhang_2019, Mercier_2014, Rupflin-Schnurer_2020} for the level set flow. The flow is based on viscosity characterizations in these works, and one often obtains $C^{1,1}$ regularity for the solution, see e.g. \cite{Mercier-Novaga_2015, Rupflin-Schnurer_2020}.

\vspace{12pt}
\textbf{Organization of the paper.}

Section \ref{sec:prelim} contains some preliminary materials and auxiliary results. This includes a review of the smooth and weak IMCF, and the definition of calibrated and weighted solutions, an introduction to elliptic regularization, and a few technical lemmas. In Section \ref{sec:obs}, we build the definitions and basic properties for the IMCF with outer obstacle, surrounding what is summarized in Definition \ref{def-intro:summary_defs}. In Section \ref{sec:halfplane} and \ref{sec:para} respectively, we prove Liouville theorems on the half space, and derive the main parabolic estimate. These results aid the proof of the main existence theorem. Finally, in Section \ref{sec:exist} we prove the main existence Theorem \ref{thm-intro:existence}, and in Section \ref{sec:max} we prove Theorem \ref{thm-intro:max_sol} and discuss on maximal solutions. Further outline of contents are included at the beginning of each Section \ref{sec:obs}\,--\,\ref{sec:exist}. In Section \ref{sec:max} we discuss on maximal solutions and prove Theorem \ref{thm-intro:max_sol}. In Appendix \ref{sec:ex} we collect several planar solitons and discuss the asymptotic limits of solutions.

\vspace{12pt}
\textbf{Acknowledgements}.

The author would like to thank Luca Benatti, Mattia Fogagnolo and Luciano Mari for inspiring discussions during the conference ``Recent advances in comparison geometry'' in Hangzhou, China. I'm also grateful to Hubert Bray, Gerhard Huisken, Demetre Kazaras and Chao Li for their interest in the topic of this paper. \TODO{update the acknowledgement before posting.}

\section{Preliminaries}\label{sec:prelim}

\subsection{Notations}\label{subsec:notations}

We use $M$ to denote a smooth manifold, which is always assumed to be connected, oriented, without boundary, and carries a smooth Riemannian metric $g$. The metric need not be complete unless particularly specified. Let $n=\dim(M)$, and assume $n\geq2$ (note: we do not assume $n\leq7$). We use $\Omega$ to denote a domain in $M$, and always assume that it is connected. We write $A\Subset B$ if the closure of $A$ is compact in $B$.

We say that a domain $\Omega$ is \textit{locally Lipschitz}, if for every $x\in\p\Omega$ there is a cylindrical coordinate chart centered at $x$, in which $\Omega$ is the sub-graph of a Lipschitz function. By Rademacher's theorem, the outer unit normal of $\Omega$ (denoted by $\nu_\Omega$ in this paper) exists for $\H^{n-1}$-almost every point on $\p\Omega$.


For a (two-sided) hypersurface $\Sigma$, we use $\nu_\Sigma$ to denote the outward unit normal, and $A, H$ to denote the second fundamental form and mean curvature. Our sign convention is that the mean curvature of $S^{n-1}\subset\RR^n$ is $n-1$.

For a function $u$, we fix the notations
\begin{equation}
	E_t(u):=\{u<t\},\qquad E_t^+(u):=\{u\leq t\}.
\end{equation}
When there is no ambiguity, we write $E_t, E_t^+$ for brevity.

Let $u$ be a function on $\Omega$. We call $u$ \textit{proper} in $\Omega$ if $E_t(u)\Subset\Omega$ for all $t\in\RR$.

\begin{remark}\label{rmk-prelim:Et_in_Omega}
	When $u$ is defined on a domain $\Omega\subset M$, the sets $E_t,E_t^+$ are viewed as subsets of $\Omega$, hence subsets of $M$.
\end{remark}

\begin{remark}\label{rmk-prelim:level_set_convergence}
	The following fact is useful: if $u_i,u$ are continuous functions such that $u_i\to u$ in $C^0_{\loc}$, and $E_t(u_i)$ converges to a set $E$ in $L^1_{\loc}$, then $E_t(u)\subset E\subset E_t^+(u)$ up to measure zero.
\end{remark}

For a function $u$ defined in a domain $\Omega$, we say that

\begin{enumerate}[label=$\boldsymbol{\cdot}$, topsep=0pt, itemsep=-0.5ex]
	\item $u\in\Lip_{\loc}(\Omega)$, if $u\in\Lip(K)$ for all $K\Subset\Omega$;
	\item $u\in\Lip_{\loc}(\bar\Omega)$, if $u\in\Lip(K\cap\Omega)$ for all $K\Subset M$. Namely, the regularity holds up to $\p\Omega$ but remains local in $M$.
	\item $u\in\BV_{\loc}(\bar\Omega)$, if $u\in\BV(K\cap\Omega)$ for all $K\Subset M$. (Note: the space $\BV(\Omega)$ is still in the traditional sense: $u\in\BV(\Omega)$ means $u\in L^1(\Omega)$ and $\|Du\|(\Omega)<\infty$.)
	\item $u\in\Lip_0(\Omega)$, if $u\in\Lip(\Omega)$ and $\spt(u)\Subset\Omega$.
\end{enumerate}

\noindent Given a set $E$ with locally finite perimeter, and a domain $\Omega$, we denote

\begin{enumerate}[label=$\cdot$, topsep=0pt, itemsep=-0.5ex]
	\item $\Ps{E;\Omega}$ and $\Ps{E}$: the perimeter of $E$ inside $\Omega$, and $\Ps{E}:=\Ps{E;M}$;
	\item $E^{(1)}$: the measure-theoretic interior of $E$;
	\item $\p^*E$ and $\nu_E$: the reduced boundary and outer unit normal of $E$;
	\item $|D\chi_E|$: the perimeter measure of $E$.
\end{enumerate}
We refer to Appendix \ref{sec:gmt} for more explanations on these notations, as well as geometric measure theory background.

The main symbols $\IMCF{\cdot}$, $\IVP{\cdot}$, $\IMCFOO{\cdot}{\cdot}$, $\IVPOO{\cdot}{\cdot}$, corresponding to various forms of weak IMCF, are introduced in Definition \ref{def-prelim:weak_sol}, \ref{def-prelim:ivp}, \ref{def-obs:obstacle1}, \ref{def-obs:ivp} respectively. The regular radius $\sigma(x;\Omega,g)$ arises in Definition \ref{def-prelim:sigma_x}.

\subsection{Smooth and interior weak solutions}

We say that a family of hypersurfaces $\{\Sigma_t\}$ is a \textit{smooth solution} of IMCF in $\Omega$, if
\begin{equation}\label{eq-prelim:smooth_imcf}
	\frac{\p\Sigma_t}{\p t}=\frac{\nu}{H}\qquad\text{in}\ \ \Omega,
\end{equation}
where $\nu,H$ are the outer unit normal and mean curvature of $\Sigma_t$. We say that a function $u\in C^\infty(\Omega)$ is a \textit{smooth solution} of IMCF in $\Omega$, if $u$ has nonvanishing gradient, and
\begin{equation}\label{eq-prelim:level_set}
	\div\Big(\frac{\D u}{|\D u|}\Big)=|\D u|\qquad\text{in}\ \ \Omega.
\end{equation}
The two equations are related in the following way: if $u$ is a parametrizing function for the hypersurfaces $\{\Sigma_t\}$, namely, if $\Sigma_t=\{u=t\}$ for each $t$, then we may calculate $H=\div\big(\frac{\D u}{|\D u|}\big)$ and $\metric{\p_t\Sigma_t}{\nu}=1/|\D u|$, and thus \eqref{eq-prelim:smooth_imcf} is equivalent to \eqref{eq-prelim:level_set}.

The following facts are not hard to see: (1) the family of circles $\Sigma_t=\big\{|x|=e^{t/(n-1)}\big\}$, or the function $u(x)=(n-1)\log|x|$, is a smooth solution of the IMCF in $\RR^n$. (2) If $\{\Sigma_t\}_{a\leq t\leq b}$ is a compact solution of IMCF, then we can compute
\[\frac{d|\Sigma_t|}{dt}=|\Sigma_t|,\]
hence $|\Sigma_t|=e^{t-s}|\Sigma_s|$ for all $a\leq s,t\leq b$. (3) If $u$ solves \eqref{eq-prelim:level_set} in $(\Omega,g)$, then $u$ solves \eqref{eq-prelim:level_set} in $(\Omega,\lambda g)$ for all $\lambda>0$. This observation tells the appropriate way to blow up or blow down a solution; see \cite{Choi-Hung_2023} and \cite{Choi-Daskalopoulos_2021} for works using this observation.

If $\Sigma_0\subset\RR^n$ is star-shaped, compact, and with $H>0$, then \cite{Gerhardt_1991, Urbas_1990} proved that the smooth flow exists for $t\in[0,\infty)$ and converges to round spheres as $t\to\infty$. If $\Sigma_0$ is noncompact and convex, then \cite{Choi-Daskalopoulos_2021} proved that \eqref{eq-prelim:smooth_imcf} exists until $\Sigma$ becomes a hyperplane (then $H\equiv0$ and the flow stops). The maximal time of existence depends on the asymptotic cone of $\Sigma$. In particular, if $\Sigma_0$ is a complete graph with superlinear growth, then we have long time existence (see also \cite{Daskalopoulos-Huisken_2022}).

\vspace{3pt}

We next review the definition of weak solutions, following \cite{Huisken-Ilmanen_2001}. We also refer the reader to \cite{Lee_rel} as a helpful introductory material.

\begin{defn}\label{def-prelim:energies}
	Let $K\Subset\Omega\subset M$ be domains. For $u,v\in\Lip_{\loc}(\Omega)$ we define the energy
	\begin{equation}\label{eq-prelim:energy_v}
		J_u^K(v):=\int_K\big(|\D v|+v|\D u|\big).
	\end{equation}
	For a set $E$ with locally finite perimeter in $\Omega$, we define the energy
	\begin{equation}\label{eq-prelim:energy_E}
		J_u^K(E):=\Ps{E;K}-\int_{E\cap K}|\D u|.
	\end{equation}
	We say that a set $E$ locally minimizes $J_u$ (resp. minimizes from outside, inside) in $\Omega$, if for all $F$ (resp. for all $F\supset E$, $F\subset E$) and domain $K$ satisfying $E\Delta F\Subset K\Subset\Omega$, we have $J_u^K(E)\leq J_u^K(F)$.
\end{defn}

\begin{defn}[interior weak solutions; cf. {\cite[p. 364-367]{Huisken-Ilmanen_2001}}]\label{def-prelim:weak_sol} {\ }
	
	We say that $u\in\Lip_{\loc}(\Omega)$ is a (sub-, super-) solution of $\IMCF{\Omega}$, if one of the following equivalent conditions holds.
	
	(1) For any $v\in\Lip_{\loc}(\Omega)$ (resp. for any $v\leq u$, $v\geq u$) and any domain $K$ satisfying $\{u\ne v\}\Subset K\Subset\Omega$, we have $J_u^K(u)\leq J_u^K(v)$.
	
	(2) For each $t\in\RR$, the sub-level set $E_t:=\{u<t\}$ locally minimizes $J_u$ (resp. locally minimizes from outside, inside) in $\Omega$, as in Definition \ref{def-prelim:energies}.
\end{defn}

Solutions satisfying Definition \ref{def-prelim:weak_sol} are often called interior solutions (as opposed to solutions with obstacles). When there is a need to clarify the background Riemannian metric, we will write $\IMCF{\Omega,g}$ to denote weak solutions of IMCF in the domain $\Omega$ with the metric $g$.

\begin{defn}[initial value problem; cf. {\cite[p. 367-368]{Huisken-Ilmanen_2001}}]\label{def-prelim:ivp} {\ }
	
	Given a domain $\Omega\subset M$ and a $C^{1,1}$ domain $E_0\subset\Omega$ such that $\p E_0\cap\p\Omega=\emptyset$. We say that $u\in\Lip_{\loc}(\Omega)$ is a (sub-, super-) solution of $\IVP{\Omega;E_0}$, if one of the following equivalent conditions holds.
	
	(1) $E_0=\{u<0\}$, and $u|_{\Omega\setminus\bar{E_0}}$ is a (sub-, super-) solution of $\IMCF{\Omega\!\setminus\!\bar{E_0}}$.
	
	(2) $E_0=\{u<0\}$, and for any $t>0$, any set $E\supset E_0$ (resp. any $E\supset E_t$, $E_0\subset E\subset E_t$) and domain $K$ with $E\Delta E_t\Subset K\Subset\Omega$, we have $J_u^K(E_t)\leq J_u^K(E)$.
	
	We say that two solutions $u_1,u_2$ are equivalent, if $u_1=u_2$ in $\Omega\setminus E_0$.
\end{defn}

Similarly, we will write $\IVP{\Omega,g;E_0}$ when we need to clarify the background metric.

Item (2) here is formulated slightly differently but equivalently as in \cite{Huisken-Ilmanen_2001}. Also, from this item we see that the initial value problem of IMCF has the form of an inner obstacle problem, which reflects an observation in \cite{Moser_2008}.

\vspace{3pt}

Suppose $u\in\Lip_{\loc}(\Omega)$ solves $\IMCF{\Omega}$. Then for each $t$ and any competitor set $E$ with $E\Delta E_t\Subset K\Subset\Omega$, we have by the minimization of \eqref{eq-prelim:energy_E}
\begin{equation}\label{eq-prelim:Lam_r0_min}
	\Ps{E_t;K}\leq\Ps{E;K}+\sup_K|\D u|\cdot|E\Delta E_t|.
\end{equation}
Therefore, each sub-level set $E_t$ is a $(\Lambda,r_0)$-perimeter minimizer in $K$, for all $K\Subset\Omega$, $r_0>0$: see Appendix \ref{sec:gmt} for the definition of $(\Lambda,r_0)$-perimeter minimizers. The classical results in geometric measure theory then imply that each $E_t$ is a $C^{1,\alpha}$ hypersurface ($\alpha<1/2$) except for a codimension 8 singularity. In dimension less than 8, the result of Heidusch \cite{Heidusch_thesis} strengthens this regularity to $C^{1,1}$, if the solution comes from the elliptic regularization process (see Subsection \ref{subsec:ellreg}).

\vspace{3pt}

We say that a set $E$ is locally outward minimizing in $\Omega$ (resp. strictly outward minimizing), if for all $F\supset E$ and domain $K$ satisfying $F\setminus E\Subset K\Subset \Omega$, $|F\setminus E|>0$, we have $\Ps{E;K}\leq\Ps{F;K}$ (resp. $\Ps{E;K}<\Ps{F;K}$).

\begin{lemma}[cf. {\cite[Property 1.4, Lemma 1.6]{Huisken-Ilmanen_2001}}]\label{lemma-prelim:out_min}
	Suppose $u$ solves $\IMCF{\Omega}$. Then:
	
	(i) each $E_t$ is locally outward minimizing in $\Omega$, provided that it is nonempty.
	
	(ii) each $E_t^+$ is strictly outward minimizing in $\Omega$, provided it is nonempty.
	
	Suppose $u$ solves $\IVP{\Omega;E_0}$ with $E_0\Subset\Omega$. Then for all $t\geq0$ we have:
	
	(iii) $\Ps{E_t}\leq e^t\Ps{E_0}$ if $E_t\Subset\Omega$.
	
	(iv) $\Ps{E_t}=e^t\Ps{E_0}$ if $E_t\Subset\Omega$ and $E_0$ is locally outward minimizing in $\Omega$.
\end{lemma}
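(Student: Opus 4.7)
The strategy is to unpack each item directly from the variational minimization in Definitions \ref{def-prelim:weak_sol} and \ref{def-prelim:ivp}, with the coarea formula and Gronwall's inequality as the key analytic ingredients for the exponential growth statements (iii)--(iv).

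Item (i) is essentially a direct consequence of Definition \ref{def-prelim:weak_sol}(2): for any competitor $F\supset E_t$ with $F\Delta E_t\subset\subset K\subset\subset\Omega$, the inequality $J_u^K(E_t)\leq J_u^K(F)$ rearranges to
\[
\Ps{E_t;K}\leq \Ps{F;K}-\int_{(F\setminus E_t)\cap K}|\D u|\leq \Ps{F;K},
\]
using only $|\D u|\geq 0$. For item (ii), I would use the representation $E_t^+=\bigcap_{s>t}E_s$: given $F\supset E_t^+$ with $F\Delta E_t^+\subset\subset K$ and $|F\setminus E_t^+|>0$, apply (i) to each $E_{t+\epsilon}$ against the outer competitor $F\cup E_{t+\epsilon}$, then pass to the limit $\epsilon\to 0^+$ using $L^1_{\loc}$-convergence $E_{t+\epsilon}\to E_t^+$ and lower semicontinuity of perimeter. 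This yields non-strict outward minimization of $E_t^+$; strictness then follows by invoking the characterization of $E_t^+$ as the strictly outward-minimizing hull at level $t$, so that any $F\supsetneq E_t^+$ with the same perimeter would itself be outward minimizing, contradicting how $E_t^+$ arises as the maximal such set.

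For item (iii), the set $E_0$ is itself a valid competitor in Definition \ref{def-prelim:ivp}(2), yielding $J_u^K(E_t)\leq J_u^K(E_0)$ and hence
\[
\Ps{E_t;K}\leq \Ps{E_0;K}+\int_{(E_t\setminus E_0)\cap K}|\D u|.
\]
The coarea formula applied to the Lipschitz function $u$ on $E_t\setminus E_0=\{0\leq u<t\}$, together with $\H^{n-1}(\{u=s\})=\Ps{E_s}$ for a.e.\ $s$, rewrites the last integral as $\int_0^t\Ps{E_s}\,ds$. Taking $K$ large enough to contain $E_t$ and applying Gronwall to $a(t):=\Ps{E_t}$ gives $a(t)\leq e^t a(0)$.

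Item (iv) is the main obstacle, because it requires exploiting the full interior IMCF structure of $u|_{\Omega\setminus\bar{E_0}}$ rather than only the one-sided minimization from the IVP. For $0<s<t$ with $E_t\subset\subset\Omega$, the open set $E_s$ contains a neighborhood of $\bar{E_0}$ (since $u=0$ on $\p E_0$ and $u\geq 0$ on $\Omega\setminus E_0$), so $E_t\setminus E_s\subset\subset\Omega\setminus\bar{E_0}$ and the bidirectional comparison from Definition \ref{def-prelim:weak_sol} applies. Running the comparison in both directions yields the exact equality $\Ps{E_t}-\Ps{E_s}=\int_s^t\Ps{E_r}\,dr$ for $0<s<t$, so $a$ is absolutely continuous on $(0,\infty)$ with $a'=a$ and $a(t)=e^{t-s}a(s)$. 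The locally outward minimizing hypothesis on $E_0$ is then used to pin down the $s\to 0^+$ limit: $a(0)\leq a(s)$ by outward minimization and $E_0\subset E_s$, while item (iii) gives $a(s)\leq e^s a(0)\to a(0)$. Hence $\lim_{s\to 0^+}a(s)=a(0)$, and sending $s\to 0^+$ in $a(t)=e^{t-s}a(s)$ produces the desired equality $a(t)=e^t a(0)$.
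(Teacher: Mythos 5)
The paper itself gives no proof of this lemma; it is cited directly from Huisken--Ilmanen, so there is no in-paper argument to compare against. Your treatment of (i), (iii), and (iv) is correct and is the standard argument. In particular, (iv) correctly exploits the bidirectional comparison available once $E_t\setminus E_s\subset\subset\Omega\setminus\bar{E_0}$, and the $s\to 0^+$ reconciliation via the outward-minimizing hypothesis on $E_0$ together with (iii) is exactly the right closing step.

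The gap is in (ii), on two levels. First, a minor one: passing to the limit in $\Ps{E_{t+\epsilon};K}\leq\Ps{F\cup E_{t+\epsilon};K}$ does not follow from lower semicontinuity alone, since the right-hand side is the one that needs an upper bound; the clean route is to use $\Ps{E_s;K}\leq\Ps{E_s\cup F;K}$ together with submodularity $\Ps{E_s\cap F;K}+\Ps{E_s\cup F;K}\leq\Ps{E_s;K}+\Ps{F;K}$ to get $\Ps{E_s\cap F;K}\leq\Ps{F;K}$, and then apply lower semicontinuity on the left along $E_s\cap F\searrow E_t^+$. Second, and more seriously, the strictness argument is circular: you invoke ``the characterization of $E_t^+$ as the strictly outward-minimizing hull'' and its ``maximality'' to deduce $F\subset E_t^+$, but in this paper (and in Huisken--Ilmanen) the identification $E_t^+ = E_t'$ is a \emph{consequence} of Lemma~\ref{lemma-prelim:out_min}(ii), stated explicitly just after the lemma and proved in Theorem~\ref{thm-obs:out_min}(iii), not a prerequisite. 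Moreover, the hull is defined as the smallest strictly outward-minimizing superset, so ``maximal such set'' does not match the definition even if the circularity were resolved. What is actually needed is an argument specific to the IMCF structure: one must use that $E_t^+$ locally minimizes the full energy $J_u$ (via a set-replacement limit of the minimizers $E_s$, cf.\ Remark~\ref{rmk-obs:definition}(iv)), so that $\Ps{F;K}=\Ps{E_t^+;K}$ forces $\int_{(F\setminus E_t^+)\cap K}|\D u|=0$, and then extract a contradiction from $u>t$ on $F\setminus E_t^+$ together with the coarea formula and the interaction with nearby level sets $E_s$; this step does not come for free from the perimeter comparison alone.
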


Another conclusion is that $E_t^+$ is the ``strictly outward minimizing hull'' of $E_t$, whenever $E_t^+\setminus E_t\Subset\Omega$. A more general statement is included in Subsection \ref{subsec:obs_min}, thus we do not make repetition here.


The following lemma proves Example \ref{ex-intro:radial_sym}; the reader may verify that the functions in \eqref{eq-prelim:sphere_sym} are indeed solutions of $\IVP{\Omega;E_0}$. The lemma no longer holds if we drop the radial symmetry of $u$.

\begin{lemma}\label{lemma-prelim:sphere_sym}
	Let $0<r_0<R_0\leq\infty$, and $f\in C^\infty(0,R_0)$ be positive and non-decreasing. Set $\Omega=S^{n-1}\times(0,R_0)$ with the metric $g=dr^2+f(r)^2g_{S^{n-1}}$, and $E_0=\{r<r_0\}$. Then any radial solution of $\IVP{\Omega;E_0}$ takes the form
	\begin{equation}\label{eq-prelim:sphere_sym}
		u(r)=\min\Big\{(n-1)\log\big[f(r)/f(r_0)\big],t_0\Big\}
	\end{equation}
	for some $t_0\geq0$.
\end{lemma}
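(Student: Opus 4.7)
My plan is to classify radial solutions by combining (a) the structural restrictions on sub-level sets coming from Lemma \ref{lemma-prelim:out_min}, and (b) the reduction of the level-set equation \eqref{eq-prelim:level_set} to an ODE in $r$ in the radial setting. First I would show that every sub-level set $E_t=\{u<t\}$ is a radial ball $\{r<\rho(t)\}$; then integrate the ODE to pin down $u$ on intervals of strict monotonicity; and finally propagate the resulting identity across any plateau regions to obtain the stated formula.

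Radiality of $u$ implies each $\{u<t\}$ is a disjoint union of $r$-intervals, one of which is $\{r<\rho(t)\}$ with $\rho(t)\geq r_0$ by the initial condition. If $\{u<t\}$ contained an extra annular component $\{b<r<c\}$ with $\rho(t)<b<c$, then replacing $E_t$ by the filled set $\{r<c\}$ would strictly decrease the perimeter from $\omega_{n-1}[f(\rho(t))^{n-1}+f(b)^{n-1}+f(c)^{n-1}]$ to $\omega_{n-1}f(c)^{n-1}$, contradicting the outward-minimizing property in Lemma \ref{lemma-prelim:out_min}(i). Hence $E_t=\{r<\rho(t)\}$ for a non-decreasing $\rho$ with $\rho(0)=r_0$, and in particular $u$ is non-decreasing in $r$.

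Next, on the open region $\{r>r_0\}\cap\{|\nabla u|>0\}$ the solution is smooth by standard elliptic regularity. Since $\nabla u/|\nabla u|=\p_r$ and $\div(\p_r)=(n-1)f'(r)/f(r)$ in the warped metric, equation \eqref{eq-prelim:level_set} reduces to the ODE $u'(r)=(n-1)f'(r)/f(r)$. Integrating from $r_0$ with $u(r_0)=0$ yields $u(r)=(n-1)\log[f(r)/f(r_0)]$ on the first maximal interval of strict monotonicity. Setting $t_0=\sup_{\Omega}u\in[0,\infty]$ and $r_*:=\inf\{r:u(r)=t_0\}$, one has $u\equiv t_0$ on $[r_*,R_0)$; the stated formula $u(r)=\min\{(n-1)\log[f(r)/f(r_0)],t_0\}$ then follows, provided the ODE identity propagates up to $r_*$ unchanged.

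The main obstacle will be excluding \emph{staircase} profiles, in which $u$ plateaus on some $[r_1,r_2]$ with $u\equiv t_1<t_0$ there and then strictly increases on $[r_2,r_3]$: continuity at $r_2$ would a priori allow the ODE solution past the plateau to carry a nontrivial additive shift. To rule this out I would pass to the limit $t\to t_1^+$ in the perimeter bound $\Ps{E_t}\leq e^t\Ps{E_0}=\omega_{n-1}f(r_0)^{n-1}e^t$ from Lemma \ref{lemma-prelim:out_min}(iii); since $\rho(t)\to r_2$ as $t\to t_1^+$, while $e^{t_1}=(f(r_1)/f(r_0))^{n-1}$ by the pre-plateau ODE, one derives $f(r_2)\leq f(r_1)$, forcing $f(r_2)=f(r_1)$ by the monotonicity of $f$. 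But then $(n-1)\log[f(r)/f(r_0)]$ is itself constant across $[r_1,r_2]$, so the formula extends past the plateau unchanged. Iteration takes care of any further plateaus, and the remaining cases ($t_0=\infty$, or the extension of $u$ to the interior of $E_0$) are routine.
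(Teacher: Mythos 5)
Your reduction to centered balls $E_t=\{r<\rho(t)\}$ is sound and close to the paper's: you invoke the outward-minimizing property from Lemma~\ref{lemma-prelim:out_min}(i), while the paper compares the energies $J_u$ directly; both work. The genuine gap is the ODE step. The claim that ``on the open region $\{r>r_0\}\cap\{|\nabla u|>0\}$ the solution is smooth by standard elliptic regularity'' is not justified: the operator in~\eqref{eq-prelim:level_set} is degenerate elliptic (zero ellipticity in the gradient direction), and the Huisken--Ilmanen weak theory is built precisely so as \emph{not} to presume pointwise smoothness of $u$. A priori $u$ is only $\Lip_{\loc}$; a strictly increasing Lipschitz radial function need not be $C^1$, need not have $u'>0$ a.e., and an ``$r$-interval of strict monotonicity'' is not the same as a ``region where $|\nabla u|>0$.'' Integrating $u'(r)=(n-1)f'(r)/f(r)$ from $r_0$ is therefore not legitimate as written, and because your plateau argument takes the pre-plateau identity $e^{t_1}=\big(f(r_1)/f(r_0)\big)^{n-1}$ as input, it inherits the gap.

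The paper avoids this entirely. Since $f$ is non-decreasing, $E_0$ is outward minimizing, so Lemma~\ref{lemma-prelim:out_min}(iv) --- the exact identity, not merely the inequality (iii) you use --- gives $\Ps{E_t}=e^t\Ps{E_0}$ for all $t<t_0$, hence $f(\rho(t))=e^{t/(n-1)}f(r_0)$. Together with the separate observation that $f(r')<e^{t/(n-1)}f(r_0)$ for all $r'<\rho(t)$ (needed to handle intervals on which $f$ is locally constant), this pins down $E_t=\{f(r)<e^{t/(n-1)}f(r_0)\}$ for $t<t_0$, and $E_t=\Omega$ for $t>t_0$, without ever differentiating $u$. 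The fix for your proof is to replace the ODE step with an appeal to Lemma~\ref{lemma-prelim:out_min}(iv) on the whole interval $(0,t_0)$; at that point the plateau workaround becomes unnecessary and your argument collapses into the paper's.
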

\begin{proof}
	First, note that each $E_t$ takes the form of $\big\{r<r(t)\big\}$ for some $r(t)\in[r_0,R_0]$. Indeed, if there are $a<b<c<d\leq R_0$ such that
	\[\{a<r<b\}\subset E_t,\quad \{c<r<d\}\subset E_t,\quad \{b\leq r\leq c\}\nsubseteq E_t,\]
	then we would have $J_u^K\big(E_t\cup\{b\leq r\leq c\}\big)<J_u^K(E_t)$, where $K=\{a<r<d\}$. This contradicts the energy minimizing of $u$. Now let
	\[t_0=\sup\big\{t\geq0: E_t\Subset\Omega\big\}.\]
	Since $f$ is non-decreasing, $E_0$ is locally outward minimizing in $\Omega$. By Lemma \ref{lemma-prelim:out_min}(iv), we have $\Ps{E_t}=e^t\Ps{E_0}$ for all $t<t_0$, hence $f(r(t))=e^{t/(n-1)}f(r_0)$. Next, we claim that $f(r')<e^{t/(n-1)}f(r_0)$ for all $r'<r(t)$. To see this, by the definition of $r(t)$, we note that there exists $t'<t$ with $\{r<r'\}\subset E_{t'}$. Hence $|S^{n-1}|f(r')^{n-1}\leq\Ps{E_{t'}}=e^{t'}\Ps{E_0}=|S^{n-1}|e^{t'}f(r_0)^{n-1}$, proving our claim. As a result of these conclusions, we have
	\[\big\{u(r)<t\big\}=E_t=\big\{r<r(t)\big\}=\big\{f(r)<f(r(t))\big\}=\big\{f(r)<e^{t/(n-1)}f(r_0)\big\}\]
	for all $t<t_0$. For $t>t_0$, the set $E_t$ must be equal to $\Omega$, since it is noncompact and takes the form $\big\{r<r(t)\big\}$. Based on these facts, it is elementary to verify \eqref{eq-prelim:sphere_sym}.
\end{proof}

\begin{theorem}[interior maximum principle, {\cite[Theorem 2.2]{Huisken-Ilmanen_2001}}]\label{thm-prelim:max_principle} {\ }
	
	Let $u,v\in\Lip_{\loc}(\Omega)$ be respectively a supersolution and subsolution of $\IMCF{\Omega}$, such that $\{u<v\}\Subset\Omega$. Then $u\geq v$.
\end{theorem}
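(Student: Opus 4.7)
The plan is to exploit the variational characterization of Definition~\ref{def-prelim:weak_sol} via the max/min comparison principle. Since $\{u<v\}\subset\subset\Omega$, I would first choose an intermediate domain $K$ with $\{u<v\}\subset\subset K\subset\subset\Omega$. Both $\max(u,v)\geq u$ and $\min(u,v)\leq v$ differ from the originals only on $\{u<v\}\subset\subset K$, so they are admissible test functions for the supersolution $u$ and the subsolution $v$ respectively, yielding
\[
J_u^K(u)\leq J_u^K(\max(u,v)), \qquad J_v^K(v)\leq J_v^K(\min(u,v)).
\]

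Summing these two inequalities and using the pointwise identity $\max(u,v)+\min(u,v)=u+v$ together with the a.e.\ identity $|\D\max(u,v)|+|\D\min(u,v)|=|\D u|+|\D v|$, the boundary/perimeter-type terms cancel and I obtain the base estimate
\[
0\leq\int_{\{u<v\}}(v-u)\bigl(|\D u|-|\D v|\bigr)\,d\L^n.
\]
Keeping the two inequalities separate sharpens this to the two-sided sandwich
\[
\int_{\{u<v\}}(v-u)|\D v| \;\leq\; \int_{\{u<v\}}\bigl(|\D u|-|\D v|\bigr) \;\leq\; \int_{\{u<v\}}(v-u)|\D u|.
\]

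To promote this weighted estimate to the pointwise conclusion $u\geq v$, I would apply the same max/min argument to the shifted pair $(u+\lambda,v)$ for every $\lambda\geq 0$; note that $u+\lambda$ remains a supersolution because the weight $|\D u|=|\D(u+\lambda)|$ is unchanged by a constant shift. Multiplying the resulting inequalities by $\lambda^{k-1}$ and integrating in $\lambda$ via Fubini produces a family of moment estimates $B_{k+1}\leq(k+1)(A_k-B_k)\leq A_{k+1}$, where $A_k:=\int_{\{u<v\}}(v-u)^k|\D u|$ and $B_k:=\int_{\{u<v\}}(v-u)^k|\D v|$. Combined with the bound $A_{k+1}\leq MA_k$ (valid because $v-u$ is bounded on the relatively compact set $\overline{\{u<v\}}$ by local Lipschitz regularity), a careful iteration forces both $|\D u|$ and $|\D v|$ to vanish a.e.\ on $\{u<v\}$. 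Then $u$ and $v$ are both locally constant on each connected component of $\{u<v\}$, and continuity at $\p\{u<v\}\subset\{u\geq v\}$ is incompatible with $u<v$ in the interior unless $\{u<v\}=\emptyset$.

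The main obstacle is the iteration step: the moment inequalities only give $A_k-B_k\leq MA_k/(k+1)$, which does not immediately force $A_k=B_k=0$ in one stroke, so a quantitative bootstrap (or an alternative direct argument working with the shifted inequalities as $\lambda\uparrow\esssup(v-u)$ and exploiting the shrinking support) is needed to close the gap. Everything else is a routine application of the max/min trick, the identities for $|\D\max|+|\D\min|$, and continuity of Lipschitz functions.
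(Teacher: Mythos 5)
Your derivation of the sandwich inequality
\[
\int_{\{u<v\}}(v-u)|\D v|\ \leq\ \int_{\{u<v\}}\bigl(|\D u|-|\D v|\bigr)\ \leq\ \int_{\{u<v\}}(v-u)|\D u|,
\]
and its moment versions $B_{k+1}\leq(k+1)(A_k-B_k)\leq A_{k+1}$ obtained by shifting $u\mapsto u+\lambda$ and integrating against $k\lambda^{k-1}\,d\lambda$, is correct. But the gap you flag at the end is a genuine one, and the iteration cannot be closed from those inequalities alone. The bounds $(k+1)(A_k-B_k)\leq A_{k+1}\leq MA_k$ only give $B_k\geq(1-M/(k+1))A_k$, i.e.\ $B_k/A_k\to 1$, and this is perfectly consistent with $A_k=M^k A_0\neq 0$: for instance, the sequence $A_k=M^k$, $B_k=M^k\bigl(1-\tfrac{M}{k+1}\bigr)_+$ satisfies every inequality in your system. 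Shifting by a constant $\lambda$ never produces a strict factor, so no amount of Fubini bootstrapping will force $A_1=0$.

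The missing ingredient — which is what Huisken--Ilmanen use in their Theorem~2.2, and what this paper reproduces in its own proof of the obstacle version (Theorem~\ref{thm-obs:max_principle}) — is a \emph{dilation}, not a translation. Since the equation is scale-invariant, $u_\epsilon:=u/(1-\epsilon)$ is again a supersolution, and testing the original supersolution property of $u$ with the rescaled competitor yields the strict comparison
\[
\int\bigl(|\D u_\epsilon|-|\D w|\bigr)\ \leq\ (1-\epsilon)\int(w-u_\epsilon)|\D u_\epsilon|,\qquad w\geq u_\epsilon,
\]
with the crucial factor $(1-\epsilon)<1$ on the right. Combining this with the min/max comparison gives $B_1\leq(1-\epsilon)A_1$ (now strict), while your integrated inequality (for $k=1$) gives $A_1-B_1\leq\tfrac12 A_2$. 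After adding a constant to $u$ so that $0<v-u_\epsilon<\epsilon$ on the contact set (which is always possible), one has $A_2\leq\epsilon A_1$, and the three inequalities together force $A_1\leq 0$, hence $A_1=0$, hence $|\D u_\epsilon|=0$ on $\{v>u_\epsilon\}$, then $B_1\leq 0$ forces $|\D v|=0$ there too, and the continuity argument you outline finishes it (modulo the corner case of a precompact component of $\Omega$, which the paper dispatches via Remark~\ref{rmk-obs:definition}(v)). In short: your framework is sound, but you need the multiplicative $u/(1-\epsilon)$ trick — not the additive $u+\lambda$ trick — to inject the strictness that closes the loop.
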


\begin{theorem}[interior compactness]\label{thm-prelim:compactness}
	Let $\Omega\subset M$ be a domain, and $g$ be a fixed smooth Riemannian metric on $\Omega$. Given the following data:
	
	(1) $\Omega_i$ is a sequence of domains that locally uniformly converge to $\Omega$,
	
	(2) $g_i$ are smooth metrics on $\Omega_i$, which locally uniformly converge to $g$,
	
	(3) $u_i\in\Lip_{\loc}(\Omega_i)$ solves $\IMCF{\Omega_i,g_i}$, and $u_i\to u$ in $C^0_{\loc}$ for some $u$,
	
	(4) for each $K\Subset\Omega$ we have $\sup_K|\D u_i|_{g_i}\leq C(K)$ for all sufficiently large $i$.
	
	\noindent Then $u$ solves $\IMCF{\Omega,g}$. Moreover, if a sequence of sets $E_i$ locally minimizes $J_{u_i}$ in $(\Omega_i,g_i)$, and $E_i$ converge to a set $E$ in $L^1_{\loc}$, then $E$ locally minimizes $J_u$ in $(\Omega,g)$.
\end{theorem}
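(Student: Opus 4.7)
The plan is to prove the moreover statement first, then deduce the main claim by applying it to the sub-level sets $E_t(u_i)$. As a preliminary, hypotheses (2), (4) together with $u_i \to u$ in $C^0_{\loc}$ imply $u \in \Lip_{\loc}(\Omega)$ with the same local Lipschitz bound, so (along a subsequence) $\D u_i \rightharpoonup^{\ast} \D u$ in weak-$\ast$ $L^{\infty}_{\loc}$ with respect to $g$.

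The main obstacle is that weak-$\ast$ convergence of $\D u_i$ alone does not let one pass the energy $J_{u_i}$ to the limit: the term $\int_{F\cap K}|\D u_i|_{g_i}\,\dvol_{g_i}$ tests $|\D u_i|$ against a characteristic function, which weak-$\ast$ convergence cannot see. I plan to extract the required strong convergence
\begin{equation*}
\int_{F\cap K}|\D u_i|_{g_i}\,\dvol_{g_i} \;\longrightarrow\; \int_{F\cap K}|\D u|_g\,\dvol_g
\end{equation*}
from the minimizing property of $u_i$, in the spirit of the Huisken--Ilmanen compactness argument. Take a smooth cutoff $\eta$ with $\spt(\eta)\subset\subset K\subset\subset\Omega$ and set $w_i := \eta u + (1-\eta)u_i$, so $\{u_i\ne w_i\}\subset\subset K$. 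Applying the $v$-formulation minimization $J_{u_i}^K(u_i)\le J_{u_i}^K(w_i)$ from Definition~\ref{def-prelim:weak_sol}(1), using $u_i - w_i = \eta(u_i-u)$ and the estimate $|\D w_i|\le \eta|\D u|+(1-\eta)|\D u_i|+|\D\eta|\,|u-u_i|$, and discarding the lower-order terms via $u_i\to u$ uniformly together with the uniform gradient bound, a straightforward rearrangement yields
\begin{equation*}
\limsup_{i\to\infty}\int_K\eta\,|\D u_i|_{g_i}\,\dvol_{g_i}\;\le\;\int_K\eta\,|\D u|_g\,\dvol_g .
\end{equation*}
Combined with lower semicontinuity of total variation, this forces weak convergence of Radon measures $|\D u_i|_{g_i}\,\dvol_{g_i}\rightharpoonup |\D u|_g\,\dvol_g$; since the limit is absolutely continuous, the integrals above converge on every set of finite perimeter. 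Running the same argument with a general Lipschitz test function $w$ in place of $u$ also shows that $u$ itself locally minimizes $J_u$, handling the first assertion of the theorem.

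With the energy convergence in hand, the moreover statement follows by a standard transplantation argument. Given a competitor $F$ with $E\Delta F\subset\subset K\subset\subset\Omega$, pick an intermediate $K'$ with $E\Delta F\subset\subset K'\subset\subset K$, chosen generically so that $\H^{n-1}\bigl(\p K'\cap(\p^{\ast}E\cup\p^{\ast}F\cup\bigcup_i\p^{\ast}E_i)\bigr)=0$ and $\H^{n-1}(\p K'\cap(F\Delta E_i))\to 0$ (both available for a.e.\ $K'$ from a one-parameter family by Fubini and $E_i\to E$ in $L^1_{\loc}$). Define $F_i:=(F\cap K')\cup(E_i\setminus K')$, so $E_i\Delta F_i\subset K'$. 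The minimization inequality $J_{u_i}^K(E_i)\le J_{u_i}^K(F_i)$ then passes to the limit using lower semicontinuity of perimeter on the left, the clean decomposition $\Ps{F_i;K}=\Ps{F;K'}+\Ps{E_i;K\setminus\bar{K'}}+o(1)$ on the right, and the energy convergence above, producing $J_u^K(E)\le J_u^K(F)$.

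Finally, the main statement follows by applying the moreover part to $E_i:=E_t(u_i)$. Inequality~\eqref{eq-prelim:Lam_r0_min} combined with the uniform Lipschitz bound gives $(\Lambda,r_0)$-perimeter minimality with uniform constants, so $BV$-compactness extracts a subsequential $L^1_{\loc}$-limit $\tilde E_t$, which by Remark~\ref{rmk-prelim:level_set_convergence} satisfies $E_t(u)\subset\tilde E_t\subset E_t^+(u)$ up to null sets. For a.e.\ $t$ the two ends agree, so $\tilde E_t=E_t(u)$ and the moreover part gives the required minimization of $E_t(u)$. The remaining exceptional values of $t$ are recovered from $E_t(u)=\bigcup_{s<t}E_s(u)$ together with Lemma~\ref{lemma-prelim:out_min}, which ensures that the minimization property of the good $E_s$ persists under this monotone limit.
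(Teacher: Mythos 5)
Your proposal follows essentially the same route as the paper's proof. The crucial technical step — upgrading lower semicontinuity to the exact equality $\int_A|\D_g u|_g\,dV_g=\lim_i\int_A|\D_{g_i}u_i|_{g_i}\,dV_{g_i}$ by testing the minimizing property of $u_i$ against the competitor $\eta u+(1-\eta)u_i$ — is precisely the paper's derivation of \eqref{eq-prelim:limit_du}, and the \emph{moreover} part is likewise finished by the standard set-replacing device that the paper delegates to \cite[Theorem 21.14]{Maggi}. For the first assertion the paper simply refers to \cite[Theorem 2.1]{Huisken-Ilmanen_2001}, which also proceeds through sub-level sets, so your final paragraph is in the same spirit, merely spelled out.

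Two small points to tidy. First, the parenthetical claim that ``running the same argument with a general Lipschitz test function $w$'' already yields the minimization of $u$ is too compressed: it does work once the energy convergence is in hand, but you must choose the cutoff $\eta\equiv1$ on $\{u\neq w\}$ and then send $\eta\nearrow1$ to dispose of the $|\D\eta|$-term, so it is a separate short argument rather than a literal re-run of the preceding one. Second, citing Lemma~\ref{lemma-prelim:out_min} to pass from a.e.\ $t$ to all $t$ is not apt — that lemma is about outward minimization and exponential area growth, not stability of $J_u$-minimizers under monotone limits. The correct device is to observe that $E_t(u)=\bigcup_{s<t}E_s(u)$ is an $L^1_{\loc}$ limit of $J_u$-minimizers and reuse the set-replacing argument (equivalently, apply the \emph{moreover} part of this very theorem to the constant data $u_i=u$, $g_i=g$, $\Omega_i=\Omega$).
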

\begin{proof}
	To prove $u$ solves $\IMCF{\Omega;g}$, one argues as in \cite[Theorem 2.1]{Huisken-Ilmanen_2001}. The minimizing property of $E$ follows from a standard set replacing argument. See for example the proof of \cite[Theorem 21.14]{Maggi}. In this argument we need the fact that $\int_A|\D_g u|_g\,dV_g=\lim_{i\to\infty}\int_A|\D_{g_i}u_i|_{g_i}\,dV_{g_i}$ for all measurable $A\Subset\Omega$. This is proved as follows. For any $\varphi\in\Lip_0(\Omega)$ with $\varphi\geq0$, by (2)(3)(4) and lower semi-continuity we have
	\begin{equation}\label{eq-prelim:lower_semi_cont}
		\int_\Omega\varphi|\D_g u|_g\,dV_g\leq\liminf_{i\to\infty}\int_\Omega\varphi|\D_{g_i}u_i|_{g_i}\,dV_{g_i}.
	\end{equation}
	On the other hand, we have the energy comparison $J_{u_i}^K(u_i)\leq J_{u_i}^K\big(\varphi u+(1-\varphi)u_i\big)$, where $K$ is chosen with $\spt(\varphi)\Subset K\Subset\Omega\cap\Omega_i$ for large $i$. Expanding this inequality we have
	\[\int_\Omega\varphi|\D_{g_i}u_i|_{g_i}\,dV_{g_i}\leq\int_\Omega\varphi|\D_{g_i}u|_{g_i}\,dV_{g_i}+\int_\Omega|u-u_i|\,\big(\varphi|\D_{g_i}u_i|_{g_i}+|\D_{g_i}\varphi|_{g_i}\big)\,dV_{g_i}.\]
	Taking $i\to\infty$, by items (2)(3)(4) and \eqref{eq-prelim:lower_semi_cont}, we obtain exact continuity
	\begin{equation}\label{eq-prelim:limit_du}
		\int_\Omega\varphi|\D_g u|_g\,dV_g=\lim_{i\to\infty}\int_\Omega\varphi|\D_{g_i}u_i|_{g_i}\,dV_{g_i}.
	\end{equation}
	Combined with item (4), this implies our claim.
\end{proof}

\subsection{Weak solutions with weights}\label{subsec:weight}

Given a function $\psi\in C^\infty(\Omega)$, consider the following weighted version of IMCF:
\begin{equation}\label{eq-prelim:weighted_imcf}
	\div\Big(e^\psi\frac{\D u}{|\D u|}\Big)=e^\psi|\D u|.
\end{equation}
In the smooth regime, this corresponds to the curvature flow
\[\frac{\p\Sigma_t}{\p t}=\frac{\nu}{H+\p\psi/\p\nu}.\]

We call $u$ a subsolution (resp. supersolution) of \eqref{eq-prelim:weighted_imcf}, if the equality sign is replaced by ``\,$\geq$\,'' (resp. ``\,$\leq$\,'').

\begin{lemma}\label{lemma-prelim:def_weighted}
	Regarding a function $u\in\Lip_{\loc}(\Omega)$, the following are equivalent:
	
	(1) for all $v\in\Lip_{\loc}(\Omega)$ and every domain $K$ with $\{u\ne v\}\Subset K\Subset\Omega$, we have
	\begin{equation}\label{eq-weight:energy_min}
		\int_K e^\psi\big(|\D u|+u|\D u|\big)\leq\int_K e^\psi\big(|\D v|+v|\D u|\big).
	\end{equation}
	
	(2) Setting $\tilde\Omega=\Omega\times S^1$ with the warped product metric $\tilde g=g+e^{2\psi(x)}dz^2$, the function $\tilde u(x,z)=u(x)$ is a solution of $\IMCF{\tilde\Omega,\tilde g}$.
	
	(3) $u$ is a solution of $\IMCF{\Omega;g'}$, where $g'=e^{\frac{2\psi}{n-1}}g$.
	
	\vspace{1pt}
	
	We call $u$ a weak solution of \eqref{eq-prelim:weighted_imcf}, if any of the above conditions holds.
\end{lemma}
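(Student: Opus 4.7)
The plan is to establish the three equivalences by matching energies: (1) $\Leftrightarrow$ (3) follows from a direct conformal change of metric, while (1) $\Leftrightarrow$ (2) combines a product-structure identification with a $z$-averaging argument.

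First, for (1) $\Leftrightarrow$ (3) I would substitute the conformal transformation laws $dV_{g'} = e^{n\psi/(n-1)}\,dV_g$ and $|\D v|_{g'} = e^{-\psi/(n-1)}|\D v|_g$ into the unweighted energy $J_u^K(v)$ computed with respect to $g'$. The $\lambda$-exponents combine as $-1 + n = n-1$, producing exactly the weight $e^\psi$ and reducing $J_u^K$ in $g'$ to \eqref{eq-weight:energy_min}. So $\IMCF{\Omega,g'}$ is by definition the same variational problem as (1).

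Next, for (2) $\Leftrightarrow$ (1) I would first observe that $\tilde g = g + e^{2\psi(x)}\,dz^2$ on $\tilde\Omega = \Omega\times S^1$ is rotationally invariant in $z$, with volume form $e^\psi\,dV_g\,dz$, and for any $z$-independent competitor $\tilde v(x,z)=v(x)$ one has $|\D_{\tilde g}\tilde v|_{\tilde g}=|\D v|_g$. Integrating over $S^1$ turns the unweighted energy on $(\tilde\Omega,\tilde g)$ into exactly $2\pi$ times the weighted energy on $(\Omega,g)$. Applying (2) only to $z$-independent competitors thus immediately yields (1).

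The main step is (1) $\Rightarrow$ (2), where competitors on $\tilde\Omega$ may genuinely depend on $z$. Given $\tilde v$ with $\{\tilde u\ne\tilde v\}\subset\subset\tilde K\subset\subset\tilde\Omega$, I would enlarge $\tilde K$ to a product $K\times S^1$ (with $K\subset\subset\Omega$) and replace $\tilde v$ by its $S^1$-average $\bar v(x):=\tfrac{1}{2\pi}\int_{S^1}\tilde v(x,z)\,dz$. The warped-product formula gives $|\D_{\tilde g}\tilde v|_{\tilde g}^2 = |\D_g^x\tilde v|_g^2 + e^{-2\psi}(\p_z\tilde v)^2$, so $|\D_{\tilde g}\tilde v|_{\tilde g}\ge|\D_g^x\tilde v|_g$ pointwise. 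Combined with Jensen's inequality in $z$ applied to the convex norm $|\cdot|_g$, the gradient term of $\bar v$ is bounded above (after multiplication by the $e^\psi$ weight and integration) by the corresponding term of $\tilde v$, up to the factor $2\pi$. The linear term transforms exactly, since $|\D_{\tilde g}\tilde u|_{\tilde g}=|\D u|_g$ is $z$-independent. Hence $J_{\tilde u}^{\tilde K}(\bar v)\le J_{\tilde u}^{\tilde K}(\tilde v)$, and applying (1) to the $z$-independent $\bar v$ (which inherits $\{u\ne\bar v\}\subset\subset K$ from $\tilde v$) completes (2).

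I expect the main obstacle to be the averaging step: one needs the rotational invariance of the product metric (guaranteed because $\psi$ is $z$-independent), and one needs the Jensen bound $|\D\bar v|_g \le \tfrac{1}{2\pi}\int|\D_g^x\tilde v|_g\,dz$ to survive after being weighted by $e^\psi$, which is automatic since $e^\psi$ does not depend on $z$. Once these points are handled, the rest is bookkeeping. No deeper analytic machinery is needed beyond convexity and the warped-product formula for $|\D_{\tilde g}\cdot|_{\tilde g}$.
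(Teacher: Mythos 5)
Your proof is correct, and the strategy — match the weighted energy \eqref{eq-weight:energy_min} against the unweighted IMCF energy in the two auxiliary metrics, using the conformal change for (3) and the warped product with fiber volume factor $e^\psi$ for (2) — is the same one the paper uses. The only place you deviate is in the direction $(1)\Rightarrow(2)$: the paper first drops the $\p_z$ component of the gradient using $|\D_{\tilde g}\tilde v|\ge|\D^x_g\tilde v|$, then applies the hypothesis (1) to the slice $\tilde v(\cdot,z)$ for each fixed $z$ and integrates by Fubini, whereas you average $\tilde v$ over $S^1$ first and then invoke (1) once for $\bar v$, controlling $|\D\bar v|_g$ by Minkowski's integral inequality (Jensen for the norm). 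Both routes are sound; the paper's slicewise argument avoids the convexity step at the cost of applying (1) to a one-parameter family of competitors, while yours trades that for a single application of (1) plus the Jensen bound. You already verify the one subtlety your variant introduces, namely that $\{u\ne\bar v\}\subset\subset K$ and $\bar v\in\Lip_{\loc}(\Omega)$, so there is no gap. The arguments for $(2)\Rightarrow(1)$ and $(1)\Leftrightarrow(3)$ coincide with the paper's.
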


The characterization (3) allows the standard theory of IMCF to be extended to the weighted case. In particular, if $u$ weakly solves \eqref{eq-prelim:weighted_imcf}, then each $E_t$ locally minimizes
\begin{equation}\label{eq-weight:energy_E}
	E\mapsto\int_{\p^*E}e^\psi\,d\H^{n-1}-\int_Ee^\psi|\D u|
\end{equation}
in the same sense with Definition \ref{def-prelim:energies}. The notion of weak sub- and super-solutions can be defined similarly. The maximum principle (Theorem \ref{thm-prelim:max_principle}) holds for weighted solutions as well. When $\psi=0$ in a certain region, $u$ reduces to a usual weak solution of IMCF.

In the context of mean curvature flow, similar warping and conformal transformations are used to relate mean curvature flow to minimal surfaces, see \cite{Ilmanen_1994, Mari-deOliveiraSavas-Halilaj-deSena_2023, Smoczyk_2001}.

\begin{proof}[Proof of Lemma \ref{lemma-prelim:def_weighted}] {\ }
	
	(1)\,$\Rightarrow$\,(2). Given a function $\tilde v(x,t)$ and domain $\tilde K$ with $\{\tilde u\ne\tilde v\}\Subset\tilde K\Subset\Omega\times S^1$. By enlarging $\tilde K$, we may assume that $\tilde K=K\times S^1$ for some $K\Subset\Omega$. 
	We need to show
	\begin{equation}\label{eq-weight:aux1}
		\int_{\tilde K}\big(|\tilde\D\tilde u|+\tilde u|\tilde\D\tilde u|\big)\,dV_{\tilde g}\leq\int_{\tilde K}\Big(|\tilde\D\tilde v|+\tilde v|\tilde\D\tilde u|\Big)\,dV_{\tilde g},
	\end{equation}
	where $|\tilde\D\cdot|$ denotes the norm of gradient with respect to $\tilde g$. Using the facts $dV_{\tilde g}(x,z)=e^{\psi(x)}dV_g(x)dz$, $\tilde u(x,z)=u(x)$ and $|\D\tilde v(x,z)|\geq|\D_x\tilde v(x,z)|$, it is sufficient to show
	\[2\pi\int_{K}e^{\psi}\big(|\D u|+u|\D u|\big)dV_g\leq\int_{K\times S^1}e^{\psi(x)}\Big(|\D_x\tilde v(x,z)|+\tilde v(x,z)|\tilde\D\tilde u(x)|\Big)\,dV_g(x)dz.\]
	However, this follows from Fubini's theorem and \eqref{eq-weight:energy_min}.
	
	(2)\,$\Rightarrow$\,(1): It is now our hypothesis that \eqref{eq-weight:aux1} holds for any competitor $\tilde v$. Choosing $\tilde v(x,t)=v(x)$ and $\tilde K=K\times S^1$, we find that \eqref{eq-weight:aux1} implies item (1).
	
	(1)\,$\Leftrightarrow$\,(3): Note that $dV_{g'}=e^{\frac{n\psi}{n-1}}dV_g$ and $|\D_{g'}f|=e^{-\frac{\psi}{n-1}}|\D_g f|$ for all $f$. Therefore, $u$ being a solution of $\IMCF{\Omega,g'}$ is equivalent to
	\[\int_K\big(e^{-\frac{\psi}{n-1}}|\D_g u|+ue^{-\frac{\psi}{n-1}}|\D_g u|\big)e^{\frac{n\psi}{n-1}}dV_g
		\leq \int_K\big(e^{-\frac{\psi}{n-1}}|\D_g v|+ve^{-\frac{\psi}{n-1}}|\D_g u|\big)e^{\frac{n\psi}{n-1}}dV_g\]
	whenever $\{u\ne v\}\Subset K\Subset\Omega$. This inequality is identical with \eqref{eq-weight:energy_min}.
\end{proof}

\subsection{Calibrated weak solutions}\label{subsec:calibrated}

The notion of calibrated weak solutions of IMCF plays a central role in the main existence theorem. The following definition first appeared in \cite[Section 3]{Huisken-Ilmanen_2001}:

\begin{defn}\label{def-prelim:calibrated}
	Let $\Omega$ be a domain in a manifold $M$, and $u\in\Lip_{\loc}(\Omega)$. We say that $u$ is a calibrated solution of IMCF in $\Omega$, if there is a measurable vector field $\nu$ such that:
	
	(1) $\esssup_\Omega|\nu|\leq1$, and $\metric{\D u}{\nu}=|\D u|$ almost everywhere,
	
	(2) for all $\varphi\in\Lip_0(\Omega)$ we have
	\begin{equation}\label{eq-prelim:calibration}
		\int_\Omega\big(\D\varphi\cdot\nu+\varphi|\D u|\big)=0.
	\end{equation}
	
	\noindent Given a solution $u$ of $\IMCF{\Omega}$, we say that $u$ is calibrated by $\nu$, if (1)(2) are satisfied for the data $u$, $\nu$.
\end{defn}


A calibrated solution is always a weak solution satisfying Definition \ref{def-prelim:weak_sol}. Indeed, given $v\in\Lip_{\loc}(\Omega)$ and $K$ with $\{u\ne v\}\Subset K\Subset M$, the inequality $J_u^K(u)\leq J_u^K(v)$ follows by taking $\varphi=u-v$ in \eqref{eq-prelim:calibration}.

\begin{remark}\label{rmk-prelim:calibration_normal_vec}
	Suppose a solution $u$ of $\IMCF{\Omega}$ is calibrated by $\nu$. Then for almost every $t$, we have $\nu_{E_t}=\nu$ a.e. on $\p^*E_t\cap\Omega$. This follows from $\metric{\D u}{\nu}=|\D u|$.
\end{remark}

We prove the following compactness theorem for calibrated solutions. In item (6) below, we say that a sequence of vector fields $\nu_i$ converge to $\nu$ weakly in $L^1_{\loc}$, if for all $L^\infty$ vector field $X$ with $\supp(X)\Subset\Omega$, we have $\int_\Omega\metric{\nu}{X}=\lim_{i\to\infty}\int_\Omega\metric{\nu_i}{X}$. This notion of convergence is independent of the choice of background metric.

\begin{theorem}[compactness of calibrated solutions]\label{thm-prelim:cptness_calibrated}
	Let $\Omega\subset M$ be a domain, and $g$ be a fixed smooth Riemannian metric on $\Omega$. Given the following data:
	
	(1) $\Omega_i$ is a sequence of domains that locally uniformly converge to $\Omega$,
	
	(2) $g_i$ are smooth metrics on $\Omega_i$, which locally uniformly converge to $g$,
	
	(3) $u_i\in\Lip_{\loc}(\Omega_i)$ solve $\IMCF{\Omega_i,g_i}$ and are calibrated by $\nu_i$,
	
	(4) for each $K\Subset\Omega$ we have $\sup_K|\D u_i|_{g_i}\leq C(K)$ for all sufficiently large $i$,
	
	(5) $u_i$ locally uniformly converges to a function $u$ on $\Omega$,
	
	(6) $\nu_i$ converges to a vector field $\nu$ weakly in $L^1_{\loc}$.
	
	\noindent Then $u$ is solves $\IMCF{\Omega,g}$ and is calibrated by $\nu$.
\end{theorem}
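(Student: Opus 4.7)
The plan is to verify the three conditions from Definition \ref{def-prelim:calibrated} for the pair $(u,\nu)$: that $u$ is a weak solution of $\IMCF{\Omega,g}$, that $|\nu|_g \leq 1$ and $\metric{\D u}{\nu}_g = |\D u|_g$ almost everywhere, and that the integral identity \eqref{eq-prelim:calibration} holds. The first is immediate from Theorem \ref{thm-prelim:compactness}. For the bound $|\nu|_g \leq 1$: since $g_i \to g$ locally uniformly and $|\nu_i|_{g_i} \leq 1$, we have $\limsup_i \|\nu_i\|_{L^\infty_g(K)} \leq 1$ for each $K \subset\subset \Omega$, so up to a subsequence $\nu_i \to \nu$ weakly-$*$ in $L^\infty_{\loc}$, and weak-$*$ lower semicontinuity of the $L^\infty$ norm gives $|\nu|_g \leq 1$ a.e.

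For the integral identity, fix $\varphi \in \Lip_0(\Omega)$. For large $i$, $\supp(\varphi) \subset \Omega_i$, and the calibration identity for $(u_i, \nu_i)$ reads
\[
\int_{\Omega_i}\bigl(d\varphi(\nu_i) + \varphi |\D u_i|_{g_i}\bigr)\, dV_{g_i} = 0,
\]
using that the pairing $\metric{\D\varphi}{\nu_i}_{g_i} = d\varphi(\nu_i)$ is metric-independent. Write $dV_{g_i} = J_i\, dV_g$ with $J_i \to 1$ locally uniformly. For the first term, hypothesis (6) combined with $J_i d\varphi \to d\varphi$ in $L^\infty$ yields convergence to $\int d\varphi(\nu)\, dV_g$. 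For the second term, apply formula \eqref{eq-prelim:limit_du} from the proof of Theorem \ref{thm-prelim:compactness} to $\varphi^\pm$ separately. This establishes the integral identity for $(u,\nu)$.

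It remains to verify the pointwise identity $\metric{\D u}{\nu}_g = |\D u|_g$ a.e. The Cauchy--Schwarz direction $\leq$ is immediate from $|\nu|_g \leq 1$, so it suffices to prove $\int \varphi \metric{\D u}{\nu}_g\, dV_g \geq \int \varphi|\D u|_g\, dV_g$ for every non-negative $\varphi \in \Lip_0(\Omega)$. The key trick is to test the calibration equation for $(u_i, \nu_i)$ against $\varphi u_i \in \Lip_0(\Omega_i)$; using $\metric{\D u_i}{\nu_i}_{g_i} = |\D u_i|_{g_i}$ to collapse the $\varphi\, \D u_i \cdot \nu_i$ term, this gives
\[
\int u_i\, d\varphi(\nu_i)\, dV_{g_i} = -\int \varphi(1+u_i)|\D u_i|_{g_i}\, dV_{g_i}.
\]
Passing to the limit (using that $u_i \to u$ uniformly on $\supp(\varphi)$ lets us absorb $u_i$ into either the $L^\infty$ coefficient paired against $\nu_i$, or as a perturbation of the test function in \eqref{eq-prelim:limit_du}) produces $\int u\, d\varphi(\nu)\, dV_g = -\int \varphi(1+u)|\D u|_g\, dV_g$. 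Combining this with the integral identity already proved above, applied to the test function $\varphi u \in \Lip_0(\Omega)$, cancels the $u\, d\varphi(\nu)$ term and leaves $\int \varphi \metric{\D u}{\nu}_g\, dV_g = \int \varphi |\D u|_g\, dV_g$, as desired. The main obstacle is exactly this pointwise identity: the weak calibration equation on its own does not deliver it, and one must exploit the IMCF structure by a second round of testing with $\varphi u_i$ and invoking the continuity formula \eqref{eq-prelim:limit_du}.
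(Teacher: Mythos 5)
Your proof is correct and follows essentially the same route as the paper's: establish $|\nu|_g\leq 1$ from the uniform bound and weak convergence, pass the calibration identity for $(u_i,\nu_i)$ to the limit using the continuity of $\int\varphi|\D u_i|$ from the interior compactness argument, and then extract the pointwise identity $\metric{\D u}{\nu}=|\D u|$ by the key trick of testing with $\varphi u_i$ (resp.\ $\varphi u$) and subtracting — which is exactly the step the paper performs with $\psi=\varphi$. The only cosmetic difference is that you invoke Theorem \ref{thm-prelim:compactness} separately to conclude $u$ solves $\IMCF{\Omega,g}$, whereas the paper simply observes that once calibration is verified the weak-solution property follows automatically (as noted right after Definition \ref{def-prelim:calibrated}); either way is fine.
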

\begin{proof}
	From items (2)(4)(5) we have $u\in\Lip_{\loc}(\Omega)$. From (2)(6) and $\esssup|v_i|_{g_i}\leq1$ we have $\esssup|v|_g\leq1$. We have argued in \eqref{eq-prelim:limit_du} that for any $\varphi\in\Lip_0(\Omega)$ with $\varphi\geq0$, it holds
	\begin{equation}\label{eq-prelim:limit_du_2}
		\int\varphi|\D_g u|_g\,dV_g=\lim_{i\to\infty}\int\varphi|\D_{g_i}u_i|_{g_i}\,dV_{g_i}.
	\end{equation}
	Clearly, this identity holds for all $\varphi\in\Lip_0(\Omega)$ as well.
	
	Suppose $\varphi\in\Lip_0(\Omega)$. It follows from items (2)(6) that
	\begin{equation}\label{eq-prelim:aux4}
		\int\metric{\nu}{\D_g\varphi}_g\,dV_g=\lim_{i\to\infty}\int\metric{\nu_i}{\D_{g_i}\varphi}_{g_i}\,dV_{g_i}.
	\end{equation}
	Recall that each pair $(u_i,\nu_i)$ satisfies
	\[\int\big(\metric{\nu_i}{\D_{g_i}\varphi}_{g_i}+\varphi|\D_{g_i}u_i|_{g_i}\big)\,dV_{g_i}=0.\]
	Taking $i\to\infty$ and combining with \eqref{eq-prelim:limit_du_2} \eqref{eq-prelim:aux4}, the identity \eqref{eq-prelim:calibration} is verified.
	
	Finally, we show $\metric{\nu}{\D_g u}_g=|\D_gu|_g$ a.e.. Taking $\varphi=u_i\psi$ in the calibration condition \eqref{eq-prelim:calibration} for $u_i$ (where $\psi\in\Lip_0(\Omega)$ and $i$ is sufficiently large), we obtain
	\begin{align}
		0 &= \int\metric{\nu_i}{u_i\D_{g_i}\psi+\psi\D_{g_i}u_i}_{g_i}\,dV_{g_i}+\int u_i\psi|\D_{g_i} u_i|_{g_i}\,dV_{g_i} \nonumber\\
		&= \int\metric{\nu_i}{u_i\D_{g_i}\psi}_{g_i}\,dV_{g_i}+\int(1+u_i)\psi|\D_{g_i}u_i|_{g_i}\,dV_{g_i}. \label{eq-prelim:aux5}
	\end{align}
	By items (2)(5)(6), the first term of \eqref{eq-prelim:aux5} converges to $\int\metric{\nu}{u\D_g\psi}_g\,dV_g$. By \eqref{eq-prelim:limit_du_2} and items (2)(4)(5), the second term of \eqref{eq-prelim:aux5} converges to $\int(1+u)\psi|\D_g u|_g\,dV_g$. Therefore
	\[0=\int\metric{\nu}{u\D_g\psi}_g\,dV_g+\int(1+u)\psi|\D_g u|_g\,dV_g.\]
	On the other hand, since we have already verified \eqref{eq-prelim:calibration} for the pair $(u,\nu)$, we may apply $\varphi=u\psi$ there to obtain
	\[0=\int\metric{\nu}{u\D_g\psi}_g+\metric{\nu}{\psi\D_g u}_g+u\psi|\D_g u|_g.\]
	In comparison, it holds
	\[\int\psi\metric{\nu}{\D_g u}_g=\int\psi|\D_g u|_g.\]
	Since $\psi$ is arbitrary, we conclude that $\metric{\nu}{\D_g u}_g=|\D_g u|_g$ a.e.. This proves that $u$ is a weak solution calibrated by $\nu$.
\end{proof}

\subsection{The elliptic regularization process}\label{subsec:ellreg}

In \cite{Huisken-Ilmanen_2001}, the initial value problem of weak IMCF is solved by means of elliptic regularization. In this subsection, we recall the analytic results obtained there. We assume the following setup: $E_0$ is a precompact $C^{1,1}$ domain in a complete manifold $M$, and $v$ is proper function on $M$, such that $\{v<0\}\Supset E_0$, and $v$ is a smooth subsolution of IMCF in $M\setminus\bar{\{v<0\}}$ with nonvanishing gradient there (namely, $v$ is a smooth proper subsolution with an initial value containing $E_0$).

For each $\epsilon>0$ and $L>2$, consider the region $F_L=\{v<L\}$ (which is smooth and precompact by our assumptions), and the boundary value problem

\begin{align}[left=\empheqlbrace]
	& \div\Big(\frac{\D u_{\epsilon,L}}{\sqrt{\epsilon^2+|\D u_{\epsilon,L}|^2}}\Big)=\sqrt{\epsilon^2+|\D u_{\epsilon,L}|^2}\qquad\text{on }F_L\setminus\bar{E_0}, \label{eq-ellreg:reg_eq}\\
	& u_{\epsilon,L}=0\qquad\text{on }\p E_0, \label{eq-ellreg:reg_eq2}\\
	& u_{\epsilon,L}=L-2\qquad\text{on }\p F_L. \label{eq-ellreg:reg_eq3}
\end{align}
Equation \eqref{eq-ellreg:reg_eq} has the following meaning: if $u_\epsilon$ is a solution of \eqref{eq-ellreg:reg_eq} in a domain $\Omega$, then the function
\[U_\epsilon(x,z)=u_\epsilon(x)-\epsilon z\]
is a smooth solution of IMCF in $\Omega\times\RR$. Equivalently, the family of hypersurfaces
\[\Sigma_t^\epsilon=\graph\Big(\epsilon^{-1}\big(u_\epsilon(x)-t\big)\Big)\]
is a downward translating soliton of IMCF (with translation speed $\epsilon^{-1}$).

\vspace{3pt}

The gradient estimate for the smooth IMCF follows from a parabolic estimate. The following is proved in \cite{Huisken-Ilmanen_2001}, see also \cite{Choi-Daskalopoulos_2021}.

\begin{defn}\label{def-prelim:sigma_x}
	Let $\Omega$ be a domain in $M$, and $g$ be a Riemannian metric on $M$. For $x\in\Omega$, we define $\sigma(x;\Omega,g)$ to be the supremum of radius $r$ such that
	
	(1) $B_g(x,r)\Subset\Omega$, and $\Ric_g>-1/(100nr^2)$ in $B_g(x,r)$,
	
	(2) the distance function $p=d(\cdot,x)^2$ is smooth and satisfies $\D^2_gp<3g$ in $B_g(x,r)$.
	
	\noindent When there is no ambiguity, we write $\sigma(x)$ and omit the dependence on $\Omega,g$.
\end{defn}

\begin{theorem}[gradient estimate for smooth solutions, {\cite[(3.6)]{Huisken-Ilmanen_2001}}]\label{thm-prelim:grad_est}\label{thm-ellreg:grad_est_smooth} {\ }
	
	Let $\{\Sigma_t\}_{a\leq t\leq b}$ be a family of smooth hypersurfaces that solves the IMCF in $\Omega$. Denote by $H_t$ the mean curvature of $\Sigma_t$. Suppose $x\in\Sigma_t$ and $r<\sigma(x;\Omega,g)$. Define the maximal mean curvature on the parabolic boundary:
	\[H_r=\max\Big\{\sup_{\Sigma_a\cap B(x,r)}H_a,\ \sup_{s\in[a,t]}\big(\sup_{\p\Sigma_s\cap B(x,r)}H_s\big)\Big\}\]
	Then we have
	\begin{equation}
		H_t(x)\leq\max\Big\{H_r,\,\frac{C(n)}r\Big\}.
	\end{equation}
\end{theorem}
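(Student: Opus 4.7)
The approach is the classical parabolic maximum-principle argument of Huisken--Ilmanen \cite[Section 3]{Huisken-Ilmanen_2001} applied to the mean curvature $H$, localized in $B_g(x_0,r)$ by a spatial cutoff built from the squared distance. The first step is to derive the evolution equation for $H$ under the smooth IMCF. The standard formula for $\p_t H$ under a normal flow with speed $1/H$, combined with $\Delta_\Sigma(1/H) = -\Delta_\Sigma H/H^2 + 2|\DSigma H|^2/H^3$, yields
\[
\Big(\p_t - H^{-2}\Delta_{\Sigma_t}\Big)H = -\frac{2|\DSigma H|^2}{H^3} - \frac{|A|^2 + \Ric(\nu,\nu)}{H}.
\]
Both right-hand terms are favorable: $|A|^2 \geq H^2/(n-1)$ gives quadratic damping in $H$, and the Ricci term is controlled by the bound $\Ric_g > -(100nr^2)^{-1}$ that is built into the definition of $\sigma(x)$.

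Next, I would localize using the squared distance. Set $p(y) = d_g(y,x_0)^2$, which is smooth on $B_g(x_0,r)$ with $\Hess_M p > 3g$ by hypothesis. The ambient--intrinsic identity $\Delta_\Sigma p = \tr_\Sigma \Hess_M p - H\metric{\D^M p}{\nu}$, together with $|\D^M p| \leq 2\sqrt{p} \leq 2r$, gives
\[
\Delta_\Sigma p \geq 3(n-1) - 2rH.
\]
Pick a smooth decreasing cutoff $\phi:[0,r^2]\to[0,\infty)$ with $\phi(r^2)=0$, $\phi(0)=1$ (for instance $\phi(s) = (1-s/r^2)_+^\alpha$ for $\alpha>1$ to be tuned), and set the test function $w = H\,\phi(p)$ on the parabolic cylinder $Q := \{(y,s) : s\in[a,t],\ y\in \Sigma_s\cap B_g(x_0,r)\}$.

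Apply the maximum principle to $w$ on $\bar Q$. If the maximum is attained on the parabolic boundary, the lateral face $\Sigma_s\cap\p B_g(x_0,r)$ is ruled out since $w\equiv 0$ there, so the maximum occurs on $\Sigma_a\cap B_g(x_0,r)$ or on $\p\Sigma_s\cap B_g(x_0,r)$; this yields $w \leq H_r$ on $\bar Q$, and evaluating at $y=x_0$ (where $\phi(0)=1$) gives $H_t(x_0)\leq H_r$. Otherwise the maximum is interior, say at $(y^*,s^*)$: there $\DSigma w=0$ implies $\DSigma H = -(H\phi'/\phi)\DSigma p$, and the two $(\phi'/\phi)^2|\DSigma p|^2$ terms cancel between the second-order inequality $\Delta_\Sigma w\leq 0$ and the gradient-squared term in the evolution equation. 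What remains, after inserting $|A|^2 \geq H^2/(n-1)$, the Ricci bound, and $\Delta_\Sigma p \geq 3(n-1) - 2rH$, becomes a quadratic inequality in $H$ whose coefficients depend only on $n$, $r$, and $\phi$; solving it gives $H(y^*,s^*) \leq C(n)/r$, and since $(y^*,s^*)$ is the maximum of $w$, this transfers to the pointwise bound $H_t(x_0)\leq C(n)/r$.

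The hard part is the quantitative bookkeeping at the interior maximum: the exponent $\alpha$ in $\phi$ must be selected so that the unfavorable term $H\phi''|\DSigma p|^2$ is dominated by the favorable $\phi'\,\Delta_\Sigma p$ contribution \emph{uniformly} in the ambient metric inside $B_g(x_0,r)$. The strengthened Hessian bound $\Hess_M p > 3g$ (rather than the bare Euclidean value $2g$) supplies precisely the slack needed to absorb the corrective mean-curvature term $-H\metric{\D^M p}{\nu}$ and to close the quadratic argument with a universal constant $C(n)$ independent of the local geometry. Once this is secured, the dichotomy completes the proof.
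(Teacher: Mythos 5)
Your strategy is the standard Huisken--Ilmanen localization-and-maximum-principle argument, and since the paper cites this estimate from \cite[(3.6)]{Huisken-Ilmanen_2001} without giving its own proof, this is exactly the intended source. The framework is right: the evolution equation $\Box H = -2|\DSigma H|^2/H^3 - (|A|^2+\Ric(\nu,\nu))/H$, the cutoff $w = H\,\phi(p)$ with $p=d(\cdot,x_0)^2$, the vanishing of $w$ on the lateral boundary, and the key cancellation of the $H^{-1}(\phi'/\phi)^2|\DSigma p|^2$ terms at an interior critical point (from the $-2|\DSigma H|^2/H^3$ in the evolution equation against the cross term $-2H^{-2}\metric{\DSigma H}{\DSigma\phi}$) are all correct, and the quadratic damping from $|A|^2\geq H^2/(n-1)$ closes the argument once the cutoff terms are controlled by $O(1/r)$ and $O(1/r^2)$.

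However, there are two points that do not close as written. First, after the cancellation what survives is
$0 \leq -\phi(|A|^2+\Ric)/H + 2\phi'\metric{\D^M p}{\nu} - H^{-1}\phi'\,\tr_\Sigma\Hess_M p - H^{-1}\phi''|\DSigma p|^2$,
and since $\phi'\leq 0$ the term $-H^{-1}\phi'\,\tr_\Sigma\Hess_M p$ is positive and needs an \emph{upper} bound on $\tr_\Sigma\Hess_M p$ to be absorbed; the lower bound $\Delta_\Sigma p\geq 3(n-1)-2rH$ that you extract from $\Hess_M p>3g$ points in the wrong direction. (The condition $\D^2_g p > 3g$ as written in Definition \ref{def-prelim:sigma_x} cannot hold on any ball around $x$, because $\D^2_g(d(\cdot,x)^2)|_x = 2g$; the intended condition is evidently $\D^2_g p < 3g$, which does hold for $r$ small and supplies precisely the needed upper bound $\tr_\Sigma\Hess_M p < 3(n-1)$. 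So this is a sign that was inherited from the paper's statement but must be flipped for the argument to work.) Second, your remark that the exponent $\alpha$ in $\phi(s)=(1-s/r^2)_+^\alpha$ needs delicate tuning to tame an ``unfavorable'' $\phi''$ term is off: for $\alpha>1$ this cutoff is convex, so $\phi''\geq 0$ and $-H^{-1}\phi''|\DSigma p|^2 \leq 0$ is favorable; the only role of $\alpha>1$ is to make $\phi|\phi'|$ and $\phi^2|\phi''|$ bounded by $C(n)/r^2$ up to the boundary, which is automatic. Finally, the quantity you bound at the interior maximum is $w=H\phi$, not $H$ itself; the transfer $H_t(x_0)=w(x_0,t)\leq w(y^*,s^*)$ is what yields the conclusion, and should be stated explicitly.
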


\begin{cor}\label{cor-ellreg:grad_est}
	Let $u\in C^\infty(\Omega)$ solve the smooth IMCF $\div\big(\frac{\D u}{|\D u|}\big)=\D u$ in $\Omega$, with $|\D u|$ nonvanishing. Then
	\begin{equation}
		|\D u|(x)\leq\frac{C(n)}{\sigma(x;\Omega,g)},\qquad \forall\,x\in\Omega.
	\end{equation}
	In particular, when $\Omega$ is precompact, we have $|\D u|(x)\leq C(\Omega)\cdot d(x,\p\Omega)^{-1}$.
\end{cor}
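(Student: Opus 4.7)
The plan is to apply Theorem~\ref{thm-prelim:grad_est} to the level set flow parametrized by $u$ itself. Recall that if $u\in C^\infty(\Omega)$ solves $\div(\D u/|\D u|)=|\D u|$ with $|\D u|\neq 0$, then the level sets $\Sigma_s:=\{u=s\}$ form a smooth foliation evolving by IMCF, and the mean curvature satisfies $H_{\Sigma_s}=\div(\D u/|\D u|)=|\D u|$ along $\Sigma_s$. So bounding $|\D u|(x)$ is the same as bounding $H_{\Sigma_{u(x)}}(x)$.

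Fix $x_0\in\Omega$, set $t_0:=u(x_0)$, and let $r<\sigma(x_0;\Omega,g)$. By Definition~\ref{def-prelim:sigma_x}, $B(x_0,r)\subset\subset\Omega$, so $u$ is smooth up to $\overline{B(x_0,r)}$ and in particular $\inf_{\overline{B(x_0,r)}}u>-\infty$. Choose any $a\in\RR$ with
\[
a<\inf_{\overline{B(x_0,r)}}u,\qquad a<t_0,
\]
and apply Theorem~\ref{thm-prelim:grad_est} to the family $\{\Sigma_s\}_{s\in[a,t_0]}$ at the point $x_0\in\Sigma_{t_0}$. The two contributions to $H_r$ vanish: the term $\sup_{\Sigma_a\cap B(x_0,r)}H_a$ is the supremum over the empty set by the choice of $a$, and the term $\sup_{s\in[a,t_0]}\sup_{\p\Sigma_s\cap B(x_0,r)}H_s$ vanishes because each $\Sigma_s$ is an embedded level set of $u$ in $\Omega$, hence has no intrinsic boundary inside $B(x_0,r)\subset\subset\Omega$. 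Theorem~\ref{thm-prelim:grad_est} then yields
\[
|\D u|(x_0)=H_{t_0}(x_0)\leq\frac{C(n)}{r}.
\]
Letting $r\nearrow\sigma(x_0;\Omega,g)$ gives the first claim.

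For the second statement, suppose $\Omega\subset\subset M$ and let $x\in\Omega$. On the compact set $\overline{\Omega}$, the metric $g$ has uniformly bounded Ricci curvature, and the injectivity radius is uniformly bounded below at points of $\overline{\Omega}$ by a constant $\iota(\Omega)>0$. Consequently, there is a constant $c=c(\Omega)>0$ such that conditions (1) and (2) of Definition~\ref{def-prelim:sigma_x} hold on any ball $B(x,r)\subset\subset\Omega$ with $r\leq c$. Thus $\sigma(x;\Omega,g)\geq\min\{c,\,d(x,\p\Omega)\}$, and combining with the first part gives $|\D u|(x)\leq C(\Omega)\,d(x,\p\Omega)^{-1}$ as claimed.

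I do not expect any real obstacle here: the whole statement is a direct consequence of Theorem~\ref{thm-prelim:grad_est} once one chooses the initial time $a$ low enough that the parabolic boundary contribution inside $B(x_0,r)$ is empty. The only point requiring a moment of care is verifying that $\p\Sigma_s\cap B(x_0,r)=\emptyset$, which follows from $B(x_0,r)\subset\subset\Omega$ and the fact that $u$ is smooth with nonvanishing gradient on $\Omega$, so its level sets are embedded closed hypersurfaces of $\Omega$ without intrinsic boundary.
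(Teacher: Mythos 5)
Your proof is correct, and since the paper states this corollary without a proof, your argument is precisely the intended application of Theorem~\ref{thm-prelim:grad_est}: parametrize the IMCF by the level sets $\Sigma_s=\{u=s\}$, push the initial time $a$ low enough that both contributions to $H_r$ are suprema over empty sets, and let $r\nearrow\sigma(x_0;\Omega,g)$. One small simplification worth noting: you do not need $a$ strictly below $\inf_{\overline{B(x_0,r)}}u$; taking $a=\min_{\overline{B(x_0,r)}}u$ already forces $\Sigma_a\cap B(x_0,r)=\emptyset$, because a minimum of $u$ attained in the \emph{open} ball would be a critical point of $u$, contradicting $|\D u|\neq 0$, so the minimum lies on $\p B(x_0,r)$. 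This choice guarantees $a$ is in the range of $u$ and sidesteps any concern about $\Sigma_a$ being the empty hypersurface, while also giving $a<t_0$ automatically by the same critical-point argument applied at $x_0$. Your reduction of the precompact case via $\sigma(x;\Omega,g)\geq\min\{c(\Omega),d(x,\p\Omega)\}$ and $d(x,\p\Omega)\leq\diam(\Omega)$ is also fine.
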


Combining \cite[Lemma 3.4]{Huisken-Ilmanen_2001} and \cite[Lemma 3.5]{Huisken-Ilmanen_2001}, we have the following existence theorem for the regularized equation:

\begin{theorem}[approximate existence]\label{thm-ellreg:existence} {\ }
	
	Let $M$ be complete, and $E_0\Subset M$ be a $C^{1,1}$ domain. Suppose $v\in C^\infty(M)$ is proper, such that $\{v<0\}\Supset E_0$, and $v$ is a subsolution of IMCF in $M\setminus\bar{\{v<0\}}$ with nonvanishing gradient there. Then for each $L>2$ there is a small $\epsilon(L)>0$ such that \eqref{eq-ellreg:reg_eq}\,$\sim$\,\eqref{eq-ellreg:reg_eq3} admits a smooth solution $u_{\epsilon,L}$ for all $0<\epsilon\leq\epsilon(L)$. Moreover, we have the lower bound
	\begin{equation}\label{eq-ellreg:lower_bound}
		\left\{\begin{aligned}
			& u_{\epsilon,L}\geq-\epsilon\qquad\text{in }\bar{F_L}\setminus E_0, \\
			& u_{\epsilon,L}\geq v-2\qquad\text{in }\bar{F_L}\setminus\{v<0\},
		\end{aligned}\right.
	\end{equation}
	and the gradient estimate
	\begin{equation}\label{eq-ellreg:grad_reg_sol}
		\big|\D u_{\epsilon,L}(x)\big|\leq\max\Big\{\sup_{B(x,r)\cap\p E_0}H_+,\ \sup_{B(x,r)\cap\p F_L}|\D u_{\epsilon,L}|\Big\}+2\epsilon+\frac{C(n)}{r}
	\end{equation}
	for all $x\in\bar{F_L}\setminus E_0$ and $0<r\leq\sigma(x;M,g)$, where $H_+=\max\{H_{\p E_0},0\}$.
\end{theorem}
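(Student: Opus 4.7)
The plan is to solve the quasilinear boundary value problem \eqref{eq-ellreg:reg_eq}\,--\,\eqref{eq-ellreg:reg_eq3} by the Leray--Schauder fixed point theorem (or an equivalent continuity method), which reduces the task to establishing uniform $C^0$ and $C^1$ a priori estimates; Schauder theory then upgrades the solution to $C^\infty$, since \eqref{eq-ellreg:reg_eq} is uniformly elliptic on any region where $|\D u|$ is bounded. The organizing geometric principle is the translator identification: a function $u$ solves \eqref{eq-ellreg:reg_eq} in $\Omega$ if and only if $U_\epsilon(x,z):=u(x)-\epsilon z$ solves the classical smooth IMCF equation in $\Omega\times\RR$, with level hypersurfaces being downward-translating graphs of $u/\epsilon$ over $\Omega$. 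This correspondence is the bridge that transfers Theorem \ref{thm-ellreg:grad_est_smooth}, an inherently parabolic estimate, to the elliptic setting, and also guides the barrier constructions below.

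For the $C^0$ lower bounds in \eqref{eq-ellreg:lower_bound}, the bound $u_{\epsilon,L}\geq-\epsilon$ is immediate: the constant $-\epsilon$ is a subsolution of \eqref{eq-ellreg:reg_eq} whose boundary values lie below those of $u_{\epsilon,L}$, so elliptic comparison applies. The bound $u_{\epsilon,L}\geq v-2$ on $\overline{F_L}\setminus\{v<0\}$ uses $v$ as a lower barrier: although $v$ is a subsolution of unregularized IMCF and not exactly of \eqref{eq-ellreg:reg_eq}, the nonvanishing-gradient hypothesis forces $|\D v|\geq c(L)>0$ on the precompact region $\overline{F_L}\cap\{v\geq 0\}$, so the regularized and unregularized operators applied to $v$ differ by $O(\epsilon)$ there. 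The shift $-2$ is designed precisely to absorb this discrepancy and to match boundary data (with $v-2=L-2=u_{\epsilon,L}$ on $\partial F_L$, and $v-2=-2\leq -\epsilon\leq u_{\epsilon,L}$ on $\{v=0\}$ as long as $\epsilon\leq 2$). The smallness threshold $\epsilon\leq\epsilon(L)$ arises from the requirement that this barrier argument close.

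The gradient estimate \eqref{eq-ellreg:grad_reg_sol} is the main technical ingredient and is obtained by applying Theorem \ref{thm-ellreg:grad_est_smooth} to the translator $U_\epsilon$ in $\Omega\times\RR$. At a point $(x,z_0)$ with $U_\epsilon(x,z_0)=t_0$, the level hypersurface $\{U_\epsilon=t_0\}$ has mean curvature $|\D U_\epsilon|=|\D u_{\epsilon,L}|_\epsilon$ at $(x,z_0)$; since the translator is eternal, the ``initial'' slice in Theorem \ref{thm-ellreg:grad_est_smooth} can be pushed to $t=-\infty$ out of the ball $B((x,z_0),r)$, leaving only the lateral contribution on $(\partial E_0\cup\partial F_L)\times\RR$. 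On $\partial E_0$, where $u_{\epsilon,L}=0$, a one-sided barrier of the form $\phi(d(\cdot,\partial E_0))$ calibrated to the principal curvatures yields $|\D u_{\epsilon,L}|\leq H_++2\epsilon$, the cut-off at $H_+=\max\{H_{\partial E_0},0\}$ reflecting that the barrier can be chosen flat where $\partial E_0$ is mean-concave. Feeding this together with the assumed datum $|\D u_{\epsilon,L}|$ on $\partial F_L$ into Theorem \ref{thm-ellreg:grad_est_smooth}, and using $|\D u|\leq|\D u|_\epsilon\leq|\D u|+\epsilon$, gives \eqref{eq-ellreg:grad_reg_sol}. With the $C^0$ and $C^1$ estimates in hand, Leray--Schauder provides existence of a smooth solution for each $\epsilon\leq\epsilon(L)$. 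The main obstacle is the delicate interlock between the barrier on $\partial E_0$ (which must produce the sharp $H_+$-coefficient with only additive $O(\epsilon)$ error) and the $\epsilon(L)$-threshold dictated by the $v-2$ comparison, both of which must be controlled uniformly along the continuation parameter.
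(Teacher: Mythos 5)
Your high-level strategy — a Leray--Schauder/continuity-method approach, the translator identification $U_\epsilon(x,z)=u(x)-\epsilon z$ to transfer the parabolic gradient estimate (Theorem~\ref{thm-ellreg:grad_est_smooth}) to the elliptic regularization, and barrier constructions near $\partial E_0$ and $\partial F_L$ — is faithful to Huisken--Ilmanen's Lemmas 3.4 and 3.5, which the paper cites for this statement. But the argument for the lower bound $u_{\epsilon,L}\geq-\epsilon$ does not work as written: the constant $-\epsilon$ satisfies
\[
\div\!\left(\frac{\D(-\epsilon)}{\sqrt{\epsilon^2+|\D(-\epsilon)|^2}}\right)-\sqrt{\epsilon^2+|\D(-\epsilon)|^2}=0-\epsilon=-\epsilon<0,
\]
so a constant is a \emph{strict supersolution} of \eqref{eq-ellreg:reg_eq}, not a subsolution, and the elliptic comparison principle with the given boundary data (where $u_{\epsilon,L}\geq 0>-\epsilon$ on $\partial E_0\cup\partial F_L$) yields nothing. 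Nor does a naive interior-minimum argument help: at an interior minimum $x_0$ of $u_{\epsilon,L}$ one has $\D u_{\epsilon,L}(x_0)=0$, and plugging into the equation gives $\Delta u_{\epsilon,L}(x_0)=\epsilon^2>0$, which is perfectly consistent with a minimum. Establishing $u_{\epsilon,L}\geq-\epsilon$ requires a genuine lower barrier (in Huisken--Ilmanen this is built out of the translating-graph structure and the geometry of $\partial E_0$, not the constant function), and your proof as stated has a gap here.

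There is also a smaller gap in the $v-2$ comparison. You are right that the nonvanishing gradient forces $|\D v|\geq c(L)>0$ on $\overline{F_L}\cap\{v\geq 0\}$, so that the regularized and unregularized operators applied to $v$ differ by $O(\epsilon^2)$; but the operator in \eqref{eq-ellreg:reg_eq} is invariant under additive constants, so the shift $-2$ does \emph{not} ``absorb'' any discrepancy in the PDE — it only matches boundary values. Thus $v-2$ is merely an $O(\epsilon^2)$-approximate subsolution, which is not enough for comparison. The standard fix is to replace $v$ by a strict subsolution, e.g.\ $v_\mu:=(1-\mu)v$, which satisfies $\div(\D v_\mu/|\D v_\mu|)-|\D v_\mu|\geq\tfrac{\mu}{1-\mu}|\D v_\mu|\geq c'(L)\mu>0$, compare $u_{\epsilon,L}$ with $v_\mu-2$ for $\epsilon$ small relative to $\mu$, and let $\mu\to 0$ at the end. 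This is also where the threshold $\epsilon(L)$ properly enters. (Note moreover that this comparison needs $u_{\epsilon,L}\geq-\epsilon$ as input on the inner boundary $\{v=0\}$, so the two bounds are not independent.) The gradient-estimate sketch — pushing the parabolic initial slice away in $z$ so that only the lateral contributions from $(\partial E_0\cup\partial F_L)\times\RR$ survive, and handling $\partial E_0$ by a distance-type barrier — is essentially correct in outline.
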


To recover a solution of $\IMCF{\Omega}$, we take sequences $L_i\to\infty$, $\epsilon_i\to0$, and take limit of the corresponding solutions $u_i=u_{\epsilon_i,L_i}$. The following theorem follows \cite[Theorem 3.1]{Huisken-Ilmanen_2001}, and is proved in more generality for the need of blow-up analysis.

\begin{theorem}[convergence to calibrated solutions]\label{thm-ellreg:convergence}
	Let $\Omega\subset M$ be a domain, and $g$ be a fixed smooth Riemannian metric on $\Omega$. Given the following data:
	
	(1) $\Omega_i$ is a sequence of domains that converge locally uniformly to $\Omega$,
	
	(2) $g_i$ are smooth metrics on $\Omega_i$, which converge locally smoothly to $g$,
	
	(3) $\{\epsilon_i>0\}$ is a sequence with $\epsilon_i\searrow0$, and $u_i\in C^\infty(\Omega_i)$ are solutions of the equations
	\[\div_{g_i}\Big(\frac{\D_{g_i} u_i}{\sqrt{\epsilon_i^2+|\D_{g_i} u_i|_{g_i}^2}}\Big)=\sqrt{\epsilon_i^2+|\D_{g_i} u_i|_{g_i}^2}.\]
	
	Then a subsequence of $u_i$ converges in $C^0_{\loc}(\Omega)$ to a function $u\in\Lip_{\loc}(\Omega)$, which solves $\IMCF{\Omega,g}$. We have the gradient estimate
	\begin{equation}\label{eq-prelim:grad_est_limit}
		|\D_g u(x)|\leq\frac{C(n)}{\sigma(x;\Omega,g)},\qquad\forall\,x\in\Omega.
	\end{equation}
	Moreover, on $\Omega$ and $\Omega\times\RR$ respectively, the vector fields
	\[\nu_i=\frac{\D_{g_i}u_i}{\big(\epsilon_i^2+|\D_{g_i} u_i|_{g_i}^2\big)^{1/2}}
		\qquad\text{and}\qquad
		\bnu_i=\frac{\D_{g_i}u_i-\epsilon_i\p_z}{\big(\epsilon_i^2+|\D_{g_i} u_i|_{g_i}^2\big)^{1/2}}\]
	converge to some $\nu,\bnu$ in the weak topology of $L^1_{\loc}(\Omega)$ resp. $L^1_{\loc}(\Omega\times\RR)$, such that $\nu$ is the projection of $\bnu$ on the $\Omega$ factor, and $\nu$ calibrates $u$ in the sense of Definition \ref{def-prelim:calibrated}.
\end{theorem}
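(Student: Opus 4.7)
The plan is to promote each $u_i$ to a translating soliton on the product manifold $\tilde\Omega_i := \Omega_i \times \RR$, apply the compactness theorem for calibrated IMCF solutions there (Theorem \ref{thm-prelim:cptness_calibrated}), and then descend the conclusion back to $\Omega$ by exploiting $z$-invariance.

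Set $\tilde g_i := g_i + dz^2$, $\tilde g := g + dz^2$, and $U_i(x,z) := u_i(x) - \epsilon_i z$. By the translating soliton interpretation of \eqref{eq-ellreg:reg_eq}, each $U_i$ is a smooth IMCF solution on $(\tilde\Omega_i, \tilde g_i)$ with $|\tilde\D U_i|_{\tilde g_i}^2 = |\D u_i|_{g_i}^2 + \epsilon_i^2 > 0$, and is therefore weakly calibrated by $\bnu_i = \tilde\D U_i/|\tilde\D U_i|_{\tilde g_i}$; note that $\tilde\D U_i = \D u_i - \epsilon_i \p_z$ is $z$-invariant and that its $\Omega$-projection equals $\nu_i$. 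Applying Corollary \ref{cor-ellreg:grad_est} to $U_i$ on $(\tilde\Omega_i, \tilde g_i)$ gives $|\tilde\D U_i|_{\tilde g_i}(x,z) \leq C(n)/\sigma((x,z);\tilde\Omega_i, \tilde g_i)$; by the product structure together with items (1)(2), this yields uniform local gradient bounds $|\D u_i|_{g_i}(x) \leq C(K)$ on every $K \subset\subset \Omega$ for large $i$, and the gradient estimate \eqref{eq-prelim:grad_est_limit} follows by passing to the limit (using lower semi-continuity and continuity of $\sigma$ in the data). Combined with a local boundedness normalization for $\{u_i\}$, Arzelà--Ascoli extracts a subsequence $u_i \to u \in \Lip_{\loc}(\Omega)$ in $C^0_{\loc}$; since $|\nu_i|_{g_i}, |\bnu_i|_{\tilde g_i} \leq 1$, Banach--Alaoglu furnishes a further subsequence with $\nu_i \to \nu$ and $\bnu_i \to \bnu$ weakly in $L^1_{\loc}$, and linearity of projection forces $\nu$ to be the $\Omega$-projection of $\bnu$.

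Setting $U(x,z) := u(x)$, the hypotheses (1)--(6) of Theorem \ref{thm-prelim:cptness_calibrated} are verified on the product manifolds: $U_i \to U$ locally uniformly as $\epsilon_i \to 0$, while the remaining items follow from the construction above. The conclusion is that $U$ solves $\IMCF{\tilde\Omega, \tilde g}$ and is calibrated by $\bnu$. Since each $\bnu_i$ is $z$-invariant, and $z$-invariance is preserved under weak $L^1_{\loc}$ convergence, the limit $\bnu$ is also $z$-invariant; write $\bnu(x,z) = \nu(x) + \alpha(x)\p_z$ for some scalar function $\alpha$.

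To descend, fix $\varphi \in \Lip_0(\Omega)$ and $\chi \in C^\infty_c(\RR)$ with $\int_\RR \chi\,dz = 1$, and test $\tilde\varphi(x,z) := \varphi(x)\chi(z)$ in the calibration identity for $(U, \bnu)$. Using $\tilde\D U = \D u$, $dV_{\tilde g} = dV_g\,dz$, and Fubini,
\begin{equation*}
0 = \int_\Omega \metric{\D\varphi}{\nu}_g\,dV_g + \Big(\int_\RR \chi'\,dz\Big)\int_\Omega \varphi\alpha\,dV_g + \int_\Omega \varphi|\D u|_g\,dV_g,
\end{equation*}
and the middle term vanishes, yielding \eqref{eq-prelim:calibration} for $(u,\nu)$; the conditions $|\nu|_g \leq 1$ and $\metric{\D u}{\nu}_g = |\D u|_g$ a.e.\ follow from the corresponding conditions for $(\tilde\D U, \bnu)$ on $\tilde\Omega$ since $\tilde\D U$ has no $z$-component. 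The main obstacle is precisely this descent step: it relies on the crucial observation that each $\bnu_i$ is $z$-invariant (so that the limit $\bnu$ is too), without which the projected field $\nu$ would not obviously inherit the calibration property from $\bnu$.
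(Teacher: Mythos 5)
Your proposal follows the same route as the paper's proof: lift each $u_i$ to the translating soliton $U_i(x,z)=u_i(x)-\epsilon_i z$ on the product $\Omega_i\times\RR$, deduce uniform gradient bounds from the interior gradient estimate for smooth IMCF, invoke the compactness theorem for calibrated solutions (Theorem~\ref{thm-prelim:cptness_calibrated}) to get a calibrated limit $(U,\bnu)$, and descend to $\Omega$ using the $z$-translation invariance of $\bnu$ and the $z$-independence of $U$. The one mechanical variation --- testing the calibration identity against $\varphi(x)\chi(z)$ with $\int\chi=1$, $\int\chi'=0$, instead of the paper's plateau cutoff $\rho_R$ followed by $R\to\infty$ --- is a cleaner execution of the same descent step, not a different method.
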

\begin{proof}
	On $\Omega_i\times\RR$ with the product metric $g_i+dz^2$, the functions $U_i(x,z)=u_i(x)-\epsilon_iz$ are smooth solutions of the IMCF. By Theorem \ref{thm-prelim:grad_est} and Definition \ref{def-prelim:sigma_x} we have the estimate
	\begin{equation}\label{eq-ellreg:grad_ui}
		|\D_{g_i}u_i(x)|\leq|\D_{g_i+dz^2}U_i(x,0)|\leq\frac{C(n+1)}{\sigma\big((x,0);\Omega_i\times\RR,g_i+dz^2\big)}\leq\frac{C(n+1)}{\sigma(x;\Omega_i,g_i)}.
	\end{equation}
	By condition (2) of the theorem, for all $K\Subset\Omega\times\RR$ there exists $i_0$ such that
	\[\inf_{x\in K}\inf_{i\geq i_0}\sigma(x;\Omega_i\times\RR,g_i+dz^2)>0.\]
	By the Arzela-Ascoli theorem, there is a subsequence such that $u_i\to u$ locally uniformly for some $u\in\Lip_{\loc}(\Omega)$. Set $U(x,z):=u(x)$, which is clearly the $C^0_{\loc}$ limit of $U_i(x,z)$. Note that $\sigma(x;\Omega,g)\leq2\sigma(x;\Omega_i,g_i)$ for sufficiently large $i$. Thus \eqref{eq-ellreg:grad_ui} passes to the limit and gives
	\[|\D_g u(x)|\leq\frac{C'(n)}{\sigma(x;\Omega,g)}.\]
	
	Since $U_i$ are smooth solutions, they are calibrated by the vector fields
	\[\bnu_i:=\frac{\D_{g_i+dz^2}U_i}{|\D_{g_i+dz^2}U_i|}=\frac{\D_{g_i}u_i-\epsilon_i\p_z}{\big(\epsilon_i^2+|\D_{g_i} u_i|_{g_i}^2\big)^{1/2}}.\]
	By the Dunford-Pettis theorem and a diagonal argument, there is a subsequence such that $\bnu_i$ converges to some $\bnu$ weakly in $L^1_{\loc}(\Omega\times\RR)$. Now all the conditions of Theorem \ref{thm-prelim:cptness_calibrated} are met, and it follows that $U$ solves $\IMCF{\Omega\times\RR;g+dz^2}$ and is calibrated by $\bnu$.
	
	Note that $\bnu_i$ are invariant under vertical translation, and this property passes to the limit $\bnu$. Let $\nu$ be the projection of $\bnu$ on the $\Omega$ factor. It is easily seen that $\nu$ is the $L^1_{\loc}$ weak limit of $\nu_i$.
	
	Finally, we show that $u$ is a weak solution in $\Omega$ calibrated by $\nu$. Indeed, we have $|\nu|\leq|\bnu|\leq1$ and $\D u(x)\cdot\nu(x)=\D U(x,0)\cdot\bnu(x,0)=|\D U(x,0)|=|\D u(x)|$ almost everywhere. To verify the condition \eqref{eq-prelim:calibration}, we fix a cutoff function $\eta\in C^\infty(\RR)$ with $\eta|_{(-\infty,-1]}\equiv0$ and $\eta|_{[0,\infty)}\equiv1$. For $R>0$ we set $\rho_R(z)=\eta(z)\eta(R-z)$. Now for a fixed $\phi\in\Lip_{\loc}(\Omega)$, we test the calibration property of $U$ with the function $\phi(x)\rho_R(z)$ and find
	\[\begin{aligned}
		0 &= \int_{\Omega\times\RR}\big(\D_x\phi(x)\rho_R(z)+\phi(x)\rho'_R(z)\p_z\big)\cdot\bnu+\phi(x)\rho_R(z)|\D_xu(x)|\,dx\,dz \\
		&= R\int_\Omega\D\phi\cdot\nu+\phi|\D u| \\
		&\qquad +\int_{\Omega\times([-1,0]\times[R,R+1])}\big(\D_x\phi(x)\rho_R(z)+\phi(x)\rho'_R(z)\p_z\big)\cdot\bnu+\phi(x)\rho_R(z)|\D_xu(x)|\,dx\,dz.
	\end{aligned}\]
	Taking $R\to\infty$, we have $0=\int_\Omega\D\phi\cdot\nu+\phi|\D u|$. This verifies that $\nu$ calibrates $u$.
\end{proof}

In \cite{Huisken-Ilmanen_2001}, the elliptic regularization finally leads to the following existence theorem:

\begin{theorem}[{\cite[Theorem 3.1]{Huisken-Ilmanen_2001}}]
	Suppose $M$ is complete, and there exists a proper weak subsolution with a precompact initial value. Then with any $C^{1,1}$ initial data $E_0\Subset M$, there exists a unique proper solution of $\IVP{M;E_0}$.
\end{theorem}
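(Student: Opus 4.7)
The plan is to apply the elliptic regularization machinery assembled in Subsection \ref{subsec:ellreg}. First, using the assumed proper weak subsolution with a precompact initial value, I would produce (possibly after mild smoothing outside the initial region and gluing with a smooth outward motion near $\partial E_0$) a smooth proper subsolution $v\in C^\infty(M)$ satisfying $\{v<0\}\supset\supset E_0$, with nonvanishing gradient on $M\setminus\bar{\{v<0\}}$. This is the shape of hypothesis demanded by Theorem \ref{thm-ellreg:existence}. Then for each large $L$, Theorem \ref{thm-ellreg:existence} provides smooth $u_{\epsilon,L}$ on $F_L\setminus\bar{E_0}$ for all $\epsilon\leq\epsilon(L)$, together with the lower bounds \eqref{eq-ellreg:lower_bound} and the gradient estimate \eqref{eq-ellreg:grad_reg_sol}.

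Next I would pass to the limit. Pick sequences $L_i\to\infty$ and $\epsilon_i\leq\epsilon(L_i)$ with $\epsilon_i\to 0$, and set $u_i=u_{\epsilon_i,L_i}$, extended by $0$ on $E_0$. The gradient estimate \eqref{eq-ellreg:grad_reg_sol}, combined with $H_+=\max\{H_{\p E_0},0\}\in L^\infty(\p E_0)$ (thanks to the $C^{1,1}$ hypothesis on $E_0$) and with the radius function $\sigma(\cdot;M,g)$ from Definition \ref{def-prelim:sigma_x}, yields a locally uniform Lipschitz bound for $u_i$ on any $K\subset\subset M\setminus E_0$; near $\p E_0$ one gets a two-sided Lipschitz bound of order $\max H_{\p E_0}$. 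By Theorem \ref{thm-ellreg:convergence}, a subsequence converges in $C^0_{\loc}(M\setminus\bar{E_0})$ to a locally Lipschitz function $u$ that solves $\IMCF{M\setminus\bar{E_0}}$ and is calibrated by some $\nu$. I would then extend $u$ by any negative values on $E_0$ (say $u=-1$) so that $\{u<0\}=E_0$, obtaining a candidate solution of $\IVP{M;E_0}$.

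It remains to check three properties. (i) \emph{Properness}: on $\bar F_L\setminus\{v<0\}$ we have $u_i\geq v-2$ by \eqref{eq-ellreg:lower_bound}, and this passes to the limit, giving $u\geq v-2$ on $M\setminus\bar E_0$; since $v$ is proper this forces $\{u<t\}\subset\subset M$ for every $t$. (ii) \emph{Initial value}: the lower bound $u_i\geq-\epsilon_i$ on $\bar F_L\setminus E_0$ gives $u\geq 0$ there, and continuity gives $u=0$ on $\p E_0$; to verify that $\{u<0\}=E_0$ I would use the $C^{1,1}$ hypothesis on $\p E_0$ to build an explicit smooth supersolution starting from $\p E_0$ (a shrinking normal collar), apply the maximum principle Theorem \ref{thm-prelim:max_principle} to the approximate solutions, and conclude $u>0$ in $M\setminus\bar E_0$, so no jump occurs at $t=0$. (iii) \emph{Uniqueness}: if $u_1,u_2$ are two proper solutions of $\IVP{M;E_0}$, then for every $t\in\RR$ the set $\{u_2<t\}$ is precompact, so $\{u_1<u_2-\epsilon\}\subset\subset M$ for every $\epsilon>0$, and the interior maximum principle Theorem \ref{thm-prelim:max_principle} applied in $M\setminus\bar{E_0}$ gives $u_1\geq u_2-\epsilon$; sending $\epsilon\to 0$ and symmetrizing yields $u_1=u_2$ outside $E_0$, which is the equivalence stated in Definition \ref{def-prelim:ivp}.

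The main obstacle I expect is step (ii), the verification that no jump happens at $t=0$: the regularization is forced to vanish on $\p E_0$, but after passing to the limit one must still rule out a positive-measure portion of $\{u=0\}$ outside $E_0$. This is exactly where the $C^{1,1}$ regularity of $\p E_0$ is essential, allowing construction of a local smooth supersolution foliating a one-sided collar of $\p E_0$ and providing an upper barrier for $u_i$ via Theorem \ref{thm-prelim:max_principle}. A secondary technical point is the reduction from the hypothesized proper weak subsolution to a smooth one suitable for Theorem \ref{thm-ellreg:existence}; this is harmless because weak sub-IMCF is preserved under standard smoothing and can be re-glued with the smooth normal flow from $\p E_0$ without destroying the subsolution property.
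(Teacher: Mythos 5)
The paper itself offers no proof of this statement --- it is a direct citation of \cite[Theorem 3.1]{Huisken-Ilmanen_2001}, preceded by the remark that ``the elliptic regularization finally leads to'' it --- so the only comparison available is with the surrounding machinery of Subsection~\ref{subsec:ellreg}, which you use essentially correctly. Your overall route (build a smooth proper subsolution, invoke Theorem~\ref{thm-ellreg:existence}, pass to the limit via Theorem~\ref{thm-ellreg:convergence}, then check properness and uniqueness with Theorem~\ref{thm-prelim:max_principle}) is the right one. But there are two genuine problems in the details.

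First, your step~(ii) rests on a misconception. You try to show $u>0$ on $M\setminus\bar{E_0}$, i.e.\ that ``no jump occurs at $t=0$.'' That conclusion is false in general: if $E_0$ is not strictly outward minimizing, the weak flow does jump at $t=0$ to the minimizing hull $E_0^+=\{u\leq 0\}\supsetneq\bar{E_0}$, and no barrier supersolution from $\p E_0$ can prevent it --- this is precisely the fattening phenomenon the weak theory is designed to accommodate. Fortunately the conclusion is also unnecessary: Definition~\ref{def-prelim:ivp} requires only $\{u<0\}=E_0$, not $\{u\leq 0\}=\bar{E_0}$, and $\{u<0\}=E_0$ follows directly from $u_i\geq-\epsilon_i$ on $F_L\setminus E_0$ passing to the limit (giving $u\geq 0$ outside $E_0$) together with your assignment $u=-1$ on $E_0$. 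So you should delete the barrier argument in step~(ii), both because it cannot work and because it is not needed.

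Second, the uniqueness step glosses over a real issue. From properness of $u_1$ and $u_2$ alone it does \emph{not} follow that $\{u_1<u_2-\epsilon\}\subset\subset M$: both arrival-time functions tend to infinity, and a priori $u_2$ could exceed $u_1$ by $\epsilon$ along an unbounded sequence. The standard fix is to compare with the truncation $v_T:=\min\{u_2,T\}$, which remains a subsolution. Then $\{u_1+\epsilon<v_T\}\subset\{u_1<T\}\subset\subset M$ by properness of $u_1$, and it is separated from $\p E_0$ because $v_T=0<\epsilon=u_1+\epsilon$ there, so Theorem~\ref{thm-prelim:max_principle} gives $u_1+\epsilon\geq v_T$ outside $E_0$; let $T\to\infty$, then $\epsilon\to 0$, and symmetrize. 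Lastly, your reduction of the proper \emph{weak} subsolution to a proper \emph{smooth} subsolution with nonvanishing gradient is not a routine mollification --- the operator $\div(\D u/|\D u|)-|\D u|$ is not preserved under smoothing --- and should be attributed to the corresponding construction in \cite{Huisken-Ilmanen_2001} rather than dismissed as harmless.
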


\begin{remark}\label{rmk-ellreg:weighted}
	When a weighted IMCF is considered, with the interpretation of Lemma \ref{lemma-prelim:def_weighted}(3), the corresponding elliptic regularization takes the form
	\begin{equation}\label{eq-ellreg:aux3}
		\div\Big(e^\psi\frac{\D u}{\sqrt{\epsilon^2e^{2\psi/(n-1)}+|\D u|^2}}\Big)=e^\psi\sqrt{\epsilon^2e^{2\psi/(n-1)}+|\D u|^2}.
	\end{equation}
	To derive this, we note that the regularized equation writes
	\[\div_{g'}\Big(\frac{\D_{g'}u}{\sqrt{\epsilon^2+|\D_{g'}u|^2}}\Big)=\sqrt{\epsilon^2+|\D_{g'}u|^2},\qquad\text{where}\ \ g'=e^{\frac{2\psi}{n-1}}g.\]
	Since $\D_{g'}u=e^{-2\psi/(n-1)}\D_gu$ and $|\D_{g'}u|^2=e^{-2\psi/(n-1)}|\D_gu|^2$, this is equivalent to
	\[\div_{g'}\Big(e^{-\psi/(n-1)}\frac{\D u}{\sqrt{\epsilon^2e^{2\psi/(n-1)}+|\D u|^2}}\Big)=e^{-\psi/(n-1)}\sqrt{\epsilon^2e^{2\psi/(n-1)}+|\D u|^2}.\]
	Since $\div_{g'}(X)=\div_gX+\frac{n}{n-1}\metric{X}{\D_g\psi}_g$ for all $X$, from this we obtain \eqref{eq-ellreg:aux3}.
\end{remark}

\subsection{An excess inequality}

By Lemma \ref{lemma-prelim:out_min}(i), each $E_t$ in a weak solution is locally outward minimizing. On the other hand, the following useful lemma provides a quantitative inward-minimizing of $E_t$. Similar arguments played a major role in proving the main theorem of \cite{Xu_2023_proper}.

\begin{lemma}[excess inequality]\label{lemma-prelim:excess_ineq} {\ }
	
	Suppose $u\in\Lip_{\loc}(\Omega)$ is a supersolution of $\IMCF{\Omega}$, and $t\in\RR$. Let $F\Subset\Omega$ be a set with finite perimeter. Then for any domain $K$ with $F\Subset K\Subset\Omega$, we have
	\begin{equation}\label{eq-prelim:excess_ineq}
		\Ps{E_t;K}\leq\Ps{E_t\setminus F;K}+\int_{\inf_F(u)}^te^{t-s}\Ps{F;E_s}\,ds.
	\end{equation}
	In particular,
	\begin{equation}\label{eq-prelim:excess_ineq_2}
		\Ps{E_t;K}\leq\Ps{E_t\setminus F;K}+\big(e^{t-\inf_F(u)}-1\big)\Ps{F;E_t}.
	\end{equation}
\end{lemma}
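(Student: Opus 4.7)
\textbf{Proof plan for Lemma \ref{lemma-prelim:excess_ineq}.}

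The plan is to combine a one-parameter family of supersolution comparisons with a Gronwall-type argument driven by the coarea formula. First, for every $s\leq t$, the set $E_s\setminus F\subset E_s$ is a valid inside competitor (since $E_s\Delta(E_s\setminus F)\subset F\subset\subset K$), so Definition \ref{def-prelim:weak_sol}(2) gives
\[
\Ps{E_s;K}-\Ps{E_s\setminus F;K}\leq\int_{E_s\cap F}|\D u|.
\]
Next, I would use a standard cut-and-paste identity in $\BV$ to decompose the left-hand side. For $\H^1$-almost every $s$ one has $\H^{n-1}(\p^*E_s\cap\p^*F)=0$ (since the reduced boundaries $\p^*E_s$ foliate, and only countably many of them can carry positive $\H^{n-1}$-mass on the fixed set $\p^*F$), and therefore
\[
\Ps{E_s;K}-\Ps{E_s\setminus F;K}=\H^{n-1}(\p^*E_s\cap F)-\Ps{F;E_s}.
\]
Since $u$ is Lipschitz, the coarea formula identifies $\H^{n-1}(\p^*E_s\cap F)=\H^{n-1}(F\cap\{u=s\})$ for a.e.\ $s$, and the same coarea formula applied to $m(s):=\int_{E_s\cap F}|\D u|$ gives $m'(s)=\H^{n-1}(F\cap\{u=s\})$ for a.e.\ $s$.

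Putting these together, I obtain the differential inequality
\[
m'(s)\leq m(s)+\Ps{F;E_s}\qquad\text{for a.e.\ }s\in\RR.
\]
Since $E_s\cap F=\emptyset$ for $s<\inf_F u$ (by continuity of $u$ and compactness of $\bar F$), we have $m(s)=0$ on $(-\infty,\inf_F u)$, and $m$ is monotone non-decreasing and locally absolutely continuous. Multiplying the inequality by $e^{-s}$ gives $(e^{-s}m(s))'\leq e^{-s}\Ps{F;E_s}$, which after integration from $\inf_F u$ to $t$ yields
\[
m(t)\leq\int_{\inf_F u}^{t}e^{t-s}\Ps{F;E_s}\,ds.
\]
Applying the original comparison at $s=t$ then produces the main inequality
\[
\Ps{E_t;K}\leq\Ps{E_t\setminus F;K}+m(t)\leq\Ps{E_t\setminus F;K}+\int_{\inf_F u}^{t}e^{t-s}\Ps{F;E_s}\,ds.
\]
The particular case \eqref{eq-prelim:excess_ineq_2} is immediate: $\Ps{F;E_s}\leq\Ps{F}$, so $\int_{\inf_F u}^t e^{t-s}\,ds=e^{t-\inf_F u}-1$ gives the bound.

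I expect the main technical obstacle to be the rigorous justification of the perimeter decomposition step, i.e.\ that the identity $\Ps{E_s;K}-\Ps{E_s\setminus F;K}=\H^{n-1}(\p^*E_s\cap F)-\Ps{F;E_s}$ holds for a.e.\ $s$. This requires noting that $F\subset\subset K$ (so all of $\p^*F$ sits inside $K$), invoking the $\BV$ gluing formula $\p^*(E\cap F^c)\stackrel{\H^{n-1}}{=}(\p^*E\cap(F^c)^{(1)})\cup(\p^*F\cap E^{(1)})$ when $\p^*E$ and $\p^*F$ meet in a negligible set, and finally using Fubini/coarea to eliminate the exceptional $s$'s. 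Once this identity is secured, the rest of the argument is a short Gronwall computation, and the ``in particular'' estimate follows by trivially bounding $\Ps{F;E_s}\leq\Ps{F}$.
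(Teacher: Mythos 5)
Your proof is correct and takes essentially the same approach as the paper: the supersolution comparison against $E_s\setminus F$, the $\BV$ perimeter decomposition for a.e.\ $s$, the coarea formula, and a Gronwall argument. You phrase the Gronwall step as a differential inequality for $m(s)=\int_{E_s\cap F}|\D u|$, whereas the paper uses the integral form for $\H^{n-1}(\p^*E_s\cap F)$, which equals $m'(s)$ for a.e.\ $s$, so the two are the same computation organized slightly differently.
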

\begin{proof}
	Denote $t_0=\inf_F(u)$. By a zero measure modification, we may assume $F=F^{(1)}$. From the energy comparison $J_u^K(E_t)\leq J_u^K(E_t\setminus F)$ and the coarea formula, we have
	\begin{equation}\label{eq-prelim:aux1}
		\begin{aligned}
			\Ps{E_t;K} &\leq \Ps{E_t\setminus F;K}+\int_{t_0}^t\H^{n-1}\big(\p^*E_s\cap E_t\cap F\big)\,ds \\
			&= \Ps{E_t\setminus F;K}+\int_{t_0}^t\H^{n-1}\big(\p^*E_s\cap F\big)\,ds
		\end{aligned}
	\end{equation}
	If $\H^{n-1}(\p^*E_t\cap\p^*F)=0$ (which holds for almost every $t$), then we apply the decomposition identities \eqref{eq-gmt:Federer_decomp} \eqref{eq-gmt:set_operation} and Lemma \ref{lemma-prelim:density_one} to obtain
	\begin{equation}\label{eq-prelim:aux2}
		\H^{n-1}(\p^*E_t\cap F)\leq\Ps{F;E_t}+\int_{t_0}^t\H^{n-1}(\p^*E_s\cap F)\,ds.
	\end{equation}
	By a standard Gronwall argument, \eqref{eq-prelim:aux2} implies
	\begin{equation}\label{eq-prelim:aux3}
		\H^{n-1}(\p^*E_t\cap F)\leq\Ps{F;E_t}+\int_{t_0}^t e^{t-s}\Ps{F;E_s}\,ds\qquad\text{for a.e. $t$}.
	\end{equation}
	The lemma follows by inserting \eqref{eq-prelim:aux3} into \eqref{eq-prelim:aux1}.
\end{proof}

\subsection{Further analytic properties of weak solutions}

We include several technical lemmas that are frequently used later.

In summary, if $\Omega$ is locally Lipschitz and $u$ is a solution of $\IMCF{\Omega}$, then $u$ has the following a priori regularity: $\Lip_{\loc}(\Omega)$ (given by the definition), $\BV_{\loc}(\bar\Omega)$ (by Lemma \ref{lemma-prelim:BV_loc}(i)), and $L^p_{\loc}(\bar\Omega)$ for all $p\geq1$ (by Lemma \ref{lemma-obs:integrable}). In fact, the proof of Lemma \ref{lemma-obs:integrable} implies a stronger fact: for all $K\Subset M$ there exists $c(K)>0$ such that $e^{c(K)|u|}\in L^1(\Omega\cap K)$.

\begin{lemma}\label{lemma-prelim:density_one}
	Suppose $u\in\Lip_{\loc}(\Omega)$ is a solution of $\IMCF{\Omega}$. Then for every $t$ we have $E_t=E_t^{(1)}$ (which is the set of points with density 1 with respect to $E_t$).
\end{lemma}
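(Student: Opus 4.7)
The inclusion $E_t \subset E_t^{(1)}$ is immediate: since $u$ is Lipschitz and hence continuous, $E_t = \{u < t\}$ is open, and every point of an open set has density one in it. So the content lies in the reverse inclusion $E_t^{(1)} \subset E_t$.

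The plan is to argue by contradiction: suppose $x \in E_t^{(1)}$ but $x \notin E_t$, i.e.\ $u(x) \geq t$. The case $u(x) > t$ is disposed of immediately: by continuity of $u$ there is a neighborhood $B_r(x)$ on which $u > t$, so $B_r(x) \cap E_t = \emptyset$, contradicting the density-one hypothesis. The remaining case is $u(x)=t$, and here I plan to invoke the classical density estimate for quasi-minimizers. By Lemma~\ref{lemma-prelim:out_min}(i) together with the inequality \eqref{eq-prelim:Lam_r0_min}, $E_t$ is a local $(\Lambda,r_0)$-perimeter minimizer in any $K\subset\subset\Omega$, with $\Lambda$ controlled by the local Lipschitz constant of $u$. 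Appealing to the standard density bound (Appendix~\ref{sec:gmt}), one has $c(n)\leq |B_r(y)\cap E_t|/|B_r(y)|\leq 1-c(n)$ at every $y$ for which both $|B_r(y)\cap E_t|$ and $|B_r(y)\setminus E_t|$ are positive for all small $r>0$. Since the density of $E_t$ at $x$ equals $1$, this estimate is incompatible with $|B_r(x)\setminus E_t|>0$ for all $r>0$, so I must have $|B_\rho(x)\setminus E_t|=0$ for some $\rho>0$.

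From $|B_\rho(x)\setminus E_t|=0$ and continuity of $u$, one concludes $u\leq t$ throughout $B_\rho(x)$; together with $u(x)=t$, the function $u$ attains an interior maximum at $x$. To close the argument I plan to derive the final contradiction from a strong-maximum-principle-type statement: the constant function $v\equiv t$ is itself a weak solution of $\IMCF{\Omega}$ (it trivially minimizes the energy functional of Definition~\ref{def-prelim:energies}), and comparing $u$ with $v$ via Theorem~\ref{thm-prelim:max_principle}, in a form localized to $B_\rho(x)$, should force $u\equiv t$ on $B_\rho(x)$. But this contradicts the fact that $u<t$ on the open dense subset $B_\rho(x)\cap E_t$ of full measure.

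The main obstacle is precisely this last step. Theorem~\ref{thm-prelim:max_principle} as stated requires $\{u<v\}\subset\subset\Omega$, a condition that is not automatic in the present local setting since $B_\rho(x)\cap E_t$ need not be compactly contained in $B_\rho(x)$. I anticipate handling this by one of two strategies: either localizing the comparison principle to $B_\rho(x)$ using the fact that $u\leq t=v$ holds on $\partial B_\rho(x)$ in the trace sense, or bypassing the maximum principle entirely and applying the excess inequality (Lemma~\ref{lemma-prelim:excess_ineq}) with competitor $F=B_\eta(x)$ for small $\eta$ to derive a quantitative incompatibility between the vanishing perimeter $\Ps{E_t;B_\eta(x)}$ (forced by density one) and the IMCF structure captured in Lemma~\ref{lemma-prelim:out_min}(iv).
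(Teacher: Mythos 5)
Your steps through the reduction to the case $u(x)=t$, the use of the $(\Lambda,r_0)$-minimality of $E_t$, and the density-estimate argument yielding $|B_\rho(x)\setminus E_t|=0$ (equivalently, $x\notin\spt(|D\chi_{E_t}|)$) are all sound. The problem is that you have not, at that point, derived a contradiction: the configuration ``$u\leq t$ on $B_\rho(x)$, $u(x)=t$, $u<t$ a.e.'' is not, by itself, inconsistent, and your proposed step~7 is where the proof genuinely breaks. The constant $v\equiv t$ is indeed a weak solution, but Theorem~\ref{thm-prelim:max_principle} requires $\{u<v\}\subset\subset\Omega$, and after localizing to $B_\rho(x)$ the set $\{u<t\}$ occupies all of $B_\rho(x)$ up to measure zero, so no amount of boundary-trace bookkeeping recovers the precompactness; there is no Hopf-type strong maximum principle available for the degenerate operator $\div(\D u/|\D u|)$ in this weak setting. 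Your fallback via Lemma~\ref{lemma-prelim:excess_ineq} with $F=B_\eta(x)$ also fails: since $\Ps{E_t;B_\rho(x)}=0$ while $\Ps{E_t\setminus B_\eta(x);B_\rho(x)}=\H^{n-1}(\p B_\eta(x))>0$, the inequality is satisfied trivially and carries no information.

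The paper closes this gap by a different mechanism, and that is exactly the piece you are missing. Using continuity of $u$ and the relative isoperimetric inequality (or the coarea formula) one produces times $t_i\nearrow t$ and points $x_i\in\p^*E_{t_i}$ with $x_i\to x$: for each $i$ pick $y_i\to x$ with $u(y_i)<t$, set $t_i\in(u(y_i),t)$, and observe that $B_{1/i}(x)$ meets both $E_{t_i}$ and $\{u\geq t_i\}$ in positive measure, so $\p^*E_{t_i}\cap B_{1/i}(x)\neq\emptyset$. The sets $E_{t_i}$ are uniform $(\Lambda,r_0)$-minimizers by~\eqref{eq-prelim:Lam_r0_min}, and $E_{t_i}\nearrow E_t$ in $L^1_{\loc}$; Lemma~\ref{lemma-gmt:Lam_r0_convergence}(i) then forces $x\in\spt(|D\chi_{E_t}|)$ with upper density at most a dimensional constant $c<1$, hence $x\notin E_t^{(1)}$. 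If you prefer to keep your contradiction framing, note that $x\in\spt(|D\chi_{E_t}|)$ is exactly the negation of your step-6 conclusion $|B_\rho(x)\setminus E_t|=0$: you should replace step~7 with this construction of the sequence $x_i\in\p^*E_{t_i}$ and the appeal to Lemma~\ref{lemma-gmt:Lam_r0_convergence}(i), rather than the strong maximum principle, which is the wrong tool here.
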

\begin{proof}
	Since $E_t$ is open, it is sufficient to show $E_t^{(1)}\cap\p E_t=\emptyset$. Suppose $x\in\p E_t$. Since $u$ is continuous, there exists a sequence of times $t_i\nearrow t$ and points $x_i\in\p^*E_{t_i}$ with $x_i\to x$. Since $E_t=\bigcup_i E_{t_i}$, by \eqref{eq-prelim:Lam_r0_min} and Lemma \ref{lemma-gmt:Lam_r0_convergence}(i) it follows that $x\notin E_t^{(1)}$.
\end{proof}

\begin{lemma}\label{lemma-obs:inner_approx}
	Suppose $\Omega_i, \Omega\subset M$ are sets with locally finite perimeter, such that $\chi_{\Omega_i}\to\chi_\Omega$ in $L^1_{\loc}$ and $|D\chi_{\Omega_i}|\rightharpoonup|D\chi_\Omega|$ weakly as measures. Then for any $K\Subset M$ with $\H^{n-1}(\p^*\Omega\cap\p K)=0$ and any set $A\subset\Omega$ with locally finite perimeter, we have
	\[\Ps{A;K}=\lim_{i\to\infty}\P{A\cap\Omega_i;K}.\]
\end{lemma}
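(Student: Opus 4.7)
My plan is to sandwich $\P{A\cap\Omega_i;K}$ between matching upper and lower bounds, both converging to $\Ps{A;K}$. The lower bound is the easy half: since $A\subset\Omega$ almost everywhere, the pointwise estimate $|\chi_A-\chi_{A\cap\Omega_i}|\leq|\chi_\Omega-\chi_{\Omega_i}|$ yields $\chi_{A\cap\Omega_i}\to\chi_A$ in $L^1_{\loc}$, and standard lower semicontinuity of perimeter on the open set $K$ gives $\Ps{A;K}\leq\liminf_i\P{A\cap\Omega_i;K}$.

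For the upper bound, I will apply Federer's decomposition of the reduced boundary of an intersection: up to an $\H^{n-1}$-null set,
\[
\p^*(A\cap\Omega_i)=\big(\p^*A\cap\Omega_i^{(1)}\big)\cup\big(\p^*\Omega_i\cap A^{(1)}\big)\cup\big(\p^*A\cap\p^*\Omega_i\cap\{\nu_A=\nu_{\Omega_i}\}\big).
\]
Since $\Omega_i^{(1)}\cap\p^*\Omega_i$ is $\H^{n-1}$-null, the first and third pieces are disjoint subsets of $\p^*A$, so their combined mass inside $K$ is bounded by $\Ps{A;K}$, yielding
\[
\P{A\cap\Omega_i;K}\leq\Ps{A;K}+|D\chi_{\Omega_i}|\big(A^{(1)}\cap K\big).
\]
The task then reduces to showing that the error term $|D\chi_{\Omega_i}|(A^{(1)}\cap K)$ vanishes as $i\to\infty$.

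To prove the vanishing, I note that $A\subset\Omega$ forces $A^{(1)}\subset\Omega^{(1)}$ by density comparison, while Federer's density theorem places $\p^*\Omega$ on $\Omega^{(1/2)}$ modulo $\H^{n-1}$-null; together these give $|D\chi_\Omega|(A^{(1)}\cap K)=0$. By outer regularity of the Radon measure $|D\chi_\Omega|$, for any $\varepsilon>0$ I can pick an open set $V$ with $A^{(1)}\cap K\subset V\subset\subset M$ and $|D\chi_\Omega|(V)<\varepsilon$, and a coarea-type slicing (using level sets of a smoothed distance function) lets me further arrange $|D\chi_\Omega|(\p V)=0$. The weak convergence $|D\chi_{\Omega_i}|\rightharpoonup|D\chi_\Omega|$ then delivers $|D\chi_{\Omega_i}|(V)\to|D\chi_\Omega|(V)<\varepsilon$, so $\limsup_i|D\chi_{\Omega_i}|(A^{(1)}\cap K)\leq\varepsilon$, and sending $\varepsilon\to0$ closes the argument. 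The main technical hurdle is the simultaneous construction of $V$ with both small $|D\chi_\Omega|$-measure and $|D\chi_\Omega|$-null topological boundary, which is where the hypothesis $\H^{n-1}(\p^*\Omega\cap\p K)=0$ enters: it ensures $V$ can be chosen compatibly with $\p K$ so that no mass of the perimeter measure gets trapped on $\p K$ when one restricts the weak convergence to $K$.
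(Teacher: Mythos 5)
Your lower bound is exactly the paper's and is fine. The upper bound, however, contains a genuine gap: the quantity $|D\chi_{\Omega_i}|(A^{(1)}\cap K)$ need not tend to $0$, even under all stated hypotheses. Take $M=\RR^2$, $A=\Omega=B(0,1)$, $\Omega_i=B(0,1-1/i)$, $K=B(0,2)$. Here $\chi_{\Omega_i}\to\chi_\Omega$ in $L^1$, $|D\chi_{\Omega_i}|\rightharpoonup|D\chi_\Omega|$, and $\H^{1}(\p^*\Omega\cap\p K)=0$; yet $A^{(1)}\cap K=B(0,1)$ and $|D\chi_{\Omega_i}|(B(0,1))=2\pi(1-1/i)\to 2\pi\neq 0$. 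The lemma's conclusion is still true in this example ($\Ps{A;K}=2\pi$ and $\Ps{A\cap\Omega_i;K}=2\pi(1-1/i)\to 2\pi$), but your decomposition attributes the entire perimeter of $A\cap\Omega_i$ to the ``error'' term $\p^*\Omega_i\cap A^{(1)}$, so the inequality $\Ps{A\cap\Omega_i;K}\leq\Ps{A;K}+|D\chi_{\Omega_i}|(A^{(1)}\cap K)$ is too lossy to close the argument.

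The outer-regularity step is precisely where this surfaces. You need an open $V\supset A^{(1)}\cap K$ with $|D\chi_\Omega|(\bar V)$ small; but as $V$ shrinks toward $A^{(1)}\cap K$, the quantity $|D\chi_\Omega|(\bar V)$ converges to $|D\chi_\Omega|\big(\overline{A^{(1)}\cap K}\big)$, which can be strictly positive even though $|D\chi_\Omega|(A^{(1)}\cap K)=0$ — indeed in the example above $\overline{A^{(1)}\cap K}\supset\p^*\Omega$. The hypothesis $\H^{n-1}(\p^*\Omega\cap\p K)=0$ does not rescue this: the offending mass sits on $\p^*\Omega$ in the interior of $K$, not on $\p K$. (That hypothesis actually enters only to guarantee $\Ps{\Omega_i;K}\to\Ps{\Omega;K}$.) Heuristically, when $\p^*A$ and $\p^*\Omega_i$ become parallel and close, the perimeter of $A\cap\Omega_i$ migrates from the $\p^*A$-piece to the $\p^*\Omega_i$-piece of Federer's decomposition, so the latter is not an error term.

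The paper's route avoids this entirely by using the submodularity of perimeter,
\[
\Ps{A\cap\Omega_i;K}+\Ps{A\cup\Omega_i;K}\leq\Ps{A;K}+\Ps{\Omega_i;K},
\]
together with two lower-semicontinuity facts: $\Ps{\Omega;K}\leq\liminf_i\Ps{A\cup\Omega_i;K}$ (since $A\subset\Omega$ gives $A\cup\Omega_i\to\Omega$ in $L^1_{\loc}$) and $\Ps{\Omega_i;K}\to\Ps{\Omega;K}$ (from weak measure convergence plus $\H^{n-1}(\p^*\Omega\cap\p K)=0$). Taking $\limsup$ of the submodularity inequality and cancelling $\Ps{\Omega;K}$ yields $\limsup_i\Ps{A\cap\Omega_i;K}\leq\Ps{A;K}$, which is the bound you were after. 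The submodularity inequality compares $\Ps{A\cap\Omega_i;K}$ to $\Ps{A;K}$ with an error of the form $\Ps{\Omega_i;K}-\Ps{A\cup\Omega_i;K}$, and that error does go to zero — replacing your non-vanishing one.
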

\begin{proof}
	It is well-known that
	\begin{equation}\label{eq-obs:aux1}
		\Ps{A;K}\leq\liminf_{i\to\infty}\P{A\cap\Omega_i;K},
	\end{equation}
	and moreover
	\begin{equation}\label{eq-obs:aux2}
		\Ps{\Omega;K}\leq\liminf_{i\to\infty}\P{A\cup\Omega_i;K},\qquad\Ps{\Omega;K}=\lim_{i\to\infty}\Ps{\Omega_i;K}.
	\end{equation}
	To show the reverse direction of \eqref{eq-obs:aux1}, we note that
	\[\Ps{A;K}+\P{\Omega_i;K}\geq\P{A\cap\Omega_i;K}+\P{A\cup\Omega_i;K}.\]
	Taking $i\to\infty$ and using \eqref{eq-obs:aux2}, we obtain
	\[\Ps{A;K}+\P{\Omega;K}\geq\limsup_{i\to\infty}\P{A\cap\Omega_i;K}+\Ps{\Omega;K}. \qedhere\]
\end{proof}

\begin{lemma}[a priori global regularity]\label{lemma-prelim:BV_loc} {\ }
	
	Let $\Omega\subset M$ be a locally Lipschitz domain, and $u\in\Lip_{\loc}(\Omega)$.
	
	(i) If $u$ is a subsolution of $\IMCF{\Omega}$, then for all $K\Subset M$ it holds
	\begin{equation}\label{eq-prelim:subsol_per}
		\int_{\Omega\cap K}|\D u|\leq\Ps{\Omega\cap K}\quad\ \text{and}\quad\
		\Ps{E_t;K}\leq\Ps{\Omega\cap K}\ \ \ \forall\,t\in\RR.
	\end{equation}
	In particular, $u\in\BV_{\loc}(\bar\Omega)$ and each $E_t$ has locally finite perimeter in $M$.
	
	(ii) If $u$ is a supersolution of ${\IMCF\Omega}$, and for all $K\Subset M$ it holds $\inf_{\Omega\cap K}u\geq T(K)$ for some $T(K)$, then each $E_t$ has locally finite perimeter in $M$. More precisely, we have
	\begin{equation}\label{eq-prelim:supersol_per}
		\Ps{E_t;\Omega\cap K}\leq e^{t-T(K)}\Ps{\Omega\cap K},\quad
		\Ps{E_t;K}\leq 2e^{t-T(K)}\Ps{\Omega\cap K},
	\end{equation}
	for all domains $K\Subset M$. We also have
	\begin{equation}\label{eq-prelim:supersol_per2}
		\int_{E_t\cap K}|\D u|\leq e^{t-T(K)}\Ps{\Omega\cap K}.
	\end{equation}
	In particular, if $u\in L^\infty_{\loc}(\bar\Omega)$ then $u\in\BV_{\loc}(\bar\Omega)$.
\end{lemma}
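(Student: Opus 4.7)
The plan is to prove Part (i) via the outward-minimizing property of the sub-level sets $E_t$ (Lemma \ref{lemma-prelim:out_min}(i)) and Part (ii) via the excess inequality (Lemma \ref{lemma-prelim:excess_ineq}). Both parts share a common technical point: the variational hypotheses only allow competitors strictly contained in $\Omega$, so to bound quantities over $K\subset\subset M$ that may touch $\p\Omega$, I would approach $\p\Omega$ through the $\delta$-shrunk set $(\Omega\cap K)_\delta := \{x\in\Omega\cap K : d(x,\p(\Omega\cap K))>\delta\}$, which is compactly contained in $\Omega$ for every $\delta>0$.

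For Part (i), the competitor $F_\delta = E_t \cup (\Omega\cap K)_\delta$ satisfies $F_\delta\setminus E_t \subset\subset \Omega$, so outward minimization applies on a domain $K_\delta'\subset\subset\Omega$ enclosing $F_\delta\setminus E_t$. The standard decomposition of the reduced boundary of a union (absorbing the shared part of $\p^*E_t$ and $\p^*(\Omega\cap K)_\delta$) yields $\Ps{E_t;(\Omega\cap K)_\delta} \leq \Ps{(\Omega\cap K)_\delta}$. Sending $\delta\to 0$ and using the local Lipschitz structure of $\Omega$ to pass perimeters to the limit gives the interior estimate $\Ps{E_t;\Omega\cap K} \leq \Ps{\Omega\cap K}$. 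The trace of $\p^* E_t$ on $\p\Omega$ is contained in $\p^*\Omega$ up to $\H^{n-1}$-null sets (since $E_t\subset\Omega$ and the density of $E_t$ at a reduced boundary point equals $1/2$), hence is bounded by $\H^{n-1}(\p\Omega\cap K) \leq \Ps{\Omega\cap K}$, combining with the interior estimate to control $\Ps{E_t;K}$. The integral bound on $|\D u|$ then follows via coarea, $\int_{\Omega\cap K}|\D u| = \int_\RR \Ps{E_s;\Omega\cap K}\,ds$, combined with the uniform per-$s$ bound and the effective support of the integrand on the range of $u$ over $\Omega\cap K$.

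For Part (ii), I would apply the excess inequality with $F=(\Omega\cap K)_\delta$ and a containing domain $K'\subset\subset\Omega$. Since $\inf_F u\geq T(K)$ and $\Ps{F}\to\Ps{\Omega\cap K}$ as $\delta\to 0$, the estimate
\[\Ps{E_t;K'} \leq \Ps{E_t\setminus F;K'} + (e^{t-T(K)}-1)\Ps{F},\]
combined with a decomposition of $\p^*(E_t\setminus F)$ into contributions from $\p^*E_t$ (outside $F$) and $\p^*F$ (inside $E_t$), yields in the limit the interior perimeter bound $\Ps{E_t;\Omega\cap K} \leq e^{t-T(K)}\Ps{\Omega\cap K}$. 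Adding the trace of $\p^*E_t$ on $\p\Omega$, controlled as in Part (i), gives the full estimate with the stated factor of $2$. The $L^1$ bound $\int_{E_t\cap K}|\D u| \leq e^{t-T(K)}\Ps{\Omega\cap K}$ then follows directly from coarea and the integral $\int_{-\infty}^t e^{s-T(K)}\,ds = e^{t-T(K)}$.

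The main technical obstacle will be the careful tracking of boundary contributions in the limit $\delta\to 0$: the artificial inner boundary of $(\Omega\cap K)_\delta$ near $\p\Omega$ has $\H^{n-1}$-measure converging to $\H^{n-1}(\p\Omega\cap K)$, and must be apportioned between the interior perimeter of $E_t$ and the trace $\p^*E_t\cap\p\Omega$ without double-counting; likewise, in Part (ii), the excess term $\Ps{E_t\setminus F;K'}$ receives a contribution from $E_t\cap\p^*F$ whose behavior as $\delta\to 0$ must be analyzed via the coarea of $E_s$ near $\p\Omega$. The local Lipschitz regularity of $\Omega$ is what guarantees both the perimeter convergence $\Ps{(\Omega\cap K)_\delta}\to\Ps{\Omega\cap K}$ and the control of boundary traces by $\Ps{\Omega\cap K}$.
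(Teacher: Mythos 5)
Your plan for part (ii) matches the paper's argument and is sound. For part (i), however, the integral bound $\int_{\Omega\cap K}|\D u|\leq\Ps{\Omega\cap K}$ does not follow from your proposed coarea route. Writing $\int_{\Omega\cap K}|\D u|=\int_{\RR}\Ps{E_s;\Omega\cap K}\,ds$ and applying the per-$s$ estimate $\Ps{E_s;\Omega\cap K}\leq\Ps{\Omega\cap K}$ over the range of $u$ on $\Omega\cap K$ gives $\big(\sup_{\Omega\cap K}u-\inf_{\Omega\cap K}u\big)\cdot\Ps{\Omega\cap K}$, which is both the wrong constant and possibly infinite, since $u\in\Lip_{\loc}(\Omega)$ need not be bounded on $\Omega\cap K$. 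The outward-minimizing property of $E_t$ (Lemma \ref{lemma-prelim:out_min}(i)), which discards the $|\D u|$ term of the energy, simply does not see the quantity you need. What the paper does instead is invoke the full subsolution comparison at a single low time: with $T_\delta:=\inf_{K'_\delta}u>-\infty$, so that $E_{T_\delta}\cap K'_\delta=\emptyset$, the inequality $J_u^{K'_\delta}(E_{T_\delta})\leq J_u^{K'_\delta}\big(E_{T_\delta}\cup(\Omega\cap K)_\delta\big)$ reads
\[0\leq\Ps{(\Omega\cap K)_\delta}-\int_{(\Omega\cap K)_\delta}|\D u|,\]
so the $|\D u|$ integral appears linearly in the energy and is dominated by the perimeter in one step, with no integration over the time parameter. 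This is the mechanism your proposal is missing.

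A second, fixable, issue: your route to the perimeter bound in (i) produces $\Ps{E_t;K}\leq 2\Ps{\Omega\cap K}$, not the stated $\Ps{E_t;K}\leq\Ps{\Omega\cap K}$, because you derive only the interior estimate and then add a boundary-trace estimate bounded by $\Ps{\Omega\cap K}$. You in fact pass through the intermediate inequality $\Ps{E_t\cap(\Omega\cap K)_\delta;K'_\delta}\leq\Ps{(\Omega\cap K)_\delta;K'_\delta}$ (from outward minimization plus submodularity); keeping this full quantity and sending $\delta\to0$ on \emph{both} sides via Lemma \ref{lemma-obs:inner_approx} gives $\Ps{E_t\cap K}\leq\Ps{\Omega\cap K}$, which controls $\Ps{E_t;K}$ with the trace contribution already included. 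The paper also replaces $(\Omega\cap K)_\delta$ with a pre-chosen exhaustion by locally Lipschitz domains $\Omega_i$ satisfying $|D\chi_{\Omega_i}|\rightharpoonup|D\chi_\Omega|$, intersected with a generic $K'\supset\supset K$ having $\H^{n-1}(\p^*\Omega\cap\p K')=0$; this avoids the corner issues at $\p\Omega\cap\p K$ that your $(\Omega\cap K)_\delta$ would face, but that is a technical refinement rather than a conceptual one.
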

\begin{proof}
	Fix $t\in\RR$. We find a sequence of locally Lipschitz domains $\Omega_1\Subset\Omega_2\Subset\cdots\subset\Omega$, such that $\H^{n-1}(\p^*E_t\cap\p^*\Omega_i)=0$ for each $i$, and $\Omega_i\to\Omega$ locally uniformly, and $|D\chi_{\Omega_i}|\rightharpoonup|D\chi_\Omega|$ weakly as measures. For $K$ in either statement of the lemma, we choose another smooth precompact domain $K'\Supset K$ with $\H^{n-1}(\p^*\Omega\cap\p K')=0$.
	
	(i) Suppose $u$ is a weak subsolution. For each $i$ we have $\inf_{\Omega_{i+1}\cap K'} u\geq T_i$ for some $T_i\in\RR$; thus $E_{T_i}\cap(\Omega_{i+1}\cap K')=\emptyset$. Using the subsolution property, we have
	\begin{equation}\label{eq-prelim:using_subsol}
		0=J_u^{\Omega_{i+1}\cap K'}(E_{T_i})\leq J_u^{\Omega_{i+1}\cap K'}(\Omega_i\cap K)=\Ps{\Omega_i\cap K}-\int_{\Omega_i\cap K}|\D u|.
	\end{equation}
	Letting $i\to\infty$ in \eqref{eq-prelim:using_subsol} and using Lemma \ref{lemma-obs:inner_approx} (there with the choice $A=\Omega\cap K$ and with $K$ replaced by $K'$), we obtain $\int_{\Omega\cap K}|\D u|\leq\Ps{\Omega\cap K}$. To prove the second statement of \eqref{eq-prelim:subsol_per}, we recall that $E_t$ is locally outward minimizing in $\Omega$, hence $\Ps{E_t;K'}\leq\Ps{E_t\cup(\Omega_i\cap K);K'}$ for each $i$. This implies $\Ps{E_t\cap \Omega_i\cap K;K'}\leq\Ps{\Omega_i\cap K;K'}$. Taking $i\to\infty$ and applying Lemma \ref{lemma-obs:inner_approx} to both sides, we conclude that $\Ps{E_t\cap K}\leq\Ps{\Omega\cap K}$. In particular, $\Ps{E_t;K}\leq\Ps{\Omega\cap K}$. This proves \eqref{eq-prelim:subsol_per}. To obtain $u\in\BV_{\loc}(\bar\Omega)$ we need $u\in L^1_{\loc}(\bar\Omega)$ as well, but this follows from $u\in\Lip_{\loc}(\Omega)$ and \eqref{eq-prelim:subsol_per} and that $\Omega$ is locally Lipschitz.
	
	(ii) Suppose $u$ is as in the statement. We first assume that $t$ satisfies $\H^{n-1}\big(\p^*E_t\cap\p K\big)=0$. From Lemma \ref{lemma-prelim:excess_ineq} we obtain
	\[\Ps{E_t;\Omega_{i+1}\cap K'}\leq\P{E_t\setminus(\Omega_i\cap K);\Omega_{i+1}\cap K'}+(e^{t-T(K)}-1)\P{\Omega_i\cap K;E_t}.\]
	Since $\H^{n-1}(\p^*E_t\cap\p^*K)=\H^{n-1}(\p^*E_t\cap\p^*\Omega_i)=0$ for all $i$, we can decompose the first two terms using \eqref{eq-gmt:Federer_decomp} \eqref{eq-gmt:set_operation} and Lemma \ref{lemma-prelim:density_one}, and obtain
	\[\begin{aligned}
		\P{E_t;\Omega_i\cap K} &\leq \P{E_t;(\Omega_i\cap K)^{(1)}}
		\leq \P{\Omega_i\cap K;E_t^{(1)}}+(e^{t-T(K)}-1)\Ps{\Omega_i\cap K} \\
		&\leq e^{t-T(K)}\Ps{\Omega_i\cap K}.
	\end{aligned}\]
	Taking $i\to\infty$, this implies the first inequality of \eqref{eq-prelim:supersol_per}. The case of all $t$ follows by lower semi-continuity. The second inequality in \eqref{eq-prelim:supersol_per} holds since $\Ps{E_t;K}=\Ps{E_t;\Omega\cap K}+\H^{n-1}\big(\p^*E_t\cap\p^*\Omega\cap K\big)$. Finally, \eqref{eq-prelim:supersol_per2} follows by the coarea formula.
\end{proof}

\section{Weak formulations with outer obstacle}\label{sec:obs}

In this section, we set up the general framework for IMCF with outer obstacle. We make precise the variational principles summarized in Definition \ref{def-intro:summary_defs} (see Subsections \ref{subsec:obs_energy_E}, \ref{subsec:obs_diri}), and address some basic implications (see Subsections \ref{subsec:obs_energy_E}, \ref{subsec:obs_min}). In Subsection \ref{subsec:obs_tangent} we prove Lemma \ref{lemma-intro:asymp_tangent}. Then, in Subsection \ref{subsec:obs_max_prin}, we prove a weak maximum principle.

\subsection{Formulation using sub-level sets}\label{subsec:obs_energy_E}

We use the following variational principle as the starting point of the theory. Recall that all the manifolds are assumed to be smooth, connected, oriented, without boundary.

\begin{defn}\label{def-obs:energy_E}
	Let $M$ be a manifold without boundary, and $\Omega\subset M$ be a locally Lipschitz domain. For a function $u\in\Lip_{\loc}(\Omega)$, a domain $K\Subset M$, and a set $E\subset\Omega$, we define the energy
	\begin{equation}\label{eq-obs:energy_E}
		\tJ_u^K(E):=\Ps{E;K}-\int_{E\cap K}|\D u|
	\end{equation}
	whenever the two terms are not both infinite.
	
	We say that a set $E\subset\Omega$ locally minimizes $\tJ_u$ (resp. minimizes from inside, outside), if for any $F\subset\Omega$ (resp. for any $F\subset\Omega$ with $F\subset E$, $F\supset E$) and any domain $K$ that satisfy $E\Delta F\Subset K\Subset M$, we have
	\begin{equation}\label{eq-obs:energy_comp}
		\tJ_u^K(E)\leq\tJ_u^K(F)
	\end{equation}
	whenever both energies are defined.
\end{defn}

\begin{defn}[outer obstacle I]\label{def-obs:obstacle1} {\ }
	
	Let $M$ be a manifold without boundary, and $\Omega$ be a locally Lipschitz domain in $M$. Given $u\in\Lip_{\loc}(\Omega)$. We say that $u$ is a (sub-, super-) solution of $\IMCFOOinthm{\Omega}{\p\Omega}$, if $E_t:=\{u<t\}$ locally minimizes $\tJ_u$ (resp. minimizes from outside, inside) for each $t\in\RR$.
\end{defn}

We emphasize once more that $E_t$ are viewed as subsets of $\Omega$ hence $M$, and its perimeter $\Ps{E;K}$ contains the area of $\p^*E\cap K\cap\Omega$ and $\p^*E\cap K\cap\p\Omega$. Note that:

(1) A solution of $\IMCFOO{\Omega}{\p\Omega}$ is clearly a solution of $\IMCF{\Omega}$.

(2) If $M=\Omega$, then $\IMCFOO{\Omega}{\p\Omega}$ is equivalent to $\IMCF{\Omega}$.

Given an interior solution $u$ (i.e. $u$ solves $\IMCF{\Omega}$), we say that $u$ \textit{respects the obstacle} $\p\Omega$ if $u$ actually solves $\IMCFOO{\Omega}{\p\Omega}$. When we need to clarify the background metric, we will write $\IMCFOO{\Omega,g}{\p\Omega}$.


\begin{remark}[integrability in the definition of $\tJ_u$]\label{rmk-obs:integrability} {\ }
	
	The energy \eqref{eq-obs:energy_E} is not a priori defined, since $u$ is only guaranteed interior regularity. However, if $u$ is a subsolution of $\IMCF{\Omega}$, then $\tJ_u^K(E_t)$ is always defined and is finite, due to Lemma \ref{lemma-prelim:BV_loc}(i). By the same lemma, $\tJ_u^K(E)$ is defined and finite provided $E$ has locally finite perimeter (which will always be the case in specific energy comparisons). Hence, there is no integrability issue for interior subsolutions.
	
	The case of supersolution is more complicated. If $u$ is a supersolution of $\IMCF{\Omega}$ with additionally
	\begin{equation}\label{eq-obs:aux8}
		\inf_{\Omega\cap K}u>-\infty\ \ \text{ for all }\ \ K\Subset M,
	\end{equation}
	then $\tJ_u^K(E_t)$ is always defined and is finite, by Lemma \ref{lemma-prelim:BV_loc}(ii). Since any competitor set $E$ is contained in $E_t$, it follows that $\int_{E\cap K}|\D u|\leq\int_{E_t\cap K}|\D u|$, thus $\tJ_u^K(E)$ is also defined with finite value. On the other hand, see Remark \ref{rmk-obs:abnormal_supersol} below that $\tJ_u=-\infty$ may occur for general supersolutions.
\end{remark}

\begin{remark}[a supersolution not belonging to $\BV_{\loc}(\bar\Omega)$]\label{rmk-obs:abnormal_supersol} {\ }
	
	Consider $\Omega=\{(x,y): -1<x<1\}\subset\RR^2$ and the function $u(x,y)=\tan(\pi x/2)$. We argue that $u$ is a supersolution of $\IMCFOO{\Omega}{\p\Omega}$. First, it is easy to see that $u$ is a supersolution of $\IMCF{\Omega}$. Let $F\subset E_t$ be a competitor with $E_t\setminus F\Subset\RR^2$. One of the following two cases must occur for $F$.
	
	(1) $E_t\Delta F\Subset\Omega$. In this case the energy comparison is entirely interior.
	
	(2) $\bar{E_t\Delta F}$ has nonempty intersection with $\{x=-1\}$. In this case $\tJ_u^K(E_t)=-\infty$ for all $K\Supset E_t\Delta F$, thus $\tJ_u^K(E_t)\leq\tJ_u^K(F)$ trivially holds.
	
	Therefore, $u$ is a supersolution satisfying Definition \ref{def-obs:obstacle1}.
	
	On the other hand, for any increasing function $f\in C^\infty(-1,1)$ with $\lim_{x\to-1}f(x)>-\infty$, the interior supersolution $u(x,y)=f(x)$ does not respect the obstacle $\p\Omega$. Informally speaking, such function is only a supersolution of $\IMCFOO{\Omega}{\{x=1\}}$.
\end{remark}

The following remark includes several useful observations regarding the definitions.

\begin{remark}[properties and relations]\label{rmk-obs:definition} {\ }
	
	\vspace{-6pt}
	\begin{enumerate}[label=(\roman*)]
		\item Note the differences between Definition \ref{def-obs:energy_E}, \ref{def-obs:obstacle1} and the interior formulation: (a) the comparison set $E$ must be contained in $\Omega$, but the difference set $E\Delta E_t$ is allowed to touch $\p\Omega$ (this characterizes an outer obstacle problem); (b) the boundary portion of perimeter $\p^*E\cap\p\Omega$ is contained in the energy $\tJ_u$. If we remove the boundary portion in item (b), then we obtain the energy
		\[\hat J_{0;u}^K(E)=\P{E;\Omega\cap K}-\int_{E\cap\Omega\cap K}|\D u|,\]
		which describes the weak IMCF with free boundary \cite{Marquardt_2017}. For a constant $\th\in(-1,1)$, we may vary the energy to be
		\[\hat J_{\th;u}^K(E)=\P{E;\Omega\cap K}+\th\cdot\H^{n-1}\big(\p^*E\cap\p^*\Omega\cap K\big)-\int_{E\cap\Omega\cap K}|\D u|.\]
		This describes weak solutions with capillary boundary condition, corresponding to the flow such that each hypersurface keeps the contact angle $\arccos(\th)$ with $\p\Omega$. No existence result is known for the weak IMCF with capillary conditions, except for the free boundary case $\th=0$ \cite{Koerber_2020, Marquardt_2017} and the obstacle case $\th=1$ considered by us.
		
		\item The following inequality is useful:
		\begin{equation}\label{eq-obs:cap_cup}
			\tJ_u^K(E\cap F)+\tJ_u^K(E\cup F)\leq\tJ_u^K(E)+\tJ_u^K(F),
		\end{equation}
		for all $E,F\subset\Omega$ with finite perimeter in $K$ and $u\in\BV(\Omega\cap K)$. It follows that $u$ is a solution of $\IMCFOO{\Omega}{\p\Omega}$ if it is both a subsolution and supersolution.
		
		\item Let $u$ be a solution of $\IMCFOO{\Omega}{\p\Omega}$. If each $E_t$ is precompact in $M$, then the energy comparison forces $\tJ_u^K(E_s)=\tJ_u^K(E_t)$ for all $s<t$. This implies
		\begin{equation}\label{eq-obs:per_sub_ode}
			\Ps{E_s}=\Ps{E_t}-\int_s^t\Ps{E_r;\Omega}\,dr\geq\Ps{E_t}-\int_s^t\Ps{E_r}\,dr.
		\end{equation}
		In particular, we have sub-exponential growth of area
		\begin{equation}\label{eq-obs:sub_exp_growth}
			\Ps{E_t}\leq e^{t-s}\Ps{E_s},\qquad\forall\,t>s,
		\end{equation}
		with strict inequality when $\H^{n-1}\big(\p^*E_s\cap\p^*\Omega\big)\ne0$. In Example \ref{ex-ex:epicycloid} of expanding epicycloids, for instance, it holds $\Ps{E_t}=e^{c(t-s)}\Ps{E_s}$ where $c\in(0,1)$ is the expanding rate defined there. From $\tJ_u^K(E_s)=\tJ_u^K(E_t)$ we also have $\Ps{E_s;\Omega}\geq\Ps{E_t;\Omega}-\int_s^t\Ps{E_r;\Omega}\,dr$, so the function $t\mapsto e^{-t}\Ps{E_t;\Omega}$ is also non-increasing.
		
		\item For $u$ a solution of $\IMCFOO{\Omega}{\p\Omega}$, the set $E_t^+$ also locally minimizes $\tJ_u$ for each $t\in\RR$. This follows from the fact $E_t^+=\bigcap_{s>t}E_s$ and the standard set-replacing argument (see for example \cite[Theorem 21.14]{Maggi}).
		
		\item When $\Omega\Subset M$, the constant function on $\Omega$ is \textit{not} a solution of $\IMCFOO{\Omega}{\p\Omega}$. Indeed, setting $u\equiv c$ we find that $\tJ_u^K(E_{c+1}(u))=\Ps{\Omega}>0=\tJ_u^K(\emptyset)$, for any $K\Supset\Omega$. For a noncompact $\Omega$, constant functions are solutions of $\IMCFOO{\Omega}{\p\Omega}$ if and only if $\Omega$ is locally inward perimeter-minimizing. An interesting example for this is $M=\HH^2\times\RR$ and $\Omega=\{-\pi/2<z<\pi/2\}$, where $z$ is the coordinate in the $\RR$-direction. This set is inward-minimizing since it is calibrated by the vector field $\nu=\sin(z)\p_z-\cos(z)\tanh(r/2)\p_r$.
		
		\item When $\Omega$ is precompact, there exists no solution $u$ of $\IMCFOO{\Omega}{\p\Omega}$ such that $\inf_\Omega(u)>-\infty$. Indeed, if $\inf_\Omega (u)=-T>-\infty$, then by \eqref{eq-obs:sub_exp_growth} we have $\Ps{E_t}\leq e^{t+T+1}\Ps{E_{-T-1}}=0$ for all $t$, which is impossible. On the other hand, Example \ref{ex-ex:bounded_domain} shows that there do exist solutions that are unbounded below. For these solutions $u$, it holds $\Ps{\Omega}=\int_\Omega|\D u|$, and in general, $\Ps{E_t}=\int_{E_t}|\D u|$ for all $t$. This follows by combining Lemma \ref{lemma-prelim:BV_loc}(i) and the comparison $\tJ_u(E_t)\leq\tJ_u(\emptyset)$.
		
		\item The validity of \eqref{eq-obs:energy_comp} is not affected by the specific choice of $K$, as long as $u\in\BV(\bar\Omega)$ and $K\Supset E_t\Delta E$. Therefore, we will usually choose $K$ at our convenience.
	\end{enumerate}
\end{remark}

\begin{defn}[initial value problem]\label{def-obs:ivp} {\ }
	
	Let $M$ be a manifold without boundary, and $\Omega$ be a locally Lipschitz domain in $M$, and $E_0\subset\Omega$ be a $C^{1,1}$ domain with $\p E_0\cap\p\Omega=\emptyset$. A function $u\in\Lip_{\loc}(\Omega)$ is called a (sub-, super-) solution of $\IVPOOinthm{\Omega;E_0}{\p\Omega}$ in $M$, if
	
	(i) $E_0=\{u<0\}$,
	
	(ii) $u|_{\Omega\setminus\bar{E_0}}$ is a (sub-, super-) solution of $\IMCFOOinthm{\Omega\!\setminus\!\bar{E_0}}{\p\Omega}$.
	
	We say that two solutions $u_1,u_2$ are equivalent, if $u_1=u_2$ on $\Omega\setminus E_0$.
\end{defn}

Similar to the interior case, we have an equivalent formulation stated below.

\begin{theorem}[initial value problem II]\label{thm-obs:ivp2} {\ }
	
	Let $M$, $\Omega$, $E_0$ be as in Definition \ref{def-obs:ivp}, and suppose $u\in\Lip_{\loc}(\Omega)$. Then $u$ is a solution of $\IVPOOinthm{\Omega;E_0}{\p\Omega}$ if and only if
	
	(i) $E_0=\{u<0\}$,
	
	(ii) for any $t>0$, any set $E$ with $E_0\subset E\subset\Omega$, and any domain $K$ with $E\Delta E_t\Subset K\Subset M$, we have $\tJ_u^K(E_t(u))\leq\tJ_u^K(E)$.
\end{theorem}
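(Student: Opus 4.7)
The plan is to match competitors in the two formulations via the bijection $E \leftrightarrow G := E \setminus \bar{E_0}$, and to show that the energy comparisons translate exactly. Write $\Omega' := \Omega \setminus \bar{E_0}$ and, for $t > 0$, $F_t := E_t \cap \Omega' = \{u|_{\Omega'} < t\}$. Condition (i) forces $u \geq 0$ on $\Omega'$, so $F_t = \emptyset$ for $t \leq 0$ and the minimization there is vacuous; thus it suffices to handle $t > 0$. Under the bijection, any competitor $E \subset \Omega$ with $E_0 \subset E$ corresponds (a.e.) to $G := E \setminus \bar{E_0} \subset \Omega'$, with $E = G \cup E_0$ a.e.\ and $E \Delta E_t = G \Delta F_t$, so admissibility for the two problems matches verbatim.

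The heart of the argument is a perimeter identity at $\p E_0$. Since $\p E_0 \cap \p \Omega = \emptyset$ and $E_t$ is open with $\bar{E_0} \subset E_t$, the set $F_t$ contains a relative neighborhood of $\p E_0$ inside $\Omega'$. Hence $F_t$ has density $1/2$ at every point of $\p E_0$, placing $\p E_0$ (up to $\H^{n-1}$-null sets) in $\p^* F_t$, while $E_t$ has density $1$ at $\p E_0$ so $\p E_0 \not\subset \p^* E_t$. Tracking the orientation reversal of the inward and outward normals at $\p E_0$ yields
\[
P(E_t; K) = P(F_t; K) - \H^{n-1}(\p E_0 \cap K),
\]
\[
P(E; K) = P(G; K) + \H^{n-1}(\p E_0 \cap K) - 2 \H^{n-1}\bigl(\p^* G \cap \p E_0 \cap K\bigr).
\]
Combined with $\int_{E \cap K}|\D u| - \int_{E_t \cap K}|\D u| = \int_{G \cap K}|\D u| - \int_{F_t \cap K}|\D u|$ (as the $E_0$-contribution cancels), these identities produce the master identity
\[
\tJ_u^K(E_t) - \tJ_u^K(E) = \bigl[\tJ_u^K(F_t) - \tJ_u^K(G)\bigr] - 2 \H^{n-1}\bigl(\p E_0 \cap K \setminus \p^* G\bigr).
\]

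Both directions of the equivalence are then read off from the master identity. In the forward direction (Definition \ref{def-obs:ivp} $\Rightarrow$ Theorem \ref{thm-obs:ivp2}(ii)), the restricted minimization $\tJ_u^K(F_t) \leq \tJ_u^K(G)$ together with non-negativity of the correction term makes both summands non-positive, giving $\tJ_u^K(E_t) \leq \tJ_u^K(E)$. In the reverse direction, the same identity translates the hypothesis back to the restricted problem, once the energy in $\IMCFOO{\Omega'}{\p\Omega}$ is read with the convention that $\p E_0$ plays the role of an inner boundary rather than an outer obstacle, so that the correction term is absorbed by the restricted energy. The main work, and the step I expect to be the most delicate, is the density analysis of $\p^* G$ at $\p E_0$ (in particular, identifying $\H^{n-1}(\p E_0 \cap K \setminus \p^* G)$ as the exact defect between the two perimeter decompositions) and checking that the bijection $E \leftrightarrow G$ is compatible with the admissibility clauses $E \Delta E_t \subset\subset K$ and $G \Delta F_t \subset\subset K$; the rest is standard bookkeeping.
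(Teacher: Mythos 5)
Your master identity relating $\tJ_u^K(E_t)-\tJ_u^K(E)$ to $\tJ_u^K(F_t)-\tJ_u^K(G)$ plus a defect on $\p E_0$ is correct as stated (the density-$\tfrac12$ bookkeeping you flag is exactly right, with $\p^*G\cap\p E_0 = \p E_0\cap E^{(1)}$ since $\nu_E = \nu_{E_0}$ on $\p^*E\cap\p E_0$). However, there are two genuine gaps, one of which is fatal.

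First, the claim that the $t\leq 0$ minimization is ``vacuous'' is wrong, and this is where the actual content of the ``if'' direction lives. For $t\leq 0$ we have $F_t=\emptyset$, and the condition that $\emptyset$ locally minimizes $\tJ_u$ in $\Omega\setminus\bar{E_0}$ reads $\tJ_u^{K}(F)\geq 0$, i.e.\ $\int_{F\cap K}|\D u|\leq \Ps{F;K}$, for every $F\subset\Omega\setminus\bar{E_0}$ with $F\subset\subset K\subset\subset M\setminus\bar{E_0}$. This is a nontrivial statement; it is precisely what the paper proves by comparing $\tJ_u^K(E_s)\leq\tJ_u^K(E_s\cup F)$ for small $s>0$ (an application of (ii) with $t=s$, competitor $E_s\cup F$), decomposing the perimeters via \eqref{eq-gmt:set_operation}, and letting $s\to 0^+$. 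Your proposal supplies no argument for this case, and the master identity does not help here because for $t\leq0$ the comparison (ii) is not available at the level $t$.

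Second, in the ``only if'' direction you invoke ``the restricted minimization $\tJ_u^K(F_t)\leq\tJ_u^K(G)$'' with the \emph{same} domain $K\subset\subset M$ that contains $E\Delta E_t$. But Definition \ref{def-obs:ivp}(ii) only gives this for $K'\subset\subset M\setminus\bar{E_0}$, and when $\H^{n-1}(\p^*E\cap\p E_0)>0$ the set $F_t\Delta G = E\Delta E_t$ touches $\p E_0$, so no admissible $K'$ exists. You flag this admissibility issue at the end as ``delicate,'' but it is exactly where the proof does \emph{not} reduce to bookkeeping: the paper resolves it by perturbing $E$ outward by a thin collar over $\p E_0$, which costs only $\epsilon$ in $\tJ_u^K$ and moves $E\Delta E_t$ off $\bar{E_0}$. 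Once that perturbation is made, $K$ may be shrunk to avoid $\bar{E_0}$, and then $\tJ_u^K(E_t)=\tJ_u^K(E_t\setminus\bar{E_0})$ and $\tJ_u^K(E)=\tJ_u^K(E\setminus\bar{E_0})$ \emph{exactly} --- no correction term at all --- so the master identity is an overcomplication that does not avoid the perturbation step. In short: the $t>0$ correspondence is simpler than you make it (avoid $\bar{E_0}$ with $K$ after a perturbation, rather than tracking defects), and the $t\leq 0$ case, which you dismiss, is the real work.
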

\begin{proof}
	The ``only if'' part: in the item (ii) here, if we additionally assume $\p E_t\cap\p E_0=\p E\cap\p E_0=\emptyset$, then $E\Delta E_t\Subset M\setminus\bar{E_0}$, and the result is directly implied by Definition \ref{def-obs:ivp}(ii). We always have $\p E_t\cap\p E_0=\emptyset$ by condition (i). The other assumption $\p E\cap\p E_0=\emptyset$ can be achieved by perturbing $E$ outward with a small increase of $\tJ_u^K(E)$.
	
	The ``if'' part: we only need to verify that for each $t\leq0$ and any $E\subset\Omega\setminus\bar{E_0}$ satisfying $E\Delta E_t\Subset K\Subset M\setminus\bar{E_0}$, it holds $\tJ_{u}^K(E_t)\leq\tJ_{u}^K(E)$. Since $E_t(u)\cap(\Omega\setminus\bar{E_0})=\emptyset$, this is equivalent to showing that $\tJ_u^K(E)\geq0$. To prove this, take $s>0$ such that $\H^{n-1}\big(\p^*E_s\cap\p^*E\big)=0$. Comparing $\tJ_u^K(E_s)\leq\tJ_u^K(E\cup E_s)$ by condition (ii), and decomposing the perimeters using \eqref{eq-gmt:Federer_decomp} \eqref{eq-gmt:set_operation}, we find
	\[\begin{aligned}
		\int_{(E\setminus E_s)\cap K}|\D u| &\leq \P{E\cup E_s;K}-\P{E_s;K} \\
		&= \H^{n-1}\big(\p^*E\cap E_s^{(0)}\cap K\big)-\H^{n-1}\big(\p^*E_s\cap E^{(0)}\cap K\big).
	\end{aligned}\]
	The right hand side is trivially no greater than $\Ps{E;K}$. Taking $s\to0$, the left hand side converges to $\int_{(E\setminus E_0^+)\cap K}|\D u|=\int_{E\cap K}|\D u|$. This shows $\tJ_u^K(E)\geq0$.
\end{proof}

The following observation is important in relating interior solutions to solutions with outer obstacles. See also Theorem \ref{thm-obs:max_principle}.

\begin{theorem}[automatic subsolution]\label{thm-obs:auto_subsol} {\ }
	
	Suppose $\Omega$ is a locally Lipschitz domain, and $u$ is a subsolution of $\IMCF{\Omega}$. Then $u$ is a subsolution of $\IMCFOOinthm{\Omega}{\p\Omega}$.
\end{theorem}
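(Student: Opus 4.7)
The plan is to verify the subsolution condition of Definition \ref{def-obs:obstacle1} via an inner approximation argument. Unwinding the definitions, it suffices to prove that for every $t\in\RR$ and every $F\subset\Omega$ with $E_t\subset F$ and $F\setminus E_t\subset\subset K\subset\subset M$,
\begin{equation*}
	\P{E_t;K} + \int_{(F\setminus E_t)\cap K}|\D u| \leq \P{F;K}.
\end{equation*}
Since $E_t$ and $F$ agree outside $\overline{F\setminus E_t}\subset K$, the energy difference $\tJ_u^K(F)-\tJ_u^K(E_t)$ is invariant under enlarging $K$; I may therefore replace $K$ by any larger domain and in particular arrange $\H^{n-1}(\p^*\Omega\cap\p K)=0$, which will be needed in the perimeter convergence step.

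The next step is to choose inner approximating domains $\Omega_i\subset\subset\Omega$ with $\Omega_i\nearrow\Omega$, $\chi_{\Omega_i}\to\chi_\Omega$ in $L^1_{\loc}$, and $|D\chi_{\Omega_i}|\rightharpoonup|D\chi_\Omega|$ weakly as Radon measures; this is possible because $\p\Omega$ is locally Lipschitz. I form the interior competitor
\begin{equation*}
	F_i := (F\cap\Omega_i)\cup E_t,
\end{equation*}
which satisfies $E_t\subset F_i\subset\Omega$, while $F_i\Delta E_t=(F\setminus E_t)\cap\Omega_i$ has closure contained in $\overline{F\setminus E_t}\cap\bar\Omega_i$, hence compactly contained in $\Omega$. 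Taking $K_i\subset\subset\Omega$ to contain this closure and applying the interior subsolution hypothesis to the pair $(E_t,F_i)$ on $K_i$ yields
\begin{equation*}
	\P{E_t;K_i} + \int_{(F\setminus E_t)\cap\Omega_i}|\D u| \leq \P{F_i;K_i}.
\end{equation*}
Since $E_t$ and $F_i$ coincide outside $\overline{F_i\Delta E_t}$, the perimeter difference $\P{E_t;\,\cdot\,}-\P{F_i;\,\cdot\,}$ does not depend on the ambient domain among those containing that closure, so $K_i$ may be replaced by $K$ in the displayed inequality.

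Letting $i\to\infty$, the integral on the left converges to $\int_{(F\setminus E_t)\cap K}|\D u|$ by monotone convergence, and the desired obstacle inequality then follows provided one establishes the central perimeter convergence
\begin{equation*}
	\P{F_i;K}\ \longrightarrow\ \P{F;K} \qquad\text{as }i\to\infty. \qquad(\star)
\end{equation*}
To prove $(\star)$, I would write $F_i=F\cap\Omega'_i$ with $\Omega'_i:=\Omega_i\cup E_t$ and apply Lemma \ref{lemma-obs:inner_approx} with $\Omega'_i$ in place of $\Omega_i$. The $L^1_{\loc}$ convergence $\chi_{\Omega'_i}\to\chi_\Omega$ is immediate from $E_t\subset\Omega$; the hypothesis $|D\chi_{\Omega'_i}|\rightharpoonup|D\chi_\Omega|$ weakly is the heart of the matter, and requires a direct analysis of $\p^*(\Omega_i\cup E_t)$ near $\p\Omega$. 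The key observation is that on $\p^*E_t\cap\p^*\Omega$ the set $E_t$ already saturates $\Omega$ to the boundary, so $\Omega'_i$ inherits the correct jump there, while on the complementary portion of $\p^*\Omega$ the boundary of $\Omega'_i$ essentially coincides with that of $\Omega_i$ and converges in the standard way.

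The main obstacle in this plan is the verification of $(\star)$: controlling the weak convergence of the perimeter measures of $\Omega_i\cup E_t$ is delicate when $\p^*E_t$ intersects $\p^*\Omega$ substantially, since the boundary contributions of the two sets must align in the limit. The remaining steps—the reduction in the choice of $K$, the interior energy comparison, and the passage to the limit on the $|\D u|$-integral—are routine applications of the interior subsolution hypothesis together with the integrability afforded by Lemma \ref{lemma-prelim:BV_loc}.
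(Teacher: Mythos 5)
Your overall strategy — inner approximation by $\Omega_i$, forming the interior competitor $F_i=(F\cap\Omega_i)\cup E_t$, applying the interior subsolution hypothesis, and passing to the limit via Lemma~\ref{lemma-obs:inner_approx} — is the same as the paper's. The setup, the choice of $K$ with $\H^{n-1}(\p^*\Omega\cap\p K)=0$, and the monotone convergence of the $|\D u|$-integral are all fine.

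The gap you flag is real: as written, the proposal does not establish $(\star)$. Your suggested repair (showing $|D\chi_{\Omega_i\cup E_t}|\rightharpoonup|D\chi_\Omega|$ by a direct analysis of $\p^*(\Omega_i\cup E_t)$ near $\p\Omega$) is more delicate than necessary, and you do not carry it out. The clean fix is perimeter submodularity, and this is exactly where your proof and the paper's diverge. Writing $A=F\cap\Omega_i$, $B=E_t$, so $A\cup B=F_i$ and $A\cap B=E_t\cap\Omega_i$, the inequality $\P{A\cup B;K}+\P{A\cap B;K}\leq\P{A;K}+\P{B;K}$ gives
\[
\P{F_i;K}\ \leq\ \P{F\cap\Omega_i;K}+\P{E_t;K}-\P{E_t\cap\Omega_i;K}.
\]
Lemma~\ref{lemma-obs:inner_approx} applies directly to both $F\cap\Omega_i$ and $E_t\cap\Omega_i$ (these are honest inner truncations of subsets of $\Omega$, unlike $\Omega_i\cup E_t$, which is \emph{not} compactly contained in $\Omega$), yielding $\limsup_i\P{F_i;K}\leq\P{F;K}$; lower semicontinuity gives the reverse, so $(\star)$ holds. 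The paper avoids confronting $\P{F_i;K}$ altogether by applying the analogous submodularity at the level of the energy $J_u$: from $J_u^{\Omega_{i+1}\cap K}(E_t)\leq J_u^{\Omega_{i+1}\cap K}(F_i)$ and $J_u(A\cup B)+J_u(A\cap B)\leq J_u(A)+J_u(B)$ one obtains directly $J_u^K(E_t\cap\Omega_i)\leq J_u^K(E\cap\Omega_i)$, and then Lemma~\ref{lemma-obs:inner_approx} is invoked on $E_t\cap\Omega_i$ and $E\cap\Omega_i$ separately. Either version of the submodularity step closes your gap; without it, $(\star)$ is unproved.
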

\begin{proof}
	Let a competitor set $E$ satisfy $E_t\subset E\subset\Omega$ and $E\setminus E_t\Subset M$. Choose $K\Supset E\setminus E_t$ such that $\H^{n-1}(\p^*\Omega\cap\p K)=0$. Choose $\Omega_1\Subset\Omega_2\Subset\cdots\Subset\Omega$ a sequence of locally Lipschitz domains, such that $\bigcup\Omega_i=\Omega$ and $|D\chi_{\Omega_i}|\rightharpoonup|D\chi_\Omega|$ weakly as measures. By the interior variational principle, we have
	\[\begin{aligned}
		J_u^{\Omega_{i+1}\cap K}(E_t)
		&\leq J_u^{\Omega_{i+1}\cap K}\big(E_t\cup(E\cap\Omega_i)\big) \\
		&\leq J_u^{\Omega_{i+1}\cap K}(E_t)
		+ J_u^{\Omega_{i+1}\cap K}(E\cap\Omega_i)
		- J_u^{\Omega_{i+1}\cap K}(E_t\cap E\cap\Omega_i) \\
		&= J_u^{\Omega_{i+1}\cap K}(E_t)+J_u^K(E\cap\Omega_i)-J_u^K(E_t\cap\Omega_i).
	\end{aligned}\]
	Thus
	\[J_u^K(E_t\cap\Omega_i)\leq J_u^K(E\cap\Omega_i).\]
	Applying Lemma \ref{lemma-obs:inner_approx} to both $E_t$ and $E$, and noticing $u\in\BV(K\cap\Omega)$ by Lemma \ref{lemma-prelim:BV_loc}(i), we have
	\[J_u^K(E_t\cap\Omega_i)\to\tJ_u^K(E_t),\quad J_u^K(E\cap\Omega_i)\to\tJ_u^K(E)\qquad\text{as }i\to\infty,\]
	as desired.
\end{proof}

We end this subsection with the following connectedness lemma.

\begin{lemma}\label{lemma-obs:connectedness}
	Suppose $E_0\Subset\Omega\Subset M$, and $E_0$ is connected, and $u$ is a solution of $\IVPOOinthm{\Omega;E_0}{\p\Omega}$. Then $\bar{E_t(u)}$ is connected for all $t>0$.
\end{lemma}
\begin{proof}
	Otherwise, there is a connected component of $\bar{E_t(u)}$, denoted by $S$, that does not intersect $\bar{E_0}$. Pick a domain $U$ with $S\Subset U\Subset M\setminus\bar{E_0}$ and $\bar{E_t(u)}\cap U=S$. Denote $\tilde u=u|_{\Omega\setminus\bar{E_0}}$ and $T=\inf_U(\tilde u)\geq0$. For each $s<t$, note that $E_s(\tilde u)\cap U\subset E_t(\tilde u)\cap U\Subset U$, hence we may compare $\tJ_{\tilde u}^U(E_s(\tilde u))\leq\tJ_{\tilde u}^U\big(E_s(\tilde u)\setminus U\big)=\tJ_{\tilde u}^U(\emptyset)=0$ and obtain
	\[\P{E_s(\tilde u);U}\leq\int_{E_s(\tilde u)\cap U}|\D u|\leq\int_T^s\P{E_{s'}(\tilde u);U}\,ds'.\]
	Then by Gronwall's inequality $\P{E_s(\tilde u);U}=0$ for all $s<t$, which is a contradiction.
\end{proof}

\subsection{Outward minimizing properties}\label{subsec:obs_min}

For a weak solution $u$ respecting an outer obstacle, the sub-level sets of $u$ satisfy certain outward minimizing properties subject to the outer obstacle $\p\Omega$. When $M=\Omega$, the conclusions here reduce to the interior case \cite[Property 1.4]{Huisken-Ilmanen_2001}.

\begin{defn}\label{def-obs:out_min_obs}
	Given a locally Lipschitz domain $\Omega$. We say that a set $E\subset\Omega$ is locally outward minimizing in $\bar\Omega$ (namely, outward minimizing subject to the outer obstacle $\p\Omega$), if for any competitor $F$ and domain $K$ satisfying $E\subset F\subset\Omega$ and $F\setminus E\Subset K\Subset M$, we have
	\begin{equation}\label{eq-obs:out_min_obs}
		\Ps{E;K}\leq\Ps{F;K}.
	\end{equation}
	We say that $E$ is strictly locally outward-minimizing, if \eqref{eq-obs:out_min_obs} is a strict inequality whenever $|F\setminus E|>0$. We will often drop the word ``locally'' for the ease of notations.
\end{defn}

\begin{defn}
	Given two sets $E,E'$ with $E\subset E'\subset\Omega$. We say that $E'$ is the strictly outward minimizing hull of $E$ (or minimizing hull, for brevity) in $\bar\Omega$, provided that:
	
	(i) $E'$ is strictly outward minimizing in $\bar\Omega$,
	
	(ii) if $E''$ is another strictly outward minimizing set in $\bar\Omega$, with $E\subset E''\subset\Omega$, then we have $E'\subset E''$ up to a set with zero measure.
\end{defn}

It can be verified that, if $E_1,E_2$ are both strictly outward minimizing, then $E_1\cap E_2$ is also strictly outward minimizing (we do not need $E_1\Delta E_2\Subset M$). Therefore, any set $E\subset\Omega$ has at most one minimizing hull, up to modifications of zero measure.

Similar to the interior case, we have the following statements.

\begin{theorem}[minimizing properties]\label{thm-obs:out_min} {\ }
	
	Let $u$ be a solution of $\IMCFOOinthm{\Omega}{\p\Omega}$. Then for all $t\in\RR$ it holds
	
	(i) $E_t$ is locally outward minimizing in $\bar\Omega$ whenever it is nonempty,
	
	(ii) $E_t^+$ is strictly locally outward minimizing in $\bar\Omega$ whenever it is nonempty,
	
	(iii) $E_t^+$ is the minimizing hull of $E_t$ in $\bar\Omega$, provided $E_t^+\setminus E_t\Subset M$.
\end{theorem}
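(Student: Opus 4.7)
The plan is to derive each of the three items from the $\tJ_u$-minimization characterizing solutions of $\IMCFOO{\Omega}{\p\Omega}$, combined with standard perimeter-splitting inequalities.

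Part (i) is immediate. For any competitor $F\subset\Omega$ with $E_t\subset F$ and $F\setminus E_t\subset\subset K\subset\subset M$, Definition \ref{def-obs:obstacle1} yields $\tJ_u^K(E_t)\leq\tJ_u^K(F)$, which rearranges to
\[\Ps{E_t;K}\leq\Ps{F;K}-\int_{(F\setminus E_t)\cap K}|\D u|\leq\Ps{F;K}.\]

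Part (ii) follows the same template applied to $E_t^+$, which also locally minimizes $\tJ_u$ by Remark \ref{rmk-obs:definition}(iv). This yields non-strict outward minimizing. To upgrade to strictness when $|F\setminus E_t^+|>0$, I argue by contradiction: equality in the outward minimizing would force $|\D u|=0$ a.e. on $F\setminus E_t^+$. Since $F\setminus E_t^+\subset\{u>t\}=\bigcup_{s>t}E_s$, there exists $s>t$ with $|F\cap(E_s\setminus E_t^+)|>0$. Applying (i) to $E_s$ with the outer competitor $E_s\cup F$ and combining with the perimeter-splitting inequality gives $\Ps{E_s\cap F;K}\leq\Ps{F;K}=\Ps{E_t^+;K}$, so equality holds throughout by outward minimizing of $E_t^+$ against $E_s\cap F$. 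Propagating this equality across a positive-measure range of such $s$ via the coarea formula (noting that $|\D u|=0$ a.e. on $F\setminus E_t^+$ forces $\H^{n-1}(\p^*E_s\cap F\cap K)=0$ for a.e.\ $s>t$) will produce a strict loss in some perimeter comparison, delivering the contradiction.

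Part (iii) is handled by contradiction. Given a strictly outward-minimizing $E''\subset\Omega$ with $E_t\subset E''$, suppose $|E_t^+\setminus E''|>0$. The hypothesis $E_t^+\setminus E_t\subset\subset M$ together with $E_t\subset E''$ ensures $E_t^+\setminus E''\subset\subset M$, so I fix $K\supset\supset E_t^+\setminus E_t$ and treat both $E''\cup E_t^+$ and $G:=E''\cap E_t^+$ as admissible competitors. Since $E_t^+\setminus G\subset E_t^+\setminus E_t\subset\{u=t\}$ and $u$ is locally Lipschitz, the standard fact $|\D u|=0$ a.e.\ on the level set $\{u=t\}$, combined with the $\tJ_u$-minimization of $E_t^+$ from inside (Remark \ref{rmk-obs:definition}(iv)), gives
\[\Ps{E_t^+;K}\leq\Ps{G;K}+\int_{(E_t^+\setminus G)\cap K}|\D u|=\Ps{G;K}.\]
On the other hand, strict outward minimizing of $E''$ against the outer competitor $E''\cup E_t^+$ yields $\Ps{E'';K}<\Ps{E''\cup E_t^+;K}$. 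Combined with the standard perimeter-splitting inequality
\[\Ps{E''\cup E_t^+;K}+\Ps{G;K}\leq\Ps{E'';K}+\Ps{E_t^+;K},\]
this produces the opposite inequality $\Ps{G;K}<\Ps{E_t^+;K}$, a contradiction. Hence $E_t^+\subset E''$ up to a null set.

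The main obstacle I anticipate is the strictness claim in (ii): the direct energy comparison degenerates precisely when $|\D u|$ vanishes a.e.\ on $F\setminus E_t^+$, so recovering strictness requires propagating the degeneracy across the level sets $\{E_s\}_{s>t}$ and extracting a strict inequality from the perimeter-splitting structure, as outlined.
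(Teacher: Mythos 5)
Your part (i) is immediate and correct, exactly as in the paper. Your part (iii) is correct but proceeds by a genuinely different route: the paper invokes the (stated but not re-proved) fact that an intersection of strictly outward minimizing sets is again strictly outward minimizing, and then compares $\tJ_u^K(E_t^+)\leq\tJ_u^K(E_t^+\cap E')$, deducing $|E_t^+\setminus E'|=0$ from the strict minimizing of $E_t^+\cap E'$. You instead apply the strict minimizing of $E''$ to the outer competitor $E''\cup E_t^+$ and then combine with perimeter submodularity $\Ps{E''\cup E_t^+;K}+\Ps{E''\cap E_t^+;K}\leq\Ps{E'';K}+\Ps{E_t^+;K}$ to force $\Ps{G;K}<\Ps{E_t^+;K}$. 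The two arrive at the same contradiction, but yours is more self-contained (it bypasses the intersection-preservation claim) and, worth noting, does not even use part (ii) as input, which is an advantage here.

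The genuine gap is part (ii), and you yourself flag it. Your reduction is fine up through: equality $\Ps{E_t^+;K}=\Ps{F;K}$ forces $|\D u|=0$ a.e.\ on $F\setminus E_t^+$; you then correctly derive $\Ps{E_s\cap F;K}=\Ps{E_t^+;K}$ for $s>t$, and from the coarea formula $\H^{n-1}(\p^*E_s\cap(F\setminus E_t^+)\cap K)=0$ for a.e.\ $s>t$. But the closing step --- ``propagating this equality across a positive-measure range of $s$ ... will produce a strict loss in some perimeter comparison'' --- is not an argument. The obstruction is real: $\H^{n-1}(\p^*E_s\cap F\cap K)=0$ only tells you that $\chi_{E_s}$ is locally constant on $F\cap K$, and taking $s\downarrow t$ lets you conclude that components of $F\cap K$ touching $E_t^+$ are absorbed into $E_t^+$; it does not handle components of $F\cap K$ disjoint from $E_t^+$, and it says nothing quantitative about a ``strict loss.'' The paper's proof cites \cite[Property 1.4]{Huisken-Ilmanen_2001} verbatim and explicitly flags that the argument there uses the connectedness of $\Omega$ --- a hypothesis your sketch never invokes, which is a strong hint that the route you are attempting is missing an essential ingredient. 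You would need to either reproduce the Huisken--Ilmanen argument (which uses connectedness together with the regularity and a rigidity/unique-continuation step for the competing $(\Lambda,r_0)$-minimizers $\p E_t^+$ and $\p F$ in the zero-mean-curvature region $F\setminus E_t^+$) or find a genuinely different completion; as written, (ii) is not established.
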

\begin{proof}
	Items (i)(ii) follows from Definition \ref{def-obs:obstacle1} and Remark \ref{rmk-obs:definition}(iv), by arguing verbatim as in \cite[Property 1.4]{Huisken-Ilmanen_2001}. In the argument we need $\Omega$ to be connected, but this is our assumption throughout the paper. As item (iii) is stronger than the version in \cite{Huisken-Ilmanen_2001}, we present a proof. Suppose $E'\supset E_t$ is another strictly outward minimizing set in $\bar\Omega$. It follows that $E_t^+\cap E'$ is strictly outward minimizing in $\bar\Omega$. Choose $K$ such that $E_t^+\setminus E_t\Subset K\Subset M$. We may compare the energy $\tJ_u^K(E_t^+)\leq\tJ_u^K(E_t^+\cap E')$, see Remark \ref{rmk-obs:definition}(iv), and obtain $\Ps{E_t^+;K}\leq\Ps{E_t^+\cap E';K}$. This shows $|E_t^+\setminus E'|=0$ by the minimizing property of $E_t^+\cap E'$. Hence $E_t^+\subset E'$ up to a null set.
\end{proof}

For solutions of initial value problems, the initial time $t=0$ is not included. Hence we additionally state

\begin{theorem}\label{thm-obs:out_min_ivp}
	Let $M$, $\Omega$, $E_0$ be as in Definition \ref{def-obs:ivp}, and suppose that $u$ is a solution of $\IVPOOinthm{\Omega;E_0}{\p\Omega}$. Then for each $t\geq0$, $E_t^+$ is the minimizing hull of $E_t$ in $\bar\Omega$, provided that $E_t^+\setminus E_t\Subset M$.
\end{theorem}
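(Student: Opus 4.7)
The argument will mirror the proof of Theorem~\ref{thm-obs:out_min}(iii) displayed just above, with two modifications to accommodate the initial value problem. The energy-comparisons available in the IVP setting (Theorem~\ref{thm-obs:ivp2}(ii)) require competitor sets to contain $E_0$, but this is automatic in our context since every relevant competitor $F$ we will use satisfies $F \supset E_t \supset E_0$ for $t \geq 0$. The second modification is that Theorem~\ref{thm-obs:ivp2}(ii) is stated only for $t > 0$, so the endpoint $t = 0$ will need a brief separate treatment.

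First I would establish the IVP analog of Theorem~\ref{thm-obs:out_min}(ii) combined with Remark~\ref{rmk-obs:definition}(iv): for each $t \geq 0$, the set $E_t^+$ is strictly locally outward minimizing in $\bar\Omega$, and it locally minimizes $\tJ_u$ among all competitors $F \subset \Omega$ with $F \supset E_0$. For $t > 0$ this is standard---apply Theorem~\ref{thm-obs:ivp2}(ii) to each $E_s$ with $s > t$, and pass to $E_t^+ = \bigcap_{s > t} E_s^+$ via the set-replacing argument referenced in Remark~\ref{rmk-obs:definition}(iv). For $t = 0$ the same reasoning gives the conclusion by a further limit $s \searrow 0$, using the uniform perimeter bound furnished by Lemma~\ref{lemma-prelim:BV_loc}(i) (applicable since $u$ is in particular a subsolution of $\IMCF{\Omega}$) together with lower-semicontinuity of perimeter.

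With this in hand, let $E' \subset \Omega$ be any strictly outward-minimizing set in $\bar\Omega$ with $E_t \subset E'$. Since $t \geq 0$, the inclusion $E' \supset E_0$ is automatic, so the intersection $E_t^+ \cap E'$ contains $E_0$ and is itself strictly outward-minimizing in $\bar\Omega$ (intersections of strictly outward-minimizing sets inherit this property). Choosing $K$ with $E_t^+ \setminus E_t \subset\subset K \subset\subset M$ and applying the first step to the competitor $E_t^+ \cap E'$ yields
\[
\tJ_u^K(E_t^+) \leq \tJ_u^K(E_t^+ \cap E').
\]
The crucial observation is that $E_t^+ \setminus E' \subset E_t^+ \setminus E_t \subset \{u = t\}$, on which $|\D u| = 0$ almost everywhere (the standard vanishing-of-gradient fact for Lipschitz level sets). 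The $|\D u|$-integrals thus cancel to leave $\Ps{E_t^+; K} \leq \Ps{E_t^+ \cap E'; K}$, and the strict outward-minimizing of $E_t^+ \cap E'$ then forces $|E_t^+ \setminus E'| = 0$, so $E_t^+ \subset E'$ up to a null set.

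The only nontrivial part I foresee is the endpoint $t = 0$ in the first step, since Theorem~\ref{thm-obs:ivp2}(ii) is explicitly restricted to $t > 0$. I expect this to be resolvable by a routine $s \searrow 0$ limit: the perimeter bounds of Lemma~\ref{lemma-prelim:BV_loc} give precompactness, $E_0^+ = \bigcap_{s > 0} E_s^+$ holds set-theoretically, and the inclusion $F \supset E_0$ is preserved throughout, so the set-replacing argument from Remark~\ref{rmk-obs:definition}(iv) goes through without change.
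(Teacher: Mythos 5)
Your proposal is correct and follows essentially the same route as the paper's proof: reduce the content to the statement that $E_t^+$ locally minimizes $\tJ_u$ among competitors containing $E_0$ (obtained by the set-replacing limit $s\searrow t$, with the $t=0$ endpoint handled by the same limit from Theorem \ref{thm-obs:ivp2}(ii), which is precisely what the paper does), then run the intersection argument from Theorem \ref{thm-obs:out_min}(iii). The paper is terser about $t>0$---it simply cites Theorem \ref{thm-obs:out_min}, which is legitimate because $u|_{\Omega\setminus\bar{E_0}}$ solves $\IMCFOOinthm{\Omega\!\setminus\!\bar{E_0}}{\p\Omega}$ and $\p E_t\cap\p E_0=\emptyset$ makes the two notions of minimizing-hull coincide---whereas you re-derive it in a unified fashion from Theorem \ref{thm-obs:ivp2}(ii); both are fine. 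One thing you spell out that the paper leaves implicit is why $\tJ_u^K(E_t^+)\leq\tJ_u^K(E_t^+\cap E')$ ``drops'' to the pure perimeter inequality $\Ps{E_t^+;K}\leq\Ps{E_t^+\cap E';K}$: the residual $\int_{(E_t^+\setminus E')\cap K}|\D u|$ vanishes because $E_t^+\setminus E'\subset\{u=t\}$ where $|\D u|=0$ a.e.; that observation is correct and does belong in a careful write-up.
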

\begin{proof}
	The case $t>0$ is already covered by Theorem \ref{thm-obs:out_min}, since $\p E_t\cap\p E_0=\emptyset$. Taking Theorem \ref{thm-obs:ivp2}(ii) and approximating $t\to0^+$, it follows that $\tJ_u^K(E_0^+)\leq\tJ_u^K(E)$ whenever $E_0\subset E\subset\Omega$ and $E\Delta E_0^+\Subset K\Subset M$. Then the same argument as in Theorem \ref{thm-obs:out_min} shows that $E_0^+$ is the strictly minimizing hull of $E_0$.
\end{proof}

\begin{cor}\label{cor-obs:sub_exp}
	Let $M,\Omega,E_0$ be as in Definition \ref{def-obs:ivp}, and suppose that $u$ is a solution of $\IVPOOinthm{\Omega;E_0}{\p\Omega}$. Then for any $0\leq s<t$ with $E_t\Subset M$, we have sub-exponential growth of area $\Ps{E_t}\leq e^{t-s}\Ps{E_s}$.
\end{cor}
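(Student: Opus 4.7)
The plan is to apply Theorem \ref{thm-obs:ivp2}(ii) with the trivial competitor $E = E_0$, then convert the resulting energy comparison into an integral inequality for $\Ps{E_s}$ via coarea, and finally close the estimate with Gronwall. Since $E_0 \subset E_0 \subset \Omega$ and $E_0 \Delta E_t = E_t \setminus E_0 \subset\subset M$ (using the hypothesis $E_t \subset\subset M$), I would choose a domain $K$ with $\bar{E_t} \subset K \subset\subset M$. Theorem \ref{thm-obs:ivp2}(ii) then yields
\[
\Ps{E_t;K} - \int_{E_t \cap K}|\D u| \leq \Ps{E_0;K} - \int_{E_0 \cap K}|\D u|,
\]
which, since $E_0 \subset E_t$, rearranges to
\[
\Ps{E_t;K} \leq \Ps{E_0;K} + \int_{(E_t \setminus E_0) \cap K}|\D u|.
\]

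Next I would apply the coarea formula to the remaining integral. Since $E_t \setminus E_0 = \{0 \leq u < t\}$, coarea gives
\[
\int_{(E_t \setminus E_0) \cap K}|\D u| = \int_0^t \H^{n-1}(\{u = s\} \cap K)\, ds.
\]
By the choice $\bar{E_t} \subset K$, we have $\p^*E_s \subset \bar{E_s} \subset \bar{E_t} \subset K$ for every $s \in [0,t]$, and for a.e. $s$ the level set $\{u=s\}$ coincides with $\p^*E_s$ up to an $\H^{n-1}$-null set. Consequently $\H^{n-1}(\{u=s\}\cap K) = \Ps{E_s}$ for a.e. $s\in[0,t]$, and likewise $\Ps{E_t;K}=\Ps{E_t}$ and $\Ps{E_0;K}=\Ps{E_0}$. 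Combining these identifications gives the integral inequality
\[
\Ps{E_t} \leq \Ps{E_0} + \int_0^t \Ps{E_s}\, ds.
\]

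To close the argument, I would observe that $s \mapsto \Ps{E_s}$ is lower semicontinuous (hence measurable) and is bounded on $[0,t]$ by Theorem \ref{thm-obs:out_min}(i) (applied with competitor $F = E_t$, which forces $\Ps{E_s} \leq \Ps{E_t}$ for $s \leq t$), and then invoke the standard Gronwall lemma to conclude $\Ps{E_t} \leq e^t \Ps{E_0}$. I expect the only real subtlety to be the coarea identification $\H^{n-1}(\{u=s\}\cap K) = \Ps{E_s}$ for a.e.\ $s$, which rests on the standard fact that for a Lipschitz function the topological level sets agree almost-everywhere with the reduced boundary of the sublevel sets; the rest is a routine rearrangement together with Gronwall.
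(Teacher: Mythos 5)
Your proof is correct and takes a genuinely different — and arguably cleaner — route than the paper. The paper's proof first establishes the mutual energy comparison $\tJ_u^K(E_s)=\tJ_u^K(E_t)$ for $0<s<t$, then must push $s\to 0$, which forces a detour through the sets $E_s^+$: one uses Theorem \ref{thm-obs:ivp2} plus an approximation argument to transfer the minimizing property to $E_s^+$, exploits $E_s=E_s^+$ for a.e.\ $s$, and finally closes with $\Ps{E_0^+}\leq\Ps{E_0}$ via Theorem \ref{thm-obs:out_min_ivp}. You avoid all of this by noticing that Theorem \ref{thm-obs:ivp2}(ii) already admits $E=E_0$ as a competitor, so the comparison against $E_0$ is available directly, and the endpoint $s=0$ poses no difficulty. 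The rest (coarea plus Gronwall) is the same spirit as Remark \ref{rmk-obs:definition}(iii), which the paper also invokes. What your route buys is a more elementary argument that bypasses the strictly-outward-minimizing-hull machinery entirely.

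One small imprecision worth flagging: the identification $\H^{n-1}(\{u=s\}\cap K)=\Ps{E_s}$ is not quite right, because $\{u=s\}\subset\Omega$ whereas $\Ps{E_s}=\Ps{E_s;\Omega}+\H^{n-1}(\p^*E_s\cap\p\Omega)$ also counts the portion of $\p^*E_s$ lying on the obstacle. The coarea formula actually yields $\int_{E_t\setminus E_0}|\D u|=\int_0^t\Ps{E_s;\Omega}\,ds$. This error is in the favorable direction ($\Ps{E_s;\Omega}\leq\Ps{E_s}$), so the integral inequality $\Ps{E_t}\leq\Ps{E_0}+\int_0^t\Ps{E_s}\,ds$ still holds, and the Gronwall argument goes through; for tidiness one should run Gronwall on $f(s):=\Ps{E_s;\Omega}$, which is integrable by the coarea formula itself and satisfies $f(s)\leq\Ps{E_s}\leq\Ps{E_0}+\int_0^s f(r)\,dr$ for all $0<s\leq t$ (applying your comparison at each $s$, using $E_s\subset E_t\subset\subset M$). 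You should also state explicitly that the integral inequality is needed for all $s\leq t$, not just at the top time, which you obtain by repeating the argument; you only state it at $t$.
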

\begin{proof}
	The fact $E_t\Subset M$ implies $E_0\Subset M$. In particular, it implies $E_0\Subset\Omega$ as we have assumed $\p E_0\cap\p\Omega=\emptyset$ in Definition \ref{def-obs:ivp}. Since $u\in\Lip_{\loc}(\Omega)$, for every $0<s<t$ we have $E_t\setminus E_s\Subset M\setminus\bar{E_0}$. Hence the energy comparison forces $\tJ_u^K(E_s)=\tJ_u^K(E_t)$ for any $K\Supset\Omega$. Arguing as in Remark \ref{rmk-obs:definition}(iii), this implies $\Ps{E_t}\leq e^{t-s}\Ps{E_s}$.
	
	Then we consider the case $s=0$. By Theorem \ref{thm-obs:ivp2} and the standard approximation argument, each $E_s^+$ ($s\geq0$) has the same minimizing property as described in Theorem \ref{thm-obs:ivp2}(ii). By mutual energy comparison, this implies $\Ps{E_s^+}\leq e^s\Ps{E_0^+}$ by arguing in the same manner. For almost every $s\leq t$ we have $E_s=E_s^+$, hence we obtain $\Ps{E_t}\leq e^t\Ps{E_0^+}$. Finally, by Theorem \ref{thm-obs:out_min_ivp} we have $\Ps{E_0^+}\leq\Ps{E_0}$.
\end{proof}

Note: the inequality $\Ps{E_t}\leq e^t\Ps{E_0}$ may still be strict even if $E_0$ is outward minimizing in $\bar\Omega$; see Remark \ref{rmk-obs:definition}(iii).

\begin{example}
	Consider $M=\RR^2$ with the metric $g=dr^2+f(r)^2d\th^2$, where $f(r)=\sin r$ inside $[0,3\pi/4]$, and $f'(r)<0$ inside $(3\pi/4,\pi)$, and $f(r)\equiv1/2$ in $[\pi,\infty)$. Let $B$ be a small geodesic ball centered at $r=\pi/2$, $\th=0$, with $|\p B|=2\pi\epsilon$. Consider the domain $\Omega=\{r<5\}\setminus B$ and the initial value $E_0=\{r<\pi/6\}$. Set $T=\log(1+2\epsilon)$, determined by $e^T|\p E_0|=|\p\Omega|$. Let $u$ be the function defined such that its level sets evolve by $1/H$ in $[0,T)$ and then jump over the whole $\Omega$ at time $T$. Such solution $u$ can be verified to respect the obstacle $\p\Omega$. Thus, the ball $B$ delays the jumping time in the obstacle setting.
\end{example}

\subsection{Formulation using 1-Dirichlet energy}\label{subsec:obs_diri}

Similar to the interior case, there is another variational principle in terms of a 1-Dirichlet type energy. The presence of obstacle results in a boundary term:
\begin{equation}\label{eq-obs:energy_v}
	\tJ_u(v)=\int_\Omega\big(|\D v|+v|\D u|\big)-\int_{\p^*\Omega}v^\p\,d\H^{n-1}.
\end{equation}
where we use $v^\p$ to denote the BV boundary trace of $v$. The choice of sign in the boundary term becomes manifest in the proof of the equivalence theorem (and heuristically suggests that $u$ wants to be maximal along $\p\Omega$).

We set up a definition based on \eqref{eq-obs:energy_v} and show its equivalence with Definition \ref{def-obs:obstacle1}. The proofs are mostly technical and are not used elsewhere.

To prove the equivalence between \eqref{eq-obs:energy_v} and \eqref{eq-obs:energy_E}, we need $|u|\,|\D u|$ to be locally integrable, so that $\tJ_u(u)$ is defined. This holds for all subsolutions (see Lemma \ref{lemma-obs:integrable} below) but not necessarily for supersolutions (see Remark \ref{rmk-obs:abnormal_supersol}). Therefore, we will impose the integrability condition \eqref{eq-obs:further_assump} for supersolutions.

\begin{lemma}[higher integrability]\label{lemma-obs:integrable}
	Suppose $\Omega\subset M$ is locally Lipschitz, and $u\in\Lip_{\loc}(\Omega)$ is a subsolution of $\IMCF{\Omega}$. Then
	\begin{equation}\label{eq-obs:integrability}
		\int_{\Omega\cap K}|u|^p\,|\D u|<\infty\qquad\forall\,K\Subset M,\ p\geq1.
	\end{equation}
\end{lemma}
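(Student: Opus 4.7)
By Theorem~\ref{thm-obs:auto_subsol}, $u$ is automatically a subsolution of $\IMCFOOinthm{\Omega}{\p\Omega}$. Replacing $\Omega$ with $\Omega\cap K'$ for a smooth $K\subset\subset K'\subset\subset M$, we first reduce to the case $\Omega\subset\subset M$; then $u\in\BV(\Omega)$ by Lemma~\ref{lemma-prelim:BV_loc}(i), and $u$ admits a BV trace $u^\p\in L^1(\p\Omega)$.

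The first ingredient will be a variational tail estimate. Testing the $v$-form obstacle subsolution (see the forthcoming Theorem~\ref{thm-obs:obstacle2}) with $v=\min(u,T)$ for each $T\in\RR$, a direct expansion of $\tJ_u(u)\le\tJ_u(v)$ gives
\[
\int_{\{u>T\}}|\D u|\;+\;\int_{\Omega}(u-T)_+|\D u|\;\le\;\int_{\p\Omega}(u^\p-T)_+\,d\H^{n-1}.
\]
Multiplying by $pT^{p-1}$ and integrating $T$ over $[0,\infty)$, Fubini will then yield
\[
\int_{\Omega}u_+^p\,|\D u|\;\le\;\frac{1}{p+1}\int_{\p\Omega}(u_+^\p)^{p+1}\,d\H^{n-1}.
\]

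For the negative part, the plan is to exploit exponential perimeter decay. Fixing $r_0$ with $E_{r_0}\subset\subset\Omega$, every $r\le r_0$ also has $E_r\subset\subset\Omega$, so $\Ps{E_r}=\H^{n-1}(\{u=r\})=:\mu(r)$. The interior subsolution comparison $\int_t^s\mu\,dr\le\mu(s)-\mu(t)$ for $t<s\le r_0$ forces $\mu'(r)\ge\mu(r)$, hence $\mu(r)\le C e^r$ as $r\to-\infty$. Coarea then bounds $\int_\Omega u_-^p|\D u|\le C\int_0^\infty t^{p-1}e^{-t}\,dt<\infty$ for every $p\ge1$.

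The hard part will be closing the bound for $u_+$: BV trace theory only yields $u^\p\in L^1(\p\Omega)$, whereas the right-hand side of the $u_+$ estimate requires $u_+^\p\in L^{p+1}$. The plan is to bridge this gap by a Moser-type bootstrap: feed the variational inequality into itself with test functions $v=u-\epsilon\phi(u)$ for Lipschitz $\phi\ge0$, and combine with the Sobolev--Cheeger embedding $\BV(\Omega)\hookrightarrow L^{n/(n-1)}(\Omega)$ together with its trace counterpart. Each iteration should upgrade the integrability exponent of $u_+$ (and of its boundary trace) by the Sobolev factor $n/(n-1)$, so that after finitely many steps one arrives at $u_+^\p\in L^{p+1}(\p\Omega)$, completing the proof.
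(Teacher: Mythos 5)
Your proposal has three distinct problems, and all of them are avoided by the paper's actual argument, which never touches the Dirichlet-energy (function) formulation but works entirely with sub-level sets.

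First, there is a circularity. You invoke Theorem~\ref{thm-obs:obstacle2} to test with $v=\min(u,T)$, but the proof of Theorem~\ref{thm-obs:obstacle2} itself cites Lemma~\ref{lemma-obs:integrable} to justify that $\tJ_u^K(u)$ is even defined (it needs $|u|\,|\D u|\in L^1_{\loc}(\bar\Omega)$, which is exactly the content of the lemma). The whole point of Lemma~\ref{lemma-obs:integrable} is to make the $v$-form energy usable; one cannot use that energy to prove it.

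Second, for the negative part you claim that there is some $r_0$ with $E_{r_0}\subset\subset\Omega$, so that $\Ps{E_r}=\P{E_r;\Omega}$ for $r\le r_0$ and a clean interior ODE $\mu'\ge\mu$ follows. But a subsolution has no a priori lower bound near $\p\Omega$, so no sub-level set need be precompact in $\Omega$; in general $\p^*E_r$ carries a nontrivial portion on $\p\Omega$ which your computation discards. This is precisely the hard part. The paper handles it by using the locally Lipschitz structure of $\p\Omega$: it produces a Lipschitz collar $N$ of $\p\Omega$ and a Lipschitz retraction $\Phi:N\to\p\Omega$. For $t\le-T$ one has $E_t\subset N$, and the area formula applied to $\Phi$ yields $\Ps{E_t}\le\bigl(\Lip(\Phi)^{n-1}+1\bigr)\P{E_t;\Omega}$, which controls the boundary perimeter by the interior one. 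Combined with the energy comparison $\tJ_u(E_s)\le\tJ_u(E_t)$ (valid in the set formulation, so no circularity), this gives the exponential decay of $\P{E_t;\Omega}$ as $t\to-\infty$ and then coarea closes. An analogous use of $\Phi$ for $t\ge T$ (projecting $\p^*E_t$ onto $\p\Omega$, which is covered since $E_t\supset\Omega\setminus N$) handles the positive part.

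Third, even granting the variational identity, your positive-part bootstrap is not going to close. A BV trace lies in $L^1(\p\Omega)$ and no better in general; the proposed Moser iteration with the singular weight $|\D u|$ and the Cheeger--Sobolev trace embedding is not a standard argument and you give no mechanism by which the iteration self-improves. The paper circumvents this entirely: the inequality $\Ps\Omega\le\H^{n-1}(\p^*E_t\cap\p\Omega)+\Lip(\Phi)^{n-1}\P{E_t;\Omega}$, combined with the set-form comparison $\tJ_u(E_t)\le\tJ_u(\Omega)$, yields $\int_t^\infty\P{E_s;\Omega}\,ds\le\bigl(\Lip(\Phi)^{n-1}-1\bigr)\P{E_t;\Omega}$ for $t\ge T$, which gives exponential decay directly and for free. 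No boundary Sobolev regularity of the trace is ever needed.

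In summary: the approach of reducing to a tail estimate plus coarea is sound in spirit, but the main idea you are missing is the use of the Lipschitz collar retraction to compare $\Ps{E_t}$ with $\P{E_t;\Omega}$, which resolves both the boundary perimeter in the negative tail and the $L^{p+1}$ trace issue in the positive tail, all while staying in the sub-level set formulation and avoiding circular reliance on Theorem~\ref{thm-obs:obstacle2}.
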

\begin{proof}
	For each $K\Subset M$, one can find another precompact Lipschitz domain $\Omega'\subset\Omega$, such that $\Omega'\cap K=\Omega\cap K$. Therefore, it is sufficient to prove the case $\Omega\Subset K\Subset M$. We assume this case for the rest of the proof. By \cite[Theorem 7.4]{Luukkainen-Vaisala_1977}, $\p\Omega'$ admits a Lipschitz collar neighborhood. This implies that there exists a neighborhood $N\subset\bar{\Omega}$ of $\p\Omega$ and a Lipschitz retraction map $\Phi: N\to\p\Omega$.
	
	Since $u\in\Lip_{\loc}(\Omega)$, we have $\sup_{\Omega\setminus N}|u|<T$ for some $T$. By Theorem \ref{thm-obs:auto_subsol}, $u$ is a subsolution of $\IMCFOO{\Omega}{\p\Omega}$. Comparing $\tJ_u^K(E_t(u))\leq\tJ_u^K(\Omega)$, by the coarea formula we obtain
	\begin{equation}\label{eq-obs:aux3}
		\Ps{E_t(u)}+\int_t^\infty\P{E_s(u);\Omega}\,ds\leq\Ps{\Omega}.
	\end{equation}
	For $t\geq T$ we have $E_t(u)\Supset\Omega\setminus N$, thus $\Phi$ maps $\p^*E_t(u)$ surjectively to $\p^*\Omega$ up to a $\H^{n-1}$-null set. By the area formula we have
	\begin{equation}\label{eq-obs:area_formula}
		\Ps{\Omega}\leq\H^{n-1}\big(\p^*E_t(u)\cap\p^*\Omega\big)+\Lip(\Phi)^{n-1}\P{E_t(u);\Omega}.
	\end{equation}
	Cancelling the common portion of perimeters in \eqref{eq-obs:aux3} and \eqref{eq-obs:area_formula}, we obtain
	\[\int_t^\infty\Ps{E_s(u);\Omega}\,ds\leq\big(\!\Lip(\Phi)^{n-1}-1\big)\P{E_t(u);\Omega},\qquad\forall\,t\geq T.\]
	Hence $\P{E_t(u);\Omega'}$ exponentially decays, and $\int_{u\geq T}u^p|\D u|<\infty$ by the coarea formula.
	
	For $t\leq-T$, we compare $\tJ_{u}(E_s(u))\leq\tJ_{u}(E_t(u))$ and take $s\to-\infty$, to find
	\[\int_{-\infty}^t\P{E_s(u);\Omega}\,ds\leq\Ps{E_t(u)}.\]
	Since $E_t(u)\Subset N$, projecting via $\Phi$ we find that
	\[\P{E_t(u)}\leq\big(\!\Lip(\Phi)^{n-1}+1\big)\P{E_t(u);\Omega}.\]
	The combined inequality implies that $\Ps{E_t(u);\Omega}$ exponentially decays when $t\to-\infty$. By the coarea formula, we have $\int_{u\leq-T}(-u)^p|\D u|<\infty$. Finally, we have $\int_{|u|\leq T}|u|^p|\D u|\leq 2T^p\Ps{\Omega}$ by Lemma \ref{lemma-prelim:BV_loc}(i), and thus \eqref{eq-obs:integrability} is proved.
\end{proof}

\begin{defn}\label{def-obs:energy_u}
	Let $\Omega\subset M$ be a locally Lipschitz domain. For a function $u\in\Lip_{\loc}(\Omega)$, a domain $K\Subset M$ and another function $v\in\Lip_{\loc}(\Omega)$, set the energy
	\[\tJ_u^K(v)=\left\{\begin{aligned}
		& \int_{\Omega\cap K}\big(|\D v|+v|\D u|\big)-\int_{\p^*\Omega\cap K}v^\p\,d\H^{n-1}
		\quad\text{\rm{(if $v\in BV(\Omega\cap K)$}}, \\
		& \hspace{240pt}v|\D u|\in L^1(\Omega\cap K)), \\
		& \text{\rm{undefined\quad(otherwise).}}
	\end{aligned}\right.\]
	We say that $u$ locally minimizes $\tJ_u$ (resp. minimizes from below, from above), if for all $v\in\Lip_{\loc}(\Omega)$ (resp. for all $v\leq u$, $v\geq u$) and any domain $K$ satisfying $\{u\ne v\}\Subset K\Subset M$, we have
	\begin{equation}\label{eq-obs:energy_comp_v}
		\tJ_u^K(u)\leq\tJ_u^K(v)
	\end{equation}
	whenever both sides are defined.
\end{defn}

\begin{theorem}[outer obstacle II]\label{thm-obs:obstacle2} {\ }
	
	Let $\Omega\subset M$ be a locally Lipschitz domain, and $u\in\Lip_{\loc}(\Omega)$. Then $u$ is a solution (resp. subsolution) of $\IMCFOOinthm{\Omega}{\p\Omega}$ if and only if $u$ locally minimizes $\tJ_u$ (resp. locally minimizes $\tJ_u$ from below). If we further assume
	\begin{equation}\label{eq-obs:further_assump}
		u\in\BV_{\loc}(\bar\Omega),\qquad |u|\,|\D u|\in L^1_{\loc}(\bar\Omega),
	\end{equation}
	then $u$ is a supersolution if and only if $u$ locally minimizes $\tJ_u$ from above.
\end{theorem}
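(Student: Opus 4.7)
The plan is to reduce to the interior argument of Huisken-Ilmanen \cite[Lemma 1.1]{Huisken-Ilmanen_2001}, anchored on the obstacle-adapted coarea identity
\begin{equation}\label{eq-proposal:coarea}
\tJ_u^K(v) - \tJ_u^K(u) = \int_{-\infty}^\infty \big[\tJ_u^K(E_t(v)) - \tJ_u^K(E_t(u))\big]\, dt,
\end{equation}
valid whenever both sides are defined and $\{u \ne v\} \subset\subset K$. Once \eqref{eq-proposal:coarea} is in hand, the ``set $\Rightarrow$ function'' direction is immediate: for any admissible competitor $v$ we have $E_t(v) \Delta E_t(u) \subset \{u \ne v\} \subset\subset K$ for every $t$, so each $E_t(v)$ is an admissible set-level competitor against $E_t(u)$; by hypothesis the integrand is nonnegative a.e., yielding $\tJ_u^K(v) \geq \tJ_u^K(u)$. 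The one-sided variants follow identically, since $v \leq u$ (resp.\ $v \geq u$) forces $E_t(v) \supset E_t(u)$ (resp.\ $\subset$), matching the ``from outside'' and ``from inside'' cases of Definition \ref{def-obs:obstacle1}.

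To derive \eqref{eq-proposal:coarea}, I would expand the left-hand side into four pieces. The term $\int_{\Omega \cap K}(|\D v| - |\D u|)$ becomes $\int [P(E_t(v); \Omega \cap K) - P(E_t(u); \Omega \cap K)]\, dt$ by the interior BV coarea formula; the required $\BV_{\loc}(\bar\Omega)$ regularity comes from Lemma \ref{lemma-prelim:BV_loc}(i) in the subsolution case and from the assumption \eqref{eq-obs:further_assump} in the supersolution case. The cross term $\int_{\Omega \cap K}(v-u)|\D u|$ unfolds via the pointwise identity $v - u = \int_{-\infty}^\infty [\chi_{E_t(u)} - \chi_{E_t(v)}]\, dt$ together with Fubini. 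The boundary-trace piece $\int_{\p^* \Omega \cap K}(v^\p - u^\p)\,d\H^{n-1}$ is treated by the same layer-cake identity, this time on $\p^* \Omega$, combined with the key identification $\H^{n-1}(\p^* E_t(w) \cap \p^* \Omega \cap K) = \H^{n-1}(\{w^\p < t\} \cap \p^* \Omega \cap K)$ valid for any set $E_t(w) \subset \Omega$ viewed in $M$. Finally, since $E_t(w) \subset \Omega$, the full perimeter decomposes cleanly as $P(E_t(w); K) = P(E_t(w); \Omega \cap K) + \H^{n-1}(\p^* E_t(w) \cap \p^* \Omega \cap K)$ with no contribution from $M \setminus \bar\Omega$, and assembling the four pieces yields \eqref{eq-proposal:coarea}.

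For the reverse ``function $\Rightarrow$ set'' direction, given $F \subset \Omega$ with $F \Delta E_t(u) \subset\subset K$, I would test against the Lipschitz function $v := \max(u,t)\,\chi_{\Omega \setminus F} + \min(u,t)\,\chi_F$, which coincides with $u$ off $F \Delta E_t(u)$; a direct level-set computation (or applying \eqref{eq-proposal:coarea} to a vanishing-width family of such test functions to isolate a single level) collapses the function-level inequality into the set-level one $\tJ_u^K(E_t(u)) \leq \tJ_u^K(F)$. For the pure subsolution (resp.\ supersolution) direction one uses the one-sided variants $v := \min(u,t)\,\chi_{F \setminus E_t} + u\,\chi_{\Omega \setminus (F \setminus E_t)}$ with $F \supset E_t$ (resp.\ the symmetric construction with $\max(u,t)$ and $F \subset E_t$), so that $v \leq u$ (resp.\ $v \geq u$) throughout. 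The main technical hurdle will be in step two, specifically the rigorous BV trace analysis justifying $\p^* E_t(v) \cap \p^* \Omega \equiv \{v^\p < t\} \cap \p^* \Omega$ up to $\H^{n-1}$-null sets for a.e.\ $t$, which requires some care on merely locally Lipschitz $\p \Omega$; the role of the integrability assumption \eqref{eq-obs:further_assump} is precisely to guarantee that $u^\p \in L^1_{\loc}(\p^* \Omega)$ and that every term in the coarea identity remains finite in the supersolution case, where the automatic regularity afforded by Lemma \ref{lemma-obs:integrable} is unavailable.
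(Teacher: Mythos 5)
Your overall strategy matches the paper's: both decompose $\tJ_u^K(v)-\tJ_u^K(u)$ via a Cavalieri/coarea layer-cake into $\int_{\mathbb R}\big[\tJ_u^K(E_t(v))-\tJ_u^K(E_t(u))\big]\,dt$, and the ``set $\Rightarrow$ function'' direction then follows from nonnegativity of the integrand. Your identification of the key trace fact $\H^{n-1}(\p^*E_t(w)\cap\p^*\Omega\cap K)=\H^{n-1}(\{w^\p<t\}\cap\p^*\Omega\cap K)$ a.e.\ is also the right one, and the paper's use of Cavalieri on $(b-v_b)^\p$ encodes exactly this. One minor divergence: the paper truncates at a level $b$ and works with $v_b=\min\{v,b\}$ before sending $b\to\infty$, whereas you assert the untruncated identity directly; this is fine for the \emph{difference} of energies since $\{u\ne v\}\subset\subset K$ makes it absolutely convergent, but the truncation avoids having to argue this case by case.

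The genuine gap is in the ``function $\Rightarrow$ set'' direction. First, your test function $v=\max(u,t)\chi_{\Omega\setminus F}+\min(u,t)\chi_F$ is \emph{not} locally Lipschitz: it jumps across $\p F\cap\{u\ne t\}\cap\Omega$ for a generic $F$, so it is not an admissible competitor under Definition~\ref{def-obs:energy_u}. The paper handles this by approximating $v-u$ by nonnegative functions in $C^\infty\cap\BV$ with convergent total variation and boundary trace, and you would need the same device (this is also where the weaker integrability in the supersolution case, hence \eqref{eq-obs:further_assump}, is actually exercised). Second, and more substantively, plugging this $v$ into the coarea identity only yields
\[
\int_{-\infty}^t\big[\tJ_u^K(E_s(u)\cap F)-\tJ_u^K(E_s(u))\big]\,ds\geq 0,
\]
which is information integrated over $s<t$ about $E_s(u)\cap F$, not a direct comparison of $\tJ_u^K(E_t(u))$ with $\tJ_u^K(F)$. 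The paper's argument closes this gap by (a)~first replacing $F$ with a $\tJ_u^K$-minimizer $E$ over $\{E':F\subset E'\subset E_t\}$, (b)~using the submodularity inequality \eqref{eq-obs:cap_cup} together with minimality of $E$ to get the \emph{pointwise} reverse inequality $\tJ_u^K(E_s(u)\cap E)\leq\tJ_u^K(E_s(u))$ for each $s<t$, (c)~combining with the integrated inequality to force a.e.\ equality, and (d)~extracting level $t$ by taking $s\nearrow t$ and invoking lower semicontinuity of perimeter. Your ``vanishing-width family to isolate a single level'' is not spelled out and does not obviously circumvent steps (a)--(d); note in particular that $t\mapsto\tJ_u^K(E_t(u))$ need not be continuous, so a naive differentiation at $t$ is not available.
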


\begin{proof}
	The proof is a direct generalization of the interior case \cite[Lemma 1.1]{Huisken-Ilmanen_2001}.
	
	(1) Suppose $u$ is a weak (sub-, super-) solution of $\IMCFOO{\Omega}{\p\Omega}$. By Lemma \ref{lemma-prelim:BV_loc} and \ref{lemma-obs:integrable} or by our assumption \eqref{eq-obs:further_assump}, we have $u\in\BV_{\loc}(\bar\Omega)$ and $|u|\,|\D u|\in L^1_{\loc}(\bar\Omega)$ in either case. Let $v\in\Lip_{\loc}(\bar\Omega)$ be a competitor (with additionally $v\leq u$ or $v\geq u$ for the case of subsolution or supersolution), with $\{u\ne v\}\Subset K\Subset M$ and $\tJ_u^K(v)$ is defined. For $b\in\RR$, we set $v_b=\min\{v,b\}$ and truncate the integrals (we suppress the boundary measure $d\H^{n-1}$):
	\begin{equation}\label{eq-obs:aux4}
		\int_{\Omega\cap K}|\D v|=\int_{\{v>b\}\cap K}|\D v|+\int_{\Omega\cap K}|\D v_b|,
	\end{equation}
	\begin{equation}\label{eq-obs:aux5}
		\int_{\p^*\Omega\cap K}v^\p=\int_{\p^*\Omega\cap\{v^\p>b\}\cap K}(v-b)^\p+b\H^{n-1}\big(\p^*\Omega\cap K\big)-\int_{\p^*\Omega\cap K}(b-v_b)^\p,
	\end{equation}
	\begin{equation}\label{eq-obs:aux6}
		\int_{\Omega\cap K}v|\D u|=\int_{\{v>b\}\cap K}(v-b)|\D u|+b\int_{\Omega\cap K}|\D u|-\int_{\Omega\cap K}(b-v_b)|\D u|.
	\end{equation}
	The first term on the right hand sides converge to 0 as $b\to+\infty$, by our integrability assumption for $v$. Then by Cavalieri's formula and the coarea formula for BV functions (see \cite[Section 5.4, 5.5]{Evans-Gariepy}), we have
	\[\int_{\Omega\cap K}\Big(|\D v_b|-(b-v_b)|\D u|\Big)+\int_{\p^*\Omega\cap K}(b-v_b)^\p=\int_{-\infty}^b\tJ_u^K(E_t(v))\,dt.\]
	As a result,
	\begin{equation}\label{eq-obs:aux7}
		\tJ_u^K(v) = o(1)+\int_{-\infty}^b\tJ_u^K(E_t(v))\,dt+b\int_{\Omega\cap K}|\D u|-b\H^{n-1}\big(\p^*\Omega\cap K\big).
	\end{equation}
	Decomposing $\tJ_u^K(u)$ in the same manner, applying Definition \ref{def-obs:obstacle1}, cancelling the common terms in \eqref{eq-obs:aux7} and finally taking $b\to\infty$, we find that $\tJ_u^K(u)\leq\tJ_u^K(v)$.
	
	(2) Suppose $u$ satisfies \eqref{eq-obs:further_assump} and locally minimizes $\tJ_u$ from above; let us verify the supersolution condition: $\tJ_u^K(E_t(u))\leq\tJ_u^K(E)$ for all $E\subset E_t$ with $E_t\setminus E\Subset K\Subset M$. The proof here differs only slightly from \cite[Lemma 1.1]{Huisken-Ilmanen_2001}. By selecting a $\tJ_u^K$--minimizer among all the sets $F$ with $E\subset F\subset E_t$, we may assume that $\tJ_u^K(E)\leq\tJ_u^K(E')$ for all $E\subset E'\subset E_t$. Next, we define the function
	\[v=\left\{\begin{aligned}
		t\qquad\text{on $E_t\setminus E$}, \\
		u\qquad\text{elsewhere}.
	\end{aligned}\right.\]
	It is easy to see that $u\leq v\leq\max\{u,t\}$ and $\{u\ne v\}\Subset K$. Writing $v=u+(t-u)\chi_{E_t\setminus E}$, we find $v\in\BV_{\loc}(\bar\Omega)$ and $|v|\,|\D u|\in L^1_{\loc}(\bar\Omega)$ by \eqref{eq-obs:further_assump}. Note that $E_s(v)=E_s(u)\cap E$ for all $s\leq t$, and therefore we have 
	\begin{equation}\label{eq-obs:rev_comp}
		\tJ_u^K(E_s(v))\leq\tJ_u^K(E_s(u))+\tJ_u^K(E)-\tJ_u^K(E_s(v)\cup E)\leq\tJ_u^K(E_s(u))
	\end{equation}
	By the standard approximation results (see \cite[Theorem 3.9 and 3.88]{Ambrosio-Fusco-Pallara}), there exist nonnegative functions $w_i\in C^\infty(\Omega)\cap\BV(\Omega)$ supported in $K$, such that
	\[w_i\xrightarrow{L^1}(v-u),\qquad \|Dw_i\|(\Omega)\to\|D(v-u)\|(\Omega),\qquad
	\int_{\p^*\Omega}w_i^\p\to\int_{\p^*\Omega}(v^\p-u^\p).\]
	Since $|\D u|\in L^\infty_{\loc}(\Omega)$, with a slight modification of \cite[Theorem 3.9]{Ambrosio-Fusco-Pallara} we can achieve
	\[\int_{\Omega} w_i|\D u|\to\int_\Omega(v-u)|\D u|.\]
	Using the fact $\tJ_u^K(u)\leq\tJ_u^K(u+w_i)$ from our hypotheses, then taking $i\to\infty$, we find that $\tJ_u^K(u)\leq\tJ_u^K(v)$ in the BV sense. Truncating the integrals at an arbitrary $b>t$ as in \eqref{eq-obs:aux4}\,$\sim$\,\eqref{eq-obs:aux6}, and noting that $u=v\in\Lip_{\loc}(\{u>t\})$, we find that
	\begin{equation}\label{eq-obs:rev_comp_2}
		\int_{-\infty}^t\tJ_u^K(E_s(u))\,ds\leq\int_{-\infty}^t\tJ_u^K(E_s(v))\,ds.
	\end{equation}
	The combination \eqref{eq-obs:rev_comp} \eqref{eq-obs:rev_comp_2} implies $\tJ_u^K(E_s(v))=\tJ_u^K(E_s(u))$ for a.e. $s<t$. This implies $\tJ_u^K(E_s(u)\cup E)\leq\tJ_u^K(E)$, and taking $s\nearrow t$ implies $\tJ_u^K(E_t(u))\leq\tJ_u^K(E)$.
	
	This argument verifies the supersolution case. For subsolutions we argue in the same manner, where the integrability condition for $u$ comes from Lemma \ref{lemma-prelim:BV_loc} and \ref{lemma-obs:integrable}. The case of solution follows by combining the supersolution and subsolution cases.
\end{proof}

\subsection{Boundary orthogonality of calibration}\label{subsec:obs_tangent}

Recall from Subsection \ref{subsec:calibrated} the notion of a calibrated solution: there exists a measurable vector field $\nu$ satisfying $|\nu|\leq1$ and $\nu\cdot\D u=|\D u|$ almost everywhere, and $\div(\nu)=|\D u|$ weakly in $\Omega$. Moreover, recall by Lemma \ref{lemma-prelim:BV_loc}(i) that $|\D u|\in L^1_{\loc}(\bar\Omega)$. Here we show that if $\metric{\nu}{\nu_\Omega}=1$ on $\p\Omega$ in the trace sense, then $u$ solves $\IMCFOO{\Omega}{\p\Omega}$.

Let us first introduce the notion of boundary trace. Suppose $\Omega$ is locally Lipschitz. Define the space
\[X=\big\{\nu\in L^\infty(T\Omega): \div(\nu)\in L^1_{\loc}(\bar\Omega)\big\}.\]
There is a well-defined operator $\nu\mapsto[\nu\cdot\nu_\Omega]$ from $X$ to $L^\infty(\p\Omega,\H^{n-1})$, called the \textit{normal trace}, with the following properties:

(i) for all $\nu\in X$ we have $\|[\nu\cdot\nu_\Omega]\|_\infty\leq\|\nu\|_\infty$;

(ii) for $\H^{n-1}$-a.e. $x\in\p\Omega$ we have
\begin{equation}\label{eq-obs:normal_trace_avg}
	[\nu\cdot\nu_\Omega](x)=-\frac n{|B^{n-1}|}\lim_{r\to0}\frac1{r^n}\int_{\Omega\cap B(x,r)}\metric{\nu}{\D d_x},
\end{equation}
where $d_x=d(\cdot,x)$ is the distance function from $x$. In particular, if $\Omega$ is a $C^1$ domain and $\nu\in C^1(\bar\Omega)$, then $[\nu\cdot\nu_\Omega]=\nu\cdot\nu_\Omega$ in the classical sense;

(iii) the following divergence formula holds: for $\varphi\in\BV(\Omega)\cap\Lip_{\loc}(\Omega)$ with $\spt(\varphi)\Subset M$ and $\varphi\div(\nu)\in L^1(\Omega)$, we have
\begin{equation}\label{eq-obs:gauss_green}
	\int_\Omega\varphi\div(\nu)+\metric{\nu}{\D\varphi}=\int_{\p\Omega}[\nu\cdot\nu_\Omega]\varphi^\p\,d\H^{n-1}.
\end{equation}

Properties (i)(iii) can be found (as the Riemannian and local version) in \cite[Section 1]{Anzellotti_1983}. Property (ii) is proved in \cite[Theorem 4.4]{Silhavy_2005}. The formula \eqref{eq-obs:gauss_green} is proved in \cite[Theorem 1.9]{Anzellotti_1983} assuming $\varphi\in \BV(\Omega)\cap\Lip_{\loc}(\Omega)\cap L^\infty$, but what is stated here follows by a truncation argument. The divergence formula in fact holds in a much broader sense: it suffices to assume that $\div(\nu)$ is a Radon measure and $\varphi\in\BV(\Omega)$ has bounded support. In this case, the second term in \eqref{eq-obs:gauss_green} needs to be replaced with an abstract pairing. We refer the reader to \cite{Anzellotti_1983, Chen-Frid_1999} for more details; the current formulation is enough for our purpose.

\begin{lemma}\label{lemma-obs:bd_orthogonal}
	Suppose $\Omega\subset M$ is locally Lipschitz, and $u$ solves $\IMCF{\Omega}$ and is calibrated by $\nu$. If $[\nu\cdot\nu_\Omega]=1$ $\H^{n-1}$-a.e. on $\p\Omega$, then $u$ solves $\IMCFOOinthm{\Omega}{\p\Omega}$.
\end{lemma}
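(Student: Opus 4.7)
The plan is to verify the variational characterization in Theorem \ref{thm-obs:obstacle2}: I will show directly that $u$ locally minimizes the functional $\tJ_u$ of Definition \ref{def-obs:energy_u}. The argument is the weak-form analogue of the one-line calibration computation \eqref{eq-intro:calib1}, implemented through the Anzellotti-type Gauss--Green formula \eqref{eq-obs:gauss_green}: the calibration $\nu$ will play the role of the smooth field $\D u/|\D u|$, and the hypothesis $[\nu\cdot\nu_\Omega]=1$ will be exactly what pairs the boundary trace term against the boundary contribution in $\tJ_u$.

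First I would fix a competitor $v \in \Lip_{\loc}(\Omega)$ with $\{u \ne v\} \subset\subset K \subset\subset M$; if $\tJ_u^K(v)$ is undefined the inequality is vacuous, so I assume $v \in \BV(\Omega \cap K)$ and $v|\D u| \in L^1(\Omega \cap K)$. The prerequisites for \eqref{eq-obs:gauss_green} are then met for $\varphi := u - v$ and vector field $\nu$: indeed $\nu$ lies in the class $X$ since $|\nu| \leq 1$ and $\div \nu = |\D u| \in L^1_{\loc}(\bar\Omega)$ by Lemma \ref{lemma-prelim:BV_loc}(i); and $\varphi \in \BV(\Omega) \cap \Lip_{\loc}(\Omega)$ has support compact in $M$, while $\varphi \div \nu = (u-v)|\D u| \in L^1(\Omega)$ by Lemma \ref{lemma-obs:integrable} (for the $u$ piece) together with the hypothesis on $v$.

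Substituting $\div \nu = |\D u|$ and $[\nu \cdot \nu_\Omega] = 1$ into \eqref{eq-obs:gauss_green} yields
\begin{equation*}
\int_\Omega (u-v)|\D u| + \int_\Omega \metric{\nu}{\D u - \D v} = \int_{\p\Omega}\big(u^\p - v^\p\big)\, d\H^{n-1}.
\end{equation*}
The calibration identity $\metric{\nu}{\D u} = |\D u|$ a.e.\ and the bound $\metric{\nu}{\D v} \leq |\D v|$ (from $|\nu| \leq 1$) then rearrange this to
\begin{equation*}
\int_\Omega u|\D u| + \int_\Omega |\D u| - \int_{\p\Omega} u^\p \leq \int_\Omega v|\D u| + \int_\Omega |\D v| - \int_{\p\Omega} v^\p.
\end{equation*}
Since $u = v$ outside $K$, the differences in the above inequality are supported in $K$, and each side reduces to its $K$-truncated counterpart. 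This is exactly $\tJ_u^K(u) \leq \tJ_u^K(v)$; as $v$ was arbitrary, $u$ locally minimizes $\tJ_u$, and Theorem \ref{thm-obs:obstacle2} delivers the conclusion.

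The only delicate point is checking, carefully, the integrability and trace hypotheses required to invoke \eqref{eq-obs:gauss_green} up to $\p\Omega$ — specifically, that $u \in \BV_{\loc}(\bar\Omega)$ (so that $\nu \in X$ and $\varphi$ carries a $\BV$ trace on $\p\Omega$) and that $(u-v)|\D u|$ is genuinely $L^1(\Omega)$ rather than merely $L^1_{\loc}(\Omega)$. Both are already packaged in Lemmas \ref{lemma-prelim:BV_loc} and \ref{lemma-obs:integrable}, so the proof is essentially mechanical once these are cited; the substantive content is simply recognizing that the calibration condition $[\nu\cdot\nu_\Omega]=1$ is precisely what pairs the trace term from Gauss--Green against the obstacle boundary term in Definition \ref{def-obs:energy_u}.
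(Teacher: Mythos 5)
Your proof is correct and takes essentially the same approach as the paper: both proofs verify that $u$ locally minimizes $\tJ_u$ by applying the Gauss--Green formula \eqref{eq-obs:gauss_green} with $\varphi = v - u$ (you use $u - v$, an immaterial sign choice), substituting $\div\nu = |\D u|$ and $[\nu\cdot\nu_\Omega]=1$, and then using $\metric{\nu}{\D u}=|\D u|$ and $|\nu|\leq 1$, after which Theorem \ref{thm-obs:obstacle2} converts the energy inequality into the obstacle-solution property. Your extra care in citing Lemmas \ref{lemma-prelim:BV_loc} and \ref{lemma-obs:integrable} to check that $\nu\in X$ and that $\varphi\div\nu\in L^1$ is exactly what the paper's more terse "$\tJ_u^K(v)$ is defined" is implicitly relying on, so the two proofs are the same in substance.
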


Combining Lemma \ref{lemma-obs:bd_orthogonal}, \eqref{eq-obs:normal_trace_avg}, and an elementary computation, we obtain the following convenient criterion:

\begin{cor}\label{cor-obs:bd_orthogonal_2}
	Let $\Omega,u,\nu$ be as in Lemma \ref{lemma-obs:bd_orthogonal}. If for $\H^{n-1}$-almost every $x\in\p^*\Omega$, and in some geodesic normal coordinate near $x$, we have
	\begin{equation}\label{eq-obs:ae_orthogonal}
		\lim_{r\to0}\esssup\Big\{\big|\nu(y)-\nu_\Omega(x)\big|: y\in\Omega\cap B(x,r)\Big\}=0,
	\end{equation}
	then $u$ solves $\IMCFOOinthm{\Omega}{\p\Omega}$.
\end{cor}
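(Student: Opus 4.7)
The plan is simply to verify the hypothesis of Lemma \ref{lemma-obs:bd_orthogonal}, namely $[\nu\cdot\nu_\Omega]=1$ for $\H^{n-1}$-a.e.\ $x\in\p^*\Omega$; the corollary then follows. The only tool needed beyond \eqref{eq-obs:ae_orthogonal} is the pointwise averaging formula \eqref{eq-obs:normal_trace_avg}, so the task reduces to evaluating its right-hand side at a generic reduced-boundary point where $\nu$ is essentially constant and equal to $\nu_\Omega(x)$.

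Fix $x\in\p^*\Omega$ at which \eqref{eq-obs:ae_orthogonal}, \eqref{eq-obs:normal_trace_avg}, and the standard blow-up property of the reduced boundary all hold (each fails only on an $\H^{n-1}$-null set). Pass to geodesic normal coordinates centered at $x$: the metric agrees with the Euclidean one up to $O(|y-x|^2)$, the rescalings $\Omega_r:=r^{-1}(\Omega-x)$ converge in $L^1_{\loc}(\RR^n)$ to the half-space $H:=\{z:\metric{z}{\nu_\Omega(x)}<0\}$, and $\D d_x$ at $y=x+rz$ coincides with $z/|z|$ up to $O(r)$.

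Given $\epsilon>0$, use \eqref{eq-obs:ae_orthogonal} to pick $r_0$ so small that $|\nu(y)-\nu_\Omega(x)|\leq\epsilon$ for a.e.\ $y\in\Omega\cap B(x,r_0)$. For $r<r_0$ I split
\[
\frac{1}{r^n}\int_{\Omega\cap B(x,r)}\metric{\nu}{\D d_x}=\frac{1}{r^n}\int_{\Omega\cap B(x,r)}\metric{\nu_\Omega(x)}{\D d_x}+\mathrm{Err}_r,
\]
with $|\mathrm{Err}_r|\leq C(n)\epsilon$. A change of variables $y=x+rz$ reduces the main term, modulo a $(1+O(r^2))$ factor from the metric distortion, to $\int_{\Omega_r\cap B(0,1)}\metric{\nu_\Omega(x)}{z/|z|}\,dz$. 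Dominated convergence on the bounded domain $B(0,1)$ combined with the $L^1_{\loc}$-convergence $\chi_{\Omega_r}\to\chi_H$ sends the main term, as $r\to 0$, to the explicit half-space value
\[
\int_{H\cap B(0,1)}\metric{\nu_\Omega(x)}{z/|z|}\,dz=\frac{1}{n}\int_{S^{n-1}_-}\omega_n\,d\sigma=-\frac{|B^{n-1}|}{n},
\]
where the second identity is the routine projection of the lower hemisphere onto its equatorial disc. Sending $\epsilon\to 0$ then yields $\lim_{r\to 0}r^{-n}\int_{\Omega\cap B(x,r)}\metric{\nu}{\D d_x}=-|B^{n-1}|/n$, and \eqref{eq-obs:normal_trace_avg} gives $[\nu\cdot\nu_\Omega](x)=1$, as needed.

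There is no serious analytic obstruction here: the $L^\infty$-oscillation control on $\nu$ handles its deviation from the constant $\nu_\Omega(x)$, the $L^1_{\loc}$ blow-up of the reduced boundary handles the deviation of $\Omega$ from the model half-space, and the metric distortion is $O(r^2)$ in the normal chart, so only the leading half-space integral survives the limit. The single point requiring care is the clean interaction of these two smallness conditions, which is achieved by applying the triangle inequality against the fixed half-space benchmark before letting $r\to 0$ and then $\epsilon\to 0$ in that order.
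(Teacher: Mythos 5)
Your proof is correct and follows exactly the route the paper indicates (the paper leaves the corollary as ``Combining Lemma \ref{lemma-obs:bd_orthogonal}, \eqref{eq-obs:normal_trace_avg}, and an elementary computation''): you reduce to checking $[\nu\cdot\nu_\Omega](x)=1$ via the averaging formula, split off the $O(\epsilon)$ oscillation of $\nu$, and evaluate the remaining rescaled integral against the blow-up half space, correctly obtaining $-|B^{n-1}|/n$ via the projection of the hemisphere. The only detail left tacit is that the pairing $\metric{\nu_\Omega(x)}{\D d_x}$ compares a vector at $x$ with a vector field near $x$, which is resolved precisely by working in the fixed geodesic normal chart where $\nu_\Omega(x)$ is extended by constant coefficients and $g=\delta+O(r^2)$, as your $(1+O(r^2))$ factor already implicitly accounts for.
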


\begin{proof}[Proof of Lemma \ref{lemma-obs:bd_orthogonal}]
	Since $\div(\nu)=|\D u|$, and $u\in\BV_{\loc}(\bar\Omega)$ due to Lemma \ref{lemma-prelim:BV_loc}, we indeed have $\nu\in X$. Let $v\in\Lip_{\loc}(\Omega)$ be a competitor, such that $\{u\ne v\}\Subset K\Subset M$ and $\tJ_u^K(v)$ is defined. Applying \eqref{eq-obs:gauss_green} with the function $\varphi=v-u$, we obtain
	\[\begin{aligned}
		\int_{\p\Omega}(v^\p-u^\p)\,d\H^{n-1} &= \int_{\p\Omega}[\nu\cdot\nu_\Omega]\varphi^\p\,d\H^{n-1}
		= \int_\Omega(v-u)|\D u|+\int_\Omega\nu\cdot(\D v-\D u) \\
		&\leq \int_\Omega(v-u)|\D u|+\int_\Omega\big(|\D v|-|\D u|\big),
	\end{aligned}\]
	since $\nu\cdot\D u=|\D u|$ a.e.. This exactly implies $\tJ_u^K(u)\leq\tJ_u^K(v)$.
\end{proof}

\subsection{A weak maximum principle}\label{subsec:obs_max_prin}

We extend the interior maximum principle (Theorem \ref{thm-prelim:max_principle}) to a version with the presence of obstacle. In complement with Theorem \ref{thm-obs:auto_subsol}, this has the following consequence: for an initial value problem, the weak solution respecting the boundary obstacle (if it exists) is maximal among all interior weak solutions of $\IVP{\Omega;E_0}$. Some technical explanations are included in Remark \ref{rmk-obs:max_principle}.

\begin{theorem}[weak maximum principle]\label{thm-obs:max_principle} {\ }
	
	Given $\Omega\subset M$ a locally Lipschitz domain. Let $u,v\in\Lip_{\loc}(\Omega)$ be respectively a solution and subsolution of $\IMCFOOinthm{\Omega}{\p\Omega}$. If $\{u<v\}\Subset M$, and
	\begin{equation}\label{eq-obs:bounded_below}
		\inf_{\Omega\cap K}u>-\infty\qquad\text{for some $K$ with }\{u<v\}\Subset K\Subset M,
	\end{equation}
	then $u\geq v$ in $\Omega$.
\end{theorem}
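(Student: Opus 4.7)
I would argue by contradiction: assume $\{u<v\}\neq\emptyset$ and aim for a contradiction, adapting the interior maximum principle of Huisken--Ilmanen (Theorem \ref{thm-prelim:max_principle}) to the outer obstacle setting. Fix a domain $K$ as in the hypothesis, so that $\{u<v\}\subset\subset K\subset\subset M$ and $\inf_{\Omega\cap K}u>-\infty$. By Lemma \ref{lemma-prelim:BV_loc}(i) applied to the subsolution $v$ and (ii) to the solution $u$, both $u,v$ lie in $\BV(\Omega\cap K)$ and all their sub-level sets have locally finite perimeter inside $K$, so every obstacle energy $\tJ^K$ appearing below is well-defined (cf.\ Remark \ref{rmk-obs:integrability}).

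\textbf{Level-set comparisons.} For a.e.\ $t\in\RR$ set $A_t:=E_t(u)\setminus E_t(v)=\{u<t\leq v\}$. The crucial inclusion $A_t\subset\{u<v\}\subset\subset K\subset\subset M$ makes $E_t(u)\cap E_t(v)\subset E_t(u)$ an admissible \emph{inner} competitor for $u$ (as a supersolution of $\IMCFOO{\Omega}{\p\Omega}$) and $E_t(u)\cup E_t(v)\supset E_t(v)$ an admissible \emph{outer} competitor for $v$ (as a subsolution). Both are allowed to touch $\p\Omega$, which is precisely the advantage of the obstacle formulation in Definition \ref{def-obs:obstacle1} over the interior one. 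Expanding the two energy inequalities $\tJ_u^K(E_t(u))\leq\tJ_u^K(E_t(u)\cap E_t(v))$ and $\tJ_v^K(E_t(v))\leq\tJ_v^K(E_t(u)\cup E_t(v))$ yields
\begin{equation*}
\Ps{E_t(u);K}\leq\Ps{E_t(u)\cap E_t(v);K}+\int_{A_t\cap K}|\D u|
\end{equation*}
and
\begin{equation*}
\Ps{E_t(v);K}+\int_{A_t\cap K}|\D v|\leq\Ps{E_t(u)\cup E_t(v);K}.
\end{equation*}
Summing the two and invoking the submodularity $\Ps{A;K}+\Ps{B;K}\geq\Ps{A\cap B;K}+\Ps{A\cup B;K}$ delivers the pointwise-in-$t$ bound
\begin{equation*}
\int_{A_t\cap K}|\D v|\;\leq\;\int_{A_t\cap K}|\D u|\qquad\text{for a.e.\ }t\in\RR.
\end{equation*}

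\textbf{Closing step and main obstacle.} Integrating the last inequality over $t$ via coarea yields the averaged bound $\int_{\{u<v\}\cap K}(v-u)(|\D v|-|\D u|)\leq 0$, which on its own is not yet a contradiction; promoting it to the pointwise conclusion $|A_t|=0$ for all $t$ is the main technical obstacle and mirrors the closing mechanism of \cite[Theorem 2.2]{Huisken-Ilmanen_2001}. To achieve this I plan to supplement the level-set comparison with the one-parameter Dirichlet-type comparisons of Theorem \ref{thm-obs:obstacle2}, testing $u$ against $w_\alpha=u+\alpha(v-u)_+$ for $\alpha\in(0,1]$ (admissible since $\{u\neq w_\alpha\}=\{u<v\}\subset\subset M$ and $u$ is a full solution, so it minimizes $\tJ_u$ in both directions) and $v$ against $v-\alpha(v-u)_+$. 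Combined with the strict outward-minimizing hull property of $E_t^+(u)$ from Theorem \ref{thm-obs:out_min}(ii)---which prevents any superset with a bite of positive measure from matching the hull's perimeter---these refined inequalities should force the integrand $(v-u)(|\D v|-|\D u|)$ to vanish pointwise on $\{u<v\}\cap K$ and drive $|A_t|$ to zero, contradicting $\{u<v\}\neq\emptyset$. A subsidiary technical point is that the boundary-trace contributions $\int_{\p^*\Omega\cap K}(v^\p-u^\p)$ appearing in the Dirichlet comparisons cancel between the $u$- and $v$-inequalities, by virtue of the specific sign chosen in the obstacle energy of Definition \ref{def-obs:energy_u}, so the argument runs uniformly whether or not $\overline{\{u<v\}}$ meets $\p\Omega$.
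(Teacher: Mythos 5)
Your level-set comparison is correct and produces the same soft inequality that the paper derives (eq.\ \eqref{eq-obs:strict_comp} there with $\epsilon=0$): testing $u$ from inside with $E_t(u)\cap E_t(v)$, $v$ from outside with $E_t(u)\cup E_t(v)$, summing, and using submodularity gives $\int_{A_t\cap K}|\D v|\leq\int_{A_t\cap K}|\D u|$ and hence $\int_{\{u<v\}}(v-u)(|\D v|-|\D u|)\leq 0$. This is a perfectly good dual (sub-level-set) route to the paper's function-comparison route. But the closing step, which you flag as ``the main technical obstacle,'' is genuinely missing, and the strategy you sketch does not close it.

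Concretely, testing $u$ against $w_\alpha=u+\alpha(v-u)_+$ and $v$ against $v-\alpha(v-u)_+$ and summing gives nothing new. On $\{u<v\}$ one has $\D u+\alpha\D(v-u)_+ = (1-\alpha)\D u+\alpha\D v$ and $\D v-\alpha\D(v-u)_+=(1-\alpha)\D v+\alpha\D u$, so by convexity the two $|\D\cdot|$ differences add up to something $\leq 0$, and after cancellation of boundary traces the combined inequality collapses to $\int(v-u)(|\D v|-|\D u|)\leq 0$ again, independently of $\alpha$. At $\alpha=1$ this is precisely the paper's comparison pair, so the one-parameter family buys you nothing. The strict-outward-minimizing-hull property of $E_t^+$ is also orthogonal here: it controls competitors that add a bite to $E_t^+$, whereas the sets $A_t$ sit inside $E_t(u)$ and the issue is a degenerate equality case, not a strictness of the perimeter functional.

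What actually closes the argument in the paper is a combination of two moves you are missing. First, the hypothesis $\inf_{\Omega\cap K}u>-\infty$ is used to add a constant and normalize $u\geq 0$ in $K$, so that the rescaling $u_\epsilon=u/(1-\epsilon)$ satisfies $u_\epsilon\geq u$ there; $u_\epsilon$ then obeys a \emph{strict} supersolution comparison (the right-hand side picks up a factor $(1-\epsilon)$), and one can further normalize so that $u_\epsilon>v-\epsilon$ on the bad set. Second, a \emph{different} family of comparisons --- $u_\epsilon$ against $\max\{u_\epsilon,v-s\}$, integrated over the shift $s\geq 0$ --- produces a second inequality with a quadratic term $\frac12\int(v-u_\epsilon)^2|\D u_\epsilon|$. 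Combining the two and using $v-u_\epsilon<\epsilon$ squeezes $\int_{\{v>u_\epsilon\}}(v-u_\epsilon)|\D u_\epsilon|$ to zero, forcing $u_\epsilon$ (then $v$) to be constant on $\{v>u_\epsilon\}$, which contradicts Remark \ref{rmk-obs:definition} (no bounded-below solution on a precompact component). The fact that your proposal never uses $\inf_{\Omega\cap K}u>-\infty$ beyond integrability is itself a warning sign: Remark \ref{rmk-obs:max_principle}(i) shows that hypothesis is not removable, so any purported proof that doesn't essentially invoke it cannot be correct.
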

\begin{proof}
	We may add a common constant to $u, v$ and assume that $u\geq0$ in $K$. Replacing $v$ with $\min\{v,T\}$ (which is still a weak subsolution by Definition \ref{def-obs:obstacle1}) and taking $T\to+\infty$, we may assume that $v\leq T$ for some $T>0$. Thus we are reduced to the case $u\geq0$, $v\leq T$. For a constant $\epsilon>0$ we set $u_\epsilon=\frac{u}{1-\epsilon}$. Then it is sufficient to show that $u_\epsilon\geq v$. Suppose that this fails to hold. Then we may increase $u$ by an appropriate constant, and assume that $u_\epsilon>v-\epsilon$ but $\{u_\epsilon<v\}\ne\emptyset$.
	
	From Theorem \ref{thm-obs:obstacle2}, it is not hard to see that $u_\epsilon$ satisfies a strict minimizing condition
	\[\int\big(|\D u_\epsilon|-|\D w|\big)-\int_{\p^*\Omega}(u_\epsilon^\p-w^\p)\leq(1-\epsilon)\int(w-u_\epsilon)|\D u_\epsilon|\]
	for all competitors $w\geq u_\epsilon$ such that $\{w\ne u_\epsilon\}\Subset M$ and $\tJ_u^K(w)$ is defined. Setting $w=\max\{u_\epsilon,v\}$ (which is a valid competitor), we obtain
	\begin{equation}\label{eq-obs:comp1}
		\int_{\{v>u_\epsilon\}}\big(|\D u_\epsilon|-|\D v|\big)-\int_{\p^*\Omega\cap\{v^\p>u_\epsilon^\p\}}(u_\epsilon^\p-v^\p)\leq(1-\epsilon)\int_{\{v>u_\epsilon\}}(v-u_\epsilon)|\D u_\epsilon|.
	\end{equation}
	Testing the subsolution property of $v$ with the competitor $\min\{u_\epsilon,v\}$, we find that
	\begin{equation}\label{eq-obs:comp2}
		\int_{\{u_\epsilon<v\}}\big(|\D v|-|\D u_\epsilon|\big)-\int_{\p^*\Omega\cap\{u_\epsilon^\p<v^\p\}}(v^\p-u_\epsilon^\p)\leq\int_{\{u_\epsilon<v\}}(u_\epsilon-v)|\D v|.
	\end{equation}
	Adding \eqref{eq-obs:comp1} \eqref{eq-obs:comp2} and canceling the common terms, we obtain
	\begin{equation}\label{eq-obs:strict_comp}
		\int_{\{v>u_\epsilon\}}(v-u_\epsilon)|\D v|\leq(1-\epsilon)\int_{\{v>u_\epsilon\}}(v-u_\epsilon)|\D u_\epsilon|.
	\end{equation}
	
	Next, for $s\geq0$ we compare $\tJ_{u_\epsilon}^K(u_\epsilon)\leq\tJ_{u_\epsilon}^K(\max\{u_\epsilon,v-s\})$ to obtain
	\[\begin{aligned}
		&\int_{\{v-s>u_\epsilon\}}\big(|\D u_\epsilon|-|\D v|\big)-\int_{\p^*\Omega\cap\{v^\p-s>u_\epsilon^\p\}}(u_\epsilon^\p-v^\p+s) \\
		&\hspace{240pt}\leq \int_{\{v-s>u_\epsilon\}}(v-s-u_\epsilon)|\D u_\epsilon|.
	\end{aligned}\]
	The boundary term has the favorable sign and thus can be discarded. Integrating this inequality over $0\leq s<\infty$ (note that the integrand is nonzero only within a bounded interval of $s$), by Fubini's theorem we obtain
	\begin{equation}\label{eq-obs:strict_comp_2}
		0\leq\int_{\{v>u_\epsilon\}}(v-u_\epsilon)\big(|\D v|-|\D u_\epsilon|\big)+\frac12(v-u_\epsilon)^2|\D u_\epsilon|.
	\end{equation}
	Combining \eqref{eq-obs:strict_comp} \eqref{eq-obs:strict_comp_2} we have
	\begin{equation}\label{eq-obs:strict_comp_3}
		\epsilon\int_{\{v>u_\epsilon\}}(v-u_\epsilon)|\D u_\epsilon|\leq\frac12\int_{\{v>u_\epsilon\}}(v-u_\epsilon)^2|\D u_\epsilon|.
	\end{equation}
	Recall our reduction hypothesis at the beginning, that $u_\epsilon>v-\epsilon$ and $\{v>u-\epsilon\}\ne\emptyset$. Then \eqref{eq-obs:strict_comp_3} implies that $u_\epsilon$, hence $u$ as well, is constant on $\{v>u_\epsilon\}$. Then \eqref{eq-obs:strict_comp} implies in turn that $v$ is constant on $\{v>u_\epsilon\}$. Then the only possibility is that $\{v>u_\epsilon\}$ is a precompact connected component of $\Omega$. However, this contradicts the supersolution property of $u$, see Remark \ref{rmk-obs:definition}(v). This completes the proof.
\end{proof}

\begin{remark}\label{rmk-obs:max_principle} {\ }
	
	(i) The lower bound \eqref{eq-obs:bounded_below} is not removable due to Example \ref{ex-ex:bounded_domain}. Indeed, the function $u$ therein is a solution of $\IMCFOO{\Omega}{\p\Omega}$ for some $\Omega\Subset\RR^2$. Setting $v=u+1$, we have $\{u<v\}\Subset\RR^2$ but obviously $u\ngeqslant v$. In the above proof, the condition \eqref{eq-obs:bounded_below} is used only in the initial reduction, and is not involved in deriving \eqref{eq-obs:strict_comp_3}.
	
	(ii) The proof here is a minor adaptation of \cite[Theorem 2.2]{Huisken-Ilmanen_2001}. We assumed $u$ to be an exact solution, while the proof works when $u$ is a supersolution with enough integrability $u\in BV(\Omega\cap K)$, $u|\D u|\in L^1(\Omega\cap K)$. These assumptions can actually be removed, by invoking the original Definition \ref{def-obs:obstacle1} (hence avoiding Theorem \ref{thm-obs:obstacle2}). By comparing the energies $\tJ_u(E_t(u_\epsilon))\leq\tJ_u(E_t(u_\epsilon)\cap E_t(v))$ and $\tJ_v(E_t(v))\leq\tJ_v(E_t(v)\cup E_t(u_\epsilon))$, and integrating over $t$, we can recover the intermediate inequality \eqref{eq-obs:strict_comp}. The other inequality \eqref{eq-obs:strict_comp_2} can similarly be obtained by comparing among various sub-level sets. For technical simplicity, we do not pursue proving the case with the widest possible generality.
\end{remark}

\begin{cor}[maximality]\label{cor-obs:maximality} {\ }
	
	Given a Lipschitz domain $\Omega\Subset M$ and $C^{1,1}$ domains $E_0\subset E'_0\Subset\Omega$. Suppose
	
	(1) $u\in\Lip_{\loc}(\Omega)$ is a solution of $\IVPOOinthm{\Omega;E_0}{\p\Omega}$, 
	
	(2) $v\in\Lip_{\loc}(\Omega)$ is a subsolution of $\IVP{\Omega;E'_0}$.
	
	\noindent Then $u\geq v$ in $\Omega\setminus E'_0$. In particular, the weak solution of $\IVPOOinthm{\Omega;E_0}{\p\Omega}$ is unique up to equivalence (if it exists).
\end{cor}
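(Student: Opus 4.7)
The plan is to adapt the variational maximum principle argument of Theorem~\ref{thm-obs:max_principle}, exploiting the inclusion $E_0\subset E'_0$ to localize everything to $\Omega\setminus\bar{E'_0}$. Since $u\geq 0=v$ holds on $\p E'_0$ (from the IVP data of $v$ and the fact that $\p E'_0\subset\Omega\setminus E_0$), it suffices to show $u\geq v$ on $\Omega\setminus\bar{E'_0}$. Following the reduction of Theorem~\ref{thm-obs:max_principle}, I normalize by a common shift, truncate $v$ so $v\leq T$, set $u_\epsilon:=u/(1-\epsilon)$, and argue by contradiction: after a further upward shift of $u$, one may arrange $u_\epsilon>v-\epsilon$ globally while $A:=\{u_\epsilon<v\}\cap(\Omega\setminus\bar{E'_0})$ is nonempty.

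The core will be to produce two energy comparisons analogous to \eqref{eq-obs:comp1} and \eqref{eq-obs:comp2}. For $v$: Theorem~\ref{thm-obs:auto_subsol} promotes $v$ to a subsolution of $\IMCFOOinthm{\Omega\setminus\bar{E'_0}}{\p\Omega\cup\p E'_0}$; applying the $\tJ_v$-minimization from below (Theorem~\ref{thm-obs:obstacle2}) to the competitor $\min\{u_\epsilon,v\}$ gives the first inequality, and the boundary contribution over $\p E'_0$ vanishes because $v^\p=0\leq u_\epsilon^\p$ there, so the traces of $\min\{u_\epsilon,v\}$ and $v$ coincide on $\p E'_0$. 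For $u$: using the IVP sub-level-set formulation (Theorem~\ref{thm-obs:ivp2}) and rescaling by $(1-\epsilon)$ as in the derivation of the strict minimizing condition in the proof of Theorem~\ref{thm-obs:max_principle}, one produces a strict minimization for $u_\epsilon$; testing this against $\max\{u_\epsilon,v\}$, extended by $u_\epsilon$ throughout $\bar{E'_0}$ (compatible with the IVP competitor condition since $E_0\subset\bar{E'_0}$ and the extended function agrees with $u_\epsilon$ on a neighborhood of $\p E_0$), yields the second inequality with boundary term only on $\p^*\Omega$.

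Adding the two inequalities cancels the $\p^*\Omega$ contributions exactly as in the original proof, delivering \eqref{eq-obs:strict_comp}, and the secondary estimate \eqref{eq-obs:strict_comp_2} follows from the same sub-level-set and integration-in-$s$ argument, with all competitor sets localized in $\Omega\setminus\bar{E'_0}$. Combining forces both $u_\epsilon$ and $v$ to be constant on each connected component of $A$; by continuity the strict inequality $v>u_\epsilon$ inside prevents any boundary point of such a component from lying in $\Omega\setminus\bar{E'_0}$, and the trace $v=0$ on $\p E'_0$ (versus the positive constant value of $v$ inside the component) rules out boundary contact with $\p E'_0$, so the boundary would lie entirely on $\p\Omega$, contradicting the connectedness of $\Omega$ together with $\bar{E'_0}\subset\Omega$. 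The principal technical difficulty I anticipate is deriving the strict minimization for $u_\epsilon$ cleanly from the IVP --- whose competitor condition is an inner constraint $\supset E_0$ rather than an outer obstacle at $\p E_0$ --- in particular verifying that no spurious boundary term at $\p E_0$ contaminates the comparison; this relies on extending the test competitor by $u_\epsilon$ throughout $\bar{E'_0}$, which is possible precisely because $v<0\leq u_\epsilon$ on a neighborhood of $\bar{E_0}\subset E'_0$. Finally, uniqueness is immediate: for two solutions $u_1,u_2$ of $\IVPOOinthm{\Omega;E_0}{\p\Omega}$, each is in particular a subsolution of $\IVP{\Omega;E_0}$ (since $\IMCFOO\Rightarrow\IMCF$), so applying the main statement with $E'_0=E_0$ in both orderings yields $u_1=u_2$ on $\Omega\setminus E_0$.
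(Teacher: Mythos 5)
The paper's proof of this corollary is a one-step reduction that does \emph{not} re-open the internals of the maximum principle: one replaces the ambient manifold $M$ by $M' := M\setminus\bar{E'_0}$, so that $\p\big(\Omega\setminus\bar{E'_0}\big)\cap M' = \p\Omega$ and $\p E'_0$ simply vanishes from the boundary data. Restricting $u+\epsilon$ gives a solution of $\IMCFOOinthm{\Omega\setminus\bar{E'_0}}{\p\Omega}$ over $M'$ (the competitor class only shrinks), Theorem \ref{thm-obs:auto_subsol} makes $v$ a subsolution of the same problem, and the additive shift by $\epsilon$ yields $\{v>u+\epsilon\}\subset\subset M'$. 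Theorem \ref{thm-obs:max_principle} then applies verbatim, and letting $\epsilon\to 0$ finishes.

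Your plan instead re-runs the variational argument inside Theorem \ref{thm-obs:max_principle} with both $\p E_0$ and $\p E'_0$ in the picture, and this is where a gap appears. The admissibility of $\max\{u_\epsilon,v\}$ extended by $u_\epsilon$ on $\bar{E'_0}$ as a competitor for $u$ (which, for a solution of $\IVPOO{\Omega;E_0}{\p\Omega}$, requires $\{u_\epsilon\ne w\}\subset\subset K\subset\subset M\setminus\bar{E_0}$) rests on your claim that $v<0\leq u_\epsilon$ holds on a neighborhood of $\bar{E_0}$ contained in $E'_0$. This presupposes $\bar{E_0}\subset E'_0$, i.e.\ $\p E_0\cap\p E'_0=\emptyset$ --- which fails exactly in the case $E_0=E'_0$ that you invoke to conclude uniqueness. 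Without strict separation, $\{v>u_\epsilon\}$ may accumulate on $\p E'_0$ and hence on $\bar{E_0}$, so the extended function is not an admissible competitor. The fix is available but must be made explicit: in the contradiction scenario the ``further upward shift'' of $u$ can always be taken strictly positive, after which $u_\epsilon\geq c'/(1-\epsilon)>0=v$ on $\p E'_0$, so by continuity $\max\{u_\epsilon,v\}=u_\epsilon$ in a full neighborhood of $\bar{E'_0}$. The same positive shift is what cleanly localizes the $s$-integration giving the analogue of \eqref{eq-obs:strict_comp_2} to $\Omega\setminus\bar{E'_0}$. In short, the idea is sound but the route through the IVP internals is much longer than necessary; the key separation $u_\epsilon>v$ on $\p E'_0$ --- which the paper gets for free by excising $\bar{E'_0}$ from the ambient manifold and applying Theorem \ref{thm-obs:max_principle} as a black box --- has to be secured by hand, and your stated justification does not do it.
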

\begin{proof}
	For any $\epsilon>0$, the functions $u+\epsilon$ and $v$ are respectively a solution (by Definition \ref{def-obs:ivp} and restriction) and subsolution (by Theorem \ref{thm-obs:auto_subsol}) of $\IMCFOO{\Omega\!\setminus\!\bar{E'_0}}{\p\Omega}$. Moreover, we have $\{v>u+\epsilon\}\Subset M\setminus\bar{E'_0}$. Then $u\geq v$ in $\Omega\setminus E'_0$, by Theorem \ref{thm-obs:max_principle} and by taking $\epsilon\to0$. Finally, the uniqueness follows from maximality.
\end{proof}

\section{Liouville theorems on the half space}\label{sec:halfplane}

The aim of this section is to establish Theorem \ref{thm-halfplane:liouville} and \ref{thm-halfplane:approx}. They are Liouville theorems in the half space, for weak solutions that respect the boundary obstacle and respectively a ``soft obstacle'' respectively. See Theorem \ref{thm-halfplane:approx} for the precise description of the latter. These results enter the existence theorem in showing that there are only trivial limits in the blow-up procedures; see Section 6 for further details. The Liouville theorems are proved by combining barrier arguments (involving the interior maximum principle) and blow-up\,/\,blow-down arguments (which involves analyzing the perimeter-minimizing effects). The techniques presented here may also be of their own interests.

We adopt the following notations: for a point $x=(x_1,x_2,\cdots,x_n)\in\RR^n$, we denote $x'=(x_1,\cdots,x_{n-1})$ and write $x=(x',x_n)$. Moreover, set $e_n=(0,\cdots,0,1)$.

\begin{theorem}\label{thm-halfplane:liouville}
	Let $\Omega=\{x_n<0\}\subset\RR^n$, and suppose that $u\in\Lip_{\loc}(\Omega)$ is a solution of $\IMCFOOinthm{\Omega}{\p\Omega}$. Moreover, suppose
	\begin{equation}\label{eq-halfplane:gradient}
		\inf_\Omega(u)>-\infty\qquad\text{and}\qquad\ |\D u(x)|\leq\frac C{|x_n|}\ \ \text{for a.e. $x\in\Omega$}.
	\end{equation}
	Then $u$ must be constant.
\end{theorem}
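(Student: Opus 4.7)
Normalize so that $\inf_\Omega u = 0$ and $u \geq 0$, and suppose for contradiction that $u$ is nonconstant. The plan is a blow-down argument: the scaling symmetry of the half-space allows extraction of a self-similar limit, the hypothesis $\inf u > -\infty$ rigidifies this limit to a constant, and the weak maximum principle (Theorem \ref{thm-obs:max_principle}) --- whose applicability rests on the same hypothesis --- propagates constancy back to $u$.

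\textbf{Scaling and extraction.} For each $\lambda > 0$, set $u_\lambda(x) := u(\lambda x)$. Since Euclidean dilations preserve $\Omega$, preserve $\p\Omega$, and scale the energy $\tJ_u^K$ uniformly, each $u_\lambda$ is again a solution of $\IMCFOOinthm{\Omega}{\p\Omega}$ with $u_\lambda \geq 0$ and the scale-invariant gradient estimate $|\D u_\lambda(x)| \leq C/|x_n|$. Integrating this bound along a piecewise-linear path (horizontally at height $x_n = -1$, then vertically) shows that $u_\lambda - u_\lambda(-e_n)$ is uniformly bounded on every compact subset of $\Omega$, uniformly in $\lambda$. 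Arzelà--Ascoli, combined with the calibrated compactness Theorem \ref{thm-prelim:cptness_calibrated} and a straightforward extension of Theorem \ref{thm-prelim:compactness} to the obstacle setting (via the sub-level-set formulation of Definition \ref{def-obs:obstacle1} and lower semicontinuity of perimeters), yields along some subsequence $\lambda_k$ a locally uniform limit $u^*$ which is again a solution of $\IMCFOOinthm{\Omega}{\p\Omega}$ with the same gradient bound and with $u^*(-e_n) = 0$.

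\textbf{Self-similarity and rigidity.} A diagonal extraction over a countable dense set of further rescalings upgrades $u^*$ to a self-similar solution satisfying
\[ u^*(\lambda x) = u^*(x) + c \log \lambda \qquad \forall\,\lambda > 0, \]
for some $c \in \RR$. Because $u_{\lambda_k}(-e_n) \geq 0$ bounds $u^*$ from below on compact sets, a nonzero $c$ would force $u^*$ to be unbounded below as $\lambda \to 0$ or $\lambda \to \infty$; hence $c = 0$, so $u^*$ is dilation invariant and $u^*(x) = g(x'/|x_n|)$ for some locally Lipschitz $g$. In $n = 2$, the unit vector field $\D u^*/|\D u^*|$ coincides with the Euclidean angular unit vector based at the origin, whose divergence vanishes identically; the IMCF equation $\div(\D u^*/|\D u^*|) = |\D u^*|$ then forces $|\D u^*| \equiv 0$, hence $u^*$ is constant. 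In higher $n$, the level sets of $u^*$ are cones through the origin, and the calibration trace condition $[\nu\cdot\nu_\Omega] = 1$ at $\p\Omega$ from Lemma \ref{lemma-obs:bd_orthogonal} rules out nontrivial cones, again forcing $u^*$ constant.

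\textbf{Propagating back to $u$.} After passing to a further subsequence, $u_{\lambda_k}(-e_n) \to L \in [0,\infty)$ and hence $u_{\lambda_k} \to L$ locally uniformly. The relation $u(\lambda_k x) \to L$ for every fixed $x \in \Omega$ shows that $u$ is close to $L$ on arbitrarily small (or large, depending on whether $\lambda_k \to 0$ or $\infty$) scales. Applying Theorem \ref{thm-obs:max_principle} with a constant comparison function (which is a subsolution of the obstacle problem by Remark \ref{rmk-obs:definition}(v), since $\p\Omega$ is a minimal hyperplane and $\Omega$ is locally inward perimeter minimizing) together with Corollary \ref{cor-obs:maximality}, one deduces $u \equiv L$, contradicting the assumption that $u$ is nonconstant. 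The main obstacle I anticipate lies in the higher-dimensional part of the rigidity argument: the 2D case is transparent and the cone structure in higher dimensions is explicit, but ruling out nontrivial cone solutions via the boundary calibration condition requires careful interaction between the self-similar structure and the obstacle trace, and is where most of the technical work will sit.
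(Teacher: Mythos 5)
Your strategy---blow down the solution $u$ itself, extract a self-similar limit, use the boundedness below to rigidify, and then propagate constancy back---is genuinely different from the paper's proof, which works directly with the sub-level sets $E_t$ and runs a dichotomy on the orientation of $\nu_{E_t}$ near $\p\Omega$ (Case (i): normals nearly aligned with $e_n$, handled by a sphere barrier; Case (ii): a tilted normal, handled by rescaling the sub-level set to a perimeter minimizer and invoking Lemma~\ref{lemma-gmt:min_in_halfplane}). Unfortunately your route has several gaps that do not look repairable without essentially reverting to the paper's sub-level-set argument.

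\textbf{The self-similarity step is unjustified.} You write that a diagonal extraction over a countable dense set of rescalings upgrades $u^*$ to a self-similar solution with $u^*(\lambda x)=u^*(x)+c\log\lambda$. That kind of upgrade requires some monotone quantity under rescaling---an Almgren-type frequency, a Huisken-type density, or the like---and no such monotonicity formula is available for weak IMCF. Without it, subsequential blow-downs can be different along different subsequences, and there is no mechanism forcing the limit to be scale-invariant. This is precisely why the paper first reduces to \emph{perimeter-minimizing} sets (combining the outward-minimizing property from Theorem~\ref{thm-obs:out_min} with the inward-minimizing excess inequality Lemma~\ref{lemma-halfplane:excess_ineq_2}, the latter being exactly where $\inf_\Omega u>-\infty$ enters) before appealing to rigidity of blow-ups: for perimeter minimizers the cone structure of tangent objects \emph{is} available.

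\textbf{The normalized blow-down need not be bounded below.} Setting $u^*=\lim(u_{\lambda_k}-u_{\lambda_k}(-e_n))$, the gradient bound only gives $u(-\lambda e_n)\leq u(-e_n)+C\log(1/\lambda)$ and $u(-\lambda e_n)\geq -C\log(1/\lambda)$, so $u_{\lambda_k}(-e_n)$ can grow like $|\log\lambda_k|$, and the limit $u^*$ can satisfy $u^*(-\mu e_n)\to -\infty$ as $\mu\to 0^+$. In other words, the normalization required for compactness destroys exactly the hypothesis $\inf>-\infty$ that your rigidity argument relies on, and Example~\ref{ex-ex:nephroid} shows that nonconstant solutions unbounded below genuinely exist on the half space.

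\textbf{The last step does not close.} Even granting that every blow-down is constant, this does not imply $u$ is constant: $u(\lambda_k x)\to L$ for each fixed $x$ only gives information about one scale. To deduce $u\equiv L$ from Theorem~\ref{thm-obs:max_principle} you would need $\{u<L\}\subset\subset M$ (or $\{u>L\}\subset\subset M$), and nothing you have produced gives precompactness of a level region in the unbounded domain $\Omega$; Corollary~\ref{cor-obs:maximality} is about an initial value problem and does not apply. The paper sidesteps this entirely: in its Case (i), constancy of $u$ is extracted \emph{directly} from the barrier argument forcing $E_t\supset\Omega$ for all $t>0$, not via a maximum principle against an a-priori blow-down.

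In short, the promising part of your plan is the observation that rescaled solutions are again solutions and that the lower bound should rigidify the picture---that intuition does drive the paper's proof. But the rigidity has to be extracted at the level of the sub-level sets, where the theory of $(\Lambda,r_0)$-perimeter minimizers provides the needed compactness and cone rigidity; it cannot be obtained at the level of $u$ itself without a monotonicity formula that IMCF does not possess.
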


The condition $\inf_\Omega(u)>-\infty$ cannot be removed, due to the Poisson kernel type Example \ref{ex-ex:nephroid}. In the proof we need the following direct extension of Lemma \ref{lemma-prelim:excess_ineq}.

\begin{lemma}\label{lemma-halfplane:excess_ineq_2}
	Suppose $\Omega$ is a locally Lipschitz domain, and $u\in\Lip_{\loc}(\Omega)$ is a supersolution of $\IMCFOOinthm{\Omega}{\p\Omega}$, such that $\inf_\Omega(u)=t_0>-\infty$. Suppose $F$ has finite perimeter, and $K$ is a domain, such that $F\Subset K\Subset M$. Then we have
	\begin{equation}\label{eq-halfplane:excess_ineq}
		\Ps{E_t;K}\leq\Ps{E_t\setminus F;K}+\int_{t_0}^t e^{t-s}\Ps{F;E_s}\,ds.
	\end{equation}
\end{lemma}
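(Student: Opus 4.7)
The plan is to follow the proof of Lemma \ref{lemma-prelim:excess_ineq} essentially line-by-line, replacing the interior energy $J_u^K$ with the obstacle energy $\tJ_u^K$ and keeping track of the additional boundary portion of perimeter on $\p^*\Omega$. The assumption $\inf_\Omega u = t_0 > -\infty$ together with Lemma \ref{lemma-prelim:BV_loc}(ii) guarantees that $E_t$ has locally finite perimeter in $M$ and that $\int_{E_t \cap K}|\D u| < \infty$, so all relevant energies are finite (cf.\ Remark \ref{rmk-obs:integrability}).

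The first step is to use $E_t \setminus F$ as an inside-competitor: it is contained in $E_t$, it lies in $\Omega$ (since $E_t$ does), and $E_t \triangle (E_t \setminus F) = E_t \cap F \subset F \subset\subset K \subset\subset M$. By the supersolution condition in Definition \ref{def-obs:obstacle1} we get $\tJ_u^K(E_t) \leq \tJ_u^K(E_t \setminus F)$. Expanding this and applying the coarea formula (the Lipschitz function $u$ lives only on $\Omega$, so level sets are recorded only inside $\Omega$) yields
\[
\Ps{E_t;K} \;\leq\; \Ps{E_t \setminus F;K} \;+\; \int_{t_0}^t \H^{n-1}\big(\p^* E_s \cap F \cap \Omega\big)\,ds.
\]

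Next, for a.e.\ $t$ with $\H^{n-1}(\p^* E_t \cap \p^* F) = 0$, I would decompose both perimeters via \eqref{eq-gmt:Federer_decomp} and the set-operation identity \eqref{eq-gmt:set_operation}, combined with Lemma \ref{lemma-prelim:density_one}. The contribution of $\p^* E_t$ on $F^{(0)} \cap K$ appears on both sides and cancels; the result, after discarding the (nonnegative) $\p^*\Omega \cap F^{(1)}$ piece of $\p^* E_t$, is
\[
\H^{n-1}\big(\p^* E_t \cap F \cap \Omega\big) \;\leq\; \Ps{F;E_t} + \int_{t_0}^t \H^{n-1}\big(\p^* E_s \cap F \cap \Omega\big)\,ds.
\]
A standard Gronwall argument applied to $G(s) := \H^{n-1}(\p^* E_s \cap F \cap \Omega)$ then gives $G(t) \leq \Ps{F;E_t} + \int_{t_0}^t e^{t-s}\Ps{F;E_s}\,ds$, and substituting into the first-step bound (with Fubini to simplify the double integral) produces \eqref{eq-halfplane:excess_ineq}.

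The main point of care is the cancellation in the second step: one must verify that the boundary portions of $\p^* E_t$ and $\p^*(E_t \setminus F)$ lying on $\p^*\Omega \cap (K \setminus F)$ agree identically, while the portion on $\p^*\Omega \cap F^{(1)}$ appears only in $\Ps{E_t;K}$ and can be discarded in our favor. Both facts follow from the observation that $E_t$ and $E_t \setminus F$ coincide outside $F$, so once this cancellation is made precise, the remainder of the proof is identical to that of Lemma \ref{lemma-prelim:excess_ineq}, and no condition restricting $F$ to lie in $\Omega$ is needed.
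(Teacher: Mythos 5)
Your proposal is correct and follows the paper's approach essentially step for step: use the supersolution property to compare $\tJ_u^K(E_t)\leq\tJ_u^K(E_t\setminus F)$, expand by coarea, decompose perimeters (with the cancellation on $F^{(0)}$ and the favorable discard of the $\p^*\Omega\cap F^{(1)}$ portion), and close the estimate with Gronwall plus Fubini. The paper simply compresses the decomposition and Gronwall steps by citing that "the remaining proof is the same as in Lemma \ref{lemma-prelim:excess_ineq}," whereas you work them out explicitly, but the argument is identical.
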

\begin{proof}
	By Lemma \ref{lemma-prelim:BV_loc}(ii), both $\tJ_u^K(E_t)$ and $\tJ_u^K(E_t\setminus F)$ are finite. From the fact $\tJ_u^K(E_t)\leq\tJ_u^K(E_t\setminus F)$ and the coarea formula, we see that
	\[\begin{aligned}
		\Ps{E_t;K} &\leq \Ps{E_t\setminus F;K}+\int_{t_0}^t\H^{n-1}\big(\p^*E_s\cap E_t\cap F\cap\Omega\big)\,ds \\
		&\leq \Ps{E_t\setminus F;K}+\int_{t_0}^t\H^{n-1}\big(\p^*E_s\cap F\big)\,ds.
	\end{aligned}\]
	The remaining proof is the same as in Lemma \ref{lemma-prelim:excess_ineq}.
\end{proof}

\begin{proof}[Proof of Theorem \ref{thm-halfplane:liouville}] {\ }
	
	Shifting $u$ by a constant, we may assume $\inf_\Omega(u)=0$. Thus we aim to show $u\equiv0$. We separate the proof into two cases:
	
	\vspace{3pt}
	
	\hypertarget{thm4.1case1}{\textbf{Case (i)}}: suppose there exist sequences $t_i\searrow0$ and $\th_i\searrow0$, such that
	\begin{equation}\label{eq-halfplane:case1}
		\essinf_{\p^*E_{t_i}}\metric{\nu_{E_{t_i}}}{e_n}\geq\cos\th_i.
	\end{equation}
	For this case we use a barrier argument. Suppose that $u$ is not identically zero, so there exists $x_0\in\Omega$ with $u(x_0)>0$. We may remove finitely many terms in the sequence and assume $u(x_0)>t_i$ for all $i$. By continuity and $\inf_\Omega(u)=0$, for each $i$ there exists a point $y_i=y'_i+y_{i,n}e_n$ ($y_{i,n}<0$) that lies on $\p^*E_{t_i}$. By Lemma \ref{lemma-gmt:contain_cone}, \eqref{eq-halfplane:case1}, and Lemma \ref{lemma-prelim:density_one}, we have
	\begin{equation}\label{eq-halfplane:contain_cone}
		E_{t_i}\supset\Big\{(x',x_n): x_n<y_{i,n}-\big|x'-y'_i\big|\tan\th_i\Big\}.
	\end{equation}
	For a constant $N>1$, consider the family of sets
	\[F_t=B\big(y'_i+Ny_{i,n}e_n,e^\frac t{n-1}(N-1)|y_{i,n}|\cos\th_i\big),\]
	whose boundary smoothly solves the IMCF. By the interior maximum principle and the fact that $F_0\subset E_{t_i}$, we have
	\[F_t\subset E_{t+t_i}\qquad\text{whenever $F_t\Subset\Omega$.}\]
	The condition $F_t\Subset\Omega$ holds precisely when $t<T:=(n-1)\log\big(\frac{N}{(N-1)\cos\th_i}\big)$. Taking $t\nearrow T$, we obtain
	\[E_{t_i+T}\supset B\big(y'_i+Ny_{i,n}e_n,N|y_{i,n}|\big).\]
	
	Finally, taking $N\to\infty$ and then $i\to\infty$, we find that $E_t\supset\{x_n<0\}$ for any $t>0$. This implies $u\equiv0$.
	
	\vspace{3pt}
	
	\textbf{Case (ii)}: suppose the hypothesis of case (i) fails. This implies that we can find a constant $\th_0<1$, a sequence of times $t_i\searrow0$, and a sequence of points $y_i=(y'_i,y_{i,n})\in\p^*E_{t_i}$, with $\metric{\nu_{E_{t_i}}(y_i)}{e_n}\leq\cos\th_0$. The points $y_i$ are necessarily contained in the interior of $\Omega$ (otherwise $\nu_{E_{t_i}}(y_i)=\nu_\Omega=e_n$). Consider the rescaled sets
	\[F_i=\frac1{|y_{i,n}|}(E_{t_i}-y'_i)\subset\Omega,\]
	where clearly $-e_n\in\p^*F_i$ and $\metric{\nu_{F_i}(-e_n)}{e_n}\leq\cos\th_0$. We investigate the minimizing properties of $F_i$: by Theorem \ref{thm-obs:out_min}, each $F_i$ is locally outward minimizing in $\bar\Omega$, hence locally outward minimizing in $\RR^n$, by direct verification. On the other hand, by Lemma \ref{lemma-halfplane:excess_ineq_2} and the scale invariance of \eqref{eq-halfplane:excess_ineq}, we obtain the inward-minimizing property
	\[\Ps{F_i;K}\leq\Ps{F_i\setminus G;K}+(e^{t_i}-1)\Ps{G}\qquad\forall\,G\Subset K\Subset\RR^n.\]
	Taking $i\to\infty$, a subsequence of $F_i$ converges locally to some $F_\infty\subset\Omega$. Taking limit of the minimizing properties stated above, and by a standard set replacing argument, it follows that $F_\infty$ is locally perimeter-minimizing in $\RR^n$.
	
	The gradient estimate in \eqref{eq-halfplane:gradient} and energy comparison implies the following: for all $x\in\Omega$ and any competitor set $E$ with $E\Delta E_t\Subset B(x,|x_n|/2)$, we have
	\[\P{E_t;B(x,\frac{|x_n|}2)}\geq\P{E;B(x,\frac{|x_n|}2)}+\frac C{|x_n|}\big|E\Delta E_t\big|.\]
	Rescaling this, we see that $F_i$ satisfies the condition
	\[\P{F_i;B(-e_n,\frac12)}\geq\P{F;B(-e_n,\frac12)}+C\big|F\Delta F_i\big|,\]
	for all $F$ with $F\Delta F_i\Subset B(-e_n,1/2)$.
	Thus $F_i$ are uniform $(\Lambda,r_0)$-perimeter minimizers in $B(-e_n,1/2)$, in the sense stated in Appendix \ref{sec:gmt}. Then we apply Lemma \ref{lemma-gmt:Lam_r0_convergence}(i) to obtain $-e_n\in\spt(|D\chi_{F_\infty}|)$, and subsequently apply Lemma \ref{lemma-gmt:min_in_halfplane} to obtain $F_\infty=\big\{x_n<-1\big\}$. Finally, by Lemma \ref{lemma-gmt:Lam_r0_convergence}(ii) we have the convergence of normal vectors $\nu_{F_i}(-e_n)\to\nu_{F_\infty}(-e_n)=e_n$, contradicting our hypothesis. This proves the theorem.
\end{proof}

We also establish the following approximate Liouville theorem, where the obstacle $\{x_n=0\}$ is replaced by a ``soft obstacle'', represented by the weight function $\psi(x_n)$ unbounded near $x_n=1$. See Subsection \ref{subsec:weight} for the definition of weighted weak solutions.

\begin{theorem}\label{thm-halfplane:approx}
	Fix a smooth function $\psi:(-\infty,1)\to[0,\infty)$ that satisfies $\psi|_{(-\infty,0]}\equiv0$, and $\psi>0$, $\psi'>0$, $\psi''>0$, $\psi'''>0$ on $(0,1)$, and $\lim_{x\to1}\psi(x)=+\infty$. Let $\Omega=\{x_n<1\}\subset\RR^n$, and $u\in\Lip_{\loc}(\Omega)$ be a weak solution of the weighted IMCF
	\begin{equation}\label{eq-halfplane:weighted_eq}
		\div\Big(e^{\psi(x_n)}\frac{\D u}{|\D u|}\Big)=e^{\psi(x_n)}|\D u|
	\end{equation}
	in $\Omega$. Moreover, assume there is a constant $C$ so that
	
	(1) $u\geq\psi(x_n)-C$ in $\Omega$ (in particular, $\inf_\Omega(u)>-\infty$),
	
	(2) $|\D u(x)|\leq\frac{C}{|x_n|}$ for a.e. $x\in\{x_n<0\}$.
	
	\noindent Then $u(x',x_n)=\psi(x_n)-C'$ for some other constant $C'$.
\end{theorem}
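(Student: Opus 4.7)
The natural candidate solution is $u_0(x) := \psi(x_n)$, which one verifies directly to be a smooth weak solution of \eqref{eq-halfplane:weighted_eq}: $\D u_0 = \psi'(x_n)\,e_n$ with $\psi'(x_n) > 0$ on $(0,1)$, so $\div(e^{\psi(x_n)} e_n) = e^{\psi(x_n)}\psi'(x_n) = e^{\psi(x_n)}|\D u_0|$, and the equation holds trivially on $\{x_n \leq 0\}$ where $u_0 \equiv 0$. Setting $c^* := \inf_\Omega(u - u_0)$, which is finite by hypothesis (1), it suffices to show $u \equiv u_0 + c^*$. After subtracting $c^*$ we may assume $u \geq u_0$ with $\inf_\Omega(u - u_0) = 0$, and seek a contradiction from $u \not\equiv u_0$. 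Note that $E_t(u) \subset E_t(u_0) = \{x_n < \psi^{-1}(t)\}$ for $t > 0$. All arguments below can be carried out in the conformally rescaled metric $g' := e^{2\psi/(n-1)}g$, in which the weighted equation becomes standard $\IMCF{\Omega, g'}$ by Lemma \ref{lemma-prelim:def_weighted}(3); this gives access to Theorem \ref{thm-prelim:max_principle} and Corollary \ref{cor-ellreg:grad_est}.

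The strategy mirrors the dichotomy from the proof of Theorem \ref{thm-halfplane:liouville}, with the normal $e_n$ of the reference foliation playing the role previously played by the obstacle normal $\nu_\Omega$. I consider sequences $t_i > 0$ and $y_i \in \p^* E_{t_i}$; the constraint $y_{i,n} \leq \psi^{-1}(t_i)$ forces such points to cluster in regions where $\psi$ is small.

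\textbf{Case 1}: if along some sequence $t_i \searrow 0$ one has $\essinf_{\p^* E_{t_i}}\metric{\nu_{E_{t_i}}}{e_n} \to 1$, then Lemma \ref{lemma-gmt:contain_cone} shows $E_{t_i}$ contains a half-space $\{x_n < a_i\}$ with $\psi^{-1}(t_i) - a_i \to 0$. For any $x_0$ with $x_{0,n} < 0$ this forces $u(x_0) \leq t_i$; letting $i \to \infty$ and using $u \geq u_0 = 0$ on $\{x_n \leq 0\}$ yields $u \equiv u_0$ on $\{x_n \leq 0\}$. Repeating at each positive level, or equivalently comparing $u$ with a vertical translation $u_0(\cdot - h e_n)$ via Theorem \ref{thm-prelim:max_principle} (applied in $g'$), propagates the identity $u \equiv u_0$ to all of $\Omega$. \textbf{Case 2}: otherwise, there exist $\theta_0 \in (0, \pi/2)$, $t_i \searrow 0$, and $y_i \in \p^* E_{t_i}$ with $\metric{\nu_{E_{t_i}}(y_i)}{e_n} \leq \cos\theta_0$. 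I would translate $y_i$ horizontally to the origin and dilate by a factor $r_i$ tied to the local geometry (for $y_{i,n} < 0$ take $r_i := |y_{i,n}|$; when $y_{i,n} \in [0, \psi^{-1}(t_i))$ take $r_i$ determined by $\sigma(y_i; \Omega, g')$). The rescaled sub-level sets $F_i$ contain $-e_n$ on their reduced boundary, are uniform $(\Lambda, r_0)$-perimeter minimizers in a ball around $-e_n$ by the gradient estimate and the energy comparison, and the rescaled weight $\exp(\psi(y_{i,n} + r_i z_n))$ converges locally to a constant as $i \to \infty$. Passing to the limit via Lemma \ref{lemma-gmt:Lam_r0_convergence} yields a local perimeter minimizer $F_\infty$ (in $\RR^n$ or in a half-space) containing $-e_n$ on its reduced boundary with $\metric{\nu_{F_\infty}(-e_n)}{e_n} \leq \cos\theta_0$, contradicting Lemma \ref{lemma-gmt:min_in_halfplane}.

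The main obstacle is carrying out the blow-up of Case 2 rigorously across all limiting regimes of $y_{i,n}$ (bounded negative, tending to $-\infty$, or approaching $0^+$ from above). In each regime the dilation factor $r_i$ must be chosen so that the rescaled weight converges and the rescaled minimizers retain uniform $(\Lambda, r_0)$ bounds; the hypothesis (2) gives the gradient bound only on $\{x_n < 0\}$, so for $y_{i,n} \in (0, 1)$ the required bound must be extracted via the elliptic regularization (Remark \ref{rmk-ellreg:weighted}) and Theorem \ref{thm-ellreg:grad_est_smooth} applied on an interior neighborhood in $g'$. Case 1 also requires care in applying the cone containment to non-smooth level sets, but Lemma \ref{lemma-prelim:density_one} together with a weighted analog of Lemma \ref{lemma-halfplane:excess_ineq_2} should suffice.
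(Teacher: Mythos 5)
Your proposal shares the broad architecture of the paper's proof (barrier comparison plus a blow-up dichotomy modeled on Theorem~\ref{thm-halfplane:liouville}), but there are three substantive gaps, two of which you partially acknowledge but underestimate.

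\textbf{Step~1 cannot be skipped.} You normalize $\inf_\Omega(u-u_0)=0$ to obtain $u\geq\psi(x_n)$ for free, and you bypass the paper's outer-barrier argument. But both of your Cases~1 and~2 implicitly require that $E_{t_i}(u)\neq\emptyset$ for $t_i\searrow 0$, i.e. that $\inf_\Omega u=0$. Your normalization gives $\inf(u-\psi)=0$, not $\inf u=0$; a priori the infimum of $u-\psi$ could be approached along $x_n\to1$ while $\inf u$ stays strictly positive, making $E_{t_i}$ empty. The paper instead normalizes $\inf u=0$ and then proves $u\geq\psi$ via the spherical-cap subsolution $\uf$; this order matters because it is precisely what locates the near-infimum of $u$ inside $\{x_n\leq 0\}$, which your level-set analysis needs.

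\textbf{The propagation from $\{x_n\leq 0\}$ to all of $\Omega$ is not justified.} Your Case~1 only yields $u\equiv 0$ on $\{x_n\leq 0\}$, which is the conclusion of the paper's Step~2, not of the whole theorem. The proposed shortcut ``comparing $u$ with a vertical translation $u_0(\cdot-h e_n)$'' fails: $\psi(x_n-h)$ for $h>0$ is a strict \emph{sub}solution (since $\psi'$ is increasing), so it gives only a lower bound; and $\psi(x_n+h)$, while a supersolution, blows up at $x_n=1-h$ and the set $\{u>\psi(x_n+h)\}$ is unbounded in the horizontal directions, so Theorem~\ref{thm-prelim:max_principle} does not apply. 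The paper's Step~3 uses the carefully constructed spherical-cap supersolutions $\of$ precisely to localize this comparison; this is a genuinely different device, not a cosmetic variant of vertical translation.

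\textbf{The blow-up in Case~2 genuinely breaks for $y_{i,n}\geq 0$.} You flag this as ``the main obstacle'' but treat it as a technicality; in fact the paper sidesteps it entirely rather than resolving it. In the paper's Step~2 the dichotomy is a \emph{three}-case split: Case~2(i) disposes of the ``thin slab'' scenario ($\p^*E_t$ confined to $\{x_n\geq-\epsilon\}$) by an isoperimetric growth argument using the weighted excess inequality, not by a blow-up. Only after ruling out Case~2(i) does one have a fixed $\epsilon>0$ with bad-normal points $y_i\in\{x_n<-\epsilon\}$, where the weight is identically $1$, the gradient hypothesis~(2) applies, and the $(\Lambda,r_0)$-bounds are uniform. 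Trying to blow up at points with $y_{i,n}\to 0^+$ (or in $(0,1)$) runs into exactly the problems you list: the rescaled weight need not converge, and there is no gradient control there. This is not a regime that can be handled by more careful bookkeeping within your two-case framework; the isoperimetric argument is a separate idea you are missing.
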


Let us remark that condition (1) is not removable. Consider the (non-smooth) weight $\psi(x)=(n-1)\log\frac1{1-x}$ for $x>0$. By Lemma \ref{lemma-prelim:def_weighted}, we may equivalently consider the usual weak IMCF in the metric $g'=e^{2\psi(x_n)/(n-1)}g$ (which is hyperbolic in $\{0<x_n<1\}$), that initiates with the horoball $\{x_n<0\}$. Without condition (1), the weak solution may choose to first ``jump'' to the totally geodesic hypersurface $\Sigma_0^+=\{x_n=1-\sqrt{1-|x'|^2}\}$, and then continue to evolve by $\Sigma_t=\big\{x: d(x,\Sigma_0^+)=\arccosh e^{t/(n-1)}\big\}$.

\begin{proof}
	Shifting $u$ by a constant, we may assume $\inf_\Omega(u)=0$. We aim to prove $u(x',x_n)=\psi(x_n)$. The proof consists of three steps: we first show $u\geq\psi(x_n)$ using outer barriers, and next show that $u=0$ on $\{x_n\leq0\}$ by arguing similarly as in Theorem \ref{thm-halfplane:liouville}, and finally prove $u\leq\psi(x_n)$ using inner barriers.
	
	\vspace{3pt}
	
	\textbf{Step 1}. We show that $u\geq\psi(x_n)$. The following fact can be directly verified: if $\Sigma_t$ are graphs of functions $f=f(\cdot,t)$ over $\RR^{n-1}$, then $\Sigma_t$ is a solution (resp. subsolution, supersolution) of the smooth weighted IMCF $\frac{\p\Sigma_t}{\p t}=\frac{\nu}{H+\p\psi/\p\nu}$ if and only if
	\begin{equation}\label{eq-halfplane:graph_sol}
		\frac{\p f}{\p t}\Big(-\div\frac{\D f}{\sqrt{1+|\D f|^2}}+\frac{\psi'(f)}{\sqrt{1+|\D f|^2}}\Big)=\ \text{(resp. $\geq\,,\leq$\,)}\ \sqrt{1+|\D f|^2}.
	\end{equation}
	For $\mu>0$ and $R\geq4$, consider the function
	\begin{equation}
		\uf(x',t)=(1+\mu)\psi^{-1}(t)+\mu+R-\sqrt{R^2-|x'|^2},
	\end{equation}
	where $\psi^{-1}$ is the inverse of $\psi$. Note that $\uf$ is smooth in the region $\{(x',t)\in\RR^{n-1}\times(0,\infty): \uf(x',t)<1\}$. We claim that there exists $R_0$ depending on $\psi$ and $\mu$, such that $\uf$ is a subsolution of \eqref{eq-halfplane:graph_sol} whenever $R\geq R_0$, $t>0$ and $\uf<1$. To verify this, we compute
	\begin{equation}\label{eq-halfplane:sub_1}
		\frac{\p\uf}{\p t}=\frac{1+\mu}{\psi'(\psi^{-1}(t))}
		\qquad\text{and}\qquad
		\div\frac{\D\uf}{\sqrt{1+|\D\uf|^2}}=\frac{n-1}R.
	\end{equation}
	To evaluate the second term in \eqref{eq-halfplane:graph_sol}, we note that $\uf<1$ implies $R-\sqrt{R^2-|x'|^2}<1$, which further implies $|x'|^2\leq 2R-1$. Therefore, by direct estimation
	\begin{equation}\label{eq-halfplane:sub_2}
		1+|\D\uf|^2=\frac{R^2}{R^2-|x'|^2}\leq\frac{R^2}{(R-1)^2}\leq 1+\frac4R\qquad\text{whenever $\uf<1$},
	\end{equation}
	where we used $R\geq4$. Moreover, using the convexity of $\psi$ and $\psi'$ we have
	\begin{equation}\label{eq-halfplane:sub_3}
		\begin{aligned}
			\psi'(\uf) &\geq \psi'\big(\psi^{-1}(t)+\frac\mu2\big)+\psi''\big(\psi^{-1}(t)+\frac\mu2\big)\cdot\big(\uf-\psi^{-1}(t)-\frac\mu2\big) \\
			&\geq \psi'(\psi^{-1}(t))+\psi''(\frac\mu2)\cdot\frac\mu2.
		\end{aligned}
	\end{equation}
	Combining \eqref{eq-halfplane:sub_1} \eqref{eq-halfplane:sub_2} \eqref{eq-halfplane:sub_3}, we obtain
	\[\begin{aligned}
		& \frac{\p\uf}{\p t}\Big(-\div\frac{\D\uf}{\sqrt{1+|\D\uf|^2}}+\frac{\psi'(\uf)}{\sqrt{1+|\D\uf|^2}}\Big) \\
		\geq&\, \frac{1+\mu}{\psi'(\psi^{-1}(t))}\cdot\Big(-\frac{n-1}R+\frac{\psi'(\psi^{-1}(t))+(\mu/2)\psi''(\mu/2)}{\sqrt{1+4/R}}\Big).
	\end{aligned}\]
	If we choose $R_0$ sufficiently large so that
	\[\frac\mu2\psi''(\frac\mu2)>\sqrt{1+\frac4R}\cdot\frac{n-1}R
	\quad\text{and}\quad
	\frac{1+\mu}{\sqrt{1+4/R}}>\sqrt{1+\frac4R}
	\quad\ \,\text{for all $R\geq R_0$},\]
	then combined with \eqref{eq-halfplane:sub_2}, it follows that $\uf$ is a strict subsolution of \eqref{eq-halfplane:graph_sol} whenever $t>0$ and $\uf<1$. Switching to the level set description, we find that the function
	\[\uu(x',x_n)=\psi\Big(\frac1{1+\mu}\big(x_n-\mu-R+\sqrt{R^2-|x'|^2}\big)\Big),\]
	defined such that $\big\{\uu=t\big\}=\graph\big(f(\cdot,t)\big)$, satisfies the subsolution condition 
	\[\div\Big(e^\psi\frac{\D\uu}{|\D\uu|}\Big)>e^\psi|\D\uu|\]
	in the region $\Omega'=\big\{\mu+R-\sqrt{R^2-|x'|^2}<x_n<1\big\}\subset\Omega$.
	
	For $\epsilon>0$, we apply the interior maximum principle (Theorem \ref{thm-prelim:max_principle}) to the functions $u'=u+\epsilon$ and $\uu$. By our hypothesis we have $u\geq\max\{0,\psi(x_n)-C\}$. Hence
	\[u'(x)<\uu(x)\ \ \Rightarrow\ \ \psi(x_n)-C<\psi\big(\frac{x_n}{1+\mu}\big)\ \ \Rightarrow\ \ x_n<C'\qquad\text{for all $x\in\Omega'$},\]
	where $C'<1$ is a constant depending only on $C,\mu,\psi$. On the other hand, $u'(x)<\uu(x)$ implies $\uu(x)>\epsilon$, therefore the closure of $\{u'<\uu\}$ does not intersect $\{x_n=\mu+R-\sqrt{R^2-|x'|^2}\}$. As a result, we have $\{u'<\uu\}\Subset\Omega'$. By Lemma \ref{lemma-prelim:def_weighted}(3) and Theorem \ref{thm-prelim:max_principle}, we obtain $u'\geq\uu$. Taking $\epsilon\to0$, $R\to\infty$ and then $\mu\to0$, we obtain $u\geq\psi(x_n)$.
	
	\vspace{3pt}
	
	\textbf{Step 2}. We show that $u=0$ on $\{x_n\leq0\}$, using similar methods as in Theorem \ref{thm-halfplane:liouville}. Due to the presence of weight function, the discussion here is more complicated. Recall that we have assumed $\inf(u)=0$ and showed that $E_t\subset\{x_n<\psi^{-1}(t)\}$ for all $t>0$.
	
	\textit{Case 2(i)}: suppose for any $\epsilon>0$, there exists $\delta>0$ such that $\p^*E_t\subset\{x_n\geq -\epsilon\}$ for all $0<t\leq\delta$. Thus for each $t\in(0,\delta]$ there are only two possibilities: either $E_t\supset\{x_n<-\epsilon\}$, or $E_t\subset\{-\epsilon<x_n<\psi^{-1}(t)\}$. We claim that latter case never occurs. Once this is proved, we have $\{u\leq0\}\supset\{x_n<-\epsilon\}$, which by taking $\epsilon\to0$ proves the goal of Step 2. It remains to prove our claim. Suppose we have $E_t\subset\{-\epsilon<x<\psi^{-1}(t)\}$ for some $\epsilon,\delta$ and $t\in(0,\delta]$. Let $\pi:(x',x_n)\mapsto x'$ be the projection map. Note that $\P{E_t;A\times\RR}\geq\L^{n-1}(\pi(E_t)\cap A)$ for any open set $A\subset\RR^{n-1}$. Lemma \ref{lemma-prelim:excess_ineq} is easily generalized to the weighted case, which implies that
	\begin{equation}\label{eq-halfplane:weighted_excess}
		\int_{\p^*E_t\cap K}e^\psi\,d\H^{n-1}\leq \int_{\p^*(E_t\setminus F)\cap K}e^\psi\,d\H^{n-1}
		+(e^t-1)\int_{\p^* F\cap E_t}e^\psi\,d\H^{n-1},
	\end{equation}
	whenever $F\Subset K\Subset\Omega$. If $\H^{n-1}(\p^*E_t\cap\p^*F)=0$ and $F=F^{(1)}$, then \eqref{eq-halfplane:weighted_excess} is reduced to
	\begin{equation}\label{eq-halfplane:weighted_excess_2}
		\int_{\p^*E_t\cap F}e^\psi\,d\H^{n-1}\leq e^t\int_{\p^* F\cap E_t}e^\psi\,d\H^{n-1}.
	\end{equation}
	Since $e^\psi\geq1$ everywhere and $e^\psi\leq e^t$ in $\bar{E_t}$, we have
	\begin{equation}\label{eq-halfplane:approx_min}
		\Ps{E_t;F}\leq e^{2t}\Ps{F;E_t}.
	\end{equation}
	Now for a radius $R>0$ we choose $F=B'(0,R)\times(-1/2,1/2)$, where $B'$ denotes open balls in $\RR^{n-1}$. For almost every $R$, \eqref{eq-halfplane:approx_min} implies
	\[\begin{aligned}
		\L^{n-1}(\pi(E_t)\cap B'(0,R)) &\leq \P{E_t;B'(0,R)\times\RR}
		= \Ps{E_t;F}
		\leq e^{2t}\Ps{F;E_t} \\
		&\leq e^{2t}\big(\epsilon+\psi^{-1}(t)\big)\H^{n-2}(\p B'(0,R)\cap\pi(E_t)).
	\end{aligned}\]
	Denote $f(R)=\L^{n-1}(\pi(E_t)\cap B'(0,R))$, thus $f(R)\leq e^{2t}\big(\epsilon+\psi^{-1}(t)\big)f'(R)$. As $\inf_\Omega(u)=0$, we have $f(R)>0$ for some $R$, therefore $f(R)$ grows exponentially as $R\to\infty$. This is impossible. From the setup made above, this proves the desired result for Case 2(i).
	
	For the rest of Step 2, we may assume the contrary of Case 2(i): that there is a fixed $\epsilon>0$ and a sequence of times $t_i\to0$, such that $\p^*E_{t_i}\cap\{x_n<-\epsilon\}\ne\emptyset$ for all $i$.
	
	\textit{Case 2(ii)}: assume that there are angles $\th_i\to0$, such that for each $i$ it holds
	\[\essinf_{\p^*E_{t_i}\cap\{x_n<-\epsilon\}}\metric{\nu_{E_{t_i}}}{e_n}\geq\cos\th_i.\]
	Then arguing similarly as in \hyperlink{thm4.1case1}{Case (i)} of Theorem \ref{thm-halfplane:liouville}, we have $E_t\supset\{x_n<0\}$ for all $t>0$. This proves the desired result for Step 2.
	
	\textit{Case 2(iii)}: suppose the hypothesis of Case 2(ii) does not hold. Then passing to a subsequence, there is a constant $\th_0>0$ and points $y_i=(y'_i,y_{i,n})\in\p^*E_{t_i}\cap\{x_n<-\epsilon\}$, such that $\metric{\nu_{E_{t_i}}(y_i)}{e_n}\leq\cos\th_0$. We argue that this is impossible. For convenience, we denote $E_i=E_{t_i}$. Recall from step 1 that $E_i\subset\{x_n<\psi^{-1}(t_i)\}$. We start with investigating the minimizing properties of $E_i$.
	
	1. $E_i$ are local outward minimizers of the weighted perimeter $E\mapsto\int_{\p^*E}e^\psi\,d\H^{n-1}$ in $\{x_n<1\}$, by Lemma \ref{lemma-prelim:def_weighted}(iii). This immediately implies that $E_i$ are local outward minimizers of $E\mapsto\int_{\p^*E}e^{\min\{\psi,t_i\}}\,d\H^{n-1}$ in $\big\{x_n<\psi^{-1}(t_i)\big\}$. Then by direct verification, $E_i$ locally outward minimizes the same functional $E\mapsto\int_{\p^*E}e^{\min\{\psi,t_i\}}\,d\H^{n-1}$ in $\RR^n$.
	
	2. The inward minimizing effect of $E_i$ is given by \eqref{eq-halfplane:weighted_excess}. Since all the integrals in \eqref{eq-halfplane:weighted_excess} occur inside the set $\{\psi\leq t_i\}$, it makes no difference to replace each $\psi$ by $\min\{\psi,t_i\}$.
	
	Combining the inward and outward minimizing properties, we conclude that
	\[\begin{aligned}
		\int_{\p^*E_i\cap K}e^{\min\{\psi,t_i\}}\,d\H^{n-1} &\leq \int_{\p^* F\cap K}e^{\min\{\psi,t_i\}}\,d\H^{n-1} \\
		&\qquad\qquad +(e^{t_i}-1)\int_{\p^*(E_i\setminus F)}e^{\min\{\psi,t_i\}}\,d\H^{n-1}
	\end{aligned}\]
	whenever $F\Delta E_i\Subset K\Subset\RR^n$. This directly implies
	\[\Ps{E_i;K}\leq e^{t_i}\Ps{F;K}+e^{t_i}(e^{t_i}-1)\Ps{E_i\setminus F}.\]
	
	Next, consider the rescaled sets $F_i=(E_i-y'_i)/|y_{i,n}|$. It follows that $F_i\subset\{x_n<1/\epsilon\}$, $-e_n\in\p^*F_i$, and $\metric{\nu_{F_i}(-e_n)}{e_n}\leq\cos\th_0$, and $F_i$ satisfy the minimizing property
	\begin{equation}\label{eq-halfplane:almost_min_2}
		\Ps{F_i;K}\leq e^{t_i}\Ps{F;K}+e^{t_i}(e^{t_i}-1)\Ps{F_i\setminus F}
	\end{equation}
	whenever $F\Delta F_i\Subset K\Subset\RR^n$. Moreover, since $\psi_i=0$ in $\{x_n<0\}$, and by the gradient estimate $|\D u|\leq C/|x_n|$, we have the uniform $(\Lambda,r_0)$-minimizing property
	\begin{equation}\label{eq-halfplane:uniform_Lam_r0}
		\P{F_i;B(-e_n,\frac12)}\leq\P{F;B(-e_n,\frac12)}+C|F\Delta F_i|
	\end{equation}
	whenever $F\Delta F_i\Subset B(-e_n,\frac12)$.
	
	We are in a position to pass to the limit: there is a subsequence of $F_i$ that converge to a set $F_\infty$ in $L^1_{\loc}$. By \eqref{eq-halfplane:uniform_Lam_r0} and Lemma \ref{lemma-gmt:Lam_r0_convergence}(i) we have $-e_n\in\p^*F_\infty$. By \eqref{eq-halfplane:almost_min_2} and the fact $t_i\to0$ and the standard set replacing argument, the limit $F_\infty$ locally minimizes the perimeter in $\RR^n$. By Lemma \ref{lemma-gmt:min_in_halfplane} and the fact $F_\infty\subset\{x_n<1/\epsilon\}$, we have $F_\infty=\{x_n<-1\}$. Finally, by Lemma \ref{lemma-gmt:Lam_r0_convergence}(ii) we have convergence of normal vectors $\nu_{F_i}(-e_n)\to\nu_{F_\infty}(-e_n)=e_n$, contradicting our hypothesis. This completes the proof of step 2.
	
	\vspace{3pt}
	
	\textbf{Step 3.} We have shown that $u\geq\psi(x_n)$ and $u=0$ on $\{x_n<0\}$. In this step we use inner barriers to show that $u\leq\psi(x_n)$, which completes the proof of the theorem. For constants $\mu\ll1$, $R\gg1$ and for $t>0$, we consider the function
	\[\of(x',t)=(1-\mu)\int_0^t\frac{ds}{\psi'(\psi^{-1}(s))+(n-1)/R}-R+\sqrt{R^2-|x'|^2}.\]
	We claim that $\of$ is a supersolution of \eqref{eq-halfplane:graph_sol} whenever $t>0$ and $\of>0$. Note that $\of\leq(1-\mu)\int_0^t\frac{ds}{\psi'(\psi^{-1}(s))}<(1-\mu)\psi^{-1}(t)$. In particular, $\of<1$ always holds. To verify the supersolution property, according to \eqref{eq-halfplane:graph_sol} we compute
	\[\frac{\p\of}{\p t}=\frac{1-\mu}{\psi'(\psi^{-1}(t))+(n-1)/R}
	\qquad\text{and}\qquad
	\div\Big(\frac{\D f}{\sqrt{1+|\D f|^2}}\Big)=-\frac{n-1}R.\]
	By the convexity of $\psi$, we have $\psi'(\of)\leq\psi'(\psi^{-1}(t))$. Therefore,
	\[\begin{aligned}
		&\, \frac{\p\of}{\p t}\Big(-\div\frac{\D\of}{\sqrt{1+|\D\of|^2}}+\frac{\psi'(\of)}{\sqrt{1+|\D\of|^2}}\Big) \\
		\leq&\, \frac{1-\mu}{\psi'(\psi^{-1}(t))+(n-1)/R}\cdot\Big(\frac{n-1}R+\psi'(\psi^{-1}(t))\Big) \\
		=&\, 1-\mu
		<\, \sqrt{1+|\D\of|^2}.
	\end{aligned}\]
	Thus $\of$ is a supersolution of \eqref{eq-halfplane:graph_sol}.
	
	Consider the unique positive function $\ou$ on $\{0<x_n<1\}$, such that $E_t(\ou)=\big\{(x',x_n): 0<x_n<\of(x',t)\big\}$ for all $t>0$. Then $\ou$ is a smooth supersolution of IMCF in the region $\Omega':=\{0<x_n<1,\ou<\infty\}$. For $\epsilon>0$ we wish to compare $u$ with the function $\ou+\epsilon$. Assembling the facts: $u=0$ on $\{x_n=0\}$, and $\{\ou<\infty\}\subset\big\{x_n<1-\mu-R+\sqrt{R^2-|x'|^2}\big\}$, and $u\in\Lip_{\loc}(\{x_n<1\})$, we conclude that $\{u>\ou+\epsilon\}\Subset\Omega'$. By Lemma \ref{lemma-prelim:def_weighted} (3) and Theorem \ref{thm-prelim:max_principle}, we obtain $u\leq\ou+\epsilon$. Taking $\epsilon\to0$ and then $R\to\infty$ and then $\mu\to0$, we eventually obtain $u\leq\psi(x_n)$.
\end{proof}

\section{Parabolic estimates near smooth obstacles}\label{sec:para}

In this section, we compute several parabolic evolution equations for the smooth IMCF near a smooth obstacle. The aim is to show the following: if $\Sigma_t$ evolves under the IMCF in a smooth domain $\Omega$, such that
\begin{enumerate}[label={(\arabic*)}, nosep]
	\item $\nu_{\Sigma_t}$ is approximately equal to $\nu_{\Omega}$ on $\p\Omega$ (where $\nu$ denotes the outer unit normal of the corresponding objects),
	\item in some neighborhood of $\p\Omega$ we have $\metric{\nu_{\Sigma_t}}{\p_r}\geq\frac12$ (where $\p_r$ is an extension of $\nu_\Omega$, see below),
\end{enumerate}
then $\metric{\nu_{\Sigma_t}}{\p_r}\geq 1-Cr^\gamma-o(1)$ in some smaller neighborhood of $\p\Omega$. This estimate enters the proofs in Section \ref{sec:exist} in showing the $C^{1,\alpha}$ regularity of level sets and boundary regularity of blow-up limits.

\vspace{6pt}

The following notations are used consistently in the present and the next sections. Suppose $\Omega$ is a smooth domain in a Riemannian manifold $M$. Define the signed distance function
\[r(x):=\left\{\begin{aligned}
	& -d(x,\p\Omega)\qquad x\in\Omega, \\
	& d(x,\p\Omega)\qquad x\notin\Omega,
\end{aligned}\right.\]
For $\delta\in\RR$, we denote $\Omega_\delta:=\big\{x\in M: r(x)<\delta\big\}$. We use $r_0$ to denote a radius such that $r(x)$ is smooth in $\Omega\setminus\bar{\Omega_{-r_0}}$. In Lemma \ref{lemma-para:delta_p}$\,\sim\,$\ref{lemma-para:box_eta}, the existence of such a radius is implicitly assumed. In $\Omega\setminus\bar{\Omega_{-r_0}}$ we define the outer radial vector field $\p_r:=\D r$.

For a hypersurface $\Sigma\subset\Omega\setminus\bar{\Omega_{-r_0}}$, we use $\nu, A, H$ to denote the unit normal vector, second fundamental form and mean curvature. We use $|\cdot|_\Sigma, \DSigma, \Delta_\Sigma$ to denote the (Hilbert-Schmidt) norm, gradient and Laplacian on $\Sigma_t$, and use $|\cdot|, \D, \Delta$ for objects with respect to the ambient manifold $M$. Also, denote
\[p:=\metric{\nu}{\p_r}.\]
The following algebraic fact is useful: $|\nu-p\p_r|=|\p_r-p\nu|=|\DSigma r|=\sqrt{1-p^2}$.

When there is a family of hypersurfaces $\{\Sigma_t\}$, we often omit the dependence on $t$ in the above notations. When $\Sigma_t$ evolves under the IMCF, we denote $\Box=\p_t-\frac1{H^2}\Delta_\Sigma$ the associated heat operator.


\vspace{9pt}

We start by evaluating $\Delta_\Sigma p$. The main formula \eqref{eq-para:delta_p} is arranged such that each term vanishes when $\Sigma=\p\Omega_{-r}$ for some $r$, and the terms with unfavorable sign become small when $M$ and $\p\Omega$ are close to being flat.

\begin{lemma}\label{lemma-para:delta_p}
	Let $\Sigma$ be a smooth hypersurface in $\Omega\setminus\bar{\Omega_{-r_0}}$, such that $p>0$ on $\Sigma$. Then we have
	\begin{equation}\label{eq-para:delta_p}
		\begin{aligned}
			\Delta_\Sigma p &\leq \metric{\D_\Sigma H}{\p_r}-p\big|A-p^{-1}\D^2 r\big|^2_\Sigma \\
			&\qquad +|\D^2 r|^2p^{-1}(1-p^2)+\big(|\Ric|+n|\D^2\p_r|\big)\sqrt{1-p^2}.
		\end{aligned}
	\end{equation}
	(In the second term, the expression $\D^2r$ means the restriction of $\D^2_Mr$ to $\Sigma$.)
\end{lemma}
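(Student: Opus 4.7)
The plan is a direct tensorial computation at an arbitrary point $q\in\Sigma$. Fix a local orthonormal frame $\{e_i\}_{i=1}^{n-1}$ on $\Sigma$ that is normal at $q$, so $\D^\Sigma_{e_i}e_j|_q=0$ and the ambient correction is $\D_{e_i}e_j|_q = -A_{ij}\nu$. Using $\D_{e_i}\nu = \sum_j A_{ij}e_j$ and $\metric{\nu}{\D_{e_i}\p_r} = \D^2 r(e_i,\nu)$, the first derivative becomes
\[
  e_i(p) = A(e_i,\p_r^T) + \D^2 r(e_i,\nu),
\]
where $\p_r^T := \p_r - p\nu$ has $|\p_r^T|^2 = 1-p^2$.

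Differentiating a second time, the $A(e_i,\p_r^T)$ piece yields, via the Codazzi identity $\sum_i(\D^\Sigma_{e_i}A)(e_i,Y) = Y(H) + \Ric(\nu,Y)$ and a short computation giving $\D^\Sigma_{e_i}\p_r^T = \sum_j[\D^2 r(e_i,e_j) - pA_{ij}]e_j$, the contribution $\metric{\D_\Sigma H}{\p_r} + \Ric(\nu,\p_r^T) + \metric{A,\D^2 r}_\Sigma - p|A|^2$. The $\D^2 r(e_i,\nu)$ piece, using the normal correction $\D_{e_i}e_i|_q = -A_{ii}\nu$, gives $\sum_i\D^3 r(e_i,e_i,\nu) - H\,\D^2 r(\nu,\nu) + \metric{A,\D^2 r}_\Sigma$. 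Completing the square
\[
  -p|A|^2 + 2\metric{A,\D^2 r}_\Sigma = -p|A - p^{-1}\D^2 r|^2_\Sigma + p^{-1}|\D^2 r|^2_\Sigma
\]
produces the distinguished square term of the lemma, leaving the residual $p^{-1}|\D^2 r|^2_\Sigma - H\D^2 r(\nu,\nu) + \Ric(\nu,\p_r-p\nu) + \sum_i\D^3 r(e_i,e_i,\nu)$ to be handled.

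The next step converts this residual into $p^{-1}(1-p^2)|\D^2 r|^2 + (|\Ric|+n|\D^2\p_r|)\sqrt{1-p^2}$. The structural identity $\D^2 r(\p_r,\cdot)\equiv 0$ (a consequence of $|\nabla r|\equiv 1$) is the key lever: differentiating it, together with the fact that $\p_r$ is a geodesic vector field so $\D_{\p_r}\p_r = 0$, yields $\D^3 r(X,\p_r,Y) = -\metric{(\D^2 r)^2 X}{Y}$, and tracing gives
\[
  \sum_i \D^3 r(e_i,e_i,\p_r) = -|\D^2 r|^2_\Sigma - \sum_i \D^2 r(e_i,\nu)^2.
\]
Splitting $\nu = p\p_r + (\nu - p\p_r)$ with $|\nu-p\p_r|=\sqrt{1-p^2}$ then rewrites $\sum_i\D^3 r(e_i,e_i,\nu)$ as $-p|\D^2 r|^2_\Sigma - p\sum_i\D^2 r(e_i,\nu)^2$ plus a remainder controlled by $n|\D^2\p_r|\sqrt{1-p^2}$. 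The cancellation $p^{-1}|\D^2 r|^2_\Sigma - p|\D^2 r|^2_\Sigma = p^{-1}(1-p^2)|\D^2 r|^2_\Sigma$ emerges, and is upgraded via $|\D^2 r|^2_\Sigma \leq |\D^2 r|^2$; the Ricci cross-term is absorbed using $|\Ric(\nu,\p_r-p\nu)| \leq |\Ric|\sqrt{1-p^2}$.

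The main delicacy I expect to wrestle with is the stray $-H\D^2 r(\nu,\nu)$ term, which naively carries the unknown mean curvature $H$. Since $\D^2 r(\nu,\nu) = \D^2 r(\nu-p\p_r,\nu-p\p_r)$ is of order $1-p^2$, the strategy is to pair it against the positivity hidden inside $p^{-1}(1-p^2)|\D^2 r|^2$---which by the decomposition $|\D^2 r|^2 = |\D^2 r|^2_\Sigma + 2\sum \D^2 r(e_i,\nu)^2 + \D^2 r(\nu,\nu)^2$ contains both $p^{-1}(1-p^2)\D^2 r(\nu,\nu)^2$ and a term quadratic in $\D^2 r(e_i,\nu)$---together with the unused $-p\sum\D^2 r(e_i,\nu)^2$ from the previous step, so that after a Young-type regrouping every leftover piece is either manifestly $\leq 0$ or absorbed into $(|\Ric|+n|\D^2\p_r|)\sqrt{1-p^2}$. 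Arranging this bookkeeping cleanly is the computational heart of the lemma.
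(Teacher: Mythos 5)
Your computation is essentially the paper's argument: a $\Sigma$-geodesic frame at a point, the Codazzi identity for the $A$-part, the Riccati identity $\D^3 r(X,\p_r,Y) = -\metric{(\D^2r)^2X}{Y}$ for the $r$-part (a repackaging of the paper's $\D^2\p_r$ into $\D^3 r$), and completing the square on $A - p^{-1}\D^2 r$. Your intermediate formulas for $\sum_ie_i\big(A(e_i,\p_r^T)\big)$ and $\sum_ie_i\big(\D^2r(e_i,\nu)\big)$ check out, and you are right to flag the cross term $-H\,\D^2r(\nu,\nu)$: it arises because $\D_{e_i}e_i=-A_{ii}\nu$ lands in a slot of $\D^2 r$. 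The gap is in the plan to absorb it. Every quantity you name as an absorber --- the pieces of $p^{-1}(1-p^2)|\D^2 r|^2$, the left-over $-p\sum_i\D^2r(e_i,\nu)^2$, and $(|\Ric|+n|\D^2\p_r|)\sqrt{1-p^2}$ --- is controlled by the ambient geometry near $\p\Omega$ and carries no factor of $H$. But the term you must dominate scales like $|H|\,|\D^2r|(1-p^2)$ with $H$ a free quantity depending on $\Sigma$; on a hypersurface with $p$ bounded away from $1$ and $|H|$ arbitrarily large, placed so that $\D^2 r(\nu,\nu)$ has the unfavorable sign (e.g.\ near a non-convex portion of $\p\Omega$), the proposed regrouping cannot balance.

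The only resource on the right-hand side of \eqref{eq-para:delta_p} that can see $H$ is the negative square $-p|A-p^{-1}\D^2r|_\Sigma^2$. To close the argument you should split $H=(H-p^{-1}\tr_\Sigma\D^2r)+p^{-1}\tr_\Sigma\D^2r$, bound $|H-p^{-1}\tr_\Sigma\D^2r|\le\sqrt{n-1}\,|A-p^{-1}\D^2r|_\Sigma$, and trade a Young-inequality fraction of the quadratic term against the $H$-carrying part; the remaining part contributes only another $C(n)\,p^{-1}(1-p^2)|\D^2r|^2$. This yields an inequality of the form of \eqref{eq-para:delta_p} with, say, $\tfrac12 p|A-p^{-1}\D^2r|_\Sigma^2$ and a dimensional constant in front of $p^{-1}(1-p^2)|\D^2r|^2$, which is all that Lemma \ref{lemma-para:box_qeta} subsequently uses --- but your plan as written does not reach it.
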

\begin{proof}
	Near a given point $x\in\Sigma$, we let $\{e_i\}_{1\leq i\leq n-1}$ be a local orthonormal frame on $\Sigma$, such that $\D_{e_i}^\Sigma e_j=0$ at $x$. We compute
	\begin{align}
		\Delta_\Sigma p &= \sum_i e_ie_i\metric{\nu}{\p_r}
		= \sum_i e_i\Big[\metric{A(e_i)}{\p_r}+\metric{\nu}{\D_{e_i}\p_r}\Big] \nonumber\\
		&= \sum_i\Big[\metric{\D_{e_i}(A(e_i))}{\p_r}+2\metric{A(e_i)}{\D_{e_i}\p_r}+\metric{\nu}{\D_{e_i}\D_{e_i}\p_r}\Big]. \label{eq-para:del_p_master}
	\end{align}
	Here $A(e_i)$ is understood as a tangent vector field of $\Sigma$. First observe that
	\begin{equation}\label{eq-para:del_p_II}
		2\sum_i\metric{A(e_i)}{\D_{e_i}\p_r}=2\sum_i\D^2r(A(e_i),e_i)=2\metric{A}{\D^2r}_\Sigma.
	\end{equation}
	Next, the first term in \eqref{eq-para:del_p_master} is calculated as
	\begin{align}
		\sum_i\metric{\D_{e_i}A(e_i)}{\p_r} &= \sum_i\Big\langle(\D_{e_i}^\Sigma A)(e_i)+A(\D_{e_i}^\Sigma e_i)-A(e_i,A(e_i))\nu,\,\p_r\Big\rangle \nonumber\\
		&= \metric{\div_\Sigma A-|A|^2_\Sigma\,\nu}{\p_r} \nonumber\\
		&= \metric{\div_\Sigma A}{\p_r-p\nu}-p|A|^2 \nonumber\\
		&\leq \metric{\D_\Sigma H}{\p_r}-p|A|^2_\Sigma+|\Ric|\sqrt{1-p^2}, \label{eq-para:del_p_I}
	\end{align}
	where we use the traced Codazzi equation and note that $\div_\Sigma A$, $\DSigma H$ are tangent to $\Sigma$. It remains to simplify the last term in \eqref{eq-para:del_p_master}. We calculate
	\begin{align}
		\sum_i\metric{\nu}{\D_{e_i}\D_{e_i}\p_r} &= \sum_i\Big[\metric{\nu-p\p_r}{\D_{e_i}\D_{e_i}\p_r}+p\metric{\p_r}{\D_{e_i}\D_{e_i}\p_r}\Big] \nonumber\\
		&\leq n|\D^2\p_r|\sqrt{1-p^2}-p\sum_i\big|\D_{e_i}\p_r\big|^2, \label{eq-para:del_p_III}
	\end{align}
	Continuing to evaluate the second term in \eqref{eq-para:del_p_III}:
	\begin{equation}
		\sum_i\big|\D_{e_i}\p_r\big|^2 = \sum_{ij}\metric{\D_{e_i}\p_r}{e_j}^2+\sum_i\metric{\D_{e_i}\p_r}{\nu}^2 \geq \sum_{ij}\metric{\D_{e_i}\p_r}{e_j}^2=|\D^2 r|^2_\Sigma. \label{eq-para:del_p_III2}
	\end{equation}
	Inserting \eqref{eq-para:del_p_II}$\,\sim\,$\eqref{eq-para:del_p_III2} into \eqref{eq-para:del_p_master}, we obtain
	\[\begin{aligned}
		\Delta_\Sigma p &\leq \metric{\D_\Sigma H}{\p_r}-p|A|^2_\Sigma+2\metric{A}{\D^2r}_\Sigma-p|\D^2 r|^2_\Sigma \\
		&\hspace{144pt} +|\Ric|\sqrt{1-p^2}+n|\D^2\p_r|\sqrt{1-p^2}.
	\end{aligned}\]
	This implies \eqref{eq-para:delta_p} by completing the square and noting that $|\D^2r|_\Sigma\leq|\D^2r|$.
\end{proof}

\begin{lemma}\label{lemma-para:box_p}
	Suppose $\{\Sigma_t\}$ evolves under the IMCF in $\Omega\setminus\bar{\Omega_{-r_0}}$, such that $p>0$ holds everywhere. Then we have
	\begin{equation}\label{eq-para:box_p}
		\begin{aligned}
			\Box p &\geq \frac{|A-p^{-1}\D^2r|^2_\Sigma}{2H^2}p
			-2n\frac{|\D^2r|^2}{H^2p}(1-p^2)
			-\frac{|\Ric|+n|\D^2\p_r|}{H^2}\sqrt{1-p^2}.
		\end{aligned}
	\end{equation}
\end{lemma}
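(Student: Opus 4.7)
\noindent The plan is to compute $\p_t p$ directly, combine with Lemma \ref{lemma-para:delta_p}, and then absorb the resulting cross term via Young's inequality. Under the IMCF $\p_t F = \nu/H$, standard variation formulas give $\p_t \nu = H^{-2}\DSigma H$, while $\p_r$ is a fixed vector field on $M$, so it contributes $\D_{\p_t F}\p_r = H^{-1}\D_\nu \p_r$ to $\p_t p$. Since $\p_r = \D r$ is the gradient of a distance function, its integral curves are geodesics, so $\D_{\p_r}\p_r = 0$ and hence $\D^2 r(\p_r,\cdot) = 0$. Combining yields
\begin{equation*}
	\p_t p = \frac{1}{H^2}\metric{\DSigma H}{\p_r} + \frac{1}{H}\D^2 r(\nu,\nu).
\end{equation*}
Substituting into $\Box p = \p_t p - H^{-2}\Delta_\Sigma p$ and invoking Lemma \ref{lemma-para:delta_p} cancels the $\DSigma H$ terms, yielding with $B := A - p^{-1}\D^2 r|_\Sigma$
\begin{equation*}
	\Box p \geq \frac{1}{H}\D^2 r(\nu,\nu) + \frac{p}{H^2}|B|^2_\Sigma - \frac{|\D^2 r|^2(1-p^2)}{H^2\,p} - \frac{|\Ric|+n|\D^2\p_r|}{H^2}\sqrt{1-p^2}.
\end{equation*}

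\noindent It remains to absorb $H^{-1}\D^2 r(\nu,\nu)$ into half of $p|B|^2_\Sigma/H^2$ and a multiple of $|\D^2 r|^2(1-p^2)/(H^2 p)$. Because $\D^2 r(\p_r,\cdot) = 0$, one has $\D^2 r(\nu,\nu) = \D^2 r(\nu-p\p_r,\nu-p\p_r)$, so $|\D^2 r(\nu,\nu)| \leq |\D^2 r|(1-p^2)$. On the other hand, the $\Sigma$-trace of $pA = pB + \D^2 r|_\Sigma$ gives $H = \tr_\Sigma B + p^{-1}\tr_\Sigma \D^2 r$, and Cauchy--Schwarz yields $H \leq \sqrt{n-1}\bigl(|B|_\Sigma + p^{-1}|\D^2 r|\bigr)$. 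Multiplying by $|\D^2 r|(1-p^2)$ and applying Young's inequality with weight $1/p$ on the $|B|_\Sigma \cdot |\D^2 r|(1-p^2)$ term (using $(1-p^2)^2 \leq 1-p^2$) yields
\begin{equation*}
	H|\D^2 r|(1-p^2) \leq \frac{p}{2}|B|^2_\Sigma + \Bigl(\frac{n-1}{2}+\sqrt{n-1}\Bigr)\frac{|\D^2 r|^2(1-p^2)}{p}.
\end{equation*}
Since $\tfrac{n-1}{2}+\sqrt{n-1} \leq 2n-1$, dividing by $H^2$ and substituting into the earlier inequality gives \eqref{eq-para:box_p}.

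\noindent The principal difficulty is the uncontrolled sign of the extrinsic cross term $H^{-1}\D^2 r(\nu,\nu)$, which arises because $\p_r$ is not parallel along the flow. Matching it against the favorable quantity $p|B|^2_\Sigma/H^2$ requires combining two ingredients: the identity $\D^2 r(\p_r,\cdot) = 0$, which supplies a factor of $(1-p^2)$, and the trace decomposition $H = \tr_\Sigma B + p^{-1}\tr_\Sigma \D^2 r$, which provides precisely the factor of $|B|_\Sigma$ needed for Young's inequality to close the estimate at exactly the right powers of $p$, $|B|_\Sigma$, and $|\D^2 r|$.
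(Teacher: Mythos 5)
Your proof is correct and follows essentially the same route as the paper: compute $\p_t p$ via the variation $\p_t\nu = H^{-2}\DSigma H$ and the cross term $H^{-1}\D^2r(\nu,\nu)$, combine with Lemma \ref{lemma-para:delta_p} so the $\DSigma H$ terms cancel, bound $|\D^2r(\nu,\nu)|\leq|\D^2r|(1-p^2)$ using $\D^2r(\p_r,\cdot)=0$, and absorb the remaining $H^{-1}|\D^2r|(1-p^2)$ via the trace decomposition $H=\tr_\Sigma B+p^{-1}\tr_\Sigma\D^2r$ together with Young's inequality. Your bookkeeping of constants is in fact slightly sharper than the paper's, but both land on the coefficient $2n$.
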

\begin{proof}
	We compute
	\[\begin{aligned}
		\frac{\p p}{\p t} &= \p_t\metric{\nu}{\p_r}
		= \metric{H^{-2}\DSigma H}{\p_r}+\metric{\nu}{\D_{H^{-1}\nu}\p_r}.
	\end{aligned}\]
	For the second term, we use the fact $\D^2r(\p_r,X)=0$ for all $X$ to evaluate
	\[\begin{aligned}
		\metric{\nu}{\D_{H^{-1}\nu}\p_r} = \frac1H\D^2 r(\nu,\nu)
		= \frac1H\D^2 r(\nu-p\p_r,\nu-p\p_r)
		\geq -\frac1H|\D^2r|(1-p^2).
	\end{aligned}\]
	Combined with \eqref{eq-para:delta_p}, we obtain
	\begin{equation}\label{eq-para:box_p_aux}
		\begin{aligned}
			\Box p &\geq \frac{|A-p^{-1}\D^2r|_\Sigma^2}{H^2}p-\frac1H|\D^2r|(1-p^2) \\
			&\qquad\qquad\qquad -\frac{|\D^2r|^2}{H^2p}(1-p^2)-\frac{|\Ric|+n|\D^2\p_r|}{H^2}\sqrt{1-p^2}.
		\end{aligned}
	\end{equation}
	We bound the second term in this expression as follows:
	\[\begin{aligned}
		\frac1H|\D^2r|(1-p^2) &= \frac{H-p^{-1}\tr_\Sigma\D^2r+p^{-1}\tr_\Sigma\D^2r}{H^2}|\D^2r|(1-p^2) \\
		&\leq \sqrt{n-1}\frac{|A-p^{-1}\D^2r|_\Sigma+p^{-1}|\D^2r|_\Sigma}{H^2}|\D^2r|(1-p^2).
	\end{aligned}\]
	Using Young's inequality, we continue the estimate:
	\[\begin{aligned}
		\frac1H|\D^2r|(1-p^2) &\leq \Big[\frac{|A-p^{-1}\D^2r|_\Sigma^2}{2H^2}p+\frac{n}{2H^2p}|\D^2r|^2(1-p^2)^2\Big]+n\frac{|\D^2r|^2}{H^2p}(1-p^2).
	\end{aligned}\]
	The result follows by combining this into \eqref{eq-para:box_p_aux} and noting that $0<p\leq1$.
\end{proof}

\begin{lemma}\label{lemma-para:box_eta}
	Assume the same conditions as in Lemma \ref{lemma-para:box_p}. Let $\eta=\eta(r)$ be a smooth radial function, and denote $\eta'=d\eta/dr$, $\eta''=d^2\eta/dr^2$. Then we have
	\begin{equation}\label{eq-para:box_eta}
		\begin{aligned}
			\Box\eta &\leq \frac{2p\eta'}{H^2}\big(H-p^{-1}\tr_\Sigma\D^2r\big)-\frac{\eta''}{H^2}(1-p^2)+\frac{n|\eta'|}{H^2}|\D^2r|.
		\end{aligned}
	\end{equation}
	and
	\begin{equation}\label{eq-para:box_eta_2}
		\Box\eta \leq \frac{2p\eta'}{H}-\frac{\eta''}{H^2}(1-p^2)+\frac{n|\eta'|}{H^2}|\D^2r|.
	\end{equation}
\end{lemma}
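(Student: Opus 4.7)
The plan is a direct evolution computation, analogous in spirit to Lemmas \ref{lemma-para:delta_p} and \ref{lemma-para:box_p} but applied to the radial scalar $\eta(r)$ rather than the angle quantity $p$. I would compute $\partial_t \eta$ and $\Delta_\Sigma \eta$ separately, combine them into an \emph{exact} expression for $\Box\eta$, and then dispose of one mildly awkward term using $|\tr_\Sigma \D^2 r|\leq (n-1)|\D^2 r|$.

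First, since points on $\Sigma_t$ move with velocity $\nu/H$ under the IMCF, one has $\partial_t r = \metric{\D r}{\nu}/H = p/H$, whence $\partial_t \eta = \eta' p / H$. Next, for the Laplacian I would invoke the standard identity
\begin{equation*}
	\Delta_\Sigma(f|_\Sigma) \;=\; \tr_\Sigma(\D^2 f)\;-\;H\,\metric{\D f}{\nu}, \qquad f\in C^2(M),
\end{equation*}
which is immediate from the decomposition $\D_X Y = \D^\Sigma_X Y - A(X,Y)\nu$ for tangent $X,Y$ together with $H=\div_\Sigma \nu$. Applied to $f=\eta\circ r$, one has $\D f=\eta'\p_r$ and $\D^2 f = \eta''\,dr\otimes dr + \eta'\,\D^2 r$, and tracing over $\Sigma$ using $\tr_\Sigma(dr\otimes dr)=|\DSigma r|^2=1-p^2$ gives
\begin{equation*}
	\Delta_\Sigma \eta \;=\; \eta''(1-p^2)+\eta'\,\tr_\Sigma(\D^2 r)\;-\;H\eta' p.
\end{equation*}

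Assembling these two pieces yields the exact identity
\begin{equation*}
	\Box\eta \;=\; \frac{2p\eta'}{H} \;-\; \frac{\eta''(1-p^2)}{H^2} \;-\; \frac{\eta'\,\tr_\Sigma\D^2 r}{H^2}.
\end{equation*}
Bounding the last term by $|\eta'||\tr_\Sigma \D^2 r|/H^2 \leq n|\eta'||\D^2 r|/H^2$ gives \eqref{eq-para:box_eta_2} directly. For \eqref{eq-para:box_eta}, I would add and subtract $\eta'\,\tr_\Sigma\D^2 r/H^2$ so as to group the first and third terms into the single factor $\dfrac{2p\eta'}{H^2}\bigl(H - p^{-1}\tr_\Sigma\D^2 r\bigr)$ (which is engineered to vanish when $\Sigma = \p\Omega_{-r}$, since then $p=1$ and $H=\tr_\Sigma\D^2 r$), and bound the remaining residual $\eta'\,\tr_\Sigma\D^2 r/H^2$ in the same way.

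There is no real obstacle here; the computation is completely mechanical once the sign convention ($A(X,Y)=\metric{\D_X\nu}{Y}$, $H=\div_\Sigma\nu$) is fixed. The only thing worth highlighting conceptually is that the ``bad'' term $-\eta'\tr_\Sigma\D^2 r/H^2$ is controlled purely by the ambient geometry of $\p\Omega$ through $|\D^2 r|$ — an extrinsic quantity that is uniformly bounded in any fixed tubular neighborhood of $\p\Omega$ — so both inequalities are indeed useful in the subsequent maximum principle arguments of Section \ref{sec:exist}.
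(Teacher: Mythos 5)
Your argument is correct and is essentially the paper's own: both compute $\partial_t\eta=p\eta'/H$ and $\Delta_\Sigma\eta=\eta'(\tr_\Sigma\D^2r-pH)+\eta''(1-p^2)$, combine them into the exact identity $\Box\eta=\frac{2p\eta'}{H}-\frac{\eta''(1-p^2)}{H^2}-\frac{\eta'\tr_\Sigma\D^2r}{H^2}$, and then bound the last term by $n|\eta'||\D^2r|/H^2$, with a single add-and-subtract to regroup into the factored form for \eqref{eq-para:box_eta}. The only (harmless) difference is presentational: you write the exact formula for $\Box\eta$ once before estimating, whereas the paper keeps the two ingredients separate and leaves the assembly implicit.
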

\begin{proof}
	This is obtained by combining
	\[\frac{\p\eta}{\p t}
		=\frac{\metric{\D\eta}{\nu}}{H}
		=\frac{p\eta'}{H}
		=\frac{p\eta'}{H^2}\big(H-p^{-1}\tr_\Sigma\D^2r+p^{-1}\tr_\Sigma\D^2r\big)\]
	with
	\[\begin{aligned}
		\Delta_\Sigma\eta &= \eta'\Delta_\Sigma r+\eta''|\DSigma r|^2 
		= \eta'(\tr_\Sigma\D^2r-pH)+\eta''(1-p^2),
	\end{aligned}\]
	and noting that $\tr_\Sigma\D^2r\leq\sqrt{n-1}|\D^2r|_\Sigma\leq n|\D^2r|$.
\end{proof}

\begin{lemma}\label{lemma-para:box_qeta}
	Suppose $\Omega$ is a smooth domain, and $r_0>0$ is a radius such that $r(x)$ is smooth in $\Omega\setminus\bar{\Omega_{-r_0}}$. Assume that
	\begin{equation}\label{eq-para:small_curv}
		|\Ric|\leq r_0^{-2},\qquad |\D^2r|\leq r_0^{-1},\qquad |\D^2\p_r|\leq r_0^{-2}
	\end{equation}
	hold inside $\Omega\setminus\bar{\Omega_{-r_0}}$. Let $\big\{\Sigma_t\subset\Omega\setminus\bar{\Omega_{-r_0}}\big\}$ be a smooth family of hypersurfaces evolving under the IMCF. Assume additionally that $p\geq\frac12$ on all $\Sigma_t$. Then there exist constants $\gamma\in(0,1/2)$ and $C_1,C_2>0$ depending only on $n$, such that the following holds: if we set
	\begin{equation}
		\eta(r)=\big(b-r/r_0\big)^{-\gamma},\qquad F=(1-p)\eta,
	\end{equation}
	with any choice $b\in(0,1]$, then we have the evolution inequality
	\begin{equation}
		\begin{aligned}
			(1-p)^{\frac2\gamma-1}H^2\Box F+\metric{\DSigma F}{X}\leq \frac{C_1}{r_0^2}\big(-F^{\frac{\gamma+2}\gamma}+C_2\,\big)
		\end{aligned}
	\end{equation}
	on $\Sigma_t$, where $X$ is a certain smooth vector field.
\end{lemma}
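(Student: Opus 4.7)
The plan is to expand $\Box F$ by the product rule, insert the differential inequalities for $\Box p$ and $\Box\eta$ from Lemmas~\ref{lemma-para:box_p}--\ref{lemma-para:box_eta}, and then absorb all positive contributions using the hypotheses $p\geq\tfrac12$, \eqref{eq-para:small_curv}, and one application of Young's inequality. Starting from the identity
\[ \Box F \;=\; -\eta\,\Box p + (1-p)\,\Box\eta + \tfrac{2}{H^2}\metric{\DSigma p}{\DSigma\eta}, \]
I first rewrite the cross term via $\DSigma p = \eta^{-1}\bigl((1-p)\DSigma\eta - \DSigma F\bigr)$, which splits it into a residue $\tfrac{2(1-p)|\DSigma\eta|^2}{H^2\eta}$ and a piece $-\tfrac{2}{H^2\eta}\metric{\DSigma F}{\DSigma\eta}$. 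After multiplying the entire inequality by $(1-p)^{2/\gamma-1}H^2$, the $\DSigma F$ piece becomes $-\metric{\DSigma F}{X}$ with $X:=2(1-p)^{2/\gamma-1}\eta^{-1}\DSigma\eta$, which I move to the left-hand side to form the $\metric{\DSigma F}{X}$ term appearing in the statement.

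The leading negative contribution arises from the summand $-\eta''(1-p^2)/H^2$ in Lemma~\ref{lemma-para:box_eta}. Using $\eta''=\gamma(\gamma+1)r_0^{-2}\eta^{(\gamma+2)/\gamma}$ together with the key identity $(\gamma+2)/\gamma=2/\gamma+1$ and $1+p\geq\tfrac32$, this contributes, after the full multiplication, at most $-\tfrac{3}{2}\gamma(\gamma+1)r_0^{-2}F^{(\gamma+2)/\gamma}$. I must then dominate the two other positive contributions of the same order $F^{(\gamma+2)/\gamma}$ by this: (a) the cross-term residue $\tfrac{2(1-p)|\DSigma\eta|^2}{H^2\eta}$, which via $|\DSigma\eta|=|\eta'|\sqrt{1-p^2}$ and $(\eta')^2/\eta=\gamma^2r_0^{-2}\eta^{(\gamma+2)/\gamma}$ contributes at most $4\gamma^2 r_0^{-2}F^{(\gamma+2)/\gamma}$; (b) the drift summand $2p\eta'H^{-2}(H-p^{-1}\tr_\Sigma\nabla^2 r)$ from Lemma~\ref{lemma-para:box_eta}, bounded via $|H-p^{-1}\tr_\Sigma\nabla^2 r|\leq\sqrt{n-1}\,|A-p^{-1}\nabla^2 r|_\Sigma$ and absorbed using one Young inequality against the favorable term $-\eta p|A-p^{-1}\nabla^2 r|^2_\Sigma/(2H^2)$ coming from $-\eta\Box p$, at cost another $\leq 8(n-1)\gamma^2 r_0^{-2}F^{(\gamma+2)/\gamma}$. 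Choosing $\gamma<\gamma_0(n)$ sufficiently small (a direct computation yields $\gamma<3/(16n-11)$, well within $\gamma<1/2$) makes the sum of positive coefficients strictly smaller than $\tfrac32\gamma(\gamma+1)$, and the net $F^{(\gamma+2)/\gamma}$ coefficient becomes strictly negative.

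All remaining positive contributions are subcritical. Under \eqref{eq-para:small_curv} and $p\geq\tfrac12$, the three curvature errors $\eta(|\Ric|+n|\nabla^2\p_r|)H^{-2}\sqrt{1-p^2}$, $2n\eta|\nabla^2 r|^2(1-p^2)/(H^2 p)$, and $n|\eta'||\nabla^2 r|/H^2$ each reduce, after multiplication by $(1-p)^{2/\gamma-1}H^2$, to an expression of the form $C(n,\gamma)r_0^{-2}F^a\eta^c$ with exponent $a\in\{2/\gamma-\tfrac12,\,2/\gamma\}$ strictly less than $(\gamma+2)/\gamma$ and with $c<0$. The uniform lower bound $\eta\geq(b+1)^{-\gamma}\geq 2^{-\gamma}$ (valid since $-r_0\leq r<0$ and $b\in(0,1]$) makes the $\eta^c$ factor bounded, reducing each error to $C(n,\gamma)r_0^{-2}F^a$. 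A standard Young inequality $F^a\leq\epsilon F^{(\gamma+2)/\gamma}+C(\epsilon,\gamma,n)$ with $\epsilon$ small enough absorbs the $F^a$ into the main negative term and collects the additive constant into $C_2$, yielding the asserted bound with $C_1=C_1(n)$ and $C_2=C_2(n)$.

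\textbf{Main obstacle.} The delicate step is the bookkeeping in the second paragraph: ensuring the total positive coefficient of $F^{(\gamma+2)/\gamma}$ stays strictly below $\tfrac{3}{2}\gamma(\gamma+1)r_0^{-2}$. This forces the quantitative smallness $\gamma<\gamma_0(n)$, and it requires that the favorable term $-\eta p|A-p^{-1}\nabla^2 r|^2_\Sigma/(2H^2)$ be spent exclusively on absorbing the drift summand from $\Box\eta$, not recycled for the subcritical curvature errors handled in the third paragraph. Once $\gamma$ is fixed with the right constants in place, the remaining Young absorptions are routine.
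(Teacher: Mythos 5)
Your proposal is correct and follows essentially the same route as the paper's proof: product-rule expansion of $\Box F$, rewriting the cross term to extract $\langle\DSigma F, X\rangle$, inserting Lemmas~\ref{lemma-para:box_p} and~\ref{lemma-para:box_eta}, absorbing the drift via Young's inequality against the favorable $|A-p^{-1}\D^2 r|^2_\Sigma$ term, homogenizing by $(1-p)^{2/\gamma-1}$, choosing $\gamma$ small (with the same $\gamma = O(1/n)$ scaling) so the net $F^{(\gamma+2)/\gamma}$ coefficient is negative, and disposing of the subcritical curvature errors by Young's inequality using $\eta\geq 2^{-\gamma}$. The only differences are cosmetic numerics (e.g. keeping $(1+p)\geq\tfrac32$ where the paper drops it to $\geq 1$, and $8(n-1)$ vs.~$8n$), which do not affect the argument.
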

\begin{proof}
	Note that $r<0$ inside $\Omega$, hence $\eta>0$ and increasing when approaching $\p\Omega$. Combining Lemma \ref{lemma-para:box_p} and equation \eqref{eq-para:box_eta} in Lemma \ref{lemma-para:box_eta}, then using \eqref{eq-para:small_curv} with $\frac12\leq p\leq1$ to simplify the resulting expressions, we have
	\begin{align}
		\Box F &= -\eta\Box p+(1-p)\Box\eta-\frac2{H^2}\metric{\DSigma(1-p)}{\DSigma\eta} \nonumber\\
		&\leq -\frac{|A-p^{-1}\D^2r|^2_\Sigma}{2H^2}\eta p+2n\frac{|\D^2r|^2}{H^2p}\eta(1-p^2)+\frac{|\Ric|+n|\D^2\p_r|}{H^2}\eta\sqrt{1-p^2} \nonumber\\
		&\qquad +\frac{2p\eta'}{H^2}(1-p)\big(H-p^{-1}\tr_\Sigma\D^2r\big)-\frac{\eta''}{H^2}(1-p)(1-p^2)+n\frac{|\eta'|}{H^2}|\D^2r|(1-p) \nonumber\\
		&\qquad -\frac2{H^2\eta}\metric{\DSigma F}{\DSigma\eta}+\frac2{H^2\eta}(1-p)|\DSigma\eta|^2 \nonumber\\
		&\leq -\frac{|A-p^{-1}\D^2r|^2_\Sigma}{2H^2}\eta p+\frac{8n}{H^2r_0^2}\eta(1-p)+\frac{4n\eta}{H^2r_0^2}\sqrt{1-p} \nonumber\\
		&\qquad +\frac{2p\eta'}{H^2}(1-p)\big(H-p^{-1}\tr_\Sigma\D^2r\big)-\frac{\eta''}{H^2}(1-p)^2(1+p)+\frac{n|\eta'|}{H^2r_0}(1-p) \nonumber\\
		&\qquad -\frac2{H^2\eta}\metric{\DSigma F}{\DSigma\eta}+\frac2{H^2}\frac{(\eta')^2}{\eta}(1-p)^2(1+p). \nonumber
	\end{align}
	We use Young's inequality to estimate the fourth term:
	\[\begin{aligned}
		\frac{2p\eta'}{H^2}(1-p)\big(H-p^{-1}\tr_\Sigma\D^2r\big) &\leq \frac{(H-p^{-1}\tr_\Sigma\D^2r)^2}{2(n-1)H^2}\eta p+8(n-1)\frac{(\eta')^2}{H^2\eta}p(1-p)^2 \\
		&\leq \frac{|A-p^{-1}\D^2r|^2_\Sigma}{2H^2}\eta p+8n\frac{(\eta')^2}{H^2\eta}p(1-p)^2.
	\end{aligned}\]
	Further, we calculate
	\[\eta'=\frac{\gamma}{r_0}\eta^{\frac{\gamma+1}\gamma},\qquad\eta''=\frac{\gamma(\gamma+1)}{r_0^2}\eta^{\frac{\gamma+2}\gamma}.\]
	Inserting these to the main estimate, we obtain
	\[\begin{aligned}
		H^2\Box F &\leq \frac{8n}{r_0^2}\eta(1-p)
		+ \frac{4n\eta}{r_0^2}\sqrt{1-p}
		+ 8n\frac{\gamma^2}{r_0^2}\eta^{\frac{\gamma+2}\gamma}p(1-p)^2 \\
		&\qquad -\frac{\gamma(\gamma+1)}{r_0^2}\eta^{\frac{\gamma+2}\gamma}(1-p)^2(1+p)
		+\frac{n\gamma}{r_0^2}\eta^{\frac{\gamma+1}\gamma}(1-p) \\
		&\qquad -\frac2\eta\metric{\DSigma F}{\DSigma\eta}
		+ \frac{2\gamma^2}{r_0^2}\eta^{\frac{\gamma+2}\gamma}(1-p)^2(1+p).
	\end{aligned}\]
	Multiplying both sides by $(1-p)^{\frac2\gamma-1}$, and using the facts $0<p\leq1$, $1-p\leq\sqrt{1-p}$, we obtain
	\begin{equation}\label{eq-para:intermediate1}
		\begin{aligned}
			(1-p)^{\frac2\gamma-1}H^2\Box F &\leq \frac{12n}{r_0^2}\eta(1-p)^{\frac2\gamma-\frac12}+\frac{n\gamma}{r_0^2}\eta^{\frac{\gamma+1}\gamma}(1-p)^{\frac2\gamma}-\metric{\DSigma F}{X} \\
			&\qquad +\frac1{r_0^2}\Big[8n\gamma^2+4\gamma^2-\gamma(\gamma+1)\Big]\eta^{\frac{\gamma+2}\gamma}(1-p)^{\frac{\gamma+2}\gamma},
		\end{aligned}
	\end{equation}
	where $X$ is a certain smooth vector field. Choosing $\gamma$ sufficiently small (depending only on $n$), we can ansure
	\begin{equation}\label{eq-para:aux1}
		8n\gamma^2+4\gamma^2-\gamma(\gamma+1)<-3\gamma^2.
	\end{equation}
	Using H\"older's inequality, we have
	\begin{equation}\label{eq-para:aux2}
		12n\eta(1-p)^{\frac2\gamma-\frac12}
		\leq \gamma^2\eta^{\frac{\gamma+2}\gamma}(1-p)^{\frac{\gamma+2}\gamma}+C(n)\eta^{-\frac{(4-3\gamma)(2+\gamma)}{3\gamma^2}}
	\end{equation}
	and
	\begin{equation}\label{eq-para:aux3}
		n\gamma\eta^{\frac{\gamma+1}\gamma}(1-p)^{\frac2\gamma}
		\leq \gamma^2\eta^{\frac{\gamma+2}\gamma}(1-p)^{\frac{\gamma+2}\gamma}+C(n)\eta^{-\frac{(1-\gamma)(2+\gamma)}{\gamma^2}}
	\end{equation}
	Notice that $\eta\geq2^{-\gamma}$ in $\Omega\setminus\bar{\Omega_{-r_0}}$, which bounds the last terms uniformly by constants. Inserting \eqref{eq-para:aux1}$\,\sim\,$\eqref{eq-para:aux3} into \eqref{eq-para:intermediate1}, we finally obtain
	\[(1-p)^{\frac2\gamma-1}H^2\Box F+\metric{\DSigma F}{X}\leq \frac1{r_0^2}\Big(-\gamma^2F^{\frac{\gamma+2}\gamma}+C(n)\Big).\qedhere\]
\end{proof}

\begin{lemma}\label{lemma-para:box_etaH}
	Suppose $b,\gamma\in(0,1/2)$ are constants, $\Omega$ is a smooth domain, and $r_0>0$ is a radius such that $r(x)$ is smooth in $\Omega\setminus\bar{\Omega_{-r_0}}$. Assume that
	\begin{equation}\label{eq-para:small_curv2}
		|\Ric|\leq r_0^{-2},\qquad |\D^2r|\leq r_0^{-1},\qquad |\D^2\p_r|\leq r_0^{-2}
	\end{equation}
	holds inside $\Omega\setminus\Omega_{-r_0}$. Suppose $\big\{\Sigma_t\subset\Omega\setminus\bar{\Omega_{-r_0}}\big\}$ evolves by the IMCF. Additionally, assume $p\geq\frac12$ and $1-p\leq C_3(b-r/r_0)^\gamma$ on each $\Sigma_t$. Set
	\begin{equation}
		\eta(r)=\big(b-r/r_0\big)^{1-\gamma}-(2b)^{1-\gamma},\qquad G=r_0H\eta(r).
	\end{equation}
	Then in $\Omega_{-br_0}\setminus\bar{\Omega_{-r_0}}$ we have the evolution inequality
	\begin{equation}\label{eq-para:box_etaH}
		\begin{aligned}
			\Box G+\metric{\DSigma G}{X} &\leq -\frac Gn+\frac{4n}G+C_4\big(b-r/r_0\big)^{-\gamma}\Big(\frac{C_5}G-1\Big),
		\end{aligned}
	\end{equation}
	where $X$ is a certain smooth vector field, and the constants $C_4,C_5$ depend on $n,C_3$.
\end{lemma}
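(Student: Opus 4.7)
\textbf{Proof proposal for Lemma \ref{lemma-para:box_etaH}.}

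The plan is a direct computation via the product rule: write $G = (r_0\eta) \cdot H$ and expand
\begin{equation*}
\Box G \;=\; r_0\eta\,\Box H \;+\; H\,\Box(r_0\eta) \;-\; \frac{2}{H^2}\bigl\langle \DSigma(r_0\eta),\, \DSigma H\bigr\rangle,
\end{equation*}
estimating each summand separately. The three terms will supply, respectively, the $-G/n + 4n/G$ contribution, the dominant negative contribution $-C_4(b-r/r_0)^{-\gamma}$, and (after absorbing a transport piece into $\langle \DSigma G, X\rangle$) the correction of order $(b-r/r_0)^{-\gamma}/G$.

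First I would handle $r_0\eta\Box H$. The standard IMCF evolution for the mean curvature gives $\Box H \leq -|A|^2/H - \Ric(\nu,\nu)/H$; combined with $|A|^2 \geq H^2/n$ and the curvature bound $|\Ric|\leq r_0^{-2}$ from \eqref{eq-para:small_curv2}, this yields $\Box H \leq -H/n + (Hr_0^2)^{-1}$. Multiplying by $r_0\eta$ and substituting $H = G/(r_0\eta)$ (with the trivial bound $\eta \leq 2^{1-\gamma}\leq 2$) converts this into $r_0\eta\Box H \leq -G/n + \eta^2/G \leq -G/n + 4n/G$.

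Next, for $H\Box(r_0\eta)$ I will apply Lemma \ref{lemma-para:box_eta} to the radial function $\eta$, then multiply through by $r_0 H$. With the explicit derivatives
\[
\eta' = -\tfrac{1-\gamma}{r_0}(b-r/r_0)^{-\gamma},\qquad \eta'' = \tfrac{(1-\gamma)\gamma}{r_0^2}(b-r/r_0)^{-\gamma-1},
\]
the dominant term $2pr_0\eta'$ is negative, and the hypothesis $p \geq \tfrac12$ yields
\[
2pr_0\eta' \;\leq\; -(1-\gamma)(b-r/r_0)^{-\gamma},
\]
which supplies the crucial $-C_4(b-r/r_0)^{-\gamma}$ contribution (take $C_4 = 1-\gamma$). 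The term $-r_0\eta''(1-p^2)/H$ has favorable sign and can be discarded, while the curvature correction $\tfrac{n r_0|\eta'|\cdot|\D^2 r|}{H}$ is bounded by $\tfrac{n(1-\gamma)\eta(b-r/r_0)^{-\gamma}}{G} \lesssim n(b-r/r_0)^{-\gamma}/G$ after using $|\D^2 r|\leq r_0^{-1}$ and $H = G/(r_0\eta)$, $\eta \leq 2$.

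The gradient transport term is where the hypothesis $1-p \leq C_3(b-r/r_0)^\gamma$ is consumed. Using $\DSigma G = r_0\eta\,\DSigma H + r_0 H \eta'\,\DSigma r$ to solve for $\DSigma H$ and substituting, I would decompose
\[
-\tfrac{2}{H^2}\bigl\langle\DSigma(r_0\eta),\DSigma H\bigr\rangle
\;=\; \bigl\langle \DSigma G,\, X\bigr\rangle \;+\; \tfrac{2r_0(\eta')^2}{H\eta}(1-p^2),
\]
with $X := \tfrac{2\eta'}{H^2\eta}\DSigma r$; the remaining scalar error equals $\tfrac{2(1-\gamma)^2(b-r/r_0)^{-2\gamma}(1-p^2)}{G}$, which by $1-p^2\leq 2(1-p)\leq 2C_3(b-r/r_0)^\gamma$ becomes $\leq \tfrac{4C_3(1-\gamma)^2(b-r/r_0)^{-\gamma}}{G}$. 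Assembling the three estimates and choosing $C_5$ as a constant depending on $n$ and $C_3$ large enough to dominate the combined $(b-r/r_0)^{-\gamma}/G$ coefficients then gives \eqref{eq-para:box_etaH}.

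The main (though really only bookkeeping) obstacle will be ensuring that every residual error which carries the potentially large factor $(b-r/r_0)^{-2\gamma}$ coming from $(\eta')^2$ actually gets tamed by the $(1-p)\leq C_3(b-r/r_0)^\gamma$ hypothesis, so that only $(b-r/r_0)^{-\gamma}$ survives in the final inequality — otherwise the closure of the estimate in terms of $G$ would fail near $r=-br_0$, where $\eta$ degenerates.
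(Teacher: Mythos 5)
Your overall decomposition $\Box G = r_0\eta\,\Box H + H\,\Box(r_0\eta) - \tfrac{2}{H^2}\langle\DSigma(r_0\eta),\DSigma H\rangle$ and the role you assign to each piece is the same as the paper's, with one small structural variant: the paper retains the good term $-\tfrac{2\eta}{H^3}|\DSigma H|^2$ from $\Box H$ and adds it to the cross term to land directly on a clean $-\langle\DSigma G, X\rangle$, whereas you discard that term and instead invert $\DSigma G = r_0\eta\,\DSigma H + r_0 H\eta'\,\DSigma r$ for $\DSigma H$, which generates the extra residue $\tfrac{2r_0(\eta')^2}{H\eta}(1-p^2)$ that you then absorb with the hypothesis on $1-p$. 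That variant is correct (you are simply giving away a negative term, so the resulting upper bound is still valid), just slightly less economical.

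However, there is a concrete sign error. With $\eta(r)=(b-r/r_0)^{1-\gamma}-(2b)^{1-\gamma}$ the map $s\mapsto s^{1-\gamma}$ is concave, so
\[
\eta' = -\frac{1-\gamma}{r_0}\,(b-r/r_0)^{-\gamma}, \qquad
\eta'' = -\frac{\gamma(1-\gamma)}{r_0^2}\,(b-r/r_0)^{-1-\gamma},
\]
i.e.\ $\eta''<0$, not $>0$ as you wrote. Consequently $-r_0\eta''(1-p^2)/H$ is \emph{positive}, i.e.\ has the \emph{unfavorable} sign, and cannot simply be discarded. The lemma still goes through: after multiplying by $r_0 H = G/\eta$ this term is $-r_0^2\eta\eta''(1-p^2)/G$, and since $\eta\leq (b-r/r_0)^{1-\gamma}$ one gets
\[
-\,\frac{r_0^2\,\eta\eta''}{G}(1-p^2) \;\leq\; \frac{\gamma(1-\gamma)}{G}\,(b-r/r_0)^{-2\gamma}\,(1-p^2)
\;\leq\; \frac{2C_3\gamma(1-\gamma)}{G}\,(b-r/r_0)^{-\gamma},
\]
using $1-p^2\leq 2(1-p)\leq 2C_3(b-r/r_0)^\gamma$ exactly as you do for your other residue. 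So the term belongs in the $C_5/G$ budget; but as written, your proof silently drops a positive contribution, which is a genuine gap. (Minor: the sign of your $X$ is also off — your decomposition produces $-\langle\DSigma G,X\rangle$, not $+\langle\DSigma G,X\rangle$ — but that is immaterial since the lemma allows any smooth vector field.)
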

\begin{proof}
	Recall the evolution equation of the mean curvature:
	\[\Box H=-\frac2{H^3}|\DSigma H|^2-\Big(|A|^2+\Ric(\nu,\nu)\Big)\frac1H.\]
	Combining \eqref{eq-para:box_eta_2} and \eqref{eq-para:small_curv2} we have
	\[\begin{aligned}
		r_0^{-1}\Box G &= \eta\Box H+H\Box\eta-\frac{2}{H^2}\metric{\DSigma H}{\DSigma\eta} \\
		&\leq -\frac{2\eta}{H^3}|\DSigma H|^2-\frac{\eta H}{n-1}+\frac{\eta}{r_0^2H}
		+2p\eta'-\frac{\eta''}{H}(1-p^2)+\frac{n|\eta'|}{r_0H} \\
		&\qquad -\frac2{H^3}\metric{\DSigma H}{\DSigma(H\eta)-\eta\DSigma H} \\
		&= -\frac G{(n-1)r_0}+\frac{\eta^2}{r_0G}+2p\eta'-r_0\frac{\eta\eta''}{G}(1-p^2)+\frac{n\eta|\eta'|}{G}-\metric{\DSigma G}{X}.
	\end{aligned}\]
	Note that
	\[\eta'=-(1-\gamma)r_0^{-1}(b-r/r_0)^{-\gamma},\qquad \eta''=-\gamma(1-\gamma)r_0^{-2}(b-r/r_0)^{-1-\gamma}.\]
	These imply that $|\eta\eta'|\leq 2r_0^{-1}$ in $\Omega_{-br_0}\setminus\bar{\Omega_{-r_0}}$, since $\gamma<1/2$. In addition, we have $\eta\leq2$ and $1-p^2\leq2(1-p)$. Hence
	\[\Box G \leq -\frac Gn+\frac{4+2n}{G}-2p(1-\gamma)(b-r/r_0)^{-\gamma}+2\gamma(1-\gamma)\frac{1-p}{G}(b-r/r_0)^{-2\gamma}.\]
	Inserting our assumption $1-p\leq C_3(b-r/r_0)^{\gamma}$, we obtain
	\[\Box G\leq -\frac{G}{n}+\frac{4n}{G}+(b-r/r_0)^{-\gamma}\Big[-2p(1-\gamma)+\frac{C}{G}\Big]-\metric{\DSigma G}{X}.\]
	Since $p\geq\frac12,\gamma\leq\frac12\,\Rightarrow-2p(1-\gamma)\leq-\frac12$, the result follows.
\end{proof}

\section{Initial value problems in smooth domains}\label{sec:exist}

In this section we prove the main existence and regularity theorem, which is stated below. In Subsection \ref{subsec:exist_setup}, we set up some definitions and preliminary estimates, then we define the approximation scheme mentioned in the introduction, which gives a candidate solution $u$ (it is temporarily only an interior solution). In the end of Subsection \ref{subsec:exist_setup} we state the remaining tasks needed to conclude that $u$ satisfies Theorem \ref{thm-exist:main}. Then in Subsection \ref{subsec:exist_strategy} we make an outline of the blow up strategy, and finally prove these results in Subsection \ref{subsec:exist_proofs}.

\begin{theorem}\label{thm-exist:main}
	Let $\Omega\subset M$ be a precompact domain with smooth boundary, and $E_0\Subset\Omega$ be a $C^{1,1}$ domain. Then there exists a solution $u$ of $\IVPOOinthm{\Omega;E_0}{\p\Omega}$, unique up to equivalence. There exists $\gamma\in(0,1)$ depending on $n$, such that the following holds.
	
	(i) We have $u\in\Lip_{\loc}(\Omega)\cap\BV(\Omega)\cap C^{0,\gamma}(\bar\Omega)$. More precisely,
	
	\begin{equation}\label{eq-exist:main_grad_int}
		|\D u(x)|\leq\sup_{\p E_0\cap B(x,r)}H_++\frac{C(n)}{r},\qquad x\in\Omega\setminus E_0,\ \ r\leq\sigma(x;\Omega,g),
	\end{equation}
	where $\sigma(x;\Omega,g)$ is as in Definition \ref{def-prelim:sigma_x}, and
	\begin{equation}\label{eq-exist:main_grad_bd}
		|\D u(x)|\leq Cd(x,\p\Omega)^{\gamma-1},\qquad x\in\Omega\setminus\bar{E_0},
	\end{equation}
	for some constant $C>0$.
	
	(ii) The solution $u$ is calibrated in $\Omega\setminus\bar{E_0}$ by a vector field $\nu$, which satisfies
	\begin{equation}\label{eq-exist:main_calib_dir}
		\metric{\nu}{\p_r}(x)\geq 1-Cd(x,\p\Omega)^\gamma
	\end{equation}
	in some small neighborhood of $\p\Omega$. Here $C>0$ is a constant, and $\p_r:=-\D d(\cdot,\p\Omega)$ is the outpointing unit vector perpendicular to $\p\Omega$.
	
	(iii) Each level set $\p E_t$ is a $C^{1,\gamma/2}$ hypersurface in some small neighborhood of $\p\Omega$.

	(iv) If $v\in\Lip_{\loc}(\Omega)$ is some other solution of $\IVP{\Omega;E_0}$, then $u\geq v$ in $\Omega\setminus E_0$.
\end{theorem}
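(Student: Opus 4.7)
The strategy is precisely the approximation scheme outlined in the introduction: construct a family of weighted weak solutions $u_\delta$ using a barrier weight $\psi_\delta$ that diverges outside $\Omega_\delta := \{d(\cdot,\Omega) < \delta\}$, pass to the limit $\delta \to 0$ to obtain an interior weak solution $u$ of $\IVP{\Omega;E_0}$, and then prove that $u$ respects the obstacle $\p\Omega$ via Lemma \ref{lemma-obs:bd_orthogonal}. Concretely, I fix a smooth $\psi_\delta:\Omega_\delta\to[0,\infty)$ with $\psi_\delta|_\Omega\equiv 0$ and $\psi_\delta(x)\to\infty$ as $d(x,\Omega)\to\delta$. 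By the elliptic regularization of Subsection \ref{subsec:ellreg} (in the weighted form of Remark \ref{rmk-ellreg:weighted}) applied on $\Omega_\delta$ with boundary data $0$ on $\p E_0$ and $L-2$ on $\{\psi_\delta=L\}$, then taking $L\to\infty$ and $\epsilon\to 0$, I obtain via Theorem \ref{thm-ellreg:convergence} a calibrated weak solution $(u_\delta,\nu_\delta)$ of the weighted IVP on $\Omega_\delta$. A further diagonal extraction gives $u_\delta\to u$ in $C^0_{\loc}(\Omega)$ and $\nu_\delta\rightharpoonup\nu$ weakly in $L^1_{\loc}(\Omega)$; Theorem \ref{thm-prelim:compactness} and \ref{thm-prelim:cptness_calibrated} show that $u$ solves $\IVP{\Omega;E_0}$ with calibration $\nu$ on $\Omega\setminus\bar{E_0}$, and \eqref{eq-exist:main_grad_int} follows from \eqref{eq-ellreg:grad_reg_sol}.

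\textbf{Blow-up analysis at $\p\Omega$.} The core step is to show, uniformly in $\delta$, that $\metric{\nu_\delta}{\nu_\Omega}\to 1$ on $\p\Omega$ and $\metric{\nu_\delta}{\p_r}\geq\tfrac12$ in a one-sided tubular neighborhood of $\p\Omega$ of size independent of $\delta$. I argue by contradiction: suppose there are $\delta_i\to 0$ and points $x_i\to\p\Omega$ at which the bound fails. Using the scale-invariance property (3) in Subsection~2.2, I rescale $u_{\delta_i}$ based at $x_i$ by $r_i^{-1}:=d(x_i,\p\Omega)^{-1}$ (or a dyadic variant when $x_i\in\p\Omega$). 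The metric and obstacle flatten to $(\RR^n, \euc)$ with boundary $\{x_n=0\}$, and the weights $\psi_{\delta_i}$ rescale either to $0$ (producing an interior solution in the half-space) or to a smooth function of the type considered in Theorem \ref{thm-halfplane:approx}. Using the minimizing/calibration properties together with the excess inequality of Lemma \ref{lemma-halfplane:excess_ineq_2} and the gradient bound \eqref{eq-exist:main_grad_int}, I pass to a limit $(u_\infty,\nu_\infty)$ which is a calibrated solution of either $\IMCFOO{\{x_n<0\}}{\{x_n=0\}}$ or the weighted problem in Theorem \ref{thm-halfplane:approx}, with $|\D u_\infty|\leq C/|x_n|$ and $\inf u_\infty>-\infty$ (the latter coming from the initial barrier below $\p E_0$). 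Both Liouville theorems then force $u_\infty$ to be constant or of the explicit radial form, contradicting the assumed defect. This gives the qualitative boundary convergence of $\nu_\delta$ and a uniform region where $\metric{\nu_\delta}{\p_r}\geq\tfrac12$.

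\textbf{Parabolic refinement and regularity.} With the qualitative bound in hand, I upgrade to the quantitative decay \eqref{eq-exist:main_calib_dir} through the parabolic estimates of Section~\ref{sec:para}. Working on the smooth soliton graph of $\epsilon^{-1}u_{\epsilon,\delta}$ in $\Omega_\delta\times\RR$ (so that $\nu_{\epsilon,\delta}=\D u_{\epsilon,\delta}/\sqrt{\epsilon^2+|\D u_{\epsilon,\delta}|^2}$ is literally the unit normal of a smooth IMCF), I apply Lemma \ref{lemma-para:box_qeta} to the quantity $F=(1-p)(b-r/r_0)^{-\gamma}$ on each level set $\Sigma_t$ intersected with the tube $\Omega\setminus\bar\Omega_{-r_0}$. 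The parabolic maximum principle, combined with the boundary control at $\p\Omega_{-br_0/2}$ obtained from the previous step and with Lemma \ref{lemma-para:box_etaH} to control $H$ from below, yields $1-\metric{\nu_{\epsilon,\delta}}{\p_r}\leq Cd(x,\p\Omega)^\gamma + o_{\epsilon,\delta}(1)$, which passes to the weak limit and proves \eqref{eq-exist:main_calib_dir}. The same argument yields \eqref{eq-exist:main_grad_bd} via Lemma \ref{lemma-para:box_etaH} (giving an upper bound on $H$, hence on $|\D u|$), and then $u\in C^{0,\gamma}(\bar\Omega)$ follows by integration. The $C^{1,\gamma/2}$ regularity of $\p E_t$ near $\p\Omega$ comes from \eqref{eq-exist:main_calib_dir} together with standard $(\Lambda,r_0)$-minimizer theory and boundary regularity for perimeter minimizers with a $C^{1,\gamma/2}$ obstacle (obtained by flattening $\p\Omega$).

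\textbf{Conclusion.} Once \eqref{eq-exist:main_calib_dir} is established, Corollary \ref{cor-obs:bd_orthogonal_2} implies $u$ solves $\IVPOOinthm{\Omega;E_0}{\p\Omega}$. Uniqueness and the maximality statement (iv) are immediate from Corollary \ref{cor-obs:maximality}. The principal obstacle is the blow-up step: one must simultaneously (a) preserve the calibration and outer-obstacle variational structure under rescaling while the weight $\psi_{\delta_i}$ may either disappear or persist, and (b) secure enough compactness (via uniform $(\Lambda,r_0)$-minimizing and the gradient estimate) to feed the two Liouville theorems. The subsequent parabolic refinement is technically delicate but follows a clear template, thanks to the evolution inequalities already prepared in Section~\ref{sec:para}.
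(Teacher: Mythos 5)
Your overall architecture is the same as the paper's: soft-obstacle approximation via the diverging weight $\psi_\delta$, elliptic regularization (in the conformally transformed metric of Lemma \ref{lemma-prelim:def_weighted}(3)), convergence to an interior calibrated solution, blow-up against the two half-space Liouville theorems, and parabolic refinement via Lemmas \ref{lemma-para:box_qeta}--\ref{lemma-para:box_etaH} followed by the $(\Lambda,r_0)$-minimizer regularity theory. Uniqueness and maximality from Corollary \ref{cor-obs:maximality} are identical.

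There is, however, a genuine gap in your ordering of the blow-up and parabolic steps. You claim that the blow-up argument alone produces both (a) the convergence of $\nu_\delta$ to $\nu_\Omega$ on $\p\Omega$ and (b) the uniform tubular neighborhood in which $\metric{\nu_\delta}{\p_r}\geq\tfrac12$, and then you perform the parabolic refinement afterwards. But for the blow-up at a rescaled interior point $x_i\in\p\Omega_{-r_i}$, feeding Theorem \ref{thm-halfplane:liouville} requires the rescaled limit to solve $\IMCFOO{\{x_n<0\}}{\{x_n=0\}}$, not merely $\IMCF{\{x_n<0\}}$. That obstacle property does not come from the minimizing/excess-inequality structure alone; it requires showing that the limit calibration $\nu_\infty$ is asymptotically orthogonal to $\{x_n=0\}$ (Corollary \ref{cor-obs:bd_orthogonal_2}). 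The only way this is obtained in the paper is a \emph{preliminary} application of Lemma \ref{lemma-para:box_qeta} \emph{inside} the contradiction argument (Step 1 of Lemma \ref{lemma-exist:graphical}), using the $\p\Omega$-convergence from Lemma \ref{lemma-exist:tangent_at_pOmega} and the contradiction hypothesis $p\geq 1/2$ on $\Omega\setminus\Omega_{-r_i}$; this produces the decay $1-\metric{\bnu_i}{\bpr}\leq C(b_i-r/r_i)^\gamma$, which passes through the rescaling to \eqref{eq-exist:obs_aux2} and hence gives the obstacle property of the limit. Your sequencing (blow-up first, parabolic refinement later) would leave the limit's obstacle condition unverified at precisely the moment you invoke the Liouville theorem, so the contradiction would not close. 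A closely related subtlety, also suppressed in your write-up, is that the inward perimeter-minimizing property of the blow-up limit set (needed for Lemma \ref{lemma-gmt:min_in_halfplane}) does not follow from interior minimizing alone and must be extracted from the rescaled excess inequality \eqref{eq-exist:aux_excess_ineq} together with an almost-constancy statement that again relies on the contradiction hypothesis $p\geq\tfrac12$.
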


\subsection{Setup, notations, and the approximating scheme}\label{subsec:exist_setup}

We make the following setup and constructions.

\vspace{3pt}

\noindent\textbf{The signed distance $r(x)$ and regular radius $r_I$.}

Fix $\Omega$, $E_0$ as in Theorem \ref{thm-exist:main}. Let $r(x)$ be the signed distance function to $\p\Omega$, taking negative values in $\Omega$ and positive values in $M\setminus\Omega$. For $\delta\in\RR$, we set $\Omega_\delta=\big\{x\in M: r(x)<\delta\big\}$. Thus $\Omega_\delta\supset\Omega$ for $\delta>0$ and $\Omega_\delta\subset\Omega$ for $\delta<0$. Let $r_I$ be a sufficiently small radius, such that

(1) $d(\p E_0,\p\Omega)>3r_I$,

(2) $r(x)$ is smooth in $\Omega_{3r_I}\setminus\bar{\Omega_{-3r_I}}$.

\noindent Define the radial vector field $\p_r:=\D r$, which is smooth in the same region. We further decrease $r_I$ such that the following hold:

(3) in $\Omega_{3r_I}\setminus\bar{\Omega_{-3r_I}}$ we have
\begin{equation}\label{eq-exist:small_curv}
	|\Ric|\leq\frac1{100n^2r_I^2},\qquad |\D\p_r|\leq\frac1{100n^2r_I},\qquad|\D^2\p_r|\leq\frac1{100n^2r_I^2},
\end{equation}

(4) for all $x\in\Omega\setminus\Omega_{-r_I}$ and $r,s\leq r_I$, we have $\H^{n-1}\big(B(x,r)\cap\p\Omega_{-s}\big)\leq 2|B^{n-1}|r^{n-1}$,

(5) for all $x\in\Omega\setminus\Omega_{-r_I}$ we have $\sigma(x;\Omega,g)\geq\frac12|r(x)|$ (see Definition \ref{def-prelim:sigma_x}).

\vspace{6pt}
\noindent\textbf{The weight functions $\psi_\delta$.}

We fix a function $\psi_0:(-\infty,1)\to\RR_+$ such that:

(1) the conditions of Theorem \ref{thm-halfplane:approx} are satisfied (in particular, $\psi_0$ is strictly increasing and convex in $(0,1)$, and $\psi_0|_{[-\infty,0]}\equiv0$, $\lim_{x\to1}\psi_0(x)=+\infty$),

(2) $\psi_0(\frac12)>\frac12$, and $\psi'_0(x)>1$ for all $x\in[\frac12,1)$, and $\psi_0(x)>\psi_0(\frac12)+3$ for all $x\in[\frac34,1)$.

(3) $\psi_0(x)\geq(n-1)\log\frac1{1-x}$ for all $x\in[\frac12,1)$.

\vspace{3pt}

\noindent For each $\delta\in(0,r_I)$, we define the (smooth) weight function
\[\psi_\delta(x)=\left\{\begin{aligned}
	& 0,\qquad x\in\Omega, \\
	& \psi_0\big(\delta^{-1}r(x)\big),\qquad x\in\Omega_\delta\setminus\Omega.
\end{aligned}\right.\]
Note that $\lim_{x\to\p\Omega_\delta}\psi_\delta(x)=+\infty$.

\vspace{6pt}
\noindent\textbf{The approximating equations}.

For $\epsilon>0$, we consider the weighted IMCF
\begin{equation}\label{eq-exist:weighted_eq}
	\div\Big(e^{\psi_\delta}\frac{\D u}{|\D u|}\Big)=e^{\psi_\delta}|\D u|,
\end{equation}
and its elliptic regularization (see Remark \ref{rmk-ellreg:weighted}):
\begin{equation}\label{eq-exist:elliptic_reg}
	\div\Big(e^{\psi_\delta}\frac{\D u}{\sqrt{\epsilon^2e^{2\psi_\delta/(n-1)}+|\D u|^2}}\Big)=e^{\psi_\delta}\sqrt{\epsilon^2e^{2\psi_\delta/(n-1)}+|\D u|^2}.
\end{equation}

The following technical lemma provides upper and lower barriers for solving \eqref{eq-exist:elliptic_reg}:

\begin{lemma}[barrier functions]\label{lemma-exist:subsol}
	Suppose $\delta\in(0,r_I)$. Then the function
	\begin{equation}\label{eq-exist:usub}
		\usub(x)=\psi_\delta(x)-\psi_0\big(\frac12\big)-\Big(\frac{r(x)}{\delta}-\frac12\Big)
	\end{equation}
	is negative in $\Omega_{\delta/2}\setminus\Omega$ and positive in $\Omega_\delta\setminus\bar{\Omega_{\delta/2}}$. Moreover, $\usub$ is a strict subsolution of \eqref{eq-exist:weighted_eq} with nonvanishing gradient in $\Omega_\delta\setminus\Omega_{\delta/2}$. The function
	\begin{equation}\label{eq-exist:usup}
		\usup(x)=\psi_\delta(x)+\frac{r(x)}{r_I}
	\end{equation}
	is a strict supersolution of \eqref{eq-exist:weighted_eq} in the region $\Omega_\delta\setminus\Omega_{-r_I}$.
\end{lemma}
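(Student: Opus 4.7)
The plan is to verify the lemma by straightforward computation, exploiting the explicit form of the barriers together with the curvature smallness \eqref{eq-exist:small_curv}. Dividing the weighted equation \eqref{eq-exist:weighted_eq} by $e^{\psi_\delta}$, it reads
\[
\div\!\Bigl(\tfrac{\D u}{|\D u|}\Bigr) + \Bigl\langle\D\psi_\delta,\tfrac{\D u}{|\D u|}\Bigr\rangle = |\D u|,
\]
and strict sub- and super-solutions correspond to strict $\ge$ and $\le$ respectively. Hence the work reduces to computing $\D u/|\D u|$, its divergence, and $|\D u|$ for each of $\usub,\usup$.

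First I would check the subsolution claim for $\usub$ on the outer layer $\{\delta/2 \le r < \delta\}$. Write $\usub(x) = h(r(x)/\delta)$ with $h(s) := \psi_0(s)-\psi_0(1/2)-(s-1/2)$. Then $\D\usub = \delta^{-1}(\psi_0'(r/\delta)-1)\p_r$, which is nonvanishing because $\psi_0'(s) > 1$ on $[1/2,1)$ by condition (2) on $\psi_0$. So $\D\usub/|\D\usub|=\p_r$ and $|\D\usub| = \delta^{-1}(\psi_0'(r/\delta)-1)$. Since $\D\psi_\delta = \delta^{-1}\psi_0'(r/\delta)\p_r$, the subsolution inequality collapses to $\div(\p_r) > -1/\delta$; combined with $|\div(\p_r)| \le n|\D\p_r| \le 1/(100nr_I)$ from \eqref{eq-exist:small_curv} and $\delta<r_I$, this gives a definite gap.

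The sign statements for $\usub$ will follow from elementary one-variable analysis of $h$. One has $h(1/2)=0$ and $h'(s)=\psi_0'(s)-1>0$ on $[1/2,1)$, so $h>0$ on $(1/2,1)$, giving the positivity in the outer annulus. On $[0,1/2)$ one uses $h(0)=1/2-\psi_0(1/2)<0$ (by condition (2)), $h(1/2)=0$, and convexity of $\psi_0$ to rule out any zero-crossing in between; the minimum of $h$ on $[0,1/2]$ is attained at the unique interior point where $\psi_0'=1$ and is therefore also negative, giving $h<0$ on $[0,1/2)$. The sign on the portion of $\Omega_{\delta/2}\setminus\Omega$ with $r \ge 0$ is read off from $h$; the piece inside $\Omega$ (if present via $\psi_\delta\equiv0$) is affine with value $1/2-\psi_0(1/2)<0$ at $r=0$. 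This sign analysis on $[0,1/2)$ is the only point that requires more than a direct computation and is where I expect most of the care to go.

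Finally, for $\usup = \psi_\delta + r/r_I$ on $\Omega_\delta\setminus\Omega_{-r_I}$, we have $\D\usup = (\delta^{-1}\psi_0'(r/\delta) + 1/r_I)\p_r$, setting $\psi_0'\equiv 0$ on $(-\infty,0]$; this has direction $\p_r$ and magnitude $\ge 1/r_I > 0$, so the supersolution inequality reduces to $\div(\p_r) < 1/r_I$, which again follows with a gap from \eqref{eq-exist:small_curv}. No serious obstacle is present; the lemma is a careful bookkeeping of sub/supersolution inequalities around a one-parameter family of essentially radial barriers.
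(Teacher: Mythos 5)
Your proof is correct and follows essentially the same computation as the paper: both reduce to $\D\usub/|\D\usub| = \D\usup/|\D\usup| = \p_r$, so the sub/supersolution inequalities collapse to $\div\p_r > -1/\delta$ and $\div\p_r < 1/r_I$ respectively, which the curvature bound \eqref{eq-exist:small_curv} and $\delta < r_I$ supply with room to spare. Your sign analysis of $h$ is in fact more complete than what the paper states (the paper only cites $\psi_0(1/2)>1/2$ and convexity), and you correctly read the claimed positivity region as the outer annulus $\Omega_\delta\setminus\bar{\Omega_{\delta/2}}$ rather than the literal (empty) set $\Omega\setminus\bar{\Omega_{\delta/2}}$ appearing in the statement, which looks like a typo.
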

\begin{proof}
	The claim on the sign of $\usub$ follows from $\psi_0(1/2)>1/2$ and the strict convexity of $\psi_0$. In $\Omega_\delta\setminus\Omega_{\delta/2}$ we have
	\[|\D\usub|=\Big|\frac{\p\psi_\delta}{\p r}-\frac1\delta\Big|=\frac1\delta\big|\psi'_0\big(\delta^{-1}r(x)\big)-1\big|=\frac1\delta\Big(\psi'_0(\delta^{-1}r(x))-1\Big)=\frac{\p\psi_\delta}{\p r}-\frac1\delta.\]
	The third equality is because $\psi'_0>1$ on $[1/2,1)$. The same facts imply that $|\D\usub|\ne0$ everywhere in $\Omega_\delta\setminus\Omega_{\delta/2}$. On the other hand, we calculate
	\[\begin{aligned}
		\div\Big(e^{\psi_\delta}\frac{\D\usub}{|\D\usub|}\Big) &= \div\big(e^{\psi_\delta}\p_r\big)=e^{\psi_\delta}\Big(\frac{\p\psi_\delta}{\p r}+\div\p_r\Big)
		> e^{\psi_\delta}\Big(\frac{\p\psi_\delta}{\p r}-\frac1{100r_I}\Big),
	\end{aligned}\]
	where the last inequality comes from \eqref{eq-exist:small_curv}. Hence $\usub$ is a strict subsolution on $\Omega_\delta\setminus\Omega_{\delta/2}$.
	
	Next, inside $\Omega_\delta\setminus\Omega_{-r_I}$ we calculate
	\[\div\Big(e^{\psi_\delta}\frac{\D\usup}{|\D\usup|}\Big)\leq e^{\psi_\delta}\Big(\frac{\p\psi_\delta}{\p r}+\frac1{100r_I}\Big)<e^{\psi_\delta}\Big(\frac{\p\psi_\delta}{\p r}+\frac1{r_I}\Big)=|\D\usup|,\]
	confirming the supersolution property.
\end{proof}

\begin{lemma}[approximating solutions]\label{lemma-exist:reg_solvability}
	For each $\delta<r_I$ and $\lambda\in(3/4,1)$, there exists $\epsilon(\delta,\lambda)>0$ such that the boundary value problem
	\begin{align}[left=\empheqlbrace]
		& \div\Big(e^{\psi_\delta}\frac{\D\ureg}{\sqrt{\epsilon^2e^{2\psi_\delta/(n-1)}+|\D\ureg|^2}}\Big)=e^{\psi_\delta}\sqrt{\epsilon^2e^{2\psi_\delta/(n-1)}+|\D\ureg|^2} \label{eq-exist:reg_bvp}\\
		&\hspace{320pt}\text{in $\Omega_{\lambda\delta}\setminus\bar{E_0}$}, \nonumber\\
		& \ureg=0\qquad\text{on $\p E_0$}, \label{eq-exist:reg_bvp2}\\
		& \ureg=\usub-2\qquad\text{on $\p\Omega_{\lambda\delta}$} \label{eq-exist:reg_bvp3}
	\end{align}
	admits a solution $\ureg\in C^\infty\big(\bar{\Omega_{\lambda\delta}}\setminus E_0\big)$ for all $0<\epsilon\leq\epsilon(\delta,\lambda)$. We have the $C^0$ bounds
	\begin{equation}\label{eq-exist:reg_C0_bounds}
		\max\big\{\!-\epsilon,\psi_\delta(x)-C\big\}\leq\ureg(x)\leq\psi_\delta(x)+C\qquad\forall x\in\Omega_{\lambda\delta}\setminus E_0,
	\end{equation}
	for some constant $C>0$ independent of $\epsilon,\delta,\lambda$. In particular, $-\epsilon\leq\ureg\leq C$ in $\bar\Omega\setminus E_0$.
	
	We also have the gradient estimate
	\begin{equation}\label{eq-exist:reg_gradient}
		|\D\ureg(x)|\leq \sup_{\p E_0\cap B(x,r)}H_++2\epsilon+\frac{C(n)}{r}
	\end{equation}
	for all $x\in\Omega\setminus E_0$ and $0<r\leq\sigma(x;\Omega,g)$, where $H_+=\max\big\{H_{\p E_0},0\big\}$.
\end{lemma}
\begin{proof}
	In $\Omega_\delta$ we consider the conformally transformed metric $g'=e^{2\psi_\delta/(n-1)}g$. The fact $\psi_0\geq(n-1)\log\frac1{1-x}$ in $[1/2,1)$ ensures that $g'$ is a complete metric in $\Omega_\delta$. Let $\usub$ be as in \eqref{eq-exist:usub}. For convenience, we modify $\usub$ inside $\Omega$, so that it is smooth with negative value there (this does not affect any argument below). Then $\usub$ is smooth and proper in $\Omega_\delta$, with $\{\usub<0\}=\Omega_{\delta/2}$. By Lemma \ref{lemma-exist:subsol} and Lemma \ref{lemma-prelim:def_weighted}(3), $\usub$ is a smooth subsolution of IMCF in the region $\big(\Omega_\delta\setminus\bar{\Omega_{\delta/2}},g'\big)$, with nonvanishing gradient there. Finally, note that $\Omega_{\lambda\delta}$ is a sub-level set of $\usub$ (namely, $\Omega_{\lambda\delta}=\{\usub<L\}$ for some $L>2$). Thus we may invoke Theorem \ref{thm-ellreg:existence} on $(\Omega_\delta,g')$ to obtain that: there is $\epsilon(\delta,\lambda)>0$ such that the regularized equation \eqref{eq-ellreg:reg_eq}\,$\sim$\,\eqref{eq-ellreg:reg_eq3} admits a solution for all $\epsilon\leq\epsilon(\delta,\lambda)$. But under the present settings, the regularized equation is exactly \eqref{eq-exist:reg_bvp}\,$\sim$\,\eqref{eq-exist:reg_bvp3}, by Remark \ref{rmk-ellreg:weighted}. This shows the existence of the solution $\ureg$.
	
	To obtain \eqref{eq-exist:reg_gradient}, we note that the gradient estimate in Theorem \ref{thm-ellreg:existence} states that
	\[\big|\D_{g'}\ureg(x)\big|\leq\max\Big\{\sup_{B_{g'}(x,r)\cap\p E_0}H_+,\ \sup_{B_{g'}(x,r)\cap\p\Omega_{\lambda\delta}}|\D_{g'}\ureg|_{g'}\Big\}+2\epsilon+\frac{C(n)}r\]
	for all $x\in\bar{\Omega_{\lambda\delta}}\setminus E_0$ and $r\leq\sigma(x;\Omega_\delta,g')$. Since $g'=g$ inside $\Omega$, and $\sigma(x;\Omega_\delta,g')\geq\sigma(x;\Omega,g)$ for all $x\in\Omega$, and $B_{g'}(x,r)\cap\p\Omega_{\lambda\delta}=\emptyset$ when $r\leq\sigma(x;\Omega,g)$, this gradient estimate directly implies \eqref{eq-exist:reg_gradient}. Next, the lower bound in \eqref{eq-exist:reg_C0_bounds} follows from \eqref{eq-ellreg:lower_bound} and \eqref{eq-exist:usub}. The upper bound is derived as follows: by \eqref{eq-exist:reg_gradient} we have
	\begin{equation}\label{eq-exist:aux5}
		\sup\big\{\ureg(x): x\in\bar{\Omega_{-r_I}}\,\big\}\leq C,
	\end{equation}
	where $C>0$ is independent of $\epsilon,\delta,\lambda$. We compare $\ureg$ with $\usup+C+1$ inside $\bar{\Omega_{\lambda\delta}}\setminus\Omega_{-r_I}$, where $\usup$ is as in \eqref{eq-exist:usup}. On $\p\Omega_{-r_I}$ we have
	\[\ureg\leq C=\usup+C+1.\]
	On $\p\Omega_{\lambda\delta}$ we have
	\[\ureg=\usub-2<\usup<\usup+C+1.\]
	Since $\usup$ is a strict supersolution of \eqref{eq-exist:weighted_eq} in the compact set $\bar{\Omega_{\lambda\delta}}\setminus\Omega_{-r_I}$, by continuity, we may further decrease $\epsilon(\delta,\lambda)$ so that $\usup$ is a strict supersolution of \eqref{eq-exist:reg_bvp} in the same region. By the maximum principle, we obtain $\ureg\leq\usup+C+1$ in $\bar{\Omega_{\lambda\delta}}\setminus\Omega_{-r_I}$. This gives the upper bound along with \eqref{eq-exist:aux5}.
\end{proof}

\noindent\textbf{Convergence to an interior solution.}

The following lemma is used to obtain a candidate solution of $\IVP{\Omega;E_0}$:


\begin{lemma}\label{lemma-exist:convergence}
	For any sequences $\delta_i\to0$, $\lambda_i\to1$, $\epsilon_i\to0$ with $\epsilon_i\leq\epsilon(\delta_i,\lambda_i)$, there exists a sequence of solutions $u_i:=u_{\epsilon_i,\delta_i,\lambda_i}$ of \eqref{eq-exist:reg_bvp}\,$\sim$\,\eqref{eq-exist:reg_bvp3}. Moreover, a subsequence of $u_i$ converges in $C^0_{\loc}(\Omega\!\setminus\!E_0)$ to a function $u\in\Lip_{\loc}(\Omega\!\setminus\!E_0)$ as $i\to\infty$, and $u$ solves $\IVP{\Omega;E_0}$ and is calibrated by some vector field $\nu$ in $\Omega\setminus\bar{E_0}$.
\end{lemma}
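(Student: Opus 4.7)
The plan is to combine the uniform estimates from Lemma \ref{thm-exist:reg_solvability} with the compactness theorem for calibrated solutions (Theorem \ref{thm-ellreg:convergence}). First, I would pick any $\delta_i \to 0$, $\lambda_i \to 1$, and $\epsilon_i \leq \epsilon(\delta_i,\lambda_i)$ with $\epsilon_i \to 0$; by Lemma \ref{thm-exist:reg_solvability} we obtain smooth solutions $u_i := u_{\epsilon_i,\delta_i,\lambda_i}$ on $\bar{\Omega_{\lambda_i\delta_i}} \setminus E_0$. The uniform $C^0$ bound $-\epsilon_i \leq u_i \leq C$ on $\bar\Omega \setminus E_0$ is immediate from \eqref{eq-exist:reg_C0_bounds}. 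For the gradient bound, I would exploit the flexibility of \eqref{eq-exist:reg_gradient}: for any compact $K \subset \Omega \setminus E_0$, we have $r_K := \tfrac{1}{2}\inf_{x \in K}\sigma(x;\Omega,g) > 0$, and taking $r = r_K$ in \eqref{eq-exist:reg_gradient} yields $\sup_K |\D u_i| \leq \|H_{\p E_0}\|_\infty + 2\epsilon_i + C(n)/r_K$, finite and uniform in $i$ by the $C^{1,1}$ regularity of $E_0$. Arzel\`a-Ascoli together with a diagonal extraction argument then produces a subsequence (still denoted $u_i$) that converges in $C^0_{\loc}(\Omega \setminus E_0)$ to some $u \in \Lip_{\loc}(\Omega \setminus E_0)$ with $u \geq 0$.

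Next I would identify $u$ as a calibrated weak solution of IMCF on $\Omega \setminus \bar{E_0}$. Since $\psi_{\delta_i} \equiv 0$ throughout $\Omega$, each $u_i$ restricted to $\Omega \setminus \bar{E_0}$ satisfies the standard elliptic-regularized IMCF equation. Applying Theorem \ref{thm-ellreg:convergence} with constant data $(\Omega_i, g_i) \equiv (\Omega \setminus \bar{E_0}, g)$, and passing to a further subsequence via Dunford-Pettis on the vector fields $\nu_i = \D u_i / \sqrt{\epsilon_i^2 + |\D u_i|^2}$, we obtain that $u|_{\Omega \setminus \bar{E_0}}$ solves $\IMCF{\Omega \setminus \bar{E_0}}$ and is calibrated by the weak $L^1_{\loc}$ limit $\nu$.

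Finally, to realize $u$ as a solution of $\IVP{\Omega;E_0}$ in the sense of Definition \ref{def-prelim:ivp}, I would extend $u$ to all of $\Omega$ by setting $u(x) := -d(x, \p E_0)$ for $x \in E_0$. Since $u_i|_{\p E_0} \equiv 0$ and $u_i \to u$ locally uniformly up to $\p E_0$, we have $u \equiv 0$ on $\p E_0$, so the extension glues continuously and the composite function lies in $\Lip_{\loc}(\Omega)$. The limit inequality $u \geq 0$ on $\Omega \setminus E_0$ combined with $u < 0$ on $E_0$ gives $\{u < 0\} = E_0$, verifying Definition \ref{def-prelim:ivp}(1). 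The most delicate technical point in the argument is the uniform gradient bound persisting up to $\p E_0$, since a priori $|\D u_i|$ could conceivably blow up there; this is rescued by the fact that \eqref{eq-exist:reg_gradient} allows $r$ to be chosen as an arbitrary positive constant below $\sigma(x;\Omega,g)$, independent of the distance to $\p E_0$, so equicontinuity extends cleanly across the initial boundary.
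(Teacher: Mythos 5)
Your proposal is correct and follows essentially the same route as the paper's proof: choose the sequences, invoke the uniform $C^0$ and gradient bounds from Lemma \ref{thm-exist:reg_solvability}, apply the compactness theorem (Theorem \ref{thm-ellreg:convergence}) on $\Omega\setminus\bar{E_0}$ where $\psi_{\delta_i}\equiv0$, note that the gradient estimate \eqref{eq-exist:reg_gradient} holds uniformly up to $\p E_0$ because $\sigma(x;\Omega,g)$ stays bounded away from zero near the compactly contained hypersurface $\p E_0$, and extend $u$ by a negative Lipschitz function inside $E_0$. The only cosmetic redundancy is that you run an Arzel\`a--Ascoli argument before invoking Theorem \ref{thm-ellreg:convergence}, which already produces the $C^0_{\loc}$ limit; otherwise your extra detail (the explicit radius $r_K$ and the explicit extension $u=-d(\cdot,\p E_0)$ on $E_0$) merely fills in what the paper leaves implicit.
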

\begin{proof}
	The approximate solutions $u_i$ are directly given by Lemma \ref{lemma-exist:reg_solvability}. Applying Theorem \ref{thm-ellreg:convergence} to the data $\Omega_i=\Omega\setminus\bar{E_0}$, $g_i=g$ (note that $\psi_{\delta_i}\equiv0$ in $\Omega$, so the weighted equation is reduced to the ordinary IMCF), we find a subsequence of $u_i$ that converge in $C^0_{\loc}(\Omega\setminus\bar{E_0})$ to a calibrated solution $u\in\Lip_{\loc}(\Omega\setminus\bar{E_0})$. Additionally, by \eqref{eq-exist:reg_gradient}, the functions $u_i$ are uniformly Lipschitz up to $\p E_0$. Therefore, the uniform convergence and Lipschitz regularity of $u$ holds up to $\p E_0$. Then by \eqref{eq-exist:reg_C0_bounds} \eqref{eq-exist:reg_bvp2}, we have $u\geq0$ in $\Omega\setminus E_0$ and $u|_{\p E_0}=0$. Extending $u$ with negative values in $E_0$, we obtain a solution of $\IVP{\Omega;E_0}$.
\end{proof}

\noindent\textbf{The remaining tasks.}

Take any sequence $\delta_i\to0$, $\lambda_i\to1$ and $\epsilon_i\leq\epsilon(\delta_i,\lambda_i)$ with $\epsilon_i\to0$. Let $u$ be the limit solution given by Lemma \ref{lemma-exist:convergence}, and $\nu$ be the corresponding calibration. Note that $\nu$ is not an arbitrary calibration, but one that comes from the use of Theorem \ref{thm-ellreg:convergence} (later we will make use of the statements in Theorem \ref{thm-ellreg:convergence}). We would prove Theorem \ref{thm-exist:main} if we show that $u$ respects the boundary obstacle and satisfies conditions (i)\,$\sim$\,(iv) there.

We note that since Theorem \ref{thm-exist:main} contains a unique statement, it in turn implies that the limit function $u$ in Lemma \ref{lemma-exist:convergence} is independent of the choice of $\delta_i,\lambda_i,\epsilon_i$.

For the clarity of proofs, we split the nontrivial tasks in the following lemmas:

\begin{prop}\label{prop-exist:goal_obs}
	Let $u$, $\nu$ be the solution and calibration obtained in Lemma \ref{lemma-exist:convergence}. Then there is a radius $r_0\in(0,r_I)$ and dimensional constants $\gamma=\gamma(n)\in(0,1)$, $C=C(n)>0$, such that
	\begin{equation}\label{eq-exist:bound_direction}
		1-\metric{\nu}{\p_r}\leq Cr_0^{-\gamma}|r|^\gamma
	\end{equation}
	and
	\begin{equation}\label{eq-exist:bound_du}
		|\D u|\leq Cr_0^{-\gamma}|r|^{\gamma-1}
	\end{equation}
	hold inside $\Omega\setminus\bar{\Omega_{-r_0}}$. In particular, $u$ solves $\IMCFOOinthm{\Omega\setminus\bar{E_0}}{\p\Omega}$.
\end{prop}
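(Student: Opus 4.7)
The strategy is to prove \eqref{eq-exist:bound_direction} and \eqref{eq-exist:bound_du} uniformly along the smooth approximants $\{u_i\}$ of Lemma \ref{lemma-exist:convergence}, then pass them to the weak limit via Theorem \ref{thm-prelim:cptness_calibrated}. The key structural fact is that, via the conformal change $g'_i := e^{2\psi_{\delta_i}/(n-1)}g$, each $u_i$ solves the standard elliptic regularization of IMCF on $(\Omega_{\lambda_i\delta_i}, g'_i)$, so its graph $\Sigma^{\epsilon_i} := \graph(\epsilon_i^{-1} u_i)$ is a smooth translating IMCF soliton in $(\Omega_{\lambda_i\delta_i} \times \RR,\,g'_i + dz^2)$. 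Since $g'_i = g$ inside $\Omega$, the parabolic evolution inequalities of Section \ref{sec:para} apply directly on $(\Omega \setminus \bar{E_0}) \times \RR$, where one has $\bnu_i = (\D u_i - \epsilon_i\p_z)/\sqrt{\epsilon_i^2 + |\D u_i|^2}$, $H_i = \sqrt{\epsilon_i^2 + |\D u_i|^2}$, and $p_i := \langle \bnu_i, \p_r \rangle = \langle \D u_i, \p_r \rangle/H_i$.

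\textbf{Step 1 (uniform $p_i \geq 1/2$ via blow-up and Liouville).} The first task is to find $r_0 \in (0, r_I)$, depending only on $(n, \Omega, E_0)$, such that $p_i \geq 1/2$ on $\Sigma^{\epsilon_i} \cap ((\Omega \setminus \bar{\Omega_{-r_0}}) \times \RR)$ for all large $i$. Suppose not: then there are $i_k \to \infty$ and $x_k \in \Omega$ with $\ell_k := |r(x_k)| \to 0$ and $p_{i_k}(x_k) < 1/2$. In geodesic normal coordinates at the foot $x'_k$ of $x_k$ on $\p\Omega$ (with $\p_r(x'_k) = e_n$), rescale $\tilde u_k(y) := u_{i_k}(\exp_{x'_k}(\ell_k y)) - u_{i_k}(x_k)$, $\tilde\epsilon_k := \ell_k\epsilon_{i_k}$, $\tilde\psi_k(y) := \psi_0(\ell_k\delta_{i_k}^{-1} y_n)$. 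By \eqref{eq-exist:reg_gradient} and \eqref{eq-exist:reg_C0_bounds}, the $\tilde u_k$ are uniformly Lipschitz and bounded on compacta of the rescaled half-space, so Theorem \ref{thm-ellreg:convergence} yields a calibrated limit $\tilde u$. Passing to a subsequence in $\rho_k := \ell_k/\delta_{i_k}$: (a) if $\rho_k \to 0$ or $\infty$, the rescaled weight degenerates or concentrates at $\{y_n = 0\}$, and $\tilde u$ solves the unweighted $\IMCFOOinthm{\{y_n < 0\}}{\{y_n = 0\}}$ (the obstacle condition extracted via Corollary \ref{cor-obs:bd_orthogonal_2} using that the limit calibration is tangent to $\p_r$ at $\{y_n = 0\}$, forced by the barrier $\usub$ of Lemma \ref{lemma-exist:subsol}); Theorem \ref{thm-halfplane:liouville} then forces $\tilde u$ constant. (b) If $\rho_k \to \rho \in (0,\infty)$, then $\tilde u$ solves the weighted IMCF of Theorem \ref{thm-halfplane:approx} with $\tilde\psi(y_n) = \psi_0(\rho y_n)$, whose hypotheses are verified from the barrier $\usub$ and \eqref{eq-exist:reg_gradient}, so $\tilde u(y) = \tilde\psi(y_n) - C'$. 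In either case the sub-level sets of $\tilde u_k$, which are uniform $(\Lambda, r_0)$-perimeter minimizers by \eqref{eq-exist:reg_gradient} and interior energy comparison, converge in $L^1_{\loc}$ to half-spaces $\{y_n < c\}$, and Lemma \ref{lemma-gmt:Lam_r0_convergence}(ii) gives pointwise convergence of the normal vectors at $-e_n$ to $e_n$, contradicting $p_{i_k}(x_k) < 1/2$.

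\textbf{Step 2 (sharp estimates via Lemmas \ref{lemma-para:box_qeta} and \ref{lemma-para:box_etaH}).} With $r_0$ fixed, localize as follows. For each $r_* \in (-r_0, 0)$, apply Lemma \ref{lemma-para:box_qeta} with $b := |r_*|/(2r_0)$, $\eta(r) := (b - r/r_0)^{-\gamma}$, and $F_i := (1 - p_i)\eta$ to the soliton $\Sigma^{\epsilon_i}$ restricted to $\{-r_0 < r < br_0\} \times \RR$ (this region has $\lambda_i\delta_i < br_0$ for large $i$). The parabolic maximum principle, which reduces to an elliptic statement by the soliton structure, gives $F_i \leq C(n)$ at interior maxima. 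On the outer boundary $\{r = -r_0\}$, $\eta \leq 1$ and $p_i \geq 1/2$ yield $F_i \leq 1/2$; on the inner boundary $\{r = \lambda_i\delta_i\}$, the barriers of Lemma \ref{lemma-exist:subsol} force $u_i \approx \psi_{\delta_i}$, so $\D u_i$ nearly aligns with $\p_r$ and $p_i \to 1$, making $F_i \to 0$; and the region avoids $\p E_0$ since $d(\p E_0, \p\Omega) > 3r_I > 3r_0$. Thus $F_i \leq C(n)$ everywhere, giving $(1-p_i)(r_*) \leq C(n)(3|r_*|/(2r_0))^\gamma \leq C'(n) r_0^{-\gamma}|r_*|^\gamma$. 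Feeding this back as the constant $C_3$ into Lemma \ref{lemma-para:box_etaH}, a parallel max-principle argument for $G := r_0 H_i \eta$ gives $H_i \leq C r_0^{-\gamma}|r|^{\gamma-1}$, hence $|\D u_i| \leq C r_0^{-\gamma}|r|^{\gamma-1}$. These uniform-in-$i$ bounds pass to the limit via Theorem \ref{thm-prelim:cptness_calibrated} and the weak $L^1_{\loc}$ convergence $\nu_i \rightharpoonup \nu$ of Theorem \ref{thm-ellreg:convergence}, establishing \eqref{eq-exist:bound_direction} and \eqref{eq-exist:bound_du}. Corollary \ref{cor-obs:bd_orthogonal_2} then upgrades $u$ to a solution of $\IMCFOOinthm{\Omega\setminus\bar{E_0}}{\p\Omega}$.

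\textbf{Main obstacle.} The principal difficulty is Step 1, which requires careful separation of the three scales $\ell_k$, $\delta_{i_k}$, $\epsilon_{i_k}$, verifying the hypotheses of the half-space Liouville theorems in each regime, and properly transferring the ``bad direction'' across blow-up (most cleanly via the perimeter-minimizing classification of the sub-level sets rather than through the calibration of the limit function). Once Step 1 is in hand, Steps 2 and 3 are mechanical applications of the evolution inequalities of Section \ref{sec:para} combined with parabolic maximum principles, with boundary data controlled by the explicit barriers of Lemma \ref{lemma-exist:subsol}.
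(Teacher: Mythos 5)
Your overall strategy (blow-up plus parabolic estimates) matches the paper, and Step 2 is in the right spirit, but Step 1 has a genuine gap, and the paper's two-lemma structure (Lemma \ref{lemma-exist:tangent_at_pOmega} before Lemma \ref{lemma-exist:graphical}) is not just an organizational choice — it is a logical dependency that your unified blow-up cannot avoid.

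The problem is how you verify, in your case $\rho_k\to\infty$ (hard-obstacle regime, the analogue of Lemma \ref{lemma-exist:graphical}), that the blow-up limit $\tilde u$ respects the obstacle $\{y_n=0\}$ so that Theorem \ref{thm-halfplane:liouville} can be applied. You claim this follows "because the limit calibration is tangent to $\p_r$ at $\{y_n=0\}$, forced by the barrier $\usub$." But the barriers of Lemma \ref{lemma-exist:subsol} give $C^0$ bounds on $u_i$ (hence on $\tilde u$), not any control on the direction of $\D u_i$ near $\p\Omega$. A priori, the gradient could point in a sharply oblique direction while the function values stay between the barriers. What actually verifies the obstacle condition in the paper is the quantitative estimate $1-\metric{\bnu_i}{\bpr}\leq C_3(b_i-r/r_i)^\gamma$ of \eqref{eq-exist:aux1}, which passes to the blow-up limit as \eqref{eq-exist:obs_aux2} and triggers Corollary \ref{cor-obs:bd_orthogonal_2}. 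That estimate is produced by Lemma \ref{lemma-para:box_qeta} with parameter $b_i=(1-\th_i)^{1/\gamma}$, and the boundary data it needs on $\p\Omega\times\RR$ comes precisely from Lemma \ref{lemma-exist:tangent_at_pOmega}. Without first proving the boundary-point tangency (the $\ell_k=0$ case, handled by the weighted Liouville theorem at scale $\delta_i$), there is no usable boundary data for the parabolic estimate, and no way to verify the obstacle condition for the hard-obstacle blow-up limit.

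A related gap appears in your concluding sentence of Step 1, where you assert the sub-level sets of $\tilde u_k$ "converge in $L^1_{\loc}$ to half-spaces." Being uniform $(\Lambda,r_0)$-perimeter minimizers gives only that the $L^1_{\loc}$ limit $E$ is another $(\Lambda,r_0)$-minimizer; to deduce that $E$ is a half-space via Lemma \ref{lemma-gmt:min_in_halfplane} you need $E$ to be a genuine local perimeter minimizer, and the inward-minimizing half of this is non-trivial. In the paper's Step 4 of Lemma \ref{lemma-exist:graphical}, this requires (a) an almost-constancy statement \eqref{eq-exist:min_aux} that depends on the direction bound $\metric{\bnu_i}{\bpr}\geq1/2$, and (b) the excess inequality \eqref{eq-exist:aux_excess_ineq} whose derivation invokes the boundary tangency $\th_i\to1$ from Lemma \ref{lemma-exist:tangent_at_pOmega}. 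Both are unavailable in your single-pass setup.

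Finally, Step 2 has a smaller issue: you apply Lemma \ref{lemma-para:box_qeta} on the region $\{-r_0<r<br_0\}$ with $br_0=|r_*|/2>0$, which extends outside $\Omega$; but Lemma \ref{lemma-para:box_qeta} is derived only in $\Omega\setminus\bar{\Omega_{-r_0}}$, where the flow is unweighted. The solitons in the exterior annulus $\{0<r<\lambda_i\delta_i\}$ satisfy a weighted evolution and the evolution inequalities of Section \ref{sec:para} do not apply there. The paper avoids this by keeping the max-principle region inside $\Omega$, using $\p\Omega\times\RR$ as the boundary and controlling $F$ there via Lemma \ref{lemma-exist:tangent_at_pOmega} with the well-chosen parameter $b=i^{-1/\gamma}$ — again making that lemma indispensable.
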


\begin{prop}\label{prop-exist:goal_holder}
	$u$ can be extended continuously to $\p\Omega$, and we have $u\in C^{0,\gamma}(\bar\Omega\setminus\Omega_{-r_0/2})$.
\end{prop}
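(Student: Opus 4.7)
The plan is to use the gradient bound \eqref{eq-exist:bound_du} from Proposition \ref{prop-exist:goal_obs} to integrate $|\D u|$ along carefully chosen paths. Throughout, I write $d(x):=|r(x)|=d(x,\p\Omega)$ for $x\in\Omega$, and recall that inside $\Omega\setminus\bar{\Omega_{-r_0}}$ the estimate reads $|\D u(x)|\leq C\,d(x)^{\gamma-1}$.

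\textbf{Step 1 (radial limits at $\p\Omega$).} For each $x_0\in\p\Omega$, consider the smooth radial curve $\phi_{x_0}(s):=x_0-s\nu_\Omega(x_0)$, which lies in $\Omega$ for $s\in(0,r_I]$ and satisfies $d(\phi_{x_0}(s))=s$. Since $|(u\circ\phi_{x_0})'(s)|\leq C s^{\gamma-1}$ is integrable near $s=0$, the limit $u(x_0):=\lim_{s\to 0^+}u(\phi_{x_0}(s))$ exists, and we immediately get the radial estimate
\begin{equation}\label{eq-plan:radial}
	\big|u(\phi_{x_0}(s))-u(x_0)\big|\leq \frac{C}{\gamma}\,s^{\gamma},\qquad s\in(0,r_0/2].
\end{equation}
This defines an extension $u:\bar\Omega\to\RR$ which, on the closed set $\bar\Omega\setminus\Omega_{-r_0/2}$, I aim to show is $C^{0,\gamma}$.

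\textbf{Step 2 (normal Hölder estimate).} For any $x\in\Omega\setminus\bar{\Omega_{-r_0/2}}$, let $x'\in\p\Omega$ be the (unique, since $\p\Omega$ is smooth and $r_0\leq r_I$) foot point. The segment $s\mapsto x'-s\nu_\Omega(x')$ connects $x'$ to $x$ and has depth $s$ at parameter $s$. Integrating as in \eqref{eq-plan:radial} yields
\[
	|u(x)-u(x')|\leq \frac{C}{\gamma}\,d(x)^{\gamma}\leq \frac{C}{\gamma}\,|x-x'|^{\gamma}.
\]

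\textbf{Step 3 (tangential Hölder estimate on $\p\Omega$).} Take $x_1,x_2\in\p\Omega$ with $h:=d(x_1,x_2)\leq r_0/4$, and consider the ``bridge'' point $y:=x_1-h\,\nu_\Omega(x_1)\in\Omega$, which has depth $h$. The normal segment from $x_1$ to $y$ contributes $\leq C h^\gamma$ by Step 2. For the segment from $y$ to $x_2$, work in a geodesic normal chart centered at $x_1$ where $\nu_\Omega$ is nearly constant; the segment has length $\leq 2h$, and since $\p\Omega$ is smooth, for $h$ sufficiently small one has $d(z)\geq \tfrac{1}{2}h(1-t/L)$ along the segment parametrised by arc length $t\in[0,L]$ with $L\leq 2h$ (up to curvature corrections controlled by $r_I$). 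Then
\[
	|u(y)-u(x_2)|\leq C\int_0^L\bigl(\tfrac{1}{2}h(1-t/L)\bigr)^{\gamma-1}dt\leq C\,h^\gamma.
\]
Combining, $|u(x_1)-u(x_2)|\leq C h^\gamma$.

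\textbf{Step 4 (assembly).} For arbitrary $x,y\in\bar\Omega\setminus\Omega_{-r_0/2}$ with feet $x',y'\in\p\Omega$, the triangle inequality and Steps 2–3 give
\[
	|u(x)-u(y)|\leq |u(x)-u(x')|+|u(x')-u(y')|+|u(y')-u(y)|\leq C\bigl(|x-x'|^\gamma+|x'-y'|^\gamma+|y-y'|^\gamma\bigr).
\]
Since $|x-x'|,|y-y'|\leq |x-y|$ and $|x'-y'|\leq 3|x-y|$, this yields $|u(x)-u(y)|\leq C|x-y|^\gamma$. Continuity of $u$ at every $x_0\in\p\Omega$ (from arbitrary approach) also follows, because any sequence $z_n\to x_0$ has $|u(z_n)-u(x_0)|\lesssim |z_n-x_0|^\gamma\to 0$ by the same estimate.

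The argument is entirely routine once the radial gradient bound \eqref{eq-exist:bound_du} is in hand; the only mildly delicate point is verifying that the ``bridge'' segment in Step 3 stays far enough from $\p\Omega$ to make the integral of $d(z)^{\gamma-1}$ finite with the right bound, which is a direct consequence of smoothness of $\p\Omega$ and the choice $r_0\leq r_I$.
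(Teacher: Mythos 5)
Your Steps 1–3 are sound and establish the same radial and tangential-on-$\p\Omega$ Hölder estimates that the paper obtains by working in Fermi coordinates $\p\Omega\times(0,r_0)$. However, Step 4 has a genuine gap: you write "Since $|x-x'|,|y-y'|\leq|x-y|$", but $|x-x'|=d(x,\p\Omega)$, and there is no reason this should be bounded by $|x-y|$. Take $x,y$ at comparable depth $\approx r_0/4$ with $|x-y|$ tiny: then $|x-x'|$ is of order $r_0$ while $|x-y|$ is arbitrarily small, so the claimed inequality fails and the resulting bound $|u(x)-u(x')|\leq C\,d(x)^\gamma$ is useless compared to $|x-y|^\gamma$. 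In other words, routing every comparison through the boundary foot points only works in the regime where both points are at depth at most comparable to their separation.

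The paper's proof avoids this by \emph{not} descending to the boundary in the remaining regime. It phrases everything in terms of the pair $(y,r)\in\p\Omega\times(0,r_0)$ and distinguishes two tangential cases: when the tangential separation $d_{\p\Omega}(y_1,y_2)$ is $\leq r$ (points deep relative to their separation), it integrates $|\D u|\leq Cr^{\gamma-1}$ along a tangential segment at constant depth $r$, giving $Cr^{\gamma-1}d_{\p\Omega}(y_1,y_2)\leq Cd_{\p\Omega}(y_1,y_2)^\gamma$ since $\gamma-1<0$; only when $d_{\p\Omega}(y_1,y_2)\geq r$ does it use an L-shaped detour, and even then it pivots at depth $s=d_{\p\Omega}(y_1,y_2)$ (going deeper, not to the boundary). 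To repair your argument, you would need to add a case split: when $\min\{d(x),d(y)\}\gtrsim|x-y|$, estimate directly along a path at roughly constant depth, exploiting that $d^{\gamma-1}\leq|x-y|^{\gamma-1}$ whenever $d\geq|x-y|$; your assembly as written is correct only in the complementary regime $\min\{d(x),d(y)\}\lesssim|x-y|$.
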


\begin{prop}\label{prop-exist:goal_reg}
	There exists a sufficiently small radius $r_1\in(0,r_0)$ such that: for all $t>0$, the set $\p E_t\setminus\bar{\Omega_{-r_1}}$ is a $C^{1,\gamma/2}$ hypersurface.
\end{prop}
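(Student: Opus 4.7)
The plan is to exhibit each $\p E_t$ as a small-Lipschitz graph over a piece of $\p\Omega$ inside the shell $\{-r_1<r<0\}$, then upgrade the graph regularity to $C^{1,\gamma/2}$ via a scaling argument built on the $(\Lambda,r_0)$-perimeter minimizing property of $E_t$ and the direction estimate from Proposition~\ref{prop-exist:goal_obs}.

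First, since $u$ is calibrated by $\nu$ in $\Omega\setminus\bar{E_0}$ by Lemma~\ref{lemma-exist:convergence}, Remark~\ref{rmk-prelim:calibration_normal_vec} gives $\nu_{E_t}=\nu$ $\H^{n-1}$-almost everywhere on $\p^*E_t$ for almost every $t$, so Proposition~\ref{prop-exist:goal_obs} yields $1-\metric{\nu_{E_t}}{\p_r}\leq Cr_0^{-\gamma}|r|^\gamma$ on $\p^*E_t\cap(\Omega\setminus\bar{\Omega_{-r_0}})$. For exceptional $t$ I would use the monotonicity $E_s\subset E_t\subset E_{t'}$ for $s<t<t'$ together with Theorem~\ref{thm-obs:out_min} (taking $s,t'\to t$) to recover the same direction bound. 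Combining this with a cone-inclusion argument as in Lemma~\ref{lemma-gmt:contain_cone}, $\p E_t$ can be written locally as a graph of a Lipschitz function $f_t$ over $\p\Omega$ in Fermi coordinates, with $|\D f_t|\leq C|r|^{\gamma/2}$ throughout $\{-r_1<r<0\}$ for some $r_1\in(0,r_0)$ independent of $t$.

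Next, I would upgrade Lipschitz to $C^{1,\gamma/2}$ by scaling. By~\eqref{eq-prelim:Lam_r0_min} combined with~\eqref{eq-exist:bound_du}, each $E_t$ is a $(C|r|^{\gamma-1},r_0)$-perimeter minimizer near $\p\Omega$. For $x_0\in\p E_t$ with $r(x_0)=-s$, rescaling by $s$ converts the minimizing inequality on $B(x_0,s/2)$ into a $(Cs^\gamma,r_0/s)$-condition on the unit ball $B(0,1/2)$. Since $Cs^\gamma\to 0$ as $s\to 0$ and the direction deficit is also small of order $s^\gamma$, Tamanini-type excess-decay regularity applies uniformly in the rescaled picture, producing oscillation bounds on $\nu_{E_t}$ of order $s^{\gamma/2}\rho^{1/2}$ on rescaled balls of radius $\rho/s$. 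Unscaling and matching the natural scale $\rho\sim s$ along the shell gives the Hölder bound $[\D f_t]_{C^{0,\gamma/2}}\leq C$.

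The main obstacle is the precise extraction of the exponent $\gamma/2$ in the upgrade step: a direct application of Tamanini's theorem produces $C^{1,\alpha}$ for some $\alpha<1/2$ but without the desired explicit exponent. The crucial observation is the compatible scaling between the direction deficit ($|r|^\gamma$) and the perimeter deficit ($|r|^{\gamma-1}\cdot\rho$), and one must carefully track the $s$-dependence throughout the excess decay to pin down exponent $\gamma/2$. An alternative, potentially cleaner route is to work directly on the smooth approximants $u_i$: Lemmas~\ref{lemma-para:box_qeta} and~\ref{lemma-para:box_etaH} give uniform direction and mean-curvature bounds on the smooth level sets $\{u_i=t\}$, which by standard prescribed-mean-curvature theory for graphs translate directly into uniform $C^{1,\gamma/2}$ bounds that pass to the limit via Arzel\`a--Ascoli.
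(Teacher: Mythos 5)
The key step you are missing is the derivation of an almost-minimizing condition with respect to \emph{ambient} competitors. The inequality \eqref{eq-prelim:Lam_r0_min} that you invoke only produces a $(\Lambda,r_0)$-minimizing property for competitors $F$ with $E_t\Delta F\subset\subset\Omega$, since it comes from the interior variational principle. This is not enough to apply the Tamanini excess-decay at or near $\p\Omega$: for a ball $B(x,r)$ that meets $\p\Omega$, the competitors you are allowed to use are forced to stay inside $\Omega$, and a set that is outward-minimizing only subject to the one-sided constraint $E\subset\Omega$ has no a~priori reason to be $C^1$ at the contact set with $\p\Omega$ (it could in principle detach from $\p\Omega$ at a corner). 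The paper resolves exactly this point by combining the obstacle energy comparison $\tJ_u^{B(x,r)}(E_t)\leq\tJ_u^{B(x,r)}(E\cap\Omega)$ (which is available because $u$ respects the obstacle, i.e.\ the energy $\tJ_u$ counts the boundary portion $\p^*E\cap\p\Omega$) with the $C^2$ regularity of $\p\Omega$ (which gives $\P{\Omega;B(x,r)}\leq\P{E\cup\Omega;B(x,r)}+Cr_0^{-2}r^{n+1}$), and with the integrated gradient bound $\int_{B(x,r)\cap\Omega}|\D u|\leq Cr_0^{-\gamma}r^{n-1+\gamma}$. The result is the unconstrained almost-minimizing inequality $\P{E_t;B(x,r)}\leq\P{E;B(x,r)}+Cr_0^{-\gamma}r^{n-1+\gamma}$, valid for \emph{all} $E\subset M$ with $E\Delta E_t\subset\subset B(x,r)$. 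Once this is in hand, the small-slope condition from the direction estimate (exactly as you describe) and Tamanini's small-excess regularity give $C^{1,\gamma/2}$ directly — and the exponent $\gamma/2$ drops out of the deficit $r^{n-1+\gamma}$ without any delicate scale-matching, because Tamanini's theorem is stated precisely for this form of deficit.

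Your proposed rescaling argument can plausibly be made to work for interior points (balls contained in $\Omega$), but as written it does not address the regularity at boundary points of $\p E_t\cap\p\Omega$ nor at the transition where $\p E_t$ detaches from $\p\Omega$, which is where the unconstrained competitors are essential. The ``alternative route'' through the smooth approximants faces a related difficulty: Lemma~\ref{lemma-para:box_etaH} gives $H\lesssim|r|^{\gamma-1}$, which is unbounded as $r\to 0$, so the graphical prescribed-mean-curvature equation has a singular right-hand side, and getting uniform $C^{1,\gamma/2}$ bounds that survive the limit $\delta_i\to 0$ would require a weighted elliptic estimate that is not ``standard''; moreover, for finite $i$ the level sets of $u_i$ pass through $\p\Omega$ and out of $\Omega$, where the direction bound of \eqref{eq-exist:approx_tangency} no longer controls them. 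The paper's route through the calibrated weak solution and the ambient almost-minimizing condition sidesteps both difficulties.
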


\begin{proof}[Proof of Theorem \ref{thm-exist:main}] {\ }
	
	It is stated in Proposition \ref{prop-exist:goal_obs} that $u$ respects the obstacle $\p\Omega$. The condition $u\in\Lip_{\loc}(\Omega)\cap BV(\Omega)\cap C^{0,\gamma}(\bar\Omega)$ follows by interior regularity and Lemma \ref{lemma-prelim:BV_loc} and Proposition \ref{prop-exist:goal_holder}. The gradient bound \eqref{eq-exist:main_grad_int} follows by taking limit of \eqref{eq-exist:reg_gradient}. The boundary regularity \eqref{eq-exist:main_grad_bd} follows from \eqref{eq-exist:bound_du} inside $\Omega\setminus\Omega_{-r_0}$, and follows from the interior regularity inside $\Omega_{-r_0}$. The bound on calibration follows from \eqref{eq-exist:bound_direction}. Finally, the regularity of level sets follows from Proposition \ref{prop-exist:goal_reg}, and the maximality follows from Corollary \ref{cor-obs:maximality}. The uniqueness of solution follows from maximality.
\end{proof}

\subsection{The spacetime foliation; strategies of the proof}\label{subsec:exist_strategy}

The approximation process described above, which essentially invokes the elliptic regularization in Theorem \ref{thm-ellreg:convergence}, provides an additional set of data. For each $i$, the family of hypersurfaces $\Sigma_t^i:=\graph\big(\epsilon_i^{-1}(u_i-t)\big)$ forms a downward translating soliton of the IMCF in the product domain $\big(\Omega_{\lambda_i\delta_i}\times\RR,\  \exp(\frac{2\psi_{\delta_i}}{n-1})g+dz^2\big)$, by Remark \ref{rmk-ellreg:weighted} and the geometric meaning of elliptic regularization. Equivalently, the function $U_i(x,z)=u_i(x)-\epsilon_iz$ solves the smooth IMCF \eqref{eq-prelim:level_set} in the same region. The proof of Proposition \ref{prop-exist:goal_obs}\,$\sim$\,\ref{prop-exist:goal_reg} is done by obtaining the corresponding estimates on $\Sigma_t^i$, and then passing to the limit.

To distinguish from objects on $M$, we will use bold symbols to denote objects on $M\times\RR$. We denote by $\br(x,z)=r(x)$ the signed distance to $\p\Omega\times\RR$. Note that $\br$ is smooth in $(\Omega_{3r_I}\setminus\bar{\Omega_{-3r_I}})\times\RR$. Then denote the product metric $\bg=g+dz^2$ and radial vector field $\bpr=\D_{\bg}\br$. In the region $(\bar\Omega\setminus\bar{E_0})\times\RR$, let
\begin{equation}\label{eq-exist:def_bnu}
	\bnu_i:=\frac{\D_{\bg}U_i}{|\D_{\bg}U_i|_{\bg}}=\frac{\D u_i-\epsilon_i\p_z}{\sqrt{\epsilon_i^2+|\D u_i|^2}}
\end{equation}
be the downward unit normal vector field of the foliation $\Sigma_t^i$ (we remind that $\psi_{\delta_i}=0$ in $\bar\Omega$, so the weighted IMCF is reduced to the usual one). According to Theorem \ref{thm-ellreg:convergence}, the vector fields $\bnu_i$ converge to some $\bnu$ in the weak sense in $L^1_{\loc}(\Omega\setminus\bar{E_0})$, and the projection of $\bnu$ onto the $\Omega$ factor is the calibration $\nu$ given in Lemma \ref{lemma-exist:convergence}.

The above propositions are proved via the following steps:

1. We show that when $i\to\infty$, we have $\inf_{\p\Omega\times\RR}\metric{\bnu_i}{\bpr}_{\bg}\to1$ (Lemma \ref{lemma-exist:tangent_at_pOmega}). The proof uses a blow-up argument: if there is a sequence of exceptional points $x_i\in\p\Omega\times\RR$, then we may rescale the solutions $u_i$ near $x_i$ and obtain an exceptional blow-up limit. The limit function will be a weak solution of
\[\div\Big(e^{\psi_0(x_n)}\frac{\D u}{|\D u|}\Big)=e^{\psi_0(x_n)}|\D u|.\]
However, Theorem \ref{thm-halfplane:approx} implies that this limit must have the form $u=\psi(x_n)-C$. Then, standard geometric measure theory is used to show that the space-time graphs $\Sigma_t^i$ converge to a hyperplane at $x_i$ in $C^1$ sense, contradicting our hypotheses for bad points.

2. Let $r_i$ be the largest radius so that $\metric{\bnu_i}{\bpr}_{\bg}\geq1/2$ holds inside $(\Omega\setminus\Omega_{-r_i})\times\RR$. We show that $r_i$ is uniformly bounded below (Lemma \ref{lemma-exist:graphical}). If this is false, then we find a subsequence of radii $r_i\to0$ and exceptional points $x_i\in\p\Omega_{-r_i}\times\RR$, with $\metric{\bnu_i}{\bpr}_{\bg}(x_i,0)=1/2$. Using the result of step 1 and the parabolic estimate in Lemma \ref{lemma-para:box_qeta}, we will obtain the bounds
\begin{equation}\label{eq-exist:aux6}
	1-\metric{\bnu_i}{\bpr}_{\bg}\leq C\big(o(1)+|r|/r_i\big)^\gamma
\end{equation}
for some uniform constants $C,\gamma$. Then consider a blow-up sequence centered at $x_i$. The limit is an exceptional weak solution $u'$ on $\{x_n<0\}\subset\RR^{n+1}$. The bound \eqref{eq-exist:aux6} passes to the limit and implies that $u'$ respects the obstacle $\{x_n=0\}$, due to Corollary \ref{cor-obs:bd_orthogonal_2}. This eventually contradicts Theorem \ref{thm-halfplane:liouville}, by looking at the convergence of the space-time graphs $\Sigma_t^i$ (this is more involved compared to the previous step).

3. Combining the above two steps, we eventually obtain \eqref{eq-exist:bound_direction} through another application of parabolic estimate (Lemma \ref{lemma-para:box_qeta}). The boundary gradient estimate \eqref{eq-exist:bound_du} and H\"older regularity follows by applying Lemma \ref{lemma-para:box_etaH}. Finally, Proposition \ref{prop-exist:goal_reg} follows by an $\epsilon$-regularity theorem for almost perimeter-minimizers (in the sense of \eqref{eq-exist:almost_min} below).

\subsection{\texorpdfstring{Proof of Propositions \ref{prop-exist:goal_obs}\,$\sim$\,\ref{prop-exist:goal_reg}}{Proof of Propositions 6.5-6.7}}\label{subsec:exist_proofs}

We assume all the setup and notations made in subsection \ref{subsec:exist_setup} and \ref{subsec:exist_strategy}.

\begin{lemma}\label{lemma-exist:tangent_at_pOmega}
	For any $\th<1$, there exists $i_0\in\NN$ such that for all $i\geq i_0$ we have
	\[\inf_{\p\Omega\times\RR}\metric{\bnu_i}{\bpr}_{\bg}\geq\th.\]
\end{lemma}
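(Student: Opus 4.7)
The plan is a blow-up argument by contradiction, combined with the Liouville-type Theorem \ref{thm-halfplane:approx}. Suppose the conclusion fails. Since $\bnu_i$ and $\bpr$ are $z$-invariant (because $u_i$ and $\psi_{\delta_i}$ do not depend on $z$), there exist $\theta_0\in(0,1)$, a subsequence, and points $x_i\in\p\Omega$ with $\metric{\bnu_i}{\bpr}_{\bg}(x_i,0)<\theta_0$; by compactness of $\p\Omega$ we may assume $x_i\to x_\ast$. In Riemannian normal coordinates on $M$ centered at $x_i$ with $\p_{y^n}|_0=\p_r|_{x_i}$, set
\[
    \tilde{u}_i(y):=u_i\big(\exp_{x_i}(\delta_i y)\big)-u_i(x_i)
\]
on $\tilde\Omega_i:=\delta_i^{-1}\exp_{x_i}^{-1}(\Omega_{\lambda_i\delta_i})$, equipped with the rescaled metric $\tilde g_i:=\delta_i^{-2}\exp_{x_i}^{\ast}g$. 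By the scale-invariance of IMCF and Lemma \ref{lemma-prelim:def_weighted}(3), together with Remark \ref{rmk-ellreg:weighted}, $\tilde u_i$ is a smooth $\epsilon_i$-regularized solution of the weighted IMCF with weight $\tilde\psi_i(y):=\psi_0\big(\delta_i^{-1}r(\exp_{x_i}(\delta_i y))\big)$. As $i\to\infty$, the metrics $\tilde g_i$ converge smoothly on compacta to $g_\euc$, the weights $\tilde\psi_i$ converge locally uniformly to $\psi_0(y_n)$ on $\{y_n<1\}$, and the domains $\tilde\Omega_i$ exhaust $\{y_n<1\}$.

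\textbf{Identifying the blow-up limit.} By \eqref{eq-exist:reg_C0_bounds} and $u_i(x_i)\in[-\epsilon_i,C]$, the $\tilde u_i$ are locally uniformly bounded on $\{y_n<1\}$. Applying Theorem \ref{thm-ellreg:grad_est_smooth} to the spacetime smooth IMCF solution $\tilde U_i(y,w):=\tilde u_i(y)-\epsilon_i\delta_i w$ in the rescaled product metric $\tilde g_i+dw^2$ (where the weight is trivial on $\{y_n<0\}$) yields $|\D\tilde u_i(y)|\leq C/|y_n|$ on $\{y_n<0\}$. Passing to a subsequence and invoking Theorem \ref{thm-ellreg:convergence} in the conformal metric $e^{2\tilde\psi_i/(n-1)}\tilde g_i$, we obtain a $C^0_{\loc}$ limit $u_\infty\in\Lip_{\loc}(\{y_n<1\})$ which is a weak solution of the weighted IMCF \eqref{eq-halfplane:weighted_eq} with weight $\psi_0(y_n)$. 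The $C^0$ and gradient bounds pass to the limit, so the hypotheses of Theorem \ref{thm-halfplane:approx} are satisfied, giving $u_\infty(y)=\psi_0(y_n)-C_0$. The normalization $\tilde u_i(0)=0$ forces $u_\infty(0)=-C_0=0$, so $u_\infty(y)=\psi_0(y_n)$.

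\textbf{GMT-based contradiction.} Consider the rescaled sub-level sets $\tilde E^i_0:=\{\tilde U_i<0\}$ in spacetime; the origin $(0,0)$ lies on $\p^\ast\tilde E^i_0$ because the spacetime gradient of $\tilde U_i$ at $(0,0)$ is nonzero (it contains the $-\epsilon_i\delta_i\,\p_w$ component). By \eqref{eq-prelim:Lam_r0_min} and the spacetime gradient bounds, the $\tilde E^i_0$ are uniform $(\Lambda,r_0)$-perimeter minimizers in compact subsets of $\{y_n<1\}\times\RR$. Any $L^1_{\loc}$ subsequential limit $E$ satisfies $E\subset\{u_\infty\leq 0\}\times\RR=\{y_n\leq 0\}\times\RR$ by Remark \ref{rmk-prelim:level_set_convergence}, and is a local perimeter minimizer in $\RR^{n+1}$; Lemma \ref{lemma-gmt:min_in_halfplane} then forces $E=\emptyset$ or $E=\{y_n\leq 0\}\times\RR$. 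The uniform lower density bound at points of $\p^\ast\tilde E^i_0$ (a standard consequence of the $(\Lambda,r_0)$-minimizing property, cf.\ Lemma \ref{lemma-gmt:Lam_r0_convergence}(i)) rules out the empty limit, so $E=\{y_n\leq 0\}\times\RR$. Since the limit boundary $\{y_n=0\}\times\RR_w$ is smooth (totally geodesic), Lemma \ref{lemma-gmt:Lam_r0_convergence}(ii) and Allard's $\epsilon$-regularity give $C^{1,\alpha}$ convergence of $\p^\ast\tilde E^i_0$ to it on a neighborhood of the origin, hence $\tilde\bnu_i(0,0)\to\p_{y^n}$. Since the rescaling is conformal and preserves unit-vector directions, this reads $\metric{\bnu_i(x_i,0)}{\bpr}_{\bg}\to 1$, contradicting $\metric{\bnu_i(x_i,0)}{\bpr}_{\bg}<\theta_0<1$.

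\textbf{Main obstacle.} The delicate step is excluding $E=\emptyset$ as a possible $L^1_{\loc}$ limit and then upgrading $L^1_{\loc}$ convergence to pointwise convergence of the unit normals at the single point $(0,0)$. The first issue is controlled by the uniform density estimate for $(\Lambda,r_0)$-minimizers, which applies because $(0,0)$ is a regular point of each $\tilde\Sigma_0^i$ (the spacetime gradient of $\tilde U_i$ being nonzero for every $i>0$). The second is Allard's $\epsilon$-regularity applied to the smooth totally geodesic limit, transferred through Lemma \ref{lemma-gmt:Lam_r0_convergence}(ii) to $C^{1,\alpha}$ convergence of the boundaries in a fixed neighborhood of the origin.
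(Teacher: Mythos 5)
The overall architecture of your argument matches the paper's: blow up at a sequence of bad boundary points, identify the limit via the approximate Liouville Theorem~\ref{thm-halfplane:approx}, and then derive a contradiction by showing that the spacetime sub-level sets converge in $C^1$ to a half-space at the origin. Your use of the normalization $\tilde u_i(0)=0$ to fix $C_0=0$ immediately is actually a small streamlining of the paper, which lists a redundant case $C'>0$.

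However, there is a genuine gap in the GMT step. You write that the subsequential limit $E$ of the $\tilde E^i_0$ ``is a local perimeter minimizer in $\RR^{n+1}$'' and then apply Lemma~\ref{lemma-gmt:min_in_halfplane}, but you give no justification, and the assertion does not follow from the facts you have established. The uniform $(\Lambda,r_0)$-minimizing property of $\tilde E^i_0$ only gives (via Lemma~\ref{lemma-gmt:Lam_r0_convergence}(i)) that $E$ is a $(\Lambda,r_0)$-minimizer with the \emph{same} nonzero $\Lambda$; that is not a perimeter minimizer. One also cannot argue directly from compactness (Theorem~\ref{thm-prelim:compactness}) that $E$ minimizes $J_{u_\infty}$ and hence perimeter: although $u_\infty=\psi_0(y_n)$ is constant on $\{y_n\leq 0\}$ where $E$ lives, it has nonzero gradient on $\{0<y_n<1\}$, so for an outward competitor $F\supset E$ that protrudes into $\{y_n>0\}$ the energy $J_{u_\infty}(F)=P(F)-\int_F|\D u_\infty|$ is strictly less than $P(F)$, and the inequality $J_{u_\infty}(E)\leq J_{u_\infty}(F)$ does not yield $P(E)\leq P(F)$. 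The paper closes this precisely with a two-sided argument: inward perimeter-minimality of $E$ follows because $E\subset\{y_n\leq 0\}\times\RR$ and $u_\infty\equiv 0$ there, so for $F\subset E$ the $J$-comparison reduces to a perimeter comparison; outward perimeter-minimality in all of $\RR^{n+1}$ is deduced from the outward-minimality of the \emph{weighted} perimeter $\int_{\p^*E}e^{\psi_0}$ inherited from the $\tilde E^i_0$, combined with $e^{\psi_0}\geq 1$ on $\{y_n<1\}$ and $e^{\psi_0}=1$ on $\{y_n\leq 0\}\supset\p^*E$. Without reconstructing both halves, the appeal to Lemma~\ref{lemma-gmt:min_in_halfplane} is not licensed and your contradiction does not close. (A minor related point: Lemma~\ref{lemma-gmt:min_in_halfplane} yields $E=\{y_n<-c\}$; you should make explicit that the density estimate gives $(0,0)\in\spt(|D\chi_E|)$, and hence $c=0$, before invoking Lemma~\ref{lemma-gmt:Lam_r0_convergence}(ii).)
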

\begin{proof}
	Suppose that the conclusion does not hold. Then passing to a subsequence (which we do not relabel), we can find a constant $\th_0<1$ and a sequence of points $q_i=(x_i,z_i)\in\p\Omega\times\RR$, such that $\metric{\bnu_i}{\bpr}_{\bg}(x_i,z_i)\leq\th_0$. Since $\bnu_i$ is invariant under vertical translation, we may assume $z_i=0$. Passing to a further subsequence, we may assume $x_i\to x_0\in\p\Omega$.
	
	Consider the domains
	\[\Omega_i:=B_g(x_i,\sqrt{\delta_i})\cap\Omega_{\lambda_i\delta_i},\qquad\bOmega_i:=\Omega_i\times(-1,1)\]
	with the rescaled and conformally transformed metrics
	\[h_i:=\delta_i^{-2}e^{2\psi_{\delta_i}/(n-1)}g,\qquad\bh_i:=h_i+\delta_i^{-2}dz^2.\]
	Set the normalized function $u'_i(x):=u_i(x)-u_i(x_i)$. By the equation \eqref{eq-exist:reg_bvp}, Remark \ref{rmk-ellreg:weighted} and the scaling invariance of IMCF, the functions $U'_i(x,z):=u'_i(x)-\epsilon_iz$ are smooth solutions of the IMCF in $(\bOmega_i,\bh_i)$.
	
	To properly state the blow-up process, we define some suitable coordinate maps. In each tangent space $T_{x_i}M$, fix an orthonormal frame $\{e_i\}$ such that $e_1,\cdots,e_{n-1}$ are tangential to $\p\Omega$, and $e_n=\p_r$. With respect to this frame, the $g$-exponential map at $x_i$ (denoted by $\exp_{x_i}$) is a diffeomorphism from a small ball in $\RR^n$ to its image. Define the maps
	\[\Phi_i:x\mapsto\exp_{x_i}(\delta_ix),\qquad \bPhi_i:(x,z)\mapsto\big(\!\exp_{x_i}(\delta_ix),\delta_iz\big),\]
	and set the preimages\footnote{The rule for setting up the notations: boldface $\to$ objects in $M\times\RR$; tilde $\to$ objects in the pulled-back spaces; prime $\to$ normalized functions (so that the value at $x_i$ is zero).} $\tilde\Omega_i:=\Phi_i^{-1}(\Omega_i)\subset\RR^n$, $\tilde\bOmega_i:=\bPhi_i^{-1}(\bOmega_i)=\tilde\Omega_i\times(-\delta_i^{-1},\delta_i^{-1})$. Also, set the pullbacks $\tilde h_i:=\Phi_i^*h_i$, $\tilde\bh_i:=\bPhi_i^*\bh_i=\tilde h_i+dz^2$, and $\tilde u'_i:=u'_i\circ\Phi_i$, $\tilde U'_i:=U'_i\circ\bPhi_i$. Note that $\tilde U'_i(0)=U'_i(x_i,0)=0$ and $\tilde U'_i(x,z)=\tilde u'_i(x)-\epsilon_i\delta_iz$. By diffeomorphism invariance, $\tilde U'_i$ are smooth solutions of IMCF in $(\tilde\bOmega_i,\tilde\bh_i)$. Therefore, $\tilde u'_i$ solves the $(\epsilon_i\delta_i)$-regularized equation
	\[\div\left(\frac{\D_{\tilde h_i}\tilde u'_i}{\big(\epsilon_i^2\delta_i^2+|\D_{\tilde h_i}\tilde u'_i|^2\big)^{1/2}}\right)=\sqrt{\epsilon_i^2\delta_i^2+|\D_{\tilde h_i}\tilde u'_i|^2}\qquad\text{in }(\tilde\Omega_i,\tilde h_i).\]
	
	Next, we take limit of the rescaled objects. Note that $\tilde\Omega_i\to\{x_n<1\}\subset\RR^n$ locally, since $\delta_i\to0$ and $\lambda_i\to1$. Also, $\tilde h_i\to\tilde h:=e^{2\psi_0(x_n)/(n-1)}(dx_1^2+\cdots+dx_n^2)$ locally smoothly. Apply Theorem \ref{thm-ellreg:convergence} with the data $\tilde\Omega_i$, $\tilde h_i$ and $\tilde u'_i$. As a result, there is a subsequence such that $\tilde u'_i$ converge in $C^0_{\loc}$ to a function $\tilde u'\in\Lip_{\loc}(\{x_n<1\})$, and $\tilde u'$ solves $\IMCF{\{x_n<1\},\tilde h}$. By Lemma \ref{lemma-prelim:def_weighted}(3), $\tilde u'$ is a weak solution of the weighted IMCF
	\[\div\Big(e^{\psi_0(x_n)}\frac{\D_0\tilde u'}{|\D_0\tilde u'|}\Big)=e^{\psi_0(x_n)}|\D_0\tilde u'|,\]
	where $\D_0$ denotes the Euclidean gradient. The gradient estimate \eqref{eq-prelim:grad_est_limit} implies
	\[|\D_0\tilde u'(x)|=|\D_{\tilde h}\tilde u'(x)|\leq\frac{C(n)}{\sigma\big(x;\{x_n<1\},\tilde h\big)}\leq\frac{C(n)}{|x_n|},\qquad\forall\,x\in\{x_n<0\}.\]
	Furthermore, the $C^0$ bounds in Lemma \ref{lemma-exist:reg_solvability} give (recall $u'_i=u_i-u_i(x_i)$)
	\[u'_i\geq\psi_{\delta_i}-2C\ \ \Rightarrow\ \ \tilde u'_i(x)\geq\psi_{\delta_i}\big(\!\exp_{x_i}(\delta_ix)\big)-2C=\psi_0(x_n)+o(1)-2C.\]
	This passes to the limit and gives the bound $\tilde u'(x)\geq\psi_0(x_n)-2C$. Now all the assumptions of Theorem \ref{thm-halfplane:approx} are met, and we obtain $\tilde u'(x)=\psi_0(x_n)-C'$ for some other $C'$.
	
	Set $\tilde U'(x,z):=\tilde u'(x)=\psi_0(x_n)-C'$, which is the $C^0_{\loc}$ limit of $\tilde U'_i$. The sub-level sets $E_0(\tilde U'_i)$ have locally uniformly bounded perimeter, by Lemma \ref{lemma-prelim:BV_loc}(i), Thus passing to a further sequence, we may assume $E_0(\tilde U'_i)\to E$ in $L^1_{\loc}$, for some set $E\subset\big\{(x,z):x_n<1\big\}$. Since $\tilde U'_i\to\tilde U'$ in $C^0_{\loc}$, we have (see Remark \ref{rmk-prelim:level_set_convergence})
	\begin{equation}\label{eq-exist:aux2}
		E_0(\tilde U')\subset E\subset E_0^+(\tilde U')
	\end{equation}
	up to zero measure. Next, by Corollary \ref{cor-ellreg:grad_est} and the nice convergence of $\tilde\bh_i$, the gradients $|\D_{\tilde\bh_i}\tilde U'_i|$ are uniformly bounded in $B_{\RR^{n+1}}(0,1/2)$, for all large $i$. Then by \eqref{eq-prelim:Lam_r0_min}, $E_0(\tilde U'_i)$ are uniform $(\Lambda,r_0)$-perimeter minimizers in $\big(B_0(0,1/2),\tilde\bh_i\big)$. By Lemma \ref{lemma-gmt:Lam_r0_convergence}(i) and the fact $0\in\p E_0(\tilde U'_i)$, we have $0\in\spt(|D\chi_E|)$. Combined with \eqref{eq-exist:aux2} and the expression $\tilde U'=\psi(x_n)-C$, we see that there are only two possibilities:
	
	\vspace{2pt}
	
	(1) $C'>0$ and $E=\{x_n<0\}$, or
	
	(2) $C'=0$, $\tilde U'(x,z)=\psi_0(x_n)$, and $E\subset\{x_n<0\}$.
	
	\vspace{2pt}
	
	\noindent Suppose the second case holds. By Theorem \ref{thm-prelim:compactness}, the limit set $E$ locally minimizes the energy $J_{\tilde U'}$ in $\big\{(x,z):x_n<1\big\}$. As $\tilde U'\equiv0$ in $\big\{(x,z):x_n<0\big\}$, this implies that $E$ is locally inward-minimizing. On the other hand, $E$ is an outward minimizer of the weighted perimeter $\int_{\p^*E}e^{\psi_0}$. By direct verification, this implies that $E$ is locally outward perimeter-minimizing in $\RR^{n+1}$. As a result, $E$ is locally perimeter-minimizing in $\RR^{n+1}$. By Lemma \ref{lemma-gmt:min_in_halfplane} and the facts $E\subset\{x_n<0\}$, $0\in\spt(|D\chi_E|)$, we have $E=\{x_n<0\}$.
	
	So in either case we conclude that $E=\{x_n<0\}$. Then by Lemma \ref{lemma-gmt:Lam_r0_convergence}(ii), we have
	\[\nu_{E_0(\tilde U'_i)}(0)\to\nu_E(0)=\p_{x_n},\]
	where $\nu_{E_0(\tilde U'_i)}$ is the outer unit normal of $E_0(\tilde U'_i)$ with respect to $\tilde\bh_i$. On the other hand, we may evaluate by unraveling the pullbacks
	\[\big\langle\nu_{E_0(\tilde U'_i)},\p_{x_n}\big\rangle_{\tilde\bh_i}(0)=\metric{\bnu_i}{\bpr}_{\bg}(x_i,0).\]
	This contradicts our initial assumption $\metric{\bnu_i}{\bpr}_{\bg}(x_i,0)\leq\th_0$, thus proving the lemma.
\end{proof}

\begin{lemma}\label{lemma-exist:graphical}
	There exists $r_0<r_I$ and $i_0\in\NN$, such that for all $i\geq i_0$ we have
	\[\inf_{(\Omega\setminus\Omega_{-r_0})\times\RR}\metric{\bnu_i}{\bpr}_{\bg}\geq\frac12.\]
\end{lemma}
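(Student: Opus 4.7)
The plan is to argue by contradiction along the lines of Subsection~\ref{subsec:exist_strategy}. Assuming the conclusion fails, after passing to a subsequence we may take $r_i\in(0,r_I)$ to be the largest radius with $\metric{\bnu_i}{\bpr}_{\bg}\geq\frac{1}{2}$ on $(\Omega\setminus\Omega_{-r_i})\times\RR$, and have $r_i\searrow0$. By continuity, there exists $x_i=(y_i,0)\in\p\Omega_{-r_i}\times\RR$ (using vertical translation invariance) with $\metric{\bnu_i}{\bpr}_{\bg}(x_i)=\frac{1}{2}$; along a further subsequence $y_i\to y_\infty\in\p\Omega$, and we denote by $p_i\in\p\Omega$ the nearest point, so $y_i=\exp_{p_i}(-r_i\bpr|_{p_i})$.

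\textit{First step: direction estimate via Lemma~\ref{lemma-para:box_qeta}.} Inside $\Omega\times\RR$ the weighted IMCF reduces to the ordinary one (as $\psi_{\delta_i}=0$ on $\Omega$), so $\Sigma_t^i=\graph(\epsilon_i^{-1}(u_i-t))$ is a smooth IMCF on $(\Omega\setminus\Omega_{-r_i})\times\RR$ satisfying $\metric{\bnu_i}{\bpr}_{\bg}\geq\frac{1}{2}$ there. By Lemma~\ref{lemma-exist:tangent_at_pOmega} there exists $\theta_i\nearrow 1$ with $\metric{\bnu_i}{\bpr}_{\bg}\geq\theta_i$ on $\p\Omega\times\RR$. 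Set $b_i=(1-\theta_i)^{1/(2\gamma)}\to 0$ and $F_i=(1-\metric{\bnu_i}{\bpr})(b_i-r/r_i)^{-\gamma}$; Lemma~\ref{lemma-para:box_qeta} yields the favorable evolution, and the parabolic maximum principle together with the boundary bounds $F_i\leq(1-\theta_i)b_i^{-\gamma}=\sqrt{1-\theta_i}$ on $\p\Omega\times\RR$ and $F_i\leq\frac{1}{2}(b_i+1)^{-\gamma}\leq\frac{1}{2}$ on $\p\Omega_{-r_i}\times\RR$ forces $\sup F_i\leq C$. Rearranging:
\[
1-\metric{\bnu_i}{\bpr}_{\bg}(x,z)\leq C\bigl(b_i+|r(x)|/r_i\bigr)^\gamma \quad \text{on} \quad (\Omega\setminus\Omega_{-r_i})\times\RR.
\]

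\textit{Second step: blow-up at $p_i$ by the scale $r_i$.} Using normal coordinates at $p_i$ with $e_n=\bpr|_{p_i}$, set $\Phi_i(\tilde x)=\exp_{p_i}(r_i\tilde x)$, $\bPhi_i(\tilde x,\tilde z)=(\Phi_i(\tilde x),r_i\tilde z)$, rescaled metrics $\tilde h_i=r_i^{-2}\Phi_i^*g$, $\tilde\bh_i=\tilde h_i+d\tilde z^2$, and normalized functions $\tilde u'_i=u_i\circ\Phi_i-u_i(y_i)$. By scale invariance $\tilde U'_i(\tilde x,\tilde z)=\tilde u'_i(\tilde x)-\epsilon_ir_i\tilde z$ is a smooth IMCF solution in $(\tilde\bOmega_i,\tilde\bh_i)$. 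As $i\to\infty$: $\tilde\Omega_i\to\{\tilde x_n<0\}$ locally, $\tilde h_i\to g_{\Euc}$ smoothly, and $\epsilon_ir_i\to 0$. Theorem~\ref{thm-ellreg:convergence} yields (along a subsequence) $\tilde U'_i\to\tilde U'=\tilde u'(\tilde x)$ in $C^0_\loc$, with $\tilde u'\in\Lip_\loc(\{\tilde x_n<0\})$ solving $\IMCF{\{\tilde x_n<0\}}$ and calibrated by the weak $L^1_\loc$-limit $\tilde\bnu$ of $\tilde\bnu_i$. The $C^0$ bounds \eqref{eq-exist:reg_C0_bounds} give $|\tilde u'|\leq C$, and \eqref{eq-prelim:grad_est_limit} gives $|\D\tilde u'|\leq C/|\tilde x_n|$. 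The direction estimate from Step~1, in rescaled coordinates, becomes $1-\metric{\tilde\bnu_i}{e_n}_{\tilde\bh_i}\leq C(b_i+|\tilde x_n|)^\gamma(1+o(1))$ on $\{-1<\tilde x_n<0\}\times\RR$; passing to the weak limit, $1-\metric{\tilde\nu}{e_n}\leq C|\tilde x_n|^\gamma$ a.e., where $\tilde\nu$ is the spatial projection of $\tilde\bnu$. By Corollary~\ref{cor-obs:bd_orthogonal_2}, $\tilde U'$ solves $\IMCFOOinthm{\{\tilde x_n<0\}}{\{\tilde x_n=0\}}$ on $\RR^{n+1}$; Theorem~\ref{thm-halfplane:liouville} (applied in dimension $n+1$) then forces $\tilde U'$ to be constant, and the normalization $\tilde U'(-e_n,0)=\tilde u'(-e_n)=0$ gives $\tilde U'\equiv 0$.

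\textit{Third step: contradiction from the spacetime level sets.} Each $\Sigma_0^i=\{\tilde U'_i=0\}$ passes through $(-e_n,0)$ with unit normal satisfying $\metric{\tilde\bnu_i}{e_n}_{\tilde\bh_i}(-e_n,0)=\metric{\bnu_i}{\bpr}_{\bg}(y_i,0)=\frac{1}{2}$. The rescaled gradient estimate makes $|\D_{\tilde\bh_i}\tilde U'_i|$ uniformly bounded in a ball about $(-e_n,0)$, so by \eqref{eq-prelim:Lam_r0_min} the sets $E_0(\tilde U'_i)$ are uniform $(\Lambda,r_0)$-perimeter minimizers there. A subsequence converges $E_0(\tilde U'_i)\to E\subset\{\tilde x_n\leq 0\}$ in $L^1_\loc$; by Theorem~\ref{thm-prelim:compactness} (with $\tilde U'\equiv 0$) $E$ is a local perimeter minimizer in $\{\tilde x_n<0\}\times\RR$, and Lemma~\ref{lemma-gmt:Lam_r0_convergence}(i) places $(-e_n,0)\in\spt(|D\chi_E|)$. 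The translating-soliton structure $E_0(\tilde U'_i)=\{\tilde z>\tilde u'_i(\tilde x)/(\epsilon_ir_i)\}$ realizes each approximant as an epigraph in the $\tilde z$-direction. Combined with $\tilde u'_i\to 0$, this should force $E$ (locally near $(-e_n,0)$) to be the horizontal half-space $\{\tilde z>0\}$, so that $\nu_E(-e_n,0)=-\p_{\tilde z}$. Lemma~\ref{lemma-gmt:Lam_r0_convergence}(ii) then yields pointwise convergence $\tilde\bnu_i(-e_n,0)\to-\p_{\tilde z}$, which has zero inner product with $e_n$—contradicting the identity $\metric{\tilde\bnu_i}{e_n}_{\tilde\bh_i}(-e_n,0)=\frac{1}{2}$ maintained along the sequence.

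\textit{Main obstacle.} The delicate point is the identification in the third step: extracting from $\tilde U'\equiv 0$ that the limit set $E$ is locally a horizontal half-space. The soliton epigraph structure gives graph functions $f_i=\tilde u'_i/(\epsilon_ir_i)$ whose limit is not a priori well-defined, since $\tilde u'_i$ and $\epsilon_ir_i$ both vanish. A quantitative argument combining elliptic regularity for the $(\epsilon_ir_i)$-regularized equation with the obstacle-side bound from Step~1 should propagate the $C^0$ convergence $\tilde u'_i\to 0$ to control of $f_i$ near $-e_n$, forcing $f_i$ to converge to the constant $0$ and hence $E$ to coincide with $\{\tilde z>0\}$ locally. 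I would expect this to occupy a substantial portion of the full proof.
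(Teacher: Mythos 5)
Your proposal correctly captures the first two steps (the parabolic estimate from Lemma~\ref{lemma-para:box_qeta} yielding the H\"older-type bound on $1-\metric{\bnu_i}{\bpr}_{\bg}$, and the blow-up at scale $r_i$ reducing to the half-space Liouville Theorem~\ref{thm-halfplane:liouville} to force $\tilde U'\equiv 0$). However, Step 3 has two genuine problems, one of which you partially anticipated but then went the wrong direction to repair.

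\textbf{The geometric conclusion in Step 3 is wrong.} You claim the limit $E$ should be the horizontal half-space $\{\tilde z>0\}$ with $\nu_E=-\p_{\tilde z}$. But $E$ is obtained as a limit of subsets of the rescaled domains $\tilde\bOmega_i$, which converge to $\{\tilde x_n<0\}\subset\RR^{n+1}$, so $E\subset\{\tilde x_n\leq 0\}$; the half-space $\{\tilde z>0\}$ is not contained in $\{\tilde x_n\leq 0\}$, so it cannot be the limit. The correct conclusion is that $E=\{\tilde x_n<-1\}$, a \emph{vertical} half-space with $\nu_E=\p_{\tilde x_n}$, which makes the inner product with $e_n$ converge to $1$ (not $0$), still contradicting the value $1/2$. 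Your intuition that $\tilde u'_i\to 0$ forces the epigraph functions $f_i=\tilde u'_i/(\epsilon_ir_i)$ to converge to $0$ is also not sound: the ratio $0/0$ is indeterminate and can produce, in principle, any limiting half-space or none at all. This is not a gap that can be closed by quantitative control on $f_i$; rather, the paper's argument deliberately avoids trying to control the graph function.

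\textbf{The missing ingredient is a boundary inward-minimizing property.} The paper's actual Step~2, which you omit entirely, derives from the boundary orthogonality $\metric{\bnu_i}{\bpr}_{\bg}\geq\th_i$ on $\p\Omega\times\RR$ (from Lemma~\ref{lemma-exist:tangent_at_pOmega}) a quantitative inward-minimizing inequality of the form $\th_i\Ps{E_t(U_i)\cap\bOmega;K}\leq\Ps{(E_t(U_i)\cap\bOmega)\setminus F;K}+(e^{t-\inf_{F\cap\bOmega}(U_i)}-1)\Ps{F}$, valid for competitors $F$ touching $\p\Omega$. This is a calibration/divergence-theorem argument analogous to Lemma~\ref{lemma-prelim:excess_ineq} but pushed to the obstacle boundary. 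It is precisely this inequality that, after scaling and passing to the blow-up limit (together with the ``almost constancy'' $\min_{Q_R}\tilde U'_i\to 0$), upgrades the limiting $E$ from a local perimeter minimizer inside the open half-space $\{\tilde x_n<0\}$ to a local perimeter minimizer in all of $\RR^{n+1}$. Only then can one invoke Lemma~\ref{lemma-gmt:min_in_halfplane} to force $E=\{\tilde x_n<-1\}$ and then Lemma~\ref{lemma-gmt:Lam_r0_convergence}(ii) to get the convergence of normals. Without the boundary minimizing property, a minimizer in the open half-space containing $-\be_n$ on its boundary is simply not classified — $\{\tilde z>0\}$ (restricted to $\{\tilde x_n<0\}$) is a perfectly valid example — so no contradiction can be extracted from constancy of $\tilde U'$ alone.
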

\begin{proof}
	By Lemma \ref{lemma-exist:tangent_at_pOmega}, we have $\inf_{\p\Omega\times\RR}\metric{\bnu_i}{\bpr}_{\bg}\geq3/4$ for all sufficiently large $i$. Suppose that the lemma is false. Then passing to a subsequence, we may find radii $r_i\in(0,r_I)$, $r_i\to0$, and a sequence of points $q_i=(x_i,z_i)\in\p\Omega_{-r_i}\times\RR$, such that
	\begin{equation}\label{eq-exist:to_contradict}
		\metric{\bnu_i}{\bpr}_{\bg}(x_i,z_i)\leq\frac12.
	\end{equation}
	By translation invariance, we may assume $z_i=0$. By re-selecting each $x_i$ to have the smallest distance to $\p\Omega$ (and decreasing $r_i$ correspondingly), we can further assume that
	\begin{equation}\label{eq-exist:smallest_dist}
		\metric{\bnu_i}{\bpr}_{\bg}\geq\frac12 \quad\text{inside}\quad (\Omega\setminus\Omega_{-r_i})\times\RR.
	\end{equation}
	Finally, passing to a further subsequence, we may assume $x_i\to x_0\in\p\Omega$. By Lemma \ref{lemma-exist:tangent_at_pOmega}, there exist numbers $\th_i\to1$ such that
	\begin{equation}\label{eq-exist:result_on_bd}
		\inf_{\p\Omega\times\RR}\metric{\bnu_i}{\bpr}_{\bg}\geq\th_i.
	\end{equation}
	
	\hypertarget{lemma:6.9step1}{\textbf{Step 1.}} We establish a uniform bound on $\bnu_i$ by parabolic estimates.
	
	Recall that $\Sigma_t^i=\graph\big(\epsilon_i^{-1}(u_i-t)\big)$ forms a downward translating soliton of the usual IMCF in $(\bar\Omega\setminus\bar{E_0})\times\RR$. We apply Lemma \ref{lemma-para:box_qeta} to the flow $\Sigma_t^i$, with $\Omega$ replaced by $\Omega\times\RR$ and with the choice $r_0=r_i$. The small curvature condition \eqref{eq-para:small_curv} is implied by \eqref{eq-exist:small_curv}, and the condition $p\geq\frac12$ there comes from \eqref{eq-exist:smallest_dist}. Choosing the parameter $b_i=(1-\th_i)^{1/\gamma}$ there, it follows that the quantity $F=\big(1-\metric{\bnu_i}{\bpr}_{\bg}\big)(b_i-r/r_i)^{-\gamma}$ satisfies
	\[\big(1-\metric{\bnu_i}{\bpr}_{\bg}\big)^{\frac2\gamma-1}H^2\Box F+\metric{\DSigma F}{X}\leq \frac{C_1}{r_i^2}\big(\!-F^{\frac{\gamma+2}\gamma}+C_2\big)\]
	inside $(\Omega\setminus\bar{\Omega_{-r_i}})\times\RR$, where $\Box=\p_t-H^{-2}\Delta_\Sigma$ is the heat operator associated to the IMCF, and the subscripts $\Sigma$ are shorthands for $\Sigma_t^i$. Since $\Sigma_t^i$ evolves as translating soliton in the $z$-direction, we have $\p_t F=\metric{\DSigma F}{Y}$ for some smooth vector field $Y$. Hence
	\[-\big(1-\metric{\bnu_i}{\bpr}_{\bg}\big)^{\frac2\gamma-1}\Delta_{\Sigma} F+\metric{\DSigma F}{Z}\leq\frac{C_1}{r_i^2}\big(\!-F^{\frac{\gamma+2}\gamma}+C_2\big),\]
	for some $\gamma\in(0,1/2)$ and $C_1,C_2>0$ depending only on $n$. Since $\Sigma_t^i$ is graphical, the intersection $\Sigma_t^i\cap\big((\bar\Omega\setminus\Omega_{-r_I})\times\RR\big)$ is compact. By the maximum principle and invariance in the $z$-direction, we obtain
	\[\begin{aligned}
		\max_{(\bar\Omega\setminus\Omega_{-r_i})\times\RR}(F) &\leq \max\Big\{\max_{\p\Omega\times\RR}(F),\ \max_{\p\Omega_{-r_i}\times\RR}(F),\ C_2^{\frac{\gamma}{\gamma+2}}\Big\} \\
		&\leq\max\Big\{1,\ \frac12,\ C_2^{\frac{\gamma}{\gamma+2}}\Big\}.
	\end{aligned}\]
	From this we have
	\begin{equation}\label{eq-exist:aux1}
		1-\metric{\bnu_i}{\bpr}_{\bg}\leq C_3(b_i-\br/r_i)^\gamma\qquad\text{in}\ \ (\bar\Omega\setminus\Omega_{-r_i})\times\RR.
	\end{equation}
	
	\textbf{Step 2.} We establish an inward minimizing property of the level sets of $U_i$, where recall $U_i(x,z)=u_i(x)-\epsilon_iz$. Denote $\bOmega=\Omega\times\RR$. Fix $t\in\RR$, and consider $E=E_t(U_i)\cap\bOmega$. Suppose $F\Subset K\Subset(M\setminus\bar{E_0})\times\RR$. Using the boundary condition \eqref{eq-exist:result_on_bd}, the fact $\nu_E=\bnu_i$ in $\bOmega$, and the smooth IMCF equation $\div(\bnu_i)=|\D_{\bg}U_i|$, we evaluate by the divergence theorem as follows. In the expressions, we write $|\cdot|=\H^{n-1}(\cdot)$.
	\[\begin{aligned}
		&\quad\ \ \P{E;\bOmega\cap K}-\P{E\!\setminus\!F;\bOmega\cap K}+\th_i\Big(\big|\p^*E\cap\p\bOmega\cap K\big|-\big|\p^*(E\!\setminus\!F)\cap\p\bOmega\cap K\big|\Big) \\
		&\leq \int_{\p^*E\cap\bOmega\cap K}\metric{\nu_E}{\bnu_i}_{\bg}
			- \int_{\p^*(E\setminus F)\cap\bOmega\cap K}\metric{\nu_{E\setminus F}}{\bnu_i}_{\bg}
			+ \int_{\p\bOmega\cap K}\big(\chi_{\p^*E}-\chi_{\p^*(E\setminus F)}\big)\metric{\nu_{\bOmega}}{\bnu_i}_{\bg} \\
		&= \int_{E\cap F}|\D_{\bg}U_i|_{\bg}.
	\end{aligned}\]
	Next, by the coarea formula we have
	\[\int_{E\cap F}|\D_{\bg}U_i|_{\bg}=\int_{\inf_{F\cap\bOmega}(U_i)}^t\big|\p^*E_s(U_i)\cap\bOmega\cap F\big|\,ds.\]
	By a perimeter decomposition and Gronwall argument, as in Lemma \ref{lemma-prelim:excess_ineq}, these imply
	\[\begin{aligned}
		\P{E;\bOmega\cap K}+\th_i\big|\p^*E\cap\p\bOmega\cap K\big| &\leq \P{E\!\setminus\!F;\bOmega\cap K}+\th_i\big|\p^*(E\!\setminus\!F)\cap\p\bOmega\cap K\big| \\
		&\qquad\quad +\int_{\inf_{F\cap\bOmega}(U_i)}^t e^{t-s}\P{F;E_s(U_i)\cap\bOmega}\,ds.
	\end{aligned}\]
	Notice that $\Ps{E;K}=\Ps{E;\bOmega\cap K}+|\p^*E\cap\p\bOmega\cap K|$, since $E\subset\bOmega$. So the above inequality clearly implies (where we have plugged in the definition of $E$):
	\begin{equation}\label{eq-exist:aux_excess_ineq}
		\th_i\P{E_t(U_i)\cap\bOmega;K}\leq\P{(E_t(U_i)\cap\bOmega)\setminus F;K}+\big(e^{t-\inf_{F\cap\bOmega}(U_i)}-1\big)\Ps{F}.
	\end{equation}
	
	We remark that, by the spirit of Theorem \ref{thm-obs:auto_subsol}, outward-minimizing properties are sort of automatic. On the other hand, the inward-minimizing property is closely related to the boundary condition. This explains why \eqref{eq-exist:result_on_bd} is invoked in this step.
	
	\vspace{3pt}
	
	\textbf{Step 3.} We are ready for the blow-up argument. Let $y_i\in\p\Omega$ be the unique point with the smallest distance from $x_i$, thus $d(x_i,y_i)=r_i$. Consider the domains
	\[\Omega_i:=B_g(y_i,\sqrt{r_i})\cap\Omega,\qquad\bOmega_i:=\Omega_i\times(-1,1),\]
	with the metrics
	\[h_i:=r_i^{-2}g,\qquad \bh_i:=r_i^{-2}\big(g+dz^2\big).\]
	
	Define $u'_i(x):=u_i(x)-u_i(x_i)$, $U'_i(x,z):=u'_i(x)-\epsilon_iz$. Near $y_i$ we set up a geodesic normal coordinate in the same way as in Lemma \ref{lemma-exist:tangent_at_pOmega}. Let $\exp_{y_i}$ be the induced exponential map; thus we have $\exp_{y_i}(-r_ie_n)=x_i$. Consider the scaled coordinate maps
	\[\Phi_i: x\mapsto\exp_{y_i}(r_ix),\qquad \bPhi_i: (x,z)\mapsto(\exp_{y_i}(r_ix), r_iz).\]
	Denote $\tilde\Omega_i:=\Phi_i^{-1}(\Omega_i)\subset\RR^n$ and $\tilde\bOmega_i:=\bPhi_i^{-1}(\bOmega_i)=\tilde\Omega_i\times(-r_i^{-1},r_i^{-1})\subset\RR^{n+1}$, equipped with the metrics and functions
	\[\begin{aligned}
		& \tilde h_i=\Phi_i^*h_i,\qquad\ \ \ \tilde\bh_i:=\bPhi_i^*\bh_i=\tilde h_i+dz^2, \\
		& \tilde u'_i:=u'_i\circ\Phi_i,\qquad \tilde U'_i:=U'_i\circ\bPhi_i.
	\end{aligned}\]
	So $\tilde\Omega_i$ is the intersection of $B_{\RR^n}(0,1/\sqrt{r_i})$ with a set that locally approximates $\{x_n<0\}$ as $i\to\infty$. Also, notice that $\Phi_i^{-1}(\p\Omega_{-r_i})$ locally approaches $\{x_n=-1\}$ when $i\to\infty$. Let us view $\tilde u'_i$, $\tilde U'_i$ as functions defined only on $\tilde\Omega_i$, $\tilde\bOmega_i$ (thus we are discarding the values outside these domains).
	
	As usual, denote $e_n=(0,\cdots,0,1)\in\RR^n$. Further denote $\be_n=(e_n,0)\in\RR^{n+1}$. Note the following facts: $x_i=\Phi_i(-e_n)$, and $\tilde U'_i(x,z)=\tilde u'_i(x)-\epsilon_ir_iz$, so $\tilde U'_i(-\be_n)=0$. Also, $\tilde U'_i$ solves the smooth IMCF in $(\tilde\bOmega_i,\tilde\bh_i)$, by scaling and diffeomorphism invariance.
	
	We have $\tilde\bOmega_i\to\{(x,z): x_n<0\}\subset\RR^{n+1}$ and $\tilde\bh_i\to\beuc:=dx_1^2+\cdots+dx_{n+1}^2$ locally smoothly. By Corollary \ref{cor-ellreg:grad_est}, we have the gradient estimate
	\begin{equation}\label{eq-exist:scaled_grad_est}
		\big|\D_{\tilde\bh_i}\tilde U'_i\big|_{\tilde\bh_i}(x,z)\leq\frac{C(n)}{|x_n|}+o(1),
	\end{equation}
	where the $o(1)$ term is uniform in all compact sets and goes to zero as $i\to\infty$.
	
	By Theorem \ref{thm-ellreg:convergence} (applied with the data $\tilde\bOmega_i$, $\tilde\bh_i$, $\tilde U'_i$), up to a subsequence, we have $\tilde U'_i\to\tilde U'$ in $C^0_{loc}$ for some $\tilde U'\in\Lip_{\loc}(\{(x,z): x_n<0\})$, and the calibrations $\tilde\bnu_i:=\D_{\tilde\bh_i}\tilde U'_i/|\D_{\tilde\bh_i}\tilde U'_i|$ converge to some $\tilde\bnu$ in the weak $L^1_{\loc}$ topology. Finally, $\tilde\bnu$ calibrates $\tilde U'$ as a solution of $\IMCF{\{x_n<0\};\beuc}$.
	
	Taking limit of \eqref{eq-exist:scaled_grad_est} and the $C^0$ bounds in Lemma \ref{lemma-exist:reg_solvability}, we have
	\begin{equation}\label{eq-exist:obs_aux1}
		|\D_{\tilde\bh_0}\tilde U'|\leq\frac{C(n)}{|x_n|},\qquad
		\tilde U'(x,z)\geq-C.
	\end{equation}
	For $(x,z)\in\tilde\bOmega_i$ with $-1\leq x_n<0$, we may calculate by unraveling the pullbacks and using the asymptotics of the exponential map:
	\begin{equation}\label{eq-exist:exp_asymp}
		\big\langle \tilde\bnu_i,\,\p_{x_n}\big\rangle_{\tilde\bh_i}(x,z)=\metric{\bnu_i}{\bpr}_{\bg}(\Phi_i(x),r_iz)+o(1),
	\end{equation}
	where the $o(1)$ term locally uniformly converges to zero as $\delta\to0$. By \eqref{eq-exist:aux1} this implies
	\begin{equation}\label{eq-exist:exp_asymp_2}
		\big\langle \tilde\bnu_i,\,\p_{x_n}\big\rangle_{\tilde\bh_i}(x,z)\geq 1-C_3\big(b_i-r_i^{-1}r(\Phi_i(x))\big)^\gamma-o(1).
	\end{equation}
	Note that $\lim_{i\to\infty}r_i^{-1}r(\Phi_i(x))=x_n$ and $\lim_{i\to\infty}b_i=0$. Thus taking the limit $i\to\infty$, we obtain
	\begin{equation}\label{eq-exist:obs_aux2}
		\big\langle \tilde\bnu,\,\p_{x_n}\big\rangle_{\beuc}(x,z)\geq 1-C_3|x_n|^\gamma\qquad\text{a.e. when}\ \ -1<x_n<0.
	\end{equation}
	Then by Corollary \ref{cor-obs:bd_orthogonal_2}, $\tilde U'$ actually solves $\IMCFOO{\{x_n<0\},\beuc}{\{x_n=0\}}$. Then applying Theorem \ref{thm-halfplane:liouville} with \eqref{eq-exist:obs_aux1} with $\tilde U'(-\be_n)=0$, it follows that $\tilde U'\equiv0$.
	
	\vspace{3pt}
	
	\textbf{Step 4.} We argue similarly as in Lemma \ref{lemma-exist:tangent_at_pOmega}, to obtain a contradiction from the level sets. Recall $-\be_n\in\p E_0(\tilde U'_i)$. Applying Lemma \ref{lemma-prelim:BV_loc}(i) to $\tilde\bOmega_i\subset\RR^{n+1}$, we have the uniform bound (recall that we are treating $E_0(\tilde U'_i)$ as subsets of $\tilde\bOmega_i$)
	\[\P{E_0(\tilde U'_i);K}\leq\P{\tilde\bOmega_i\cap K},\qquad\forall K\Subset\RR^{n+1}.\]
	Thus, a subsequence of $E_0(\tilde U'_i)$ converges to some $E\subset\big\{(x,z):x_n<0\big\}$ in $L^1_{\loc}(\RR^{n+1})$.
	
	The gradient bound \eqref{eq-exist:scaled_grad_est} and \eqref{eq-prelim:Lam_r0_min} imply the uniform $(\Lambda,r_0)$-minimizing effect
	\begin{equation}\label{eq-exist:Lam_r0}
		\P{E_0(\tilde U'_i);B(-\be_n,\frac12)}\leq\Ps{F;B(-\be_n,\frac12)}+C\big|E_0(\tilde U'_i)\Delta F\big|,
	\end{equation}
	whenever $E_0(\tilde U'_i)\Delta F\Subset B(-\be_n,1/2)$. Then by Lemma \ref{lemma-gmt:Lam_r0_convergence}(i), we obtain $(-e_n,0)\in\spt(|D\chi_E|)$. By Theorem \ref{thm-prelim:compactness}, $E$ is a local minimizer of $J_{\tilde U'}$ in $\big\{(x,z):x_n<0\big\}$, and since $\tilde U'$ is shown to be constant, $E$ is actually a local perimeter minimizer in $\big\{(x,z):x_n<0\big\}$ (with respect to the Euclidean metric). At this point, the minimizing is unknown to hold up to the boundary. However, one may show that $E$ is locally outward-minimizing in $\RR^{n+1}$, by an approximation argument as in Theorem \ref{thm-obs:auto_subsol}.
	
	If we can show that $E$ is locally inward perimeter-minimizing in $\RR^{n+1}$, then Lemma \ref{lemma-gmt:min_in_halfplane} implies $E=\{(x,z):x_n<-1\}$, and then Lemma \ref{lemma-gmt:Lam_r0_convergence}(ii) implies the convergence of normal vectors
	\[\tilde\bnu_i(-\be_n)=\nu_{E_0(\tilde U'_i)}(-\be_n)\to\p_{x_n}.\]
	This would contradict \eqref{eq-exist:to_contradict}, by \eqref{eq-exist:exp_asymp}, thus proving the lemma. It remains to show the (non-trivial) inward-minimizing of $E$.
	
	For each fixed $R$, we denote $Q_R=\{|x'|\leq R, |x_n|\leq R, |z|\leq R\}\subset\RR^{n+1}$. Let us prove the following almost constancy conclusion: for each $R\geq2$ we have
	\begin{equation}\label{eq-exist:min_aux}
		\lim_{i\to\infty}\min\Big\{\tilde U'_i(x,z): (x,z)\in Q_R\cap\tilde\bOmega_i\Big\}=0.
	\end{equation}
	Recall the direction bound \eqref{eq-exist:smallest_dist} and the definition $\bnu_i=\D_{\bg}U'_i/|\D_{\bg}U'_i|_{\bg}$. Pulling back to $\tilde\bOmega_i$, these imply that
	\[\frac{\p\tilde U'_i}{\p x_n}\geq0\quad\text{in}\quad\Big\{\!-1/2\leq x_n<0, |x'|\leq R, |z|\leq R\Big\}\cap\tilde\bOmega_i.\]
	for sufficiently large $i$. Therefore, the minimum in \eqref{eq-exist:min_aux} is attained at a point with $-R\leq x_n\leq-1/2$, for each large $i$. Since $\tilde U'_i\to\tilde U'\equiv0$ uniformly on the compact set $\big\{|x'|\leq R, -R\leq x_n\leq-1/2, |z|\leq R\big\}$, our claim \eqref{eq-exist:min_aux} immediately follows.
	
	Now we pull back \eqref{eq-exist:aux_excess_ineq} via $\bPhi_i$. In this process we notice that: $U_i$ and $U'_i$ differs only by additive constants, so \eqref{eq-exist:aux_excess_ineq} holds for $\tilde U'_i$ as well. Also, the inequality \eqref{eq-exist:aux_excess_ineq} is invariant under scaling. Moreover, notice a subtlety in notations: in our setting, $E_0(\tilde U'_i)$ is the pull back of $E_0(U'_i)\cap\bOmega$ (i.e. the intersection with $\bOmega$ is already done via $\bPhi_i$). So for any $F\Subset Q_R\Subset B_{\RR^{n+1}}(0,1/\sqrt{r_i})$, we have
	\[\th_i\P{E_0(\tilde U'_i);Q_R}\leq\P{E_0(\tilde U'_i)\setminus F;Q_R}+\big(e^{-\inf_{F\cap\tilde\bOmega_i}(\tilde U'_i)}-1\big)\Ps{F},\]
	where the perimeters are with respect to the rescaled and pulled-back metric $\tilde\bh_i$. Taking $i\to\infty$, using \eqref{eq-exist:min_aux}, $\th_i\to1$, and the convergence $\tilde\bh_i\to\beuc$ in $C^0_{\loc}(\RR^{n+1})$, we obtain by the standard set replacing argument
	\[\Ps{E;Q_R}\leq\Ps{E\setminus F;Q_R},\]
	where the perimeters are with respect to the Euclidean metric. This proves the inward minimizing of $E$ in $\RR^{n+1}$, thus completes the proof.
\end{proof}

\begin{proof}[Proof of Proposition \ref{prop-exist:goal_obs}] {\ }
	
	Combining Lemma \ref{lemma-exist:tangent_at_pOmega} and \ref{lemma-exist:graphical}, by taking a subsequence we can assume the following: there is a radius $r_0<r_I$ such that
	\begin{equation}\label{eq-exist:previous_steps}
		\inf_{\p\Omega\times\RR}\metric{\bnu_i}{\bpr}_{\bg}\geq1-\frac1i,\qquad
		\inf_{(\Omega\setminus\Omega_{-r_0})\times\RR}\metric{\bnu_i}{\bpr}_{\bg}\geq\frac12,\qquad\text{for all }i.
	\end{equation}
	Apply Lemma \ref{lemma-para:box_qeta} with the domain $\Omega\times\RR$ in place of $\Omega$, and with the smooth solutions $\Sigma_t^i=\graph\big(\epsilon_i^{-1}(u_i-t)\big)$ and the parameter $b=i^{-1/\gamma}$. The small curvature and $p\geq\frac12$ condition there are satisfied by \eqref{eq-exist:small_curv} \eqref{eq-exist:previous_steps}. Arguing verbatim as in \hyperlink{lemma:6.9step1}{Step 1} of Lemma \ref{lemma-exist:graphical}, we obtain
	\begin{equation}\label{eq-exist:approx_tangency}
		1-\metric{\bnu_i}{\bpr}_{\bg}\leq C_3\big(i^{-1/\gamma}-r/r_0\big)^\gamma\qquad\text{in }(\Omega\setminus\bar{\Omega_{-r_0}})\times\RR.
	\end{equation}
	We next apply Lemma \ref{lemma-para:box_etaH} in the region $(\Omega\setminus\bar{\Omega_{-r_0}})\times\RR$. Consider the quantity
	\[G=r_0\Big((i^{-1/\gamma}-r/r_0)^{1-\gamma}-(2i^{-1/\gamma})^{1-\gamma}\Big)H,\]
	where $H$ is the mean curvature of $\Sigma_t^i$. The conditions in Lemma \ref{lemma-para:box_etaH} are satisfied by \eqref{eq-exist:small_curv} \eqref{eq-exist:previous_steps} \eqref{eq-exist:approx_tangency} respectively. By the result \eqref{eq-para:box_etaH} and the fact that $\Sigma_t^i$ forms a translating soliton, we obtain the inequality
	\[-\frac1{H^2}\Delta_\Sigma G+\metric{\DSigma G}{Y}\leq -\frac Gn+\frac{4n}G+C_4\big(i^{-1/\gamma}-r/r_0\big)^{-\gamma}\big(\frac{C_5}G-1\big)\]
	in $(\Omega\setminus\bar{\Omega_{-r_0}})\times\RR$, for some $C_4,C_5>0$ independent of $i$. By the maximum principle, this implies
	\begin{align}
		\max_{(\bar{\Omega_{-br_0}}\setminus\Omega_{-r_0})\times\RR}(G) &\leq \max\Big\{\max_{\p\Omega_{-r_0}\times\RR}(G),\ \max_{\p\Omega_{-br_0}\times\RR}(G),2n+C_5\Big\} \nonumber\\
		&\leq \max\Big\{C(n),0,2n+C_5\Big\}. \label{eq-exist:bound_G}
	\end{align}
	Here, the control of $G$ on $\p\Omega_{-r_0}\times\RR$ follows by Corollary \ref{cor-ellreg:grad_est} and the assumptions for $r_I$:
	\[G(x)\leq r_0\cdot2\cdot H(x)\leq\frac{2C(n+1)r_0}{\sigma(x;\Omega\times\RR,\bg)}\leq\frac{2C(n+1)r_0}{\sigma(x;\Omega,g)}\leq C,\quad x\in\p\Omega_{-r_0}\times\RR.\]
	So \eqref{eq-exist:bound_G} implies
	\begin{equation}\label{eq-exist:approx_grad}
		\sqrt{\epsilon_i^2+|\D u_i|^2}=H\leq\frac{C(n)r_0^{-1}}{(i^{-1/\gamma}-r/r_0)^{1-\gamma}-(2i^{-1/\gamma})^{1-\gamma}}.
	\end{equation}
	
	As $i\to\infty$, we pass \eqref{eq-exist:approx_tangency} to the limit and project onto the $\Omega$ factor, and obtain
	\begin{equation}\label{eq-exist:aux4}
		1-\metric{\nu}{\p_r}\leq C_3\big(|r|/r_0\big)^\gamma\qquad\text{a.e. in}\ \ \Omega\setminus\Omega_{-r_0},
	\end{equation}
	see below \eqref{eq-exist:def_bnu} for the convergence to $\nu$. Passing \eqref{eq-exist:approx_grad} to the limit, we obtain
	\[|\D u|\leq C(n)r_0^{-\gamma}|r|^{\gamma-1}\qquad\text{in }(\Omega\setminus\bar{\Omega_{-r_0}})\times\RR.\]
	Finally, by Corollary \ref{cor-obs:bd_orthogonal_2} and \eqref{eq-exist:aux4}, the solution $u$ respects the obstacle $\p\Omega$.
\end{proof}

\begin{proof}[Proof of Proposition \ref{prop-exist:goal_holder}] {\ }
	
	Let $g_0$ be the product metric on $\p\Omega\times(0,r_0)$, and $\Phi:\p\Omega\times(0,r_0)\to\Omega\setminus\bar{\Omega_{-r_0}}$ be the normal exponential map. By \eqref{eq-exist:small_curv}, we have $\frac12g_0\leq\Phi^*g\leq2g_0$. By \eqref{eq-exist:bound_du}, (pulling back via $\Phi$,) we may view $u=u(y,r)$ as a function on $\p\Omega\times(0,r_0)$ with $|\D_{g_0}u|\leq Cr_0^{-\gamma}r^{\gamma-1}$.
	
	For $0<r_1,r_2\leq r_0/2$, from the gradient bound we have
	\begin{equation}\label{eq-exist:holder1}
		\big|u(y,r_1)-u(y,r_2)\big|\leq C\gamma^{-1}r_0^{-\gamma}|r_1^\gamma-r_2^\gamma|
		\leq C\gamma^{-1}r_0^{-\gamma}|r_1-r_2|^\gamma.
	\end{equation}
	Note that this implies $u\in L^\infty\big(\p\Omega\times(0,r_0/2)\big)$. Next, for $r\leq r_0/2$ and $d_{\p\Omega}(y_1,y_2)\leq r$, we have
	\begin{equation}\label{eq-exist:holder2}
		\big|u(y_1,r)-u(y_2,r)\big|\leq Cr_0^{-\gamma}r^{\gamma-1}d_{\p\Omega}(y_1,y_2)\leq Cr_0^{-\gamma}d_{\p\Omega}(y_1,y_2)^\gamma.
	\end{equation}
	Finally, suppose $r\leq r_0/2$ and $d_{\p\Omega}(y_1,y_2)\geq r$. If $d_{\p\Omega}(y_1,y_2)\geq r_0/2$, then we have
	\begin{equation}\label{eq-exist:holder3}
		\big|u(y_1,r)-u(y_2,r)\big|\leq 2\|u\|_{L^\infty(\p\Omega\times(0,r_0/2))}\cdot(r_0/2)^{-\gamma}\cdot d_{\p\Omega}(y_1,y_2)^\gamma.
	\end{equation}
	If $r\leq d_{\p\Omega}(y_1,y_2)\leq r_0/2$, then we set $s=d_{\p\Omega}(y_1,y_2)$ and estimate
	\begin{align}
		\big|u(y_1,r)-u(y_2,r)\big| &\leq \big|u(y_1,r)-u(y_1,s)\big|+\big|u(y_2,r)-u(y_2,s)\big|+\big|u(y_1,s)-u(y_2,s)\big| \nonumber\\
		&\leq 2C\gamma^{-1}r_0^{-\gamma}|s-r|^\gamma+Cr_0^{-\gamma}s^{\gamma-1}d_{\p\Omega}(y_1,y_2) \nonumber\\
		&\leq (2C\gamma^{-1}+C)r_0^{-\gamma}d_{\p\Omega}(y_1,y_2)^\gamma. \nonumber
	\end{align}
	Combined with \eqref{eq-exist:holder1}\,$\sim$\,\eqref{eq-exist:holder3}, it follows that $u$ can be extended to a $C^{0,\gamma}$ function on $\p\Omega\times[0,r_0/2]$. Finally, by the smallness of $r_0$, there is a constant $C$ such that $d_{\p\Omega\times[0,r_0/2]}(x,y)\leq Cd_g(\Phi(x),\Phi(y))$. This shows that $u$ extends to a $C^{0,\gamma}$ function on $\bar\Omega\setminus\Omega_{-r_0/2}$.
\end{proof}

\begin{proof}[Proof of Proposition \ref{prop-exist:goal_reg}] {\ }
	
	For any $x\in\Omega\setminus\Omega_{-r_0/3}$ and $r<r_0/3$, by \eqref{eq-exist:bound_du} and the choice of $r_I$ we have
	\begin{equation}\label{eq-exist:du_integral}
		\begin{aligned}
			\int_{B(x,r)\cap\Omega}|\D u| &\leq \int_{\max\{0,|r(x)|-r\}}^{|r(x)|+r}\H^{n-1}\big(B(x,r)\cap\Omega_{-s}\big)Cr_0^{-\gamma}s^{\gamma-1}\,ds \\
			&\leq C'r_0^{-\gamma}r^{n-1+\gamma}.
		\end{aligned}
	\end{equation}
	It is already known that $u$ respects the obstacle $\p\Omega$. For all $t>0$ and each competitor set $E\subset M$ satisfying $E\Delta E_t\Subset B(x,r)$, we compare the energy $\tJ_u^{B(x,r)}(E_t)\leq\tJ_u^{B(x,r)}(E\cap\Omega)$ and obtain (note that $E_t\Delta(E\cap\Omega)\Subset B(x,r)$ since $E_t\subset\Omega$)
	\[\begin{aligned}
		\P{E_t;B(x,r)} &\leq \P{E\cap\Omega;B(x,r)}+\int_{E_t\Delta(E\cap\Omega)}|\D u| \\
		&\leq \P{E;B(x,r)}+\P{\Omega;B(x,r)}-\P{E\cup\Omega;B(x,r)}+Cr_0^{-\gamma}r^{n-1+\gamma}.
	\end{aligned}\]
	It is directly verifiable, using the $C^2$ smoothness of $\p\Omega$ (see \cite[Section 1.6]{Tamanini_1984}), that
	\[\P{\Omega;B(x,r)}\leq \P{E\cup\Omega;B(x,r)}+Cr_0^{-2}r^{n+1}.\]
	Thus we obtain the almost-minimizing condition
	\begin{equation}\label{eq-exist:almost_min}
		\P{E_t;B(x,r)}\leq\P{E;B(x,r)}+Cr_0^{-\gamma}r^{n-1+\gamma}.
	\end{equation}
	
	Next, combining \eqref{eq-exist:bound_direction} and Remark \ref{rmk-prelim:calibration_normal_vec}, we obtain that almost every $E_t$ satisfies
	\begin{equation}
		1-\metric{\nu_{E_t}}{\p_r}\leq C_3r_0^{-\gamma}|r|^\gamma.
	\end{equation}
	In addition, on $\p^*E_t\cap\p^*\Omega$ it is clear that $\nu_{E_t}=\p_r$. Since $\p_r$ is smooth, this has the following consequence: for any $l>0$ there exists a sufficiently small radius $r(l)$, such that for all $x\in\bar\Omega\setminus\Omega_{-r(l)}$, $s\leq r(l)$, the set $E_t\cap B(x,s)$ is representable as the sub-graph of a $l$-Lipschitz function in some geodesic normal coordinate near $x$. Combining \eqref{eq-exist:almost_min} and this small slope condition, the classical small excess regularity theorem (see \cite{Tamanini_1983} and \cite{Almgren_1976, Tamanini_1984}) implies the following: there is a sufficiently small $r_1<r_0$, such that almost every $E_t\setminus\bar{\Omega_{-r_1}}$ is a $C^{1,\gamma/2}$ hypersurface. By the Arzela-Ascoli theorem, the same conclusion holds for all $t>0$, as desired.
\end{proof}

\section{On maximal solutions}\label{sec:max}

Let $E_0\subset M$ be an initial value. Recall that $u$ is a \textit{maximal solution} of $\IVP{M;E_0}$ if $u\geq v$ on $M\setminus E_0$ for any other solution $v$ of $\IVP{M;E_0}$. In this section, we prove the existence and basic properties of such a solution.

\subsection{Existence and properties}

The following is a restatement of Theorem \ref{thm-intro:max_sol}. In the proof we note that: since the limit solution $u$ is unique, it (a posteriori) does not depend on the choice of $E_0^i$ and $\Omega_i$.

\begin{theorem}[existence]\label{thm-max:existence} {\ }
	
	Let $(M,g)$ be a (possibly incomplete) smooth Riemannian manifold, and $E_0\subset M$ be a (possibly unbounded) $C^{1,1}$ domain. Then there exists, up to equivalence, a unique maximal solution of $\IVP{M;E_0}$, in the sense of Definition \ref{def-prelim:ivp}.
\end{theorem}
\begin{proof}
	Find a sequence of precompact $C^{1,1}$ domains $E_0^i$, such that $E_0^1\subset E_0^2\subset\cdots\subset E_0$, and $\bigcup_{i\geq1}E_0^i=E_0$, and that for all $K\Subset M$, the sets $E_0^i\cap K$ eventually stabilize (i.e. $E_0^i\cap K=E_0^{i+1}\cap K=E_0^{i+2}\cap K=\cdots$ for large enough $i$). Next, find a sequence of smooth precompact domains $\Omega_i\Supset E_0^{i+1}$ such that $\Omega_1\Subset\Omega_2\Subset\cdots\Subset M$ and $\bigcup_{i\geq1}\Omega_i=M$. Let $u_i$ be the (unique) solution of $\IVPOO{\Omega_i;E_0^i}{\p\Omega_i}$, given by Theorem \ref{thm-exist:main}.
	
	Applying Corollary \ref{cor-obs:maximality} to each $\Omega_i$, we have $u_1\geq u_2\geq u_3\geq\cdots$ outside $E_0$. By the interior gradient estimates \eqref{eq-exist:main_grad_int} and Arzela-Ascoli theorem, some subsequence of $u_i$ converges in $C^0_{\loc}$ to a function $u\in\Lip_{\loc}(M\setminus\bar{E_0})$. By Theorem \ref{thm-prelim:cptness_calibrated}, $u$ is a calibrated weak solution in $M\setminus\bar{E_0}$. Since \eqref{eq-exist:main_grad_int} provides a uniform gradient estimate up to $\p E_0$, the resulting function $u$ is Lipschitz up to $\p E_0$, and satisfies $u\geq0$, $u|_{\p E_0}=0$. Extending $u$ by negative values inside $E_0$, we obtain a solution of $\IVP{M;E_0}$.
	
	It remains to show that $u$ is maximal. Suppose $v\in\Lip_{\loc}(M)$ is another solution of $\IVP{M;E_0}$. For each $i$ we find functions $v_i\in\Lip_{\loc}(\Omega_i)$, such that $v_i=v$ on $\Omega_i\setminus E_0$, and $v_i=0$ on $E_0\setminus E_0^i$, and $v_i<0$ on $E_0^i$. By Definition \ref{def-prelim:weak_sol}(ii) it can be verified that $v_i$ is a subsolution of $\IVP{\Omega_i;E_0^i}$. Then by Corollary \ref{cor-obs:maximality}, we obtain $v_i\leq u_i$ on $\Omega_i\setminus E_0^i$, hence $v\leq u_i$ on $\Omega_i\setminus E_0$. As $u$ is the descending limit of $u_i$, it follows that $v\leq u$ on $M\setminus E_0$.
\end{proof}

The following theorem summarizes the useful properties of maximal solutions.

\begin{theorem}[properties]\label{thm-max:properties} {\ }
	
	Let $M,g,E_0$ be as in Theorem \ref{thm-max:existence}, and $u$ be the maximal solution of $\IVP{M;E_0}$ given there. Then the following hold.
	\begin{enumerate}[label={(\roman*)}, nosep]
		\item We have the interior gradient estimate
		\[|\D u|(x)\leq\sup_{\p E_0\cap B(x,r)}H_++\frac{C(n)}r,\qquad x\in M\setminus E_0,\ \ r\leq\sigma(x;M,g),\]
		where $H_+$ denotes the positive part of the mean curvature of $\p E_0$, and $\sigma(x;M,g)$ is as in Definition \ref{def-prelim:sigma_x}.
		\item If $E_0$ is connected, then $E_t$ is connected for all $t>0$.
		\item If $E_0\Subset M$, then $\Ps{E_t}\leq e^t\Ps{E_0}$.
	\end{enumerate}
\end{theorem}
\begin{proof}
	By the proof of Theorem \ref{thm-max:existence}, the maximal solution $u$ arises as a limit of the solutions $u_i$ of $\IVPOO{\Omega_i;E_0^i}{\p\Omega_i}$, where $E_0^i$ and $\Omega_i$ are precompact exhaustions of $E_0$ and $M$ as stated in Theorem \ref{thm-max:existence}. Moreover, $u$ does not depend on the choice of these exhaustions (since it is a unique object).
	
	Given this setup, item (i) follows by passing Theorem \ref{thm-exist:main}(i) to the limit. Item (iii) follows by passing Corollary \ref{cor-obs:sub_exp} to the limit and using the lower semi-continuity of perimeter. It remains to prove (ii). Given that $E_0$ is connected, we claim that there is a sequence of \textit{connected} precompact $C^{1,1}$ domains $E_0^1\subset E_0^2\subset\cdots\subset E_0$ with $\bigcup E_0^i=E_0$. Once such an exhaustion is found, Lemma \ref{lemma-obs:connectedness} implies that each $\bar{E_t(u_i)}$ is connected. Then note that each $E_t(u_i)$ is connected as well: otherwise, since $u_i\in C^0(\bar{\Omega_i})$ by Theorem \ref{thm-exist:main}, there will be a slightly smaller $t'$ so that $\bar{E_{t'}(u_i)}$ is disconnected, contradiction. Finally, as $E_t(u)=\bigcup_{i\geq1}E_t(u_i)$, it follows that $E_t(u)$ is connected.
	
	The connected exhaustion $\{E_0^i\}$ is constructed as follows. Fix a basepoint $x_0\in E_0$. We start with an arbitrary precompact $C^{1,1}$ exhaustion $\bar E_0^1\subset\bar E_0^2\subset\cdots\subset E_0$, then let $E_0^i$ be the connected component of $\bar E_0^i$ containing $x_0$. It follows that $\{E_0^i\}$ is also an exhaustion: for any $x\in E_0$, there is a path $\gamma\Subset E_0$ joining $x$ and $x_0$. We have $\gamma\Subset\bar{E_0^i}$ for all large $i$, hence $\gamma\Subset E_0^i$ as well, hence $x\in E_0^i$.
\end{proof}

When $\Omega$ is a non-compact domain, the fact that $u$ solves $\IVPOO{\Omega;E_0}{\p\Omega}$ does not imply that it is the maximal solution of $\IVP{\Omega;E_0}$. Intuitively, the maximal solution respects not only the obstacle $\p\Omega$, but also the ``invisible obstacle'' at infinity. Note that item (iii) may be a strict inequality, whereas we always have equality for proper solutions. Thus, a portion of the perimeter may be lost at infinity for non-proper solutions.

\vspace{3pt}
	
We mention the following examples of maximal solutions.
	
1. In Choi-Daskalopoulos \cite{Choi-Daskalopoulos_2021}, one considers the (smooth) IMCF with a convex, non-compact, $C^{1,1}$ initial domain $E_0\subset\RR^n$. A compact approximation argument is employed to obtain a solution: one finds bounded convex domains $E_0^1\subset E_0^2\subset\cdots$ with $\bigcup_{i\geq1}E_0^i=E_0$, and then takes $u_i$ to be the proper (hence maximal) solution of $\IVP{\RR^n;E_0^i}$, and finally take the descending limit $u=\lim_{i\to\infty}u_i$. Arguing similarly as above, it follows that $u$ is the maximal solution of $\IVP{\RR^n;E_0}$.

2. In the setting of Lemma \ref{lemma-prelim:sphere_sym}, the function
\[u_0(r)=(n-1)\log\big[f(r)/f(r_0)\big]\]
is the maximal solution of $\IVP{\Omega;E_0}$.
	
3. Consider $E_0$ a half-space in the hyperbolic space, and let $u$ be the maximal solution of $\IVP{\HH^n;E_0}$. By uniqueness, the function $u|_{\HH^n\setminus E_0}$ must be invariant under any isometry that preserves $E_0$. Therefore, the level sets of $u$ are equi-distance sets from $E_0$. This reduces the flow to a one-dimension problem: the maximal solution must be
\[u=(n-1)\log\cosh d(x,E_0).\]

\begin{remark}\label{rmk-max:connectedness}
	The connectedness of level sets (Theorem \ref{thm-max:properties}(ii)) only holds for maximal solutions, but not general solutions. A concrete counterexample is as follows. Let $E_0,E'\subset\HH^n$ be two disjoint half-spaces that are sufficiently far separated. Denote $l_1=\arccosh\big(e^{1/(n-1)}\big)$ and $l_2=\arccosh\big(e^{2/(n-1)}\big)$. Define $u\in\Lip_{\loc}(\HH^n)$ such that
	\begin{enumerate}[label={(\arabic*)}, nosep]
		\item $u|_{E_0}<0$, and $u|_{N(E_0,l_2)}=(n-1)\log\cosh d(\cdot,E_0)$;
		\item $u|_{E'}\equiv1$, and $u|_{N(E',l_1)}=1+(n-1)\log\cosh d(\cdot,E')$;
		\item $u|_{\HH^n\setminus(N(E_0,l_2)\cup N(E',l_1))}\equiv2$.
	\end{enumerate}
	\vspace{1pt}
	It follows that $u$ solves $\IVP{\HH^n;E_0}$, but $E_2(u)$ is not connected.
	\begin{figure}[htbp]
		\centering
		\vspace{-24pt}
		\includegraphics[scale=1.4]{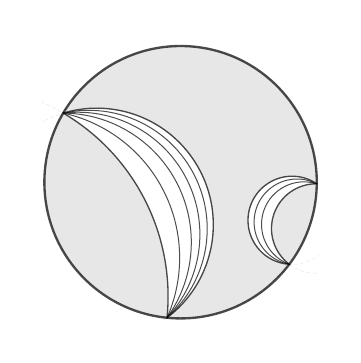}
		\begin{picture}(0,0)
			\put(-186,102){$E_0$}
			\put(-198,86){$(u<0)$}
			\put(-55,93){$E'$}
			\put(-34,86){$u\equiv1$}
			\put(-106,161){$u\equiv2$}
		\end{picture}
		\vspace*{-24pt}
		\caption{An example of disconnected sub-level set.}
	\end{figure}
\end{remark}

\begin{remark}
	By the outward perimeter-minimizing property, each $M\setminus E_t$ ($t>0$) has no compact connected components. When $M$ satisfies certain topological properties (for example, when $M$ has one end and $H_1(M,\ZZ)=0$), Theorem \ref{thm-max:properties}(ii) and this will together imply that $\p E_t$ is connected as well. This observation was already made in \cite[Lemma 4.2]{Huisken-Ilmanen_2001} for proper solutions.
\end{remark}

\begin{remark}
	When $E_0\Subset M$, a sufficient condition for the maximal solution to be nontrivial (i.e. $u\not\equiv0$) is
	\[\begin{aligned}
		&\text{there is $A>\Ps{E_0}$ and $K\Subset M$, so that} \\
		&\hspace{108pt} \text{$\Ps{E}>A$ for any $C^1$ domain $E$ with $K\Subset E\Subset M$.}
	\end{aligned}\]
	Indeed, if the maximal solution is trivial, then the sequence of solutions in the proof of Theorem \ref{thm-max:existence} satisfies $u_i\to0$ in $C^0_{\loc}(M\setminus E_0)$. So for any $\delta>0$ and $K\Subset M$, we may find a large enough $i$ with $u_i|_K<\delta$. Therefore, we find a bounded $C^1$ domain $E=E_\delta(u_i)\Supset K$ that satisfies $\Ps{E}\leq e^\delta\Ps{E_0}$. This is exactly the converse of the above statement.
\end{remark}

\subsection{Properness and isoperimetry revisited}

The purpose of this subsection is to re-prove the properness criterion \cite[Theorem 1.2]{Xu_2023_proper}. The proof presented here essentially captures the technical core \cite[Lemma 4.4]{Xu_2023_proper}, while the conic cutoff argument in \cite{Xu_2023_proper} is replaced here by Theorem \ref{thm-exist:main}. This proof is conceptually cleaner (but not actually simpler since it relies on Theorem \ref{thm-exist:main}). Define the isoperimetric profile of a Riemannian manifold:
\[\ip(v)=\inf\Big\{|\p^*E|: E\Subset M\text{ has finite perimeter, }|E|=v\Big\}.\]

\begin{theorem}[identical with {\cite[Theorem 1.2]{Xu_2023_proper}}]\label{thm-exist:properness} {\ }
	
	Suppose $(M,g)$ is a complete, connected, non-compact Riemannian manifold with infinite volume and with dimension $n\geq2$, such that the isoperimetric profile satisfies
	\begin{equation}\label{eq-max:nondeg_ip}
		\liminf_{v\to\infty}\ip(v)=\infty
		\ \ \ \text{and}\ \ \ 
		\int_0^{v_0}\frac{dv}{\ip(v)}<\infty\,\ \text{for some}\ v_0>0.
	\end{equation}
	Then for any $C^{1,1}$ domain $E_0\Subset M$, the maximal solution of $\IVP{M;E_0}$ is proper.
\end{theorem}
\begin{proof}
	Define the upper inverse isoperimetric profile
	\[\sip^{-1}(a)=\sup\Big\{|E|: E\Subset M\text{ has finite perimeter, }|\p^*E|\leq a\Big\}.\]
	Clearly $a\mapsto\sip^{-1}(a)$ is non-decreasing, and is finite for all $a>0$ by \eqref{eq-max:nondeg_ip}. Under \eqref{eq-max:nondeg_ip}, it is proved in \cite[Subsection 3.1]{Xu_2023_proper} that $v\mapsto1/\ip(v)$ is locally bounded in $(0,\infty)$.
	
	Fix an exhaustion $E_0\Subset\Omega_1\Subset\Omega_2\Subset\cdots$, with $\Omega_i$ smooth and $\bigcup_{i\geq1}\Omega_i=M$. Let $u_i$ be the solution of $\IVPOO{\Omega_i;E_0}{\p\Omega_i}$ given by Theorem \ref{thm-exist:main}. Fix a basepoint $x_0\in E_0$, and denote $B(r)=B(x_0,r)$. For each $i\in\NN$, $t>0$, and for almost every $r>\diam E_0$ (so that $\p B(r)$ is rectifiable), we apply Lemma \ref{lemma-halfplane:excess_ineq_2} with $F=\Omega_{i+1}\setminus B(r)$ to find
	\[\P{E_t(u_i)}\leq\P{E_t(u_i)\cap B(r)}+(e^t-1)\P{B(r);E_t(u_i)}.\]
	Using the decomposition identities \eqref{eq-gmt:set_operation}, this further implies
	\[\H^{n-1}\big(E_t(u_i);B(r)^{(0)}\big)\leq e^t\P{B(r);E_t}\qquad\text{for a.e. }r>\diam E_0.\]
	Consider $V(r):=|E_t(u_i)\setminus B(r)|$, so $V'(r)=-\P{B(r);E_t(u_i)}$ for a.e. $r$. We also have
	\[\H^{n-1}\big(E_t(u_i);B(r)^{(0)}\big)+\P{B(r);E_t}=\P{E_t(u_i)\setminus B(r)}\geq\ip(V(r))\]
	for almost every $r$. Combining all these inequalities, we obtain
	\begin{equation}\label{eq-max:ODE_ineq}
		V'(r)\leq-\frac{\ip(V(r))}{e^t+1}\qquad\text{for a.e. }r>\diam E_0.
	\end{equation}
	In addition, $V(\diam E_0)\leq|E_t(u_i)|\leq\sip^{-1}\big(\Ps{E_t(u_i)}\big)\leq\sip^{-1}\big(e^t\Ps{E_0}\big)$, where we used Corollary \ref{cor-obs:sub_exp}. Hence solving the ODE inequality \eqref{eq-max:ODE_ineq}, we obtain
	\[V(r)\equiv0\quad\text{for all}\ \ r>r_*(t):=\diam E_0+(1+e^t)\int_0^{\sip^{-1}(e^t\Ps{E_0})}\frac{dv}{\ip(v)}.\]
	This implies $E_t(u_i)\subset B(r_*(t))$ for all $t>0$, which is a bound independent of $i$. By \eqref{eq-max:nondeg_ip} and the observations at the beginning of the proof, we have $r_*(t)<\infty$.
	
	Now we take the descending limit $u=\lim_{i\to\infty}u_i$. Arguing as in Theorem \ref{thm-max:existence}, such limit exists and is the maximal solution of $\IVP{M;E_0}$. The bounds $E_t(u_i)\subset B(r_*(t))$ pass to the limit and give $E_t(u)\subset B(r_*(t))\Subset M$. Hence $u$ is proper.
\end{proof}

\appendix

\section{Soliton examples in \texorpdfstring{$\RR^2$}{R2}}\label{sec:ex}

In this section, we collect several smooth solitons of IMCF in the plane. We also refer to the previous works \cite{Castro-Lerma_2017,Chang_2020, Drugan-Lee-Wheeler_2016, Kim-Pyo_2019, Kim-Pyo_2020}. In these works, it was already noticed that planar solitons of IMCF are usually incomplete. However, due to the nature of singularities at their endpoints, the soliton curves contact tangentially with the boundary of the region that they sweep out. Therefore, the solutions that they generate would respect the boundary obstacle. This observation provides us the first class of nontrivial explicit examples.

We expect that the solitons presented here, as well as their analogues in more general circumstances, appear as the blow-down limits at infinity or the blow-up limits at non-smooth boundary points. See Question \ref{qs-ex:asymp_expanding}, \ref{qs-ex:asymp_shrinking} for more details. Another role of the examples is to provide insights in forming the main theory in Section \ref{sec:obs} and \ref{sec:halfplane}.

The following setup are assumed. Suppose $\gamma\subset\RR^2$ is a smooth curve with nonvanishing curvature, with unit speed parametrization $s$. Let $\tau:=d\gamma/ds$ be its unit tangential vector, and $\th$ be the angle function so that $\tau=(\cos\th,\sin\th)$, and $\nu:=(\sin\th,-\cos\th)$ be the normal vector, finally $\kappa:=d\th/ds$ be the curvature. Set the support function $h=x\cdot\nu$, where $x$ is the position vector. The following formula is well-known:
\begin{equation}\label{eq-ex:support}
	\gamma=h\nu+\frac{dh}{d\th}\tau,
\end{equation}
where $dh/d\th=\frac{dh/ds}{d\th/ds}=\frac{dh/ds}{\kappa}$ is a well-defined quantity.

\begin{example}[cycloids] {\ }
	
	By \cite[Theorem 8]{Drugan-Lee-Wheeler_2016}, any translating soliton $\gamma$ of IMCF in $\RR^2$ is isometric to a cycloid. Indeed, after a scaling and rigid motion, we may assume that $\gamma$ is translating in the positive $x$-direction with speed 1. Then the soliton equation reads $\frac1\kappa=\metric{\nu}{\p_x}=\sin\th$. Solving this (see \cite{Drugan-Lee-Wheeler_2016}), we obtain a cycloid equation
	\begin{equation}\label{eq-ex:cycloid_eq}
		\gamma(\th)=\frac14\big(1-\cos2\th,2\th-\sin2\th\big),\qquad \th\in(0,\pi).
	\end{equation}
	The curve $\gamma$ is embedded into the strip region $\Omega=\RR\times(0,\pi/2)$, and its endpoints $\gamma(0)$, $\gamma(\pi)$ lie on $\p\Omega$. By Taylor expansion, near $\gamma(0)$ the curve is the graph of
	\begin{equation}\label{eq-ex:taylor_exp}
		y=\frac{2\sqrt 2}3x^{3/2}+O(x^{5/2}),\qquad x>0.
	\end{equation}
	A similar result holds near $\gamma(\pi)$ by symmetry. By Corollary \ref{cor-obs:bd_orthogonal_2}, the translations of $\gamma$ generate a solution of $\IMCFOO{\Omega}{\p\Omega}$ (which is actually smooth).
	
	In the solution generated, each sub-level set $E_t$ is the unbounded region enclosed by the translation $\gamma+(t,0)$ and the two lines $(-\infty,t)\times\{0\}$, $(-\infty,t)\times\{\pi/2\}$. By the expansion \eqref{eq-ex:taylor_exp}, each $\p E_t$ is globally a $C^{1,1/2}$ curve.
	
	\vspace{-6pt}\TODO{make sure good placement of the figure.}
	
	\begin{figure}[h]
		\setlength{\abovecaptionskip}{-9pt}
		\captionsetup{width=0.9\textwidth}
		\begin{center}
			\includegraphics[scale=0.55]{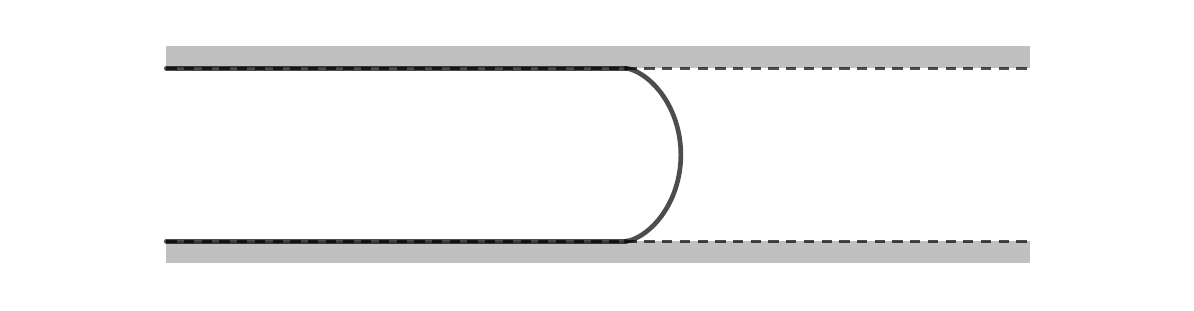}
		\end{center}
		\caption{Translating cycloids in a strip region. The bold curve represents $\p E_t$.}
	\end{figure}
\end{example}

\begin{remark}[homothetic solitons]
	Let $c\in\RR\setminus\{0\}$. If a curve $\gamma$ satisfies
	\begin{equation}\label{eq-ex:homo_sol}
		h\kappa=c^{-1},
	\end{equation}
	then the family of curves $\gamma_t=e^{ct}\gamma$ solves the IMCF. Urbas \cite{Urbas_1999} computed that \eqref{eq-ex:homo_sol} is equivalent to
	\begin{equation}\label{eq-ex:dhdth}
		\frac{d^2h}{d\th^2}=(c-1)h.
	\end{equation}
	For a full classification result based on this equation, see \cite{Castro-Lerma_2017, Drugan-Lee-Wheeler_2016, Urbas_1999}. Furthermore, we refer to \cite{Chang_2020} for a classification of more complex solitons with simultaneous scaling and rotating behaviors.
	
	We are interested in the case $c<1$. Solving \eqref{eq-ex:dhdth}, we obtain up to a normalization $h(\th)=\sin\big(\sqrt{1-c}\,\th\big)$. Since the curve $\gamma$ becomes singular when $h(\th)=0$, due to \eqref{eq-ex:homo_sol}, we restrict the parameter to a single period $\th\in(0,\pi/\sqrt{1-c})$. Then we insert the expression into \eqref{eq-ex:support} and obtain the curve $\gamma(\th)$ in complex form:
	\begin{equation}\label{eq-ex:gen_sol}
		\gamma(\th)=-\sin\big(\sqrt{1-c}\,\th\big)ie^{i\th}+\sqrt{1-c}\cos\big(\sqrt{1-c}\,\th\big)e^{i\th}.
	\end{equation}
	Setting $\mu=1-\sqrt{1-c}$ and $k=\frac{1+\sqrt{1-c}}{1-\sqrt{1-c}}$, we can rewrite \eqref{eq-ex:gen_sol} as
	\begin{equation}\label{eq-ex:gen_sol_2}
		\gamma(\th)=\frac12\mu\big(ke^{i\mu\th}-e^{ik\mu\th}\big).
	\end{equation}
	Comparing with the classical equations \cite[Section 6.3, 6.5]{Lawrence_curves}, we find that \eqref{eq-ex:gen_sol_2} describes a hypocycloid ($c<0$) or epicycloid ($c>0$). This was previously observed in \cite{Castro-Lerma_2017, Drugan-Lee-Wheeler_2016}.
	\TODO{adjust spacing}
\end{remark}

\vspace{-36pt}
			
\begin{figure}[h]
	\setlength{\abovecaptionskip}{-12pt}
	\captionsetup{width=0.8\textwidth}
	\hspace{-196pt}\includegraphics[scale=1.52]{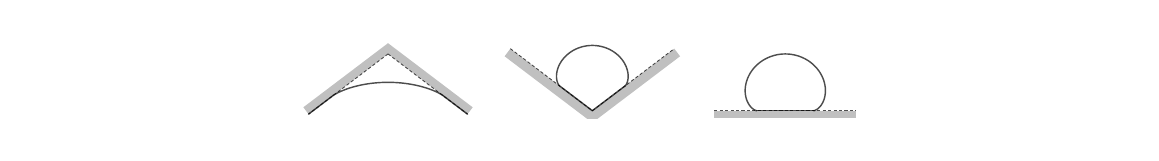}
	\caption{Shrinking hypocycloids and expanding epicycloids, expanding nephroids. These correspond to Examples \ref{ex-ex:hypocycloid}, \ref{ex-ex:epicycloid}, \ref{ex-ex:nephroid} respectively.}\label{fig_ex:solitons}
\end{figure}
			
\begin{example}[$c<0$, shrinking hypocycloids]\label{ex-ex:hypocycloid} {\ }
				
	Assume $c<0$, and set $T=\pi/\sqrt{1-c}$. From \eqref{eq-ex:gen_sol} we have $\gamma(0)=\sqrt{1-c}$ and $\gamma(T)=\sqrt{1-c}\,e^{i\pi/\sqrt{1-c}-i\pi}$. Hence $\gamma$ is supported in a planar cone $\Omega$ with angle $\big(1-\frac1{\sqrt{1-c}}\big)\pi$. Now the family of curves $\gamma_t=e^{ct}\gamma$ forms a smooth solution of IMCF in $\Omega$. By the fact $\lim_{\th\to0}h(\th)=\lim_{\th\to T}h(\th)=0$, the normal vector of $\gamma$ converges to the outer normal vector of $\p\Omega$ when $\th\to0,T$. Then by Corollary \ref{cor-obs:bd_orthogonal_2} and scaling invariance, the solution generated by $\gamma$ respects the obstacle $\p\Omega$. In such a solution, the sub-level set $E_t$ is the unbounded region enclosed by $\gamma_t$ and the two radial lines emanating from $\gamma_t(0)$ and $\gamma_t(T)$. The boundary of $E_t$ is a $C^{1,1/2}$ curve, by similar calculations as above.
				
	This provides examples of solutions of IMCF that approach $+\infty$ near $\p\Omega$.
\end{example}
			
\begin{example}[$c>0$, expanding epicycloids]\label{ex-ex:epicycloid} {\ }
				
	Assume $c>0$. Similarly as above, the family of curves $\gamma_t=e^{ct}\gamma$ form a solution of IMCF in a cone domain $\Omega$ with angle $\big(\frac1{\sqrt{1-c}}-1\big)\pi$, which respects the obstacle $\p\Omega$. The sub-level set $E_t$ is the bounded region enclosed by $\gamma_t$ and the two line segments joining 0 and $\gamma_t(0)$, $\gamma_t\big(\pi/\sqrt{1-c}\big)$. Note that $\gamma$ is embedded in $\RR^2$ only when $c\leq8/9$. For $c>8/9$, we may view $\gamma$ as being embedded in the universal cover of $\RR^2\setminus\{0\}$.
				
	This provides examples of solutions that approach $-\infty$ near $\p\Omega$.
\end{example}
			
\begin{example}[solution in bounded domains]\label{ex-ex:bounded_domain} {\ }
				
	Let $u$ be a solution in Example \ref{ex-ex:epicycloid} generated by an epicycloid. The restriction of $u$ inside $E_t(u)$ is another solution that respects the obstacle $\p E_t(u)$. This provides an example of nontrivial solutions in precompact domains. See Remark \ref{rmk-obs:definition}(vi) and Theorem \ref{thm-obs:max_principle} for related discussions.
\end{example}
			
\begin{example}[$c=3/4$, nephroid]\label{ex-ex:nephroid} {\ }
				
	In the special case $c=\frac34$ in Example \ref{ex-ex:epicycloid}, the supporting domain $\Omega$ becomes a half plane. The corresponding curve $\gamma$ is often called a nephroid. This particular example shows that the condition $\inf_\Omega(u)>-\infty$ in Theorem \ref{thm-halfplane:liouville} cannot be removed.
\end{example}

In \cite[Proposition 7.2]{Huisken-Ilmanen_2001}, it is proved that for a proper weak solution $u$ in $\RR^n$, the rescaled limit at infinity is the unique proper eternal solution in $\RR^n\setminus\{0\}$, which is $u_\infty=(n-1)\log|x|$. It is natural to ask whether similar facts hold under the presence of obstacles. Below we say that $Q\subset\RR^n$ is a conic domain with smooth link, if $\lambda Q=Q$ for all $\lambda>0$ and $\p Q\setminus\{0\}$ is a smooth hypersurface.

\begin{question}\label{qs-ex:asymp_expanding}
	Let $Q\subset\RR^n$ be a locally Lipschitz conic domain with smooth link. Then there is a unique expanding soliton that generates a solution of $\IMCFOOinthm{Q}{\p Q}$. Suppose $\Omega\subset\RR^n$ is smooth and asymptotic to $Q$ at infinity. Consider the maximal solution of $\IVP{\Omega;E_0}$ for some $E_0\Subset\Omega$. Then at spatial infinity, a suitably normalized blow-down sequence converges to the expanding soliton supported in $Q$.
\end{question}

On the other hand, shrinking solitons are related to the asymptotic limit near a rough boundary point, when the boundary converges nicely to a convex tangent cone. In this respect, we raise the following

\begin{question}\label{qs-ex:asymp_shrinking}
	Let $Q\subset\RR^n$ be a convex conic domain with smooth link. Then there is a unique shrinking soliton that generates a solution of $\IMCFOOinthm{Q}{\p Q}$. Let $\Omega\subset\RR^n$ be a bounded domain, with $0\in\p\Omega$, and $\p\Omega\setminus\{0\}$ is smooth, and $\Omega$ is asymptotic to $Q$ near $0$. Consider the maximal solution of $\IVP{\Omega;E_0}$ for some $E_0\Subset\Omega$. Then at $0$, a normalized blow-up sequence converges to the shrinking soliton supported in $Q$.
\end{question}

The solution $u$ in both questions are characterized as the maximal solution. Since the main existence Theorem \ref{thm-intro:existence} requires smoothness of the domain, we did not state that $u$ is a solution of $\IVPOO{\Omega;E_0}{\p\Omega}$, although this is expected.

\section{Background in geometric measure theory}\label{sec:gmt}

We collect relevant results in geometric measure theory, which are used in the main text. We assume that the reader is familiar with the notions of sets with locally finite perimeter and reduced boundary. For a detailed introduction on this topic, we refer to the books \cite{Ambrosio-Fusco-Pallara, Evans-Gariepy, Maggi}. Given a set $E$ with locally finite perimeter, we denote by $\p^*E$ the reduced boundary of $E$, and $\nu_E$ the measure-theoretic outer unit normal, and $|D\chi_E|$ the perimeter measure. De Giorgi's structure theorem implies that $|D\chi_E|=\H^{n-1}\res\p^*E$. For an open set $\Omega$, we denote $\Ps{E;\Omega}=|D\chi_E|(\Omega)$ the perimeter of $E$ inside $U$.

Given a measurable set $E$, and $\alpha\in[0,1]$, we define
\[\begin{aligned}
	& E^{(\alpha)}=\Big\{x\in M: \lim_{r\to0}\frac{|E\cap B(x,r)|}{|B(x,r)|}=\alpha\Big\}.
\end{aligned}\]
The set $E^{(1)}$ is often called the measure-theoretic interior. Federer proved that if $E$ has locally finite perimeter, then we have a disjoint union
\begin{equation}\label{eq-gmt:Federer_decomp}
	M=E^{(1)}\cup E^{(0)}\cup\p^*E\cup Z
\end{equation}
for some $Z$ with $\H^{n-1}(Z)=0$. See \cite[Theorem 16.2]{Maggi}. For sets $E,F$ with locally finite perimeters, such that $\H^{n-1}\big(\p^*E\cap\p^*F\big)=0$, we have the decomposition of perimeters
\begin{equation}\label{eq-gmt:set_operation}
	\begin{aligned}
		\Ps{E\cap F;K}& =\H^{n-1}\big(\p^*E\cap F^{(1)}\cap K\big)+\H^{n-1}\big(\p^*F\cap E^{(1)}\cap K\big), \\
		\Ps{E\cup F;K}& =\H^{n-1}\big(\p^*E\cap F^{(0)}\cap K\big)+\H^{n-1}\big(\p^*F\cap E^{(0)}\cap K\big), \\
		\Ps{E\setminus F;K}& =\H^{n-1}\big(\p^*E\cap F^{(0)}\cap K\big)+\H^{n-1}\big(\p^*F\cap E^{(1)}\cap K\big),
	\end{aligned}
\end{equation}
see \cite[Theorem 16.3]{Maggi}.

Let $\Omega$ be a domain in a Riemannian manifold $(M,g)$, and $\Lambda,r_0>0$ be constants. We call $E$ a $(\Lambda,r_0)$-perimeter minimizer in $(\Omega,g)$, if for all balls $B(x,r)\Subset M$ with $r\leq r_0$, and all comparison sets $F$ with $E\Delta F\Subset B(x,r)\cap\Omega$, we have
\begin{equation}\label{eq-gmt:Lam_r0_def}
	\P{E;B(x,r)}\leq \P{F;B(x,r)}+\Lambda|E\Delta F|.
\end{equation}
This definition follows \cite[Chapter 21]{Maggi} except that we do not assume $\spt(|D\chi_E|)=\p E$. It is well-known \cite{Almgren_1976, Maggi, Tamanini_1983} that the measure-theoretic boundary of a $(\Lambda,r_0)$-perimeter minimizer is a $C^{1,\alpha}$ ($\alpha<1/2$) hypersurface except for a codimension 8 singular set.

The following regularity and convergence results follow by combining (the Riemannian analogue of) Theorem 21.11, 21.14 and 26.6 in \cite{Maggi}, see also \cite{Tamanini_1983, Tamanini_1984}.

\begin{lemma}\label{lemma-gmt:Lam_r0_convergence}
	Let $(M,g)$ be a Riemannian manifold. Suppose the domains $\Omega_i$ converge to $\Omega$ locally, and the metrics $g_i\to g$ locally smoothly. Suppose $E_i\subset\Omega_i$ are $(\Lambda,r_0)$-perimeter minimizers in $(\Omega_i,g_i)$, and $x_i\in\spt(|D\chi_{E_i}|)$, such that $E_i\to E$ in $L^1_{\loc}$ and $x_i\to x$. Then:
	
	(i) $E$ is a $(\Lambda,r_0)$-perimeter minimizer in $(\Omega,g)$, and $x\in\spt(|D\chi_E|)$, and the upper density of $E$ at $x$ is bounded above by a dimensional constant $c<1$.
	
	(ii) If $x\in\p^*E$, then $x_i\in\p^*E_i$ for sufficiently large $i$, and we have $\nu_{E_i}(x_i)\to\nu_E(x)$.
\end{lemma}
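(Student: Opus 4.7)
\textbf{Proof proposal for Lemma \ref{lemma-gmt:Lam_r0_convergence}.}

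The plan is to reduce everything to the standard Euclidean theory of $(\Lambda,r_0)$-perimeter minimizers (as in \cite{Maggi} or \cite{Tamanini_1983}) and account for the perturbations introduced by the varying domains $\Omega_i$ and metrics $g_i$. Fix a normal coordinate chart near $x$ in which $g_i \to g$ in $C^k_{\loc}$ for every $k$. Since $x \in \Omega$ (indeed $x_i \in \Omega_i \to \Omega$), for large $i$ the ball $B_g(x,r)$ is compactly contained in $\Omega_i$ for all $r$ smaller than some fixed $r' < r_0$; so the restriction to such balls means the constraint $E \Delta F \subset\subset B \cap \Omega_i$ is equivalent to $E \Delta F \subset\subset B$, as in the definition \eqref{eq-gmt:Lam_r0_def}.

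First I would prove that $E$ is a $(\Lambda,r_0)$-perimeter minimizer in $(\Omega,g)$. Given a competitor $F$ with $E \Delta F \subset\subset B(x,r) \cap \Omega$ ($r \leq r_0$), we form the modified competitors $F_i := (F \cap \Omega_i) \cup (E_i \setminus B(x,r))$, apply the $(\Lambda,r_0)$ inequality for $E_i$ in $(\Omega_i,g_i)$, and pass to the limit. Lower semi-continuity of perimeter gives $\P{E;B(x,r)}_g \leq \liminf \P{E_i;B(x,r)}_{g_i}$, while the comparison side converges exactly, because $g_i \to g$ smoothly and the symmetric differences $E_i \Delta F$ converge in $L^1$. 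This is the routine set-replacing argument; cf.\ the proof of Theorem \ref{thm-prelim:compactness}.

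Next, for the density statements at $x$, I would invoke the standard density estimates for $(\Lambda,r_0)$-minimizers: there is a dimensional constant $c < 1$ and a radius $r_1$ depending on $\Lambda,r_0$ (and the local geometry, which is uniform in $i$ since $g_i \to g$ smoothly) such that for every $y \in \spt(|D\chi_{E_i}|)$ and $s \leq r_1$,
\[
  c^{-1}|B(y,s)| \leq |E_i \cap B(y,s)| \leq c\,|B(y,s)|, \qquad |D\chi_{E_i}|(B(y,s)) \geq c^{-1} s^{n-1}.
\]
These estimates are preserved under the $L^1_{\loc}$ convergence $E_i \to E$ and the smooth convergence $g_i \to g$. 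Applied at $x_i \to x$, they yield $x \in \spt(|D\chi_E|)$ and the claimed upper density bound. This proves (i).

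For (ii), the hard part is the convergence of normals at $x$. At a reduced boundary point $x \in \p^*E$, Tamanini's $\epsilon$-regularity theorem for $(\Lambda,r_0)$-minimizers provides $\rho > 0$ such that $\p^* E \cap B(x,\rho)$ is a $C^{1,\alpha}$ graph (in suitable coordinates) over the tangent hyperplane $T_x\p^*E$, with uniform $C^{1,\alpha}$ estimates. The same regularity theorem, with constants uniform in $i$ (using the smooth convergence $g_i \to g$ and the same $\Lambda,r_0$), combined with the Euclidean-to-Riemannian reduction in the chart, implies that for large $i$ the perimeters $|D\chi_{E_i}|$ are uniformly flat at $x_i$ in $B(x_i,\rho/2)$, hence $\p^* E_i \cap B(x_i,\rho/2)$ is also a $C^{1,\alpha}$ graph with uniform estimates. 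Since $E_i \to E$ in $L^1_{\loc}$, the defining graphs of $\p^* E_i$ converge in $C^1$ to that of $\p^* E$ by Arzel\`a–Ascoli; in particular $x_i \in \p^* E_i$ for large $i$ and $\nu_{E_i}(x_i) \to \nu_E(x)$.

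The main technical obstacle is making sure the uniform $\epsilon$-regularity and the density estimates carry over from the flat model to the Riemannian setting with \emph{varying} metrics $g_i$; but since $g_i \to g$ in $C^\infty_{\loc}$ and the $(\Lambda,r_0)$-inequality is scale-invariant in the obvious way, the constants in Tamanini's theorem (applied locally in a chart) can be chosen independently of $i$ for $i$ large, and the argument goes through verbatim.
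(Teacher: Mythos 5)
The paper does not give a written proof; it cites ``(the Riemannian analogue of) Theorem 21.11, 21.14 and 26.6 in [Maggi], see also [Tamanini 1983, 1984].'' Your proposal is precisely the unpacking of that citation: set-replacement + lower semicontinuity for the closure statement (21.14), uniform density estimates for the support and upper-density conclusions (21.11), and the $\epsilon$-regularity/improved-convergence theorem for the $C^1$ convergence of reduced boundaries and normals (26.6), with the Riemannian-to-Euclidean reduction handled by working in a normal chart where $g_i\to g$ smoothly so that the minimality constants are uniform. This is the same route, and the argument is sound.

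One small correction: your displayed density estimate
\[
  c^{-1}|B(y,s)| \leq |E_i \cap B(y,s)| \leq c\,|B(y,s)|
\]
is inconsistent as written with $c<1$ (the left inequality would force $|E_i\cap B|>|B|$). The standard two-sided estimate for $(\Lambda,r_0)$-minimizers is of the form
\[
  c(n)\,|B(y,s)| \;\leq\; |E_i \cap B(y,s)| \;\leq\; \bigl(1-c(n)\bigr)\,|B(y,s)|,
  \qquad |D\chi_{E_i}|\bigl(B(y,s)\bigr)\geq c(n)\,s^{n-1},
\]
with $c(n)\in(0,1/2)$. With this fix the passage to the limit and the conclusion that the upper density of $E$ at $x$ is at most $1-c(n)<1$ are exactly as you describe.
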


Finally, we prove two technical lemmas that are used in the main text. We denote by $\{e_i\}$ the standard basis of $\RR^n$, and for a point $x=(x_1,\cdots,x_{n-1},x_n)\in\RR^n$, we write $x=(x',x_n)$ for short.

\begin{lemma}\label{lemma-gmt:contain_cone}
	Let $\Omega\subset\RR^n$ be a convex domain. Fix $\th\in(0,\pi/2)$. Suppose $E\subset\RR^n$ has locally finite perimeter. Moreover, assume that
	\[\essinf_{\p^*E\cap\Omega}\metric{\nu_E}{e_n}\geq\cos\th.\]
	If $E$ has nonzero lower density at a point $x=(x',x_n)\in\Omega$, then $E$ has density 1 on the set
	\begin{equation}\label{eq-gmt:contains_cone}
		\Omega\cap\Big\{(y',y_n)\in\RR^n: y_n<x_n-|y'-x'|\tan\th\Big\}.
	\end{equation}
\end{lemma}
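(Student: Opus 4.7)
The plan is to establish, via the BV slicing theorem, that $E\cap\Omega$ is almost-everywhere the sub-graph of a $\tan\th$-Lipschitz function along the $e_n$-direction; the cone-containment then follows from elementary geometric considerations.

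First, I would apply the BV slicing theorem to $\chi_E$ in the $e_n$-direction. The distributional derivative satisfies $D\chi_E=-\nu_E\,\H^{n-1}\res\p^*E$; in particular, for a.e. $y'\in\pi(\Omega)$ (where $\pi$ denotes the projection onto the first $n-1$ coordinates), the 1D slice $s\mapsto\chi_E(y',s)$ is a BV function whose jumps (going upward in $s$) all have the sign of $-\nu_E\cdot e_n$. Since $\nu_E\cdot e_n\geq\cos\th>0$ almost everywhere on $\p^*E\cap\Omega$, every such jump is downward. Combined with the convexity of $\Omega$, which forces each vertical slice $\Omega\cap(\{y'\}\times\RR)$ to be a single open interval $(a(y'),b(y'))$, this implies that $E$ coincides (up to a null set) along this slice with an initial subinterval $(a(y'),f(y'))$. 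One obtains in this way a measurable function $f:\pi(\Omega)\to\RR$ with $a(y')\leq f(y')\leq b(y')$, and $E\cap\Omega$ is a.e. equal to $\{(y',y_n)\in\Omega: y_n<f(y')\}$.

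Next, I would show that $f$ admits a $\tan\th$-Lipschitz representative. In dimension 2 this is classical: the graph of $f$ coincides (up to null sets) with the graphical portion of $\p^*E\cap\Omega$, and the bound $\nu_E\cdot e_n\geq\cos\th$ translates exactly to $|f'|\leq\tan\th$ where $f$ is differentiable. For $n\geq3$, one reduces to the planar case by slicing: for each unit direction $w\in S^{n-2}\subset\RR^{n-1}$, BV slicing combined with Fubini implies that for a.e. 2-plane $P$ parallel to $\text{span}(w,e_n)$, the trace $E\cap P$ is a planar BV set whose boundary normal in $P$ again satisfies the angle bound, so by the 2D case, $t\mapsto f(y'_0+tw)$ is $\tan\th$-Lipschitz for a.e. $y'_0\perp w$. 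Running over a countable dense set of directions $w$ and modifying on a null set yields the desired Lipschitz representative.

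With the Lipschitz sub-graph description in hand, the remaining arguments are elementary. If one had $f(x')<x_n$, say $f(x')=x_n-\epsilon$ with $\epsilon>0$, the Lipschitz bound would give $E\cap B(x,r)\subset\{(y',y_n):y_n\leq x_n-\epsilon+\tan\th\,|y'-x'|\}$; imposing $|y-x|<r$ would force $r>\epsilon/(1+\tan\th)$, so $E\cap B(x,r)=\emptyset$ for smaller $r$, contradicting positive lower density at $x$. Hence $f(x')\geq x_n$. For any $y=(y',y_n)\in\Omega$ satisfying $y_n<x_n-|y'-x'|\tan\th$, the Lipschitz bound gives $f(y')\geq f(x')-\tan\th\,|y'-x'|\geq x_n-\tan\th\,|y'-x'|>y_n$, so $y$ lies strictly below the graph of the continuous function $f$; the same strict inequality persists in a neighborhood of $y$, and one concludes $y\in E^{(1)}$.

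The main technical obstacle is the Lipschitz regularity of $f$ in dimensions $n\geq 3$: the 2D case is immediate, but the higher-dimensional reduction requires a careful compatibility check between the BV slicing theorem applied to planar sections and the cone condition on $\nu_E$. An alternative would be to slice directly in the oblique direction $(x-y)/|x-y|$, but a naive bound $\nu_E\cdot v\geq\cos(\th+\alpha)$ only yields a positive lower bound when $\th<\pi/4$, so this route does not cover the full range $\th\in(0,\pi/2)$ and the sub-graph approach above appears to be the cleanest path.
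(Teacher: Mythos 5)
Your proposal is correct, but it takes a considerably heavier route than the paper's. Where you develop the full structural statement that $E\cap\Omega$ is (a.e.) the sub-graph of a $\tan\th$-Lipschitz function — which requires Vol'pert-type BV slicing, verifying that the graph function has no jumps (from $\nu_E\cdot e_n>0$ a.e.\ on $\p^*E$), and then a line-by-line Lipschitz reconstruction across a dense set of horizontal directions — the paper sidesteps all of this with a short mollification argument. Set $f_\epsilon=\chi_E*\rho_\epsilon$. For any constant vector $w$ with $\metric{w}{e_n}\geq\sin\th$, the hypothesis on $\nu_E$ gives $\metric{\nu_E}{w}\geq0$ a.e.\ on $\p^*E\cap\Omega$ (the angle between $\nu_E$ and $w$ is at most $\th+(\pi/2-\th)=\pi/2$), and a one-line integration by parts shows $\p f_\epsilon/\p w\leq0$ on compactly contained convex subdomains. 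Since the cone \eqref{eq-gmt:contains_cone} is reached from $x$ precisely along directions $-w$ of this type, one gets $f_\epsilon\geq f_\epsilon(x)\geq\Theta/2$ on the cone for small $\epsilon$, hence $E$ has positive density a.e.\ there, hence full density a.e.\ by Lebesgue's density theorem. This avoids any claim about graph regularity and works uniformly in dimension.

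One concrete point worth flagging: your stated reason for abandoning the oblique-slicing route is based on a miscalculation. You write that $\nu_E\cdot v\geq\cos(\th+\alpha)$ ``only yields a positive lower bound when $\th<\pi/4$,'' apparently taking the cone half-angle to be $\th$. But the cone in \eqref{eq-gmt:contains_cone} has half-angle $\pi/2-\th$ from the downward vertical: for $y$ in the cone, the unit vector $v=(x-y)/|x-y|$ satisfies $\metric{v}{e_n}>\sin\th$, so the angle between $\nu_E$ and $v$ is strictly less than $\pi/2$, and $\metric{\nu_E}{v}\geq0$ for \emph{all} $\th\in(0,\pi/2)$. Moreover one does not need a positive lower bound — non-negativity of $\metric{\nu_E}{v}$ already forces monotonicity of the mollified (or sliced) characteristic function along $v$. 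This is exactly the observation that powers the paper's proof, and it is the cleaner path even for what you were trying to do.
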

\begin{proof}
	Let $\rho_\epsilon$ ($\epsilon>0$) be a family of standard mollifiers, and set $f_\epsilon=\chi_E*\rho_\epsilon$. Let $\Omega'\Subset\Omega$ be another convex domain. For any smooth vector field $X$ with $\supp(X)\subset\Omega'$ and any $\epsilon<d(\p\Omega',\p\Omega)$, we compute
	\[\begin{aligned}
		-\int_{\RR^n}\D f_\epsilon\cdot X &= \int_{\RR^n}f_\epsilon\div X
		= \int_E\div(\rho_\epsilon*X)
		= \int_{\p^*E}(\rho_\epsilon*X)\cdot\nu_E\,d\H^{n-1}.
	\end{aligned}\]
	Let $w$ be any constant vector field with $\metric{w}{e_n}\geq\sin\th$ (thus $\metric{\nu_E}{w}\geq0$ a.e.). Setting $X=\varphi w$ for arbitrary $\varphi\in C^\infty_0(\Omega')$, $\varphi\geq0$, we obtain
	\[-\int_{\RR^n}\varphi\frac{\p f_\epsilon}{\p w}=\int_{\p^*E}(\rho_\epsilon*\varphi)\metric{w}{\nu_E}\geq0.\]
	This implies $\frac{\p f_\epsilon}{\p w}\leq0$ in $\Omega'$. Let $\Theta>0$ be the lower density of $E$ at the given point $x$. For all sufficiently small $\epsilon$ we have $f_\epsilon(x)\geq\frac12\Theta$, and it follows by the convexity of $\Omega'$ that $f_\epsilon\geq\frac12\Theta$ in the region
	\[Q:=\Omega'\cap\Big\{(y',y_n): y_n<x_n-|y'-x'|\tan\th\Big\}.\]
	Taking $\epsilon\to0$, we conclude that $E$ has density at least $\frac12\Theta$ almost everywhere in $Q$. Therefore $E$ occupies full measure in $Q$. The result follows by taking $\Omega'\to\Omega$.
\end{proof}

\begin{lemma}\label{lemma-gmt:min_in_halfplane}
	Let $E\subset\{x_n<0\}$ be nonempty and locally perimeter-minimizing in $\RR^n$. Then $E$ must be a half space, i.e. there exists $c>0$ such that $E=\big\{(x',x_n): x_n<-c\big\}$.
\end{lemma}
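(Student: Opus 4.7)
The plan is to set $c := -\esssup\{x_n : x \in E\}$, which lies in $[0, \infty)$ since $E \subset \{x_n < 0\}$ is nonempty, and to prove $E = \{x_n < -c\}$ a.e. A density argument shows $E^{(1)} \subset \{x_n < -c\}$ strictly (a density-$1$ point on $\{x_n = -c\}$ would have density at most $\tfrac12$ by the one-sided inclusion), so in particular $\p^* E \subset \{x_n \leq -c\}$. The statement ``$c > 0$'' in the lemma corresponds to excluding the trivial case $E = \{x_n < 0\}$; in the applications this is enforced by specifying a point of $\spt(|D\chi_E|)$.

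The first real step is to produce a touching point of $\overline{\p^* E}$ on the hyperplane $\{x_n = -c\}$. By definition of $c$ there exist $y_i \in E^{(1)}$ with $y_{i,n} \to -c$; on the vertical segment from $y_i$ to $(y_i', -c)$ the open set $E^{(1)}$ must terminate, producing $z_i \in \p E^{(1)} = \overline{\p^* E}$ with $z_{i,n} \in (y_{i,n}, -c]$. Translating $E$ in the $x'$-direction preserves all hypotheses, so WLOG $z_i' = 0$, and a subsequential limit yields $p = (0, -c) \in \overline{\p^* E} \cap \{x_n = -c\}$.

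Next I would invoke the strong maximum principle for area-minimizing boundaries (the regularity-free version for integral currents, due to Solomon--White and Simon). Since $\p E$ is area-minimizing, lies in the closed half-space $\{x_n \leq -c\}$, and touches the minimizing hyperplane at $p$, the two must coincide on a relative neighborhood of $p$ inside $\{x_n = -c\}$. The set $\{x_n = -c\} \cap \spt(|D\chi_E|)$ is then both relatively open and relatively closed in the connected hyperplane, hence equals the whole hyperplane, so $\{x_n = -c\} \subseteq \p^* E$ and $E = \{x_n < -c\}$ a.e.\ in some slab $\{-c - \delta < x_n < -c\}$.

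Finally I would rule out any deeper gap. If $V := \{x_n < -c\} \setminus E$ had positive measure, then at a density-$1$ point $y_0$ of $V$ the competitor $F := E \cup B(y_0, r)$ satisfies $F \cap B(y_0, r) = B(y_0, r)$, whence $\P{F; B(y_0, r)} = 0$; minimality forces $\P{E; B(y_0, r)} = 0$, and with $y_0$ density-$0$ of $E$ this gives $E \cap B(y_0, r) = \emptyset$ a.e. The resulting empty region, separated from the identified sheet $\{x_n = -c\}$ (where $E$ fills everything) by some lower component of $\p^* E$, triggers the same touching/max-principle argument at the top of that lower component, forcing it to be another horizontal hyperplane $\{x_n = -c'\}$ with $c' > c$; but then $E$ contains a slab $\{-c' < x_n < -c\}$, and slabs fail to be local perimeter minimizers (a ball $B$ straddling the face $\{x_n = -c\}$ gives $\P{E; B} > 0$ while the competitor $E \setminus B$ gives $\P{E \setminus B; B} = 0$). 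The main obstacle is the max-principle step at a point $p$ that is \emph{a priori} singular for $\p E$: one must use the regularity-free version for area-minimizing currents rather than the classical smooth-surface maximum principle.
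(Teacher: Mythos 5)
Your proof takes a genuinely different route from the paper's. The paper blows down $E$ at a point $x\in\p^*E$, obtaining a perimeter-minimizing cone $E_\infty\subset\{x_n\leq 0\}$ with $0\in\spt(|D\chi_{E_\infty}|)$; the strong maximum principle (White) then forces $E_\infty$ to be the half-space, and the rigidity statement in the monotonicity formula (the perimeter-density ratio of $E$ at $x$ is pinched between the density of a plane at $r\to 0^+$ and the density of a plane at $r\to\infty$, hence constant) forces $E$ to be a cone at the regular point $x$, hence a hyperplane. You instead set $c=-\esssup x_n$, look for a touching point on $\{x_n=-c\}$, apply the strong maximum principle there, propagate by open-and-closed along the plane, and try to exclude a gap below. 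Both proofs lean on the same hard ingredient (the strong maximum principle for minimizing boundaries), but the paper's cone/monotonicity route sidesteps the two places where your argument has genuine holes.

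The first gap is the touching-point step. Your near-touching points $z_i$ need not stay bounded in the $x'$-direction, and ``translating $E$ so that WLOG $z_i'=0$'' is not legitimate for a whole sequence with different shifts. What is legitimate is to pass to a subsequential $L^1_{\loc}$ limit $\tilde E$ of the translates $E-(z_i',0)$; by Lemma~\ref{lemma-gmt:Lam_r0_convergence} this $\tilde E$ is again a local perimeter minimizer contained in $\{x_n\leq -c\}$ with $(0,-c)\in\spt(|D\chi_{\tilde E}|)$, and your maximum-principle step now identifies $\tilde E$ as the half-space. But that conclusion is about $\tilde E$, not about $E$: knowing the translates converge to a half-space tells you the geometry of $E$ far out along the $z_i'$-direction, not near any fixed point. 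The second gap is the ``rule out any deeper gap'' paragraph, which re-invokes the touching/max-principle step at the top of a putative lower component of $\p^*E$; this is circular, since that step was never established to produce a touching point on $E$ itself rather than on a translate limit, and the final ``slab'' contradiction also needs care (the competitor $E\setminus B$ with $B$ straddling the face must be cut compactly, and the gap region may not be a clean slab). Both issues are resolved at once by switching to the paper's route: once the blow-down of $E$ at any $x\in\p^*E$ is pinned down as a half-space by the maximum principle, the monotonicity formula upgrades this to the statement that $E$ is a cone at $x$ with the density of a plane, and such a minimizing cone at a reduced-boundary point is a half-space. I'd recommend reorganizing your argument along those lines.
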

\begin{proof}
	Choose $x\in\p^*E$. There exists a sequence $\lambda_i\to\infty$, such that the limit $E_\infty=\lim_{i\to\infty}\lambda_i(E-x)$ exists and is a perimeter-minimizing cone. Since $E_\infty\subset\{x_n\leq0\}$ and $0\in\operatorname{spt}\big(|D\chi_{E_\infty}|\big)$, it follows by a strong maximum principle \cite[Theorem 4]{White_2010} that $\p E_\infty=\{x_n=0\}$. Then by the classical monotonicity formula (see \cite[Theorem 28.17]{Maggi} for details), $\p E$ itself is a cone centered at $x$. Thus $\p E$ must be a flat hyperplane.
\end{proof}

\noindent\textit{Department of Mathematics, Duke University, Durham NC, USA.}

\vspace{3pt}

\noindent\textit{Email:} \href{mailto:kx35@math.duke.edu}{kx35@math.duke.edu}
	

\begin{thebibliography}{99}
	
	\setlength{\parskip}{-0.3em}
	
	\bibitem{Agostiniani-Mantegazza-Mazzieri-Oronzio_2022} V. Agostiniani, C. Mantegazza, L. Mazzieri, F. Oronzio, \textit{Riemannian Penrose inequality via Nonlinear Potential Theory}, preprint (2022), \href{https://arxiv.org/abs/2205.11642}{arxiv:2205.11642}.
	
	\bibitem{Almeida-Chambolle-Novaga_2012} L. Almeida, A. Chambolle, M. Novaga, \textit{Mean curvature flow with obstacles}, Ann. Inst. H. Poincar\'e C Anal. Non Lin\'eaire 29 (2012), no.5, 667--681.
	
	\bibitem{Almgren_1976} F. Almgren, \textit{Existence and regularity almost everywhere of solutions to elliptic variational problems with constraints}, Mem. Amer. Math. Soc. 4 (1976), no. 165, viii+199 pp.
	
	\bibitem{Ambrosio-Fusco-Pallara} L. Ambrosio, N. Fusco, D. Pallara, \textit{Functions of bounded variation and free discontinuity problems}, Oxford Math. Monogr., The Clarendon Press, Oxford University Press, New York, 2000. xviii+434 pp.
	
	
	\bibitem{Anzellotti_1983} G. Anzellotti, \textit{Pairings between measures and bounded functions and compensated compactness}, Ann. Mat. Pura Appl. (4) 135 (1983), 293--318.
	
	\bibitem{Benatti-Pluda-Pozzetta_2024} L. Benatti, A. Pluda, M. Pozzetta, \textit{Fine properties of nonlinear potentials and a unified perspective on monotonicity formulas}, preprint (2024), \href{https://arxiv.org/abs/2411.06462}{arxiv:2411.06462}.
	
	\bibitem{Bray-Miao_2008} H. Bray, P. Miao, \textit{On the capacity of surfaces in manifolds with nonnegative scalar curvature}, Invent. Math. 172 (2008), no. 3, 459--475.
	
	\bibitem{Bray-Neves_2004} H. Bray, A. Neves, \textit{Classification of prime 3-manifolds with Yamabe invariant greater than $\mathbb{RP}^3$}, Ann. of Math. (2) 159 (2004), no. 1, 407--424.
	
	
	\bibitem{Castro-Lerma_2017} I. Castro, A. Lerma, \textit{Lagrangian homothetic solitons for the inverse mean curvature flow}, Results Math. 71 (2017), no. 3--4, 1109--1125.
	
	\bibitem{Chang_2020} J. Chang, \textit{Self-similar solutions to the inverse mean curvature flow in $\RR^2$}, Results Math. 75 (2020), no.4, Paper No. 175, 19 pp.
	
	\bibitem{Chen-Frid_1999} G.-Q. Chen, H. Frid, \textit{Divergence-measure fields and hyperbolic conservation laws}, Arch. Ration. Mech. Anal. 147 (1999), no.2, 89--118.
	
	\bibitem{Chen-Giga-Goto_1991} Y. Chen, Y. Giga, S. Goto, \textit{Uniqueness and existence of viscosity solutions of generalized mean curvature flow equations}, J. Differential Geom. 33 (1991), no.3, 749--786.
	
	\bibitem{Choi-Daskalopoulos_2021} B. Choi, P. Daskalopoulos, \textit{Evolution of noncompact hypersurfaces by inverse mean curvature}, Duke Math. J. 170 (2021), no.12, 2755--2803.
	
	\bibitem{Choi-Hung_2023} B. Choi, P.-K. Hung, \textit{Inverse mean curvature flow with singularities}, Int. Math. Res. Not. IMRN (2023), no.10, 8683--8702.
	
	\bibitem{Daskalopoulos-Huisken_2022} P. Daskalopoulos, G. Huisken, \textit{Inversemean curvature evolution of entire graphs}, Calc. Var. Partial Differential Equations 61 (2022), no.2, Paper No. 53, 37 pp.
	
	
	\bibitem{Drugan-Lee-Wheeler_2016} G. Drugan, H. Lee, G. Wheeler, \textit{Solitons for the inverse mean curvature flow}, Pacific J. Math. 284 (2016), no.2, 309--326.
	
	\bibitem{Evans-Gariepy} L. C. Evans, R. F. Gariepy, \textit{Measure theory and fine properties of functions}, Textb. Math., CRC Press, Boca Raton, FL, 2015. xiv+299 pp.
	
	\bibitem{Evans-Spruck_1991} L. C. Evans, J. Spruck, \textit{Motion of level sets by mean curvature. I}, J. Differential Geom. 33 (1991), no.3, 635--681.
	
	\bibitem{Fogagnolo-Mazzieri_2022} M. Fogagnolo, L. Mazzieri, \textit{Minimising hulls, $p$-capacity and isoperimetric inequality on complete Riemannian manifolds}, J. Funct. Anal. 283 (2022), no.9, Paper No. 109638, 49 pp.
	
	\bibitem{Gerhardt_1991}	C. Gerhardt, \textit{Flow of nonconvex hypersurfaces into spheres}, J. Differential Geom. 32 (1990), no. 1, 299--314.
	
	\bibitem{Geroch_1973} R. Geroch, \textit{Energy extraction}, Ann. New York Acad. Sci. 224 (1973) 108-17.
	
	\bibitem{Giga-Tran-Zhang_2019} Y. Giga, H. V. Tran, L. Zhang, \textit{On obstacle problem for mean curvature flow with driving force}, Geom. Flows 4 (2019), no.1, 9--29.
	
	
	\bibitem{Heidusch_thesis} M. Heidusch, \textit{On the regularity of the inverse mean curvature flow}, Ph.D. thesis, 2001, Universit\"at T\"ubingen.
	
	
	\bibitem{Huisken-Ilmanen_2001} G. Huisken, T. Ilmanen, \textit{The inverse mean curvature flow and the Riemannian Penrose inequality}, J. Differential Geom. 59 (2001), no. 3, 353--437.
	
	\bibitem{Huisken-Ilmanen_2008} G. Huisken, T. Ilmanen, \textit{Higher regularity of the inverse mean curvature flow}, J. Differential Geom. 80 (2008), no.3, 433--451.
	
	\bibitem{Huisken-Koerber_2023} G. Huisken, T. Koerber, \textit{Inverse mean curvature flow and Ricci-pinched three-manifolds}, J. Reine Angew. Math. 814 (2024), 1--8.
	
	\bibitem{Ilmanen_1994} T. Ilmanen, \textit{Elliptic regularization and partial regularity for motion by mean curvature}, Mem. Amer. Math. Soc. 108 (1994), no. 520, x+90 pp.
	
	
	\bibitem{Kawohl-Friedman_2003} B. Kawohl, V. Fridman, \textit{Isoperimetric estimates for the first eigenvalue of the $p$-Laplace operator and the Cheeger constant}, Comment. Math. Univ. Carolin. 44 (2003), no.4, 659--667.
	
	\bibitem{Kawohl-Schuricht_2007} B. Kawohl, F. Schuricht, \textit{Dirichlet problems for the 1-Laplace operator, including the eigenvalue problem}, Commun. Contemp. Math. 9 (2007), no.4, 515--543.
	
	\bibitem{Kim-Pyo_2019} D. Kim, J. Pyo, \textit{Translating solitons for the inverse mean curvature flow}, Results Math. 74 (2019), no.1, Paper No. 64, 28 pp.
	
	\bibitem{Kim-Pyo_2020} D. Kim, J. Pyo, \textit{Remarks on solitons for inverse mean curvature flow}, Math. Nachr. 293 (2020), no.12, 2363--2369.
	
	\bibitem{Koerber_2020} T. Koerber, \textit{The Riemannian Penrose inequality for asymptotically flat manifolds with non-compact boundary}, J. Differential Geom., to appear.
	
	\bibitem{Kotschwar-Ni_2009}	B. Kotschwar, L. Ni, \textit{Local gradient estimates of $p$-harmonic functions, $1/H$-flow, and an entropy formula}, Ann. Sci. \'Ec. Norm. Sup\'er. (4) 42 (2009), no. 1, 1--36.
	
	\bibitem{Lambert-Scheuer_2016} B. Lambert, J. Scheuer, \textit{The inverse mean curvature flow perpendicular to the sphere}, Math. Ann. 364 (2016), no.3--4, 1069--1093.
	
	\bibitem{Latorre-Leon_2018} M. Latorre, S. Segura De Le\'on, \textit{Existence and comparison results for an elliptic equation involving the 1-Laplacian and $L^1$-data}, J. Evol. Equ. 18 (2018), 1--28.
	
	\bibitem{Lawrence_curves} J. D. Lawrence, \textit{A catalog of special plane curves}, 1972, Dover Publications, Inc., New York.
	
	\bibitem{Lee_rel} D. A. Lee, \textit{Geometric relativity}, Graduate Studies in Mathematics, 201. American Mathematical Society, Providence, RI, 2019. xii+361 pp.
	
	\bibitem{Lee-Neves_2013} D. A. Lee, A. Neves, \textit{The Penrose inequality for asymptotically locally hyperbolic spaces with nonpositive mass}, Comm. Math. Phys. 339 (2015), no.2, 327--352.
	
	\bibitem{Logaritsch_thesis} P. Logaritsch, \textit{An Obstacle Problem for Mean Curvature Flow}, Ph.D. Thesis, Universit\"at Leipzig, 2016.
	
	\bibitem{Luukkainen-Vaisala_1977} J. Luukkainen, J. V\"ais\"al\"a, \textit{Elements of Lipschitz topology}, Ann. Acad. Sci. Fenn. Ser. A I Math. 3 (1977), no. 1, 85--122.
	
	\bibitem{Maggi} F. Maggi, \textit{Sets of finite perimeter and geometric variational problems, an introduction to geometric measure theory}, Cambridge Stud. Adv. Math., 135, Cambridge University Press, Cambridge, 2012. xx+454 pp.
	
	\bibitem{Mari-Rigoli-Setti_2022} L. Mari, M. Rigoli, A. Setti, \textit{On the $1/H$-flow by $p$-Laplace approximation: new estimates via fake distances under Ricci lower bounds}, Amer. J. Math. 144 (2022), no. 3, 779--849.
	
	\bibitem{Mazon-Leon_2015} J. M. Maz\'on, S. Segura de Le\'on, \textit{A non-homogeneous elliptic problem dealing with the level set formulation of the Inverse Mean Curvature Flow}, J. Diff. Eq. 259 (2015), 2762--2806.
	
	\bibitem{Mari-deOliveiraSavas-Halilaj-deSena_2023} L. Mari, J. de Oliveira, A. Savas-Halilaj, R. de Sena, \textit{Conformal solitons for the mean curvature flow in hyperbolic space}, Ann. Global Anal. Geom. 65 (2024), no. 2, Paper No. 19, 41 pp.
	
	\bibitem{Marquardt_2013} T. Marquardt, \textit{Inverse mean curvature flow for star-shaped hypersurfaces evolving in a cone}, J. Geom. Anal. 23 (2013), no.3, 1303--1313.
	
	\bibitem{Marquardt_2017} T. Marquardt, \textit{Weak solutions of inverse mean curvature flow for hypersurfaces with boundary}, J. Reine Angew. Math. 728 (2017), 237--261.
	
	\bibitem{Mercier_2014} G. Mercier, \textit{Mean curvature flow with obstacles: a viscosity approach}, preprint (2014), \href{https://arxiv.org/abs/1409.7657}{arxiv:1409.7657}
	
	\bibitem{Mercier-Novaga_2015} G. Mercier, M. Novaga, \textit{Mean curvature flow with obstacles: existence, uniqueness and regularity of solutions}, Interfaces Free Bound. 17 (2015), no.3, 399--426.
	
	\bibitem{Moser_2007} R. Moser, \textit{The inverse mean curvature flow and p-harmonic functions}, J. Eur. Math. Soc. (JEMS) 9 (2007), no.1, 77--83.
	
	\bibitem{Moser_2008} R. Moser, \textit{The inverse mean curvature flow as an obstacle problem}, Indiana Univ. Math. J. 57 (2008), no.5, 2235--2256.
	
	\bibitem{Rupflin-Schnurer_2020} M. Rupflin, O. C. Schnürer, \textit{Weak solutions to mean curvature flow respecting obstacles}, Ann. Sc. Norm. Super. Pisa Cl. Sci. (5) 20 (2020), no.4, 1429--1467.
	
	
	\bibitem{Silhavy_2005} M. \v{S}ilhav\'y, \textit{Divergence measure fields and Cauchy’s stress theorem}, Rend. Sem. Mat. Univ. Padova 113 (2005), 15--45.
	
	\bibitem{Shi_2016} Y. Shi, \textit{The isoperimetric inequality on asymptotically flat manifolds with nonnegative scalar curvature}, Int. Math. Res. Not. IMRN 2016, no. 22, 7038--7050.
	
	\bibitem{Smoczyk_2001} K. Smoczyk, \textit{A relation between mean curvature flow solitons and minimal submanifolds}, Math. Nachr. 229 (2001), 175--186.
	
	\bibitem{Tamanini_1983} I. Tamanini, \textit{Boundaries of Caccioppoli sets with Hölder-continuous normal vector}, J. Reine Angew. Math. 334 (1982), 27--39.
	
	\bibitem{Tamanini_1984} I. Tamanini, \textit{Regularity Results for Almost Minimal Oriented Hypersurfaces in $\RR^n$}, Quaderni del Dipartimento di Matematica dell’Universit\`a di Lecce 1 (1984), 1--92.
	
	\bibitem{Urbas_1990} J. Urbas, \textit{On the expansion of starshaped hypersurfaces by symmetric functions of their principal curvatures}, Math. Z. 205 (1990), no. 3, 355--372.
	
	\bibitem{Urbas_1999} J. Urbas, \textit{Convex curves moving homothetically by negative powers of their curvature},	Asian J. Math. 3 (1999), no.3, 635--656.
	
	\bibitem{White_2010} B. White, \textit{The maximum principle for minimal varieties of arbitrary codimension}, Comm. Anal. Geom. 18 (2010), no. 3, 421--432.
	
	\bibitem{Xu_2023_sys2} K. Xu, \textit{A topological gap theorem for the $\pi_2$-systole of positive scalar curvature 3-manifolds}, Duke Math. J. 174 (8), 1647--1664.
	
	\bibitem{Xu_2023_proper} K. Xu, \textit{Isoperimetry and the properness of weak inverse mean curvature flow}, Calc. Var. 63, 216 (2024).
	
\end{thebibliography}
\end{document}